\theoremstyle{plain}
\newtheorem{thm}{Theorem}[section]
\newtheorem{lemma}[thm]{Lemma}
\newtheorem{prop}[thm]{Proposition}
\newtheorem{cor}[thm]{Corollary}
\theoremstyle{definition}
\newtheorem{df}[thm]{Definition}
\theoremstyle{remark}
\newtheorem{rk}[thm]{Remark}
\numberwithin{equation}{section}
\def\C{\mathbb{C}}
\def\Q{\mathbb{Q}}
\def\Z{\mathbb{Z}}
\def\N{\mathbb{N}}
\def\bbD{\mathbb{D}}
\def\scrA{\mathscr{A}}
\def\scrB{\mathscr{B}}
\def\scrD{\mathscr{D}}
\def\scrF{\mathscr{F}}
\def\scrH{\mathscr{H}}
\def\scrI{\mathscr{I}}
\def\scrL{\mathscr{L}}
\def\scrM{\mathscr{M}}
\def\scrN{\mathscr{N}}
\def\scrO{\mathscr{O}}
\def\scrV{\mathscr{V}}
\def\frakC{\mathfrak{C}}
\def\frakE{\mathfrak{E}}
\def\frakS{\mathfrak{S}}
\def\frakX{\mathfrak{X}}
\def\fraka{\mathfrak{a}}
\def\frakb{\mathfrak{b}}
\def\g{\mathfrak{g}}
\def\h{\mathfrak{h}}
\def\frakl{\mathfrak{l}}
\def\frakm{\mathfrak{m}}
\def\frakn{\mathfrak{n}}
\def\frakp{\mathfrak{p}}
\def\frakq{\mathfrak{q}}
\def\frakt{\mathfrak{t}}
\def\fraksl{\mathfrak{sl}}
\def\calA{\mathcal{A}}
\def\calB{\mathcal{B}}
\def\calC{\mathcal{C}}
\def\calD{\mathcal{D}}
\def\calE{\mathcal{E}}
\def\calF{\mathcal{F}}
\def\calO{\mathcal{O}}
\def\calP{\mathcal{P}}
\def\calU{\mathcal{U}}
\def\calX{\mathcal{X}}
\def\bfone{\mathbf{1}}
\def\bfD{\mathbf{D}}
\def\bfH{\mathbf{H}}
\def\bfM{\mathbf{M}}
\def\bfO{\mathbf{O}}
\def\bfS{\mathbf{S}}
\def\bfU{\mathbf{U}}
\def\bfc{\mathbf{c}}
\def\bfk{\mathbf{k}}
\def\bfv{\mathbf{v}}
\newcommand{\al}{\alpha}
\newcommand{\ep}{\epsilon}
\newcommand{\vep}{\varepsilon}
\def\lam{\lambda}
\def\Lam{\Lambda}
\def\geqs{\geqslant}
\def\leqs{\leqslant}
\def\ra{\rightarrow}
\def\lra{\longrightarrow}
\def\simra{\overset{\sim}\ra}
\newcommand{\pair}[1]{\langle{#1}\rangle}
\newcommand{\wdg}[1]{|{#1}\rangle}
\def\ilim{\varinjlim}
\def\plim{\varprojlim}
\def\sss{\scriptscriptstyle}
\def\iff{\Longleftrightarrow}
\def\dim{{\mathrm{dim}}}
\def\dch{\mathbf{ch}}
\newcommand{\QS}{{}^{\scriptstyle{Q}}\negmedspace\frakS}
\newcommand{\PS}{{}^{\scriptstyle{P}}\negmedspace\frakS}
\newcommand{\tilim}{2\kern-.15em\ilim}
\newcommand{\tplim}{2\kern-.15em\plim}
\newcommand{\olX}{\overline{X}}
\newcommand{\olY}{\overline{Y}}
\def\homo{\operatorname{\it \mathscr{H}\kern-.25em om}}
\newcommand{\diff}{\operatorname{\it \mathscr{D}\kern-.15em iff}}
\def\ext{\operatorname{\it \mathscr{E}\kern-.25em xt}}
\def\edo{\operatorname{\it \mathscr{E}\kern-.25em nd}}
\def\der{\operatorname{\it \mathscr{D}\kern-.25em er}}
\def\Perv{\mathbf{Perv}}
\def\MHM{\mathbf{M}\mathbf{H}\mathbf{M}}
\def\Hom{\operatorname{Hom}\nolimits}
\def\End{\operatorname{End}\nolimits}
\def\Ext{\operatorname{Ext}\nolimits}
\def\Lie{\operatorname{Lie}\nolimits}
\def\Coker{\operatorname{Coker}\nolimits}
\def\Ker{\operatorname{Ker}\nolimits}
\def\modu{\operatorname{-mod}\nolimits}
\def\proj{\operatorname{-proj}\nolimits}
\def\ch{\operatorname{ch}\nolimits}
\def\DR{\operatorname{DR}\nolimits}
\def\Im{\operatorname{Im}\nolimits}
\def\hight{\operatorname{ht}\nolimits}
\def\pt{\operatorname{pt}\nolimits}
\def\pr{\operatorname{pr}\nolimits}
\def\Id{\operatorname{Id}\nolimits}
\def\Ad{\operatorname{Ad}\nolimits}
\def\tr{\operatorname{tr}\nolimits}
\def\op{\operatorname{op}\nolimits}
\def\Ext{\operatorname{Ext}\nolimits}
\def\gr{\operatorname{gr}\nolimits}
\def\Sh{\mathbf{Sh}}
\author{Peng Shan}
\address{D\'epartement de Math\'ematiques,
Universit\'e Paris 7, 175 rue du Chevaleret, F-75013 Paris.} \email{shan@math.jussieu.fr}
\title[Jantzen filtration]{Graded decomposition matrices of $v$-Schur algebras via Jantzen filtration}
\begin{document}
\begin{abstract}
We prove that certain parabolic Kazhdan-Lusztig polynomials
calculate the graded decomposition matrices of $v$-Schur algebras
given by the Jantzen filtration of Weyl modules, confirming a
conjecture of Leclerc and Thibon.
\end{abstract}

\maketitle
\tableofcontents

\section*{Introduction}

Let $v$ be a $r$-th root of unity in $\C$. The $v$-Schur algebra
$\bfS_v(n)$ over $\C$ is a finite dimensional quasi-hereditary
algebra. Its standard modules are the Weyl modules $W_v(\lam)$
indexed by partitions $\lam$ of $n$. The module $W_v(\lam)$ has a
simple quotient $L_v(\lam)$. See Section \ref{ss:schur} for more details.

The decomposition matrix of $\bfS_v(n)$ is given by the following
algorithm. Let $\calF_q$ be the Fock space of level one. It is a
$\Q(q)$-vector space with a basis $\{\wdg{\lam}\}$ indexed by the
set of partitions. Moreover it carries an action of the quantum enveloping
algebra $U_q(\widehat{\fraksl}_r)$. Let $L^+$ (resp. $L^-$) be the
$\Z[q]$-submodule (resp. $\Z[q^{-1}]$-submodule) in $\calF_q$
spanned by $\{\wdg{\lam}\}.$ Following Leclerc and Thibon \cite[Theorem 4.1]{LT1}, the Fock space $\calF_q$ admits two
particular bases $\{G^+_\lam\}$, $\{G^-_\lam\}$ with the properties
that
\begin{equation*}
G^+_\lam\equiv\wdg{\lam}\text{ mod }qL^+,\qquad
G^-_\lam\equiv\wdg{\lam}\text{ mod }q^{-1}L^-.
\end{equation*}
Let $d_{\lam\mu}(q), e_{\lam\mu}(q)$ be
elements in $\Z[q]$ such that
\begin{equation*}
G^+_\mu=\sum_{\lam}d_{\lam\mu}(q)\wdg{\lam},\qquad
G^-_{\lam}=\sum_{\mu}e_{\lam\mu}(-q^{-1})\wdg{\mu}.
\end{equation*}
For any partition $\lam$ write $\lam'$ for the transposed partition.
Then the Jordan-H\"{o}lder multiplicity of $L_v(\mu)$ in $W_v(\lam)$
is equal to the value of $d_{\lam'\mu'}(q)$ at $q=1$. This result
was conjectured by Leclerc and Thibon \cite[Conjecture 5.2]{LT1} and
has been proved by Varagnolo and Vasserot \cite{VV1}.

We are interested in the Jantzen filtration of $W_v(\lam)$ \cite{JM}
$$W_v(\lam)=J^0W_v(\lam)\supset J^1W_v(\lam)\supset\ldots.$$
It is a filtration by $\bfS_v(n)$-submodules. The graded decomposition matrix of $\bfS_v(n)$ counts the multiplicities of $L_v(\mu)$ in the associated graded module of $W_v(\lam)$. The graded version of the above algorithm was also conjectured by Leclerc and Thibon \cite[Conjecture 5.3]{LT1}. Our main result is a proof of this conjecture under a mild restriction on $v$.

\begin{thm}\label{mthm}
Suppose that $v=\exp(2\pi i/\kappa)$ with $\kappa\in\Z$ and $\kappa\leqs -3$. Let
$\lam$, $\mu$ be partitions of $n$. Then
\begin{equation}\label{eq:main}
d_{\lam'\mu'}(q)=\sum_{i\geqs 0}[J^iW_v(\lam)/J^{i+1}W_v(\lam):
L_v(\mu)]q^i.
\end{equation}
\end{thm}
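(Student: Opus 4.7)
The plan is to compare the Jantzen filtration of $W_v(\lam)$, through the chain of category equivalences used in \cite{VV1}, with a Jantzen filtration on parabolic Verma modules in the affine category $\calO$ of $\widehat{\fraksl}_N$ at negative level, and then to identify that filtration with the weight filtration on the corresponding $\calD$-modules on an affine flag variety, where the graded multiplicities are known to compute coefficients of parabolic Kazhdan--Lusztig polynomials. The whole strategy is a graded lift of the proof of the ungraded Leclerc--Thibon conjecture in \cite{VV1} and runs parallel to the Beilinson--Bernstein strategy for the finite-dimensional Kazhdan--Lusztig conjecture.

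First I would set up the Jantzen filtration in a purely deformation-theoretic framework following \cite{JM}: over a regular local $\C$-algebra $R$ whose parameter specializes to the given $v$, one constructs a deformed Weyl module $W_R(\lam)$ equipped with a generically nondegenerate contravariant form, and $J^iW_v(\lam)$ is then the image of those sections whose form vanishes to order at least $i$. This description is a formal consequence of the deformation together with the form, so any equivalence of highest weight categories which lifts to the deformed categories and respects standards and contravariant forms will transport Jantzen filtrations to Jantzen filtrations. Applying this principle to the equivalence between the relevant block of $\bfS_v(n)\modu$ and a block of affine parabolic $\calO$ at level $\kappa-N$ (for some $N\geqs n$) used in \cite{VV1}, I would reduce \eqref{eq:main} to the analogous graded statement for parabolic Verma modules on the affine side.

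Next I would invoke Kashiwara--Tanisaki localization: at negative level the parabolic category $\calO$ is equivalent to a category of holonomic twisted $\calD$-modules on an affine partial flag variety $X$ of $\widehat{\mathrm{GL}}_N$, with parabolic Verma modules corresponding to $j_!$-extensions of constant sheaves on Schubert cells and simple modules to intersection cohomology sheaves. Upgrading to mixed Hodge modules, each standard object $j_{!}\underline{\C}_{X_w}$ carries a canonical weight filtration whose associated graded, by Beilinson--Bernstein--Deligne purity, decomposes into $\mathrm{IC}$-sheaves with multiplicities given by the coefficients of parabolic Kazhdan--Lusztig polynomials; these polynomials coincide with $d_{\lam'\mu'}(q)$ by the Fock space identification already established in \cite{VV1}. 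It then remains to identify this geometric weight filtration with the algebraic Jantzen filtration on the corresponding parabolic Verma module under Kashiwara--Tanisaki localization.

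The main obstacle, and the technical heart of the argument, is precisely this last identification Jantzen $=$ weight in the affine negative-level setting. In the finite-dimensional case this is the theorem of Beilinson--Bernstein on the Jantzen conjecture; its transposition to the affine Kashiwara flag scheme requires realizing the algebraic one-parameter deformation geometrically as the nearby cycles of a family of twisted $\calD$-modules, matching the contravariant form with Verdier duality along this deformation, and controlling weights on the infinite-dimensional flag scheme. The hypothesis $\kappa\leqs -3$ enters at this point to ensure that one stays within the regime where Kashiwara--Tanisaki localization is an equivalence and the required purity statements hold. Once the comparison is in place, both sides of \eqref{eq:main} express the same coefficient of a parabolic Kazhdan--Lusztig polynomial and the theorem follows.
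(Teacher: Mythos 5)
Your proposal captures the correct overall architecture of the proof: reduce the Jantzen filtration on Weyl modules to a Jantzen filtration on affine parabolic Verma modules via a deformation-compatible highest weight category equivalence, localize to twisted $\scrD$-modules on the affine flag variety \`a la Kashiwara--Tanisaki, identify the geometric Jantzen filtration with the weight filtration in the spirit of Beilinson--Bernstein, and extract the parabolic Kazhdan--Lusztig polynomials. This is indeed the paper's strategy.

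There are, however, two concrete problems with the details. First, you propose to reduce to the affine side by lifting to $R=\C[[s]]$ ``the equivalence $\ldots$ used in \cite{VV1}'', but the equivalence underlying \cite{VV1} is the Kazhdan--Lusztig tensor equivalence, and it is not clear how to deform it over $\C[[s]]$ in a way that is compatible with standard objects and costandard duals. The paper avoids this by routing the equivalence through the category $\calO$ of the rational double affine Hecke algebra of $\frakS_n$: the deformed Suzuki functor $\frakE_\bfk:\calA_\bfk\to\calB_\bfk$ yields an equivalence with the deformed Cherednik $\calO$, and Rouquier's theorem \cite[Theorem 6.8]{R} gives the equivalence $\calB_\bfk\simeq\calA_\bfv$ over $R$. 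Both steps are compatible with specialization at $\wp$ essentially by Nakayama, which is exactly what makes the Jantzen filtration transport (Proposition \ref{prop:jantiso}) applicable. Without an intermediary of this kind, the claim that the KL equivalence ``lifts to the deformed categories and respects standards and contravariant forms'' is precisely the missing content.

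Second, your diagnosis of where $\kappa\leqs -3$ enters is wrong. It is not needed for Kashiwara--Tanisaki localization or for the purity/weight arguments: the whole affine-side computation (Theorem \ref{thm2} in the paper) is proved for every negative integer $\kappa$, and the localization and mixed Hodge module machinery impose no such restriction. The hypothesis $\kappa\leqs -3$ is equivalent to $v\neq -1$ and is used solely in Proposition \ref{prop:equiv}, to invoke Rouquier's theorem on the equivalence between the (deformed) Cherednik category $\calO$ and $\calA_\bfv$. So the restriction is a feature of the bridge from the $v$-Schur side to the Cherednik side, not of the geometry. A related technical point you omit is the passage from regular to singular antidominant highest weights, which the paper handles with geometrically constructed translation functors (Appendix \ref{s:appendix}); the bare Kashiwara--Tanisaki theorem covers the regular case only.
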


Let us outline the idea of the proof. We first show that certain equivalence of highest weight categories preserves the Jantzen filtrations of standard modules (Proposition \ref{prop:jantiso}). By constructing such an equivalence between the module category of the $v$-Schur algebra and a subcategory of the affine parabolic category $\calO$ of negative level, we then transfer the problem of computing the Jantzen filtration of Weyl modules into the same problem for
parabolic Verma modules (Corollary \ref{cor:jantid}). The latter is
solved using Beilinson-Bernstein's technics (Sections
\ref{s:localization}, \ref{s:geojant}, \ref{s:proof}).

\subsection*{Acknowledgement} I deeply thank my advisor Eric Vasserot for many helpful discussions, lots of useful remarks and for his substantial support.

\vskip1cm

\section{Jantzen filtration of standard
modules}\label{s:highestweight}

\subsection{Notation}\label{ss:notation1}

We will denote by $A\modu$ the category of finitely generated modules over an algebra $A$, and by $A\proj$ its subcategory consisting of projective
objects. Let $R$ be a commutative noetherian
$\C$-algebra. By a \emph{finite projective $R$-algebra} we mean a
$R$-algebra that belongs to $R\proj$.

A $R$-category $\calC$ is a category whose $\Hom$ sets are
$R$-modules. All the functors between $R$-categories will be assumed
to be $R$-linear, i.e., they induce morphisms of $R$-modules on the
$\Hom$ sets. Unless otherwise specified, all the functors will be assumed to be covariant. If $\calC$ is abelian, we will write $\calC\proj$ for
the full subcategory consisting of projective objects. If there
exists a finite projective algebra $A$ together with an equivalence
of $R$-categories $F:\calC\cong A\modu$, then we define $\calC\cap
R\proj$ to be the full subcategory of $\calC$ consisting of objects
$M$ such that $F(M)$ belongs to $R\proj$. By Morita theory, the
definition of $\calC\cap R\proj$ is independent of $A$ or $F$.
Further, for any $\C$-algebra homomorphism $R\ra R'$ we will abbreviate $R'\calC$ for the category $(R'\otimes_R A)\modu$. The
definition of $R'\calC$ is independent of the choice of $A$ up to
equivalence of categories.

For any abelian category $\calC$ we will write $[\calC]$ for the Grothendieck group of $\calC$. Any exact functor $F$ from $\calC$ to
another abelian category $\calC'$ yields a group homomorphism
$[\calC]\ra[\calC']$, which we will again denote by $F$.

A $\C$-category $\calC$ is called \emph{artinian} if the $\Hom$ sets
are finite dimensional $\C$-vector spaces and every object has a
finite length. The Jordan-H\"{o}lder multiplicity of a simple object
$L$ in an object $M$ of $\calC$ will be denoted by $[M:L]$.

We abbreviate $\otimes=\otimes_\C$ and $\Hom=\Hom_\C$.

\subsection{Highest weight categories}\label{ss:highestweightcategory}

Let $\calC$ be a $R$-category that is equivalent to the category
$A\modu$ for some finite projective $R$-algebra $A$. Let $\Delta$ be
a finite set of objects of $\calC$ together with a partial order
$<$. Let $\calC^\Delta$ be the full subcategory of $\calC$
consisting of objects which admit a finite filtration such that the
successive quotients are isomorphic to objects in
$$\{D\otimes U\,|\,D\in\Delta,\ U\in R\proj\}.$$
We have the following definition, see \cite[Definition 4.11]{R}.
\begin{df}
The pair $(\calC,\Delta)$ is called a \emph{highest weight $R$-category} if the following conditions hold:
\begin{itemize}
\item[$\bullet$] the objects of $\Delta$ are projective over $R$,

\item[$\bullet$] we have $\End_{\calC}(D)=R$ for all $D\in\Delta$,

\item[$\bullet$] given $D_1$, $D_2\in\Delta$, if $\Hom_\calC(D_1, D_2)\neq 0$,
then $D_1\leqs D_2$,

\item[$\bullet$] if $M\in\calC$ satisfies $\Hom_\calC(D,M)=0$ for all $D\in\Delta$,
then $M=0$,

\item[$\bullet$] given $D\in\Delta$, there exists $P\in\calC\proj$ and a surjective morphism $f: P\ra D$
 such that $\ker f$ belongs to $\calC^\Delta$. Moreover, in the
 filtration of $\ker f$ only $D'\otimes U$ with $D'>D$ appears.
 \end{itemize}
\end{df}

The objects in $\Delta$ are called \emph{standard}. We say that an
object has \emph{a standard filtration} if it belongs to $\calC^\Delta$.
There is another set $\nabla$ of objets in $\calC$, called
\emph{costandard} objects, given by the following proposition.

\begin{prop}\label{prop:costandard}
Let $(\calC,\Delta)$ be a highest weight $R$-category. Then there is
a set $\nabla=\{D\spcheck\,|\, D\in\Delta\}$ of objects of $\calC$,
unique up to isomorphism, with the following properties:
\begin{itemize}
\item[(a)] the pair $(\calC^{\op},\nabla)$ is a highest
weight $R$-category, where $\nabla$ is equipped with the same
partial order as $\Delta$,
\item[(b)] for $D_1$, $D_2\in\Delta$ we have
$\Ext^i_\calC(D_1,D_2\spcheck)\cong
\begin{cases} R &
\text{ if }i=0\text{ and } D_1=D_2\\ 0 &\text{ else. }
\end{cases}$
\end{itemize}
\end{prop}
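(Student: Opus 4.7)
The plan is to realise $\calC$ as $A\modu$ for a finite projective $R$-algebra $A$ (which exists by the hypothesis on $\calC$) and to build $D^\vee$ via the $R$-linear duality between $A$ and $A^{\op}$. Concretely, I would use the anti-equivalence $\mathbb{D} := \Hom_R(-,R)$ between the full subcategories of $R$-projective objects of $A\modu$ and $A^{\op}\modu$. Once I equip $A^{\op}\modu$ with a highest weight structure whose standards are obtained by dualising the defining projective presentations of the $D\in\Delta$, I will set $D^\vee$ to be the $\mathbb{D}$-dual of the corresponding standard for $A^{\op}$.

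First I would promote $A^{\op}\modu$ to a highest weight $R$-category with the same poset $(\Delta,<)$. For each $D\in\Delta$ the last axiom yields an exact sequence $0\to K\to P\to D\to 0$ with $P$ projective and $K\in\calC^\Delta$ filtered only by $D'\otimes U$ with $D'>D$. Since all three terms lie in the $R$-projective subcategory, applying $\mathbb{D}$ produces an exact sequence in $A^{\op}\modu$ whose cokernel I take as the standard object $\Delta^{\op}(D)$. A routine check, using that $\mathbb{D}$ interchanges projectives and preserves $R$-projectivity, verifies the five axioms for $(A^{\op}\modu,\{\Delta^{\op}(D)\})$; the key point is that the dualised presentation exhibits $\Delta^{\op}(D)$ as a quotient of a projective whose kernel is filtered by $\Delta^{\op}(D')\otimes U$ with $D'>D$, which is exactly the last axiom.

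Set $\nabla := \{D^\vee := \mathbb{D}(\Delta^{\op}(D)) \mid D\in\Delta\}$. Property (a) is immediate: under $\calC^{\op} \simeq A^{\op}\modu$ induced by $\mathbb{D}$, the set $\nabla$ corresponds to the standards of $A^{\op}\modu$ constructed above, so $(\calC^{\op},\nabla)$ is highest weight. For property (b), I would argue by dimension shifting. The base case $\Hom_\calC(D_1,D_2^\vee)$ is computed from the filtration of $D_2^\vee$ dual to that of $\Delta^{\op}(D_2)$: its sections are dual to $D'\otimes U$ with $D'\leqs D_2$, and the third axiom applied in $A^{\op}\modu$ forces $\Hom_\calC(D_1,D_2^\vee)=0$ unless $D_1\leqs D_2$, while the symmetric argument in $A\modu$ forces the opposite inequality, yielding equality of indices. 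The case $D_1=D_2$ gives $R$ by construction. For $i\geqs 1$, resolve $D_1$ one step by a projective cover (with kernel in $\calC^\Delta$ with strictly higher indices) and induct on the poset; the induction base is the Ext-vanishing of projectives against any module and the induction step uses $\Ext^i_\calC(D'\otimes U, D_2^\vee) = \Ext^i_\calC(D', D_2^\vee)\otimes_R U$ since $U\in R\proj$.

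For uniqueness, suppose $N\in\calC$ also satisfies the Ext condition for the index $D_2$. The generator of $\Hom_\calC(D_2,N)=R$ gives a morphism $D_2\to N$; using the vanishing of $\Ext^1_\calC(D_1,N)$ for all $D_1$ and the explicit projective presentations of the $D_1\in\Delta$, I would inductively build, along the poset, a morphism $D_2^\vee \to N$ and show it is an isomorphism by comparing composition multiplicities extracted from the Ext data. The main obstacle is precisely this last step: translating the Ext-orthogonality into an actual isomorphism requires carefully importing the filtration of $D_2^\vee$ (and a matching one on $N$) from the $A^{\op}$ side, and verifying that the inductive lifts along the poset can be performed without losing $R$-projectivity at any stage.
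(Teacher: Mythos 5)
The route through $A^{\op}$ and the $R$-linear duality $\bbD=\Hom_R(-,R)$ is indeed the standard one, and the paper simply defers the whole statement to Rouquier \cite[Proposition 4.19]{R}. However, there is a genuine and fatal gap at the very start: your construction of the standard objects $\Delta^{\op}(D)$ for $A^{\op}\modu$ by "dualising the defining projective presentation" does not produce standard modules at all. Applying $\bbD$ to the exact sequence $0\to K\to P\to D\to 0$ yields $0\to \bbD D\to \bbD P\to \bbD K\to 0$, so the cokernel you take as $\Delta^{\op}(D)$ is $\bbD K$. This object is filtered by $\bbD(D'\otimes U)\cong \bbD D'\otimes_R U^\ast$ with $D'>D$, not presented as a quotient of a projective with kernel filtered by strictly higher indices, and it \emph{vanishes} whenever $K=0$ --- that is, whenever $D$ is already projective (e.g.\ for all maximal $D\in\Delta$). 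The kernel $\bbD D$ of the dual surjection $\bbD P\twoheadrightarrow \bbD K$ is not filtered by $\Delta^{\op}(D')\otimes U$ with $D'>D$ either; in fact $\bbD D$ is precisely the would-be \emph{costandard} $\nabla^{\op}(D)$ for $A^{\op}$, not part of a $\Delta^{\op}$-presentation. So the later "routine check" of the highest-weight axioms for $(A^{\op}\modu,\{\Delta^{\op}(D)\})$ cannot go through.

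What is missing is exactly the nontrivial content of Rouquier's result: one needs an \emph{independent} construction of the standard objects for $A^{\op}$, and a proof that the highest-weight property is stable under passing to the opposite algebra. This is usually done via the heredity-chain characterization (a chain of idempotent ideals of $A$ is a heredity chain for $A$ if and only if its image in $A^{\op}$ is one for $A^{\op}$, and $\Delta^{\op}(\lambda)$ is read off from the corresponding heredity ideal of $A^{\op}$), or by an intrinsic characterization of costandards as relative injective objects in truncated subcategories; neither is obtained by merely dualising a $\Delta$-presentation of $D$. Once $\Delta^{\op}(D)$ is correctly identified, defining $D\spcheck:=\bbD(\Delta^{\op}(D))$ and proving (a), (b), and uniqueness proceeds essentially as you sketch, but the present argument does not reach that stage. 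Your flagged worry about the uniqueness step is secondary; the issue lies earlier, at the construction of $\Delta^{\op}(D)$.
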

See \cite[Proposition 4.19]{R}.

\subsection{Base change for highest weight categories.}\label{ss:s}

From now on, unless otherwise specified we will fix $R=\C[[s]]$, the
ring of formal power series in the variable $s$. Let $\wp$ be its
maximal ideal and let $K$ be its fraction field. For any $R$-module
$M$, any morphism $f$ of $R$-modules and any $i\in\N$ we will write
\begin{eqnarray*}
  &M(\wp^i)=M\otimes_R(R/\wp^i R),\quad &M_K=M\otimes_RK,\\
  &f(\wp^i)=f\otimes_R(R/\wp^i R),\quad &f_K=f\otimes_RK.
\end{eqnarray*}
We will abbreviate
$$\calC(\wp)=R(\wp)\calC,\quad\calC_K=K\calC.$$
Let us first recall the following basic facts.
\begin{lemma}\label{lem:useful} Let $A$ be a finite projective
$R$-algebra. Let $P\in A\modu$.

(a) The $A$-module $P$ is projective if and only if $P$ is a
projective $R$-module and $P(\wp)$ belongs to $A(\wp)\proj$.

(b) If $P$ belongs to $A\proj$, then we have a canonical
isomorphism
$$\Hom_{A}(P,M)(\wp)\simra\Hom_{A(\wp)}(P(\wp),M(\wp)),\quad\forall\ M\in A\modu.$$
Further, if $M$ belongs to $R\proj$ then $\Hom_A(P,M)$
also belongs to $R\proj$.
\end{lemma}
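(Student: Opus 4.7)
The plan is to deduce both parts from two facts coming from $R=\C[[s]]$ being complete local and $A$ being a finite $R$-algebra: (i) $A$ is $\wp$-adically complete and $\wp A$ is contained in the Jacobson radical of $A$, so idempotents lift from $A(\wp)$ to $A$ (and similarly for $M_n(A)$); (ii) any $R$-projective $A$-module is automatically $R$-flat, so tensoring with $R/\wp$ preserves short exact sequences ending in it.

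For part (a), the ``only if'' direction is immediate: a projective $A$-module $P$ is a direct summand of some $A^n$, hence $R$-projective (since $A$ is) and $P(\wp)$ is a direct summand of $A(\wp)^n$. For the converse I would proceed in three steps. First, pick a split surjection $A(\wp)^n\to P(\wp)$ (it exists because $P(\wp)$ is $A(\wp)$-projective); this yields an idempotent $\bar e\in M_n(A(\wp))$ whose image is $P(\wp)$. Using (i), lift $\bar e$ to an idempotent $e\in M_n(A)$ and set $Q = e\cdot A^n$, which is an $A$-projective summand of $A^n$ satisfying $Q(\wp)\cong P(\wp)$. Second, since $Q$ is projective, lift the mod-$\wp$ isomorphism to an $A$-linear map $f\colon Q\to P$; Nakayama applied to $\Coker f$ (finitely generated over the local ring $R$) shows $f$ is surjective. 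Third, by $R$-flatness of $P$, the sequence $0\to\ker f\to Q\to P\to 0$ remains exact modulo $\wp$, so $(\ker f)(\wp)=0$, and a second application of Nakayama to $\ker f$ (finitely generated because $A$ is $R$-Noetherian) gives $\ker f=0$, whence $P\cong Q$ is projective.

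For part (b), both statements are additive in $P$ and obvious when $P=A$: the identity $\Hom_A(A,M)=M$ gives $\Hom_A(A,M)(\wp)=M(\wp)=\Hom_{A(\wp)}(A(\wp),M(\wp))$, and $M\in R\proj$ trivially implies $\Hom_A(A,M)\in R\proj$. These statements therefore extend to $A^n$ by direct sum, and then to any direct summand, covering all of $A\proj$.

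The one genuinely technical ingredient --- and the main obstacle I expect --- is the idempotent-lifting step in part (a), for which one must verify that $\wp A$ lies inside the Jacobson radical of $A$. I would prove this by noting that $A$, being finitely generated over the $\wp$-adically complete ring $R$, is itself $\wp$-adically complete; then for any $x\in\wp A$ the series $\sum_{n\geqs 0}(-x)^n$ converges to an inverse of $1+x$, so every element of $\wp A$ is quasi-regular and lies in the Jacobson radical. Everything else in the argument is standard Nakayama and projective-module bookkeeping.
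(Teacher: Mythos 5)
The paper states this lemma without proof (``Let us first recall the following basic facts''), so there is no argument in the text to compare against; I can only assess your proof on its own terms. Your proof is correct.

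A few remarks on the details. The completeness argument in your final paragraph is right: $A$ and $M_n(A)$ are finitely generated over the complete Noetherian local ring $R=\C[[s]]$, hence $\wp$-adically complete, so every element of $\wp M_n(A)$ is quasi-regular and the two-sided ideal $\wp M_n(A)$ lies in the Jacobson radical; idempotents then lift by the usual Newton iteration using completeness. The use of $R$-flatness of $P$ (equivalent to $R$-projectivity here, since $P$ is finitely generated over the finite $R$-algebra $A$, hence over $R$) to keep the sequence $0\to\ker f\to Q\to P\to 0$ exact after $\otimes_R R/\wp$ is exactly the right way to conclude $(\ker f)(\wp)=0$; and Noetherianity of $A$ (finite over Noetherian $R$) gives finite generation of $\ker f$ as you need for Nakayama. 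Part (b) is indeed a routine reduction to $P=A$ by additivity and naturality of the base-change morphism $\Hom_A(P,M)\otimes_R R/\wp\to\Hom_{A(\wp)}(P(\wp),M(\wp))$, and closure of $R\proj$ under direct summands handles the second assertion.

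If you want to avoid idempotent lifting in part (a), here is an alternative for the ``if'' direction that uses only projectivity of free modules and Nakayama: take any surjection $\pi\colon A^n\to P$ with kernel $K$. Since $P$ is $R$-flat, $0\to K(\wp)\to A(\wp)^n\to P(\wp)\to 0$ is exact, and it splits because $P(\wp)$ is $A(\wp)$-projective; let $\bar r\colon A(\wp)^n\to K(\wp)$ be a retraction. Lift the composite $A^n\to A(\wp)^n\xrightarrow{\bar r}K(\wp)$ through the surjection $K\to K(\wp)$ using projectivity of $A^n$ to get $r\colon A^n\to K$. Then $r|_K\colon K\to K$ reduces to the identity mod $\wp$, hence is surjective by Nakayama, hence an isomorphism (a surjective endomorphism of a Noetherian module is injective). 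So $r$ is a retraction of $K\hookrightarrow A^n$, the sequence $0\to K\to A^n\to P\to 0$ splits, and $P$ is projective. This is essentially equivalent to your argument but sidesteps the lifting of matrix idempotents.
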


We will also need the following theorem of Rouquier \cite[Theorem 4.15]{R}.

\begin{prop}\label{prop:specialization}
Let $\calC$ be a $R$-category that is equivalent to $A\modu$ for
some finite projective $R$-algebra $A$. Let $\Delta$ be a finite
poset of objects of $\calC\cap R\proj$. Then the category
$(\calC,\Delta)$ is a highest weight $R$-category if and only if
$(\calC(\wp),\Delta(\wp))$ is a highest weight $\C$-category.
\end{prop}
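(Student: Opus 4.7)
The plan is to reduce both directions to the base change properties recorded in Lemma \ref{lem:useful}, together with Nakayama's lemma, exploiting the fact that $R=\C[[s]]$ is a complete local ring with residue field $\C$. Throughout I would identify $\calC$ with $A\modu$ and view each $D\in\Delta$ as an $A$-module that is moreover projective over $R$, so that $D(\wp)=D\otimes_R\C$ in $\calC(\wp)$.

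First I would treat the forward direction, verifying that each of the five defining axioms descends modulo $\wp$. Projectivity over $R$ degenerates to triviality over $\C$, so the first axiom is automatic. For the End and Hom-ordering axioms, applying Lemma \ref{lem:useful}(b) to a projective cover $P\twoheadrightarrow D$ in $\calC$ gives natural isomorphisms
\begin{equation*}
\End_\calC(D)(\wp)\simra\End_{\calC(\wp)}(D(\wp)),\qquad\Hom_\calC(D_1,D_2)(\wp)\simra\Hom_{\calC(\wp)}(D_1(\wp),D_2(\wp)),
\end{equation*}
transporting the hypotheses to the closed fiber; the no-annihilator axiom is handled by the same device. For the projective cover axiom, the kernel of $f:P\twoheadrightarrow D$ is $R$-projective (its standard filtration has $R$-projective successive quotients $D'\otimes U$), so tensoring with $\C$ is exact on the filtration and yields a standard filtration of $\ker(f(\wp))$ with the order condition intact.

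The substantive work lies in the backward direction: lifting the highest weight structure from $\calC(\wp)$ to $\calC$. The End and Hom-ordering axioms follow from Nakayama. The $R$-module $\End_\calC(D)$ is finitely generated (indeed $R$-projective, by Lemma \ref{lem:useful}(b) applied to a projective cover of $D$ in $\calC$, which exists since $A$ is semiperfect), and its reduction mod $\wp$ is $\C$; this forces the canonical map $R\to\End_\calC(D)$ to be an isomorphism. Likewise $\Hom_\calC(D_1,D_2)$ is finitely generated and vanishes whenever its reduction does, and any $M\in\calC$ with $M(\wp)=0$ is zero by Nakayama.

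The main obstacle is the last axiom: constructing, for each $D\in\Delta$, a projective cover in $\calC$ whose kernel admits a standard filtration with the prescribed order control $D'>D$. My strategy would be to start with a projective cover $\overline{f}:\overline{P}\twoheadrightarrow D(\wp)$ in the highest weight $\C$-category $\calC(\wp)$, lift $\overline{P}$ to a projective $P\in\calC$ with $P(\wp)=\overline{P}$ via Lemma \ref{lem:useful}(a), and lift $\overline{f}$ to $f:P\to D$ through the surjection $\Hom_\calC(P,D)\twoheadrightarrow\Hom_{\calC(\wp)}(\overline{P},D(\wp))$ given by Lemma \ref{lem:useful}(b), with Nakayama guaranteeing that $f$ remains surjective. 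The standard filtration on $\ker f$ would then be built by induction on the length of the standard filtration of $\ker\overline{f}$: at each step I would lift a morphism from a projective cover of the relevant $D'$ in $\calC$, use base change to certify that its image is $R$-projective with reduction the expected subquotient $D'(\wp)\otimes U$, and quotient out to iterate. The order condition is a statement about the closed fiber and so is preserved automatically. The delicate point throughout is that the $\Hom$ base-change isomorphism of Lemma \ref{lem:useful}(b) requires its first argument to be projective, which is precisely why the whole inductive construction must be phrased through maps out of projective covers lifted from $\calC(\wp)$.
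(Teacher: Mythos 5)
This proposition is not proved in the paper: it is quoted directly from Rouquier as \cite[Theorem 4.15]{R}, so there is no internal argument to compare against. Your axiom-by-axiom base-change strategy is the natural one and, with effort, can be made to work, but as written it has real gaps. The sharpest is your claim that Lemma \ref{lem:useful}(b), ``applied to a projective cover $P\twoheadrightarrow D$'', yields base-change isomorphisms $\Hom_\calC(D_1,D_2)(\wp)\simra\Hom_{\calC(\wp)}(D_1(\wp),D_2(\wp))$. That lemma applies only when the first argument is projective; for the standards $D_i$ this isomorphism is neither an instance of the lemma nor something you ever establish. What one actually does is take the projective $P_1\twoheadrightarrow D_1$ furnished by the last axiom, observe $\Hom_\calC(D_1,D_2)\hookrightarrow\Hom_\calC(P_1,D_2)$ (and likewise on the fiber), and then use the standard filtration of $P_1$ together with the ordering axiom of $\calC$ to show $\Hom(P_1,D_2)=0$ whenever $D_1\not\leq D_2$. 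This verifies the End and Hom-ordering axioms on the fiber without ever needing the unproved base-change isomorphism for $\Hom$ between standards.

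A second gap: in the backward direction the no-annihilator axiom is not addressed. ``Any $M$ with $M(\wp)=0$ is zero by Nakayama'' is true but is not the statement that $\Hom_\calC(D,M)=0$ for all $D$ forces $M=0$. The latter has to be deduced by routing through $\Hom_\calC(P_D,M)$ (which does base change by Lemma \ref{lem:useful}(b)) and then using the standard filtration of $P_D$ to manufacture a nonzero $\Hom_\calC(D',M)$, so it can only be verified after the projective-cover axiom is already in place --- an ordering constraint you do not flag. Finally, your induction for lifting the standard filtration of $\ker\overline f$ is the crux and is too telegraphic: to see that the image of the lifted map $P'^{\oplus d}\to\ker f$ is exactly $D'\otimes U$, rather than something larger, you again need a maximality-plus-Hom-ordering argument, not mere base change. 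For perspective, Rouquier's own proof avoids most of these subtleties by passing to the equivalent characterization in terms of a heredity chain of idempotent ideals of $A$, which commutes with $-\otimes_R\C$ much more transparently; the module-theoretic route you take is viable, but the points above must be filled in before it is a proof.
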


Finally, the costandard objects can also be characterized in the following way.

\begin{lemma}\label{lem:costandsp}
Let $(\calC,\Delta)$ be a highest weight $R$-category. Assume that
$\nabla'=\{{}\spcheck D\,|\,D\in\Delta\}$ is a set of objects of
$\calC\cap R\proj$ such that for any $D\in\Delta$ we have
$$({}\spcheck D) (\wp)\cong
D(\wp)\spcheck,\quad({}\spcheck D)_K\cong D_K.$$ Then we have
${}\spcheck D\cong D\spcheck\in\nabla.$
\end{lemma}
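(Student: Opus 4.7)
The plan is to show that ${}\spcheck D$ satisfies the same Ext-characterization as $D\spcheck$ given in Proposition \ref{prop:costandard}(b), and then invoke the uniqueness built into the Rouquier formalism for highest weight categories.

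Concretely, for $D_1, D_2 \in \Delta$ I would fix a projective resolution $P_\bullet \twoheadrightarrow D_1$ in $\calC$ and consider the cochain complex $C^\bullet := \Hom_\calC(P_\bullet, {}\spcheck D_2)$, whose cohomology computes $\Ext^*_\calC(D_1, {}\spcheck D_2)$. Since ${}\spcheck D_2 \in \calC \cap R\proj$, Lemma \ref{lem:useful}(b) forces each $C^i$ to be a finite free $R$-module and makes the formation of $C^\bullet$ compatible with the specializations $R \to R/\wp$ and $R \to K$; the $R$-projectivity of $D_1$ further ensures that $P_\bullet(\wp) \twoheadrightarrow D_1(\wp)$ remains a projective resolution in $\calC(\wp)$, and similarly over $K$. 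Combining this with the hypothesis $({}\spcheck D_2)(\wp) \cong D_2(\wp)\spcheck$ and Proposition \ref{prop:costandard}(b) applied to the highest weight $\C$-category $\calC(\wp)$ (which is highest weight by Proposition \ref{prop:specialization}) yields
\[
H^i(C^\bullet(\wp)) \;=\; \begin{cases} \C & \text{if } i = 0 \text{ and } D_1 = D_2, \\ 0 & \text{otherwise.} \end{cases}
\]
The hypothesis $({}\spcheck D_2)_K \cong D_{2,K}$, together with the generic semisimplicity of $\calC_K$ implicit in this setup (which gives $D_{2,K} \cong D_{2,K}\spcheck$), produces the analogous pattern over $K$.

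To descend these two fiberwise computations back to the $R$-module $\Ext^i_\calC(D_1, {}\spcheck D_2)$, I would apply the universal coefficient theorem to the complex $C^\bullet$ of free modules over the DVR $R = \C[[s]]$, giving for each $i$ a short exact sequence
\[
0 \to H^i(C^\bullet) \otimes_R R/\wp \to H^i(C^\bullet(\wp)) \to \operatorname{Tor}_1^R\bigl(H^{i+1}(C^\bullet), R/\wp\bigr) \to 0.
\]
Descending induction on $i$, combined with Nakayama's lemma and the classification of finitely generated modules over $R$, then pins down $\Ext^i_\calC(D_1, {}\spcheck D_2)$ as $R$ when $i = 0$ and $D_1 = D_2$, and $0$ otherwise. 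As $D\spcheck$ satisfies the same identification by Proposition \ref{prop:costandard}(b), the uniqueness built into the highest weight framework — realizing $D\spcheck$ as the standard of the opposite highest weight category $(\calC^{\op}, \nabla)$ from Proposition \ref{prop:costandard}(a) — forces ${}\spcheck D \cong D\spcheck$.

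The main technical obstacle will be pinning down $\Ext^i_\calC(D_1, {}\spcheck D_2)$ as an $R$-module: a single specialization only determines it up to torsion, so one genuinely needs to combine both the $\wp$-fiber and the generic fiber via the DVR structure of $R$. A secondary subtlety is making the uniqueness of the costandard object precise, since Proposition \ref{prop:costandard}(b) lists only Ext-properties; in practice I would either use the opposite highest weight structure from part (a) to identify ${}\spcheck D$ directly as the standard of $(\calC^{\op}, \nabla)$ labelled by $D$, or construct an explicit morphism ${}\spcheck D \to D\spcheck$ lifting the given isomorphism $({}\spcheck D)(\wp) \cong D(\wp)\spcheck$ and argue via Nakayama that it is already an isomorphism over $R$.
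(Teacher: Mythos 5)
Your overall strategy—verifying the characterizing properties of Proposition \ref{prop:costandard} via a projective resolution, base change, and the universal coefficient theorem—matches the paper's proof. However, one step invokes an assumption that is not a hypothesis of the lemma: you appeal to ``generic semisimplicity of $\calC_K$'' to deduce $D_{2,K}\cong D_{2,K}\spcheck$ and hence the full $\Ext$ pattern over $K$. Lemma \ref{lem:costandsp} does not assume $\calC_K$ semisimple, and the justification (``implicit in this setup'') is circular in spirit, since whether $({}\spcheck D)_K$ is the costandard $D_K\spcheck$ is essentially what the lemma wants to settle from weaker hypotheses. The paper's proof avoids any $K$-fiber $\Ext$ computation in degree $>0$: it only uses the injection $\Ext^i_\calC(D_1,{}\spcheck D_2)(\wp)\hookrightarrow\Ext^i_{\calC(\wp)}(D_1(\wp),{}\spcheck D_2(\wp))$ (half of the UCT sequence) together with Nakayama's lemma to kill each $\Ext^i$ in one stroke, and it consults the $K$-fiber exactly once, to compute $\Hom_\calC(D,{}\spcheck D)\otimes_R K=\End_{\calC_K}(D_K)=K$, which needs only $({}\spcheck D)_K\cong D_K$ and $\End_\calC(D)=R$, no semisimplicity. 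Once you drop the unneeded $K$-fiber vanishing, your descending induction also becomes superfluous: Nakayama applies in each degree independently.

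Two smaller points. First, you defer the verification of property (a) of Proposition \ref{prop:costandard}—that $(\calC^{\op},\nabla')$ is again a highest weight $R$-category—to a future-tense aside, but this is needed for the uniqueness you invoke; the paper handles it in one line via Proposition \ref{prop:specialization}, using that $\nabla'(\wp)$ is the set of costandards of $\calC(\wp)$. Second, your alternative route, lifting the isomorphism $({}\spcheck D)(\wp)\cong D(\wp)\spcheck$ to a morphism over $R$ and applying Nakayama, is not straightforward as stated: since $D$ is not projective, the natural map $\Hom_\calC(D,D\spcheck)(\wp)\to\Hom_{\calC(\wp)}(D(\wp),D(\wp)\spcheck)$ need not be surjective (Lemma \ref{lem:useful}(b) only covers projective first arguments), so one cannot simply lift; that is precisely why the paper passes through the full $\Ext$ characterization.
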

\begin{proof}
We prove the lemma by showing that $\nabla'$ has the properties
(a), (b) in Proposition \ref{prop:costandard} with ${}\spcheck D$
playing the role of $D\spcheck$. This will imply that ${}\spcheck
D\cong D\spcheck\in\nabla.$ To check (a) note that $\nabla'(\wp)$
is the set of costandard modules of $\calC(\wp)$ by assumption. So $(\calC(\wp)^{\op},\nabla'(\wp))$ is a highest weight $\C$-category.
Therefore $(\calC^{\op},\nabla')$ is a highest weight $R$-category by Proposition \ref{prop:specialization}. Now, let us concentrate on
(b). Given $D_1$, $D_2\in\Delta$, let $P_\bullet=0\ra
P_n\ra\cdots\ra P_0$ be a projective resolution of $D_1$ in $\calC$.
Then $\Ext^i_\calC(D_1,{}\spcheck D_2)$ is the cohomology of the
complex
$$C_\bullet=\Hom_{\calC}(P_\bullet,{}\spcheck D_2).$$
Since $D_1$ and all the $P_i$ belong to $R\proj$ and $R$ is a discrete valuation ring, by the Universal Coefficient Theorem the complex
$$P_\bullet(\wp)=0\ra P_n(\wp)\ra\cdots\ra P_0(\wp)$$ is a
resolution of $D_1(\wp)$ in $\calC(\wp)$. Further, each $P_i(\wp)$ is a
projective object in $\calC(\wp)$ by Lemma
\ref{lem:useful}(a). So $\Ext^i_{\calC(\wp)}(D_1(\wp),{}\spcheck
D_2(\wp))$ is given by the cohomology of the complex
$$C_\bullet(\wp)=\Hom_{\calC(\wp)}(P_\bullet(\wp),{}\spcheck D_2(\wp)).$$
Again, by the Universal Coefficient
Theorem, the canonical map
$$H_i(C_\bullet)(\wp)\lra
H_i(C(\wp)_\bullet)$$ is injective. In other words we have a
canonical injective map
\begin{equation}\label{eq:uct}
\Ext^i_\calC(D_1,{}\spcheck
D_2)(\wp)\lra\Ext^i_{\calC(\wp)}(D_1(\wp),{}\spcheck D_2(\wp)).
\end{equation}
Note that each $R$-module $C_i$ is finitely generated. Therefore
$\Ext^i_\calC(D_1,{}\spcheck D_2)$ is also finitely generated over
$R$. Note that if $i>0$, or $i=0$ and $D_1\neq D_2$, then the right hand
side of (\ref{eq:uct}) is zero by assumption. So
$\Ext^i_\calC(D_1,{}\spcheck D_2)(\wp)=0$, and hence
$\Ext^i_\calC(D_1,{}\spcheck D_2)=0$ by Nakayama's lemma. Now, let
us concentrate on the $R$-module $\Hom_\calC(D,{}\spcheck D)$ for
$D\in\Delta$. First, we have
\begin{eqnarray}
  \Hom_\calC(D,{}\spcheck D)\otimes_RK&=&
  \Hom_{\calC_K}(D_K,({}\spcheck D)_K)\nonumber\\
  &=&\End_{\calC_K}(D_K)\nonumber\\
  &=&\End_{\calC}(D)\otimes_RK\nonumber\\
  &=&K.\label{eq:overK}
\end{eqnarray}
Here the second equality is given by the isomorphism
$D_K\cong({}\spcheck D)_K$ and the last equality follows from
$\End_{\calC}(D)=R$. Next, note that $\Hom_\calC(D,{}\spcheck
D)(\wp)$ is included into the vector space
$\Hom_{\calC(\wp)}(D(\wp),{}\spcheck D(\wp))=\C$ by (\ref{eq:uct}).
So its dimension over $\C$ is less than one. Together with
(\ref{eq:overK}) this yields an isomorphism of $R$-modules
$\Hom_\calC(D,{}\spcheck D)\cong R$, because $R$ is a discrete
valuation ring. So we have verified that $\nabla'$ satisfies both property (a) and (b) in Proposition \ref{prop:costandard}. Therefore it coincides with $\nabla$ and ${}\spcheck D$ is isomorphic to $D\spcheck$.
\end{proof}

\subsection{The Jantzen filtration of standard modules}\label{ss:jantzen}

Let $(\calC_\C,\Delta_\C)$ be a highest weight $\C$-category and
$(\calC,\Delta)$ be a highest weight $R$-category such that
$(\calC_\C,\Delta_\C)\cong(\calC(\wp),\Delta(\wp))$. Then any
standard module in $\Delta_\C$ admits a Jantzen type filtration
associated with $(\calC,\Delta)$. It is given as follows.

\begin{df}\label{df:jantzen}
For any $D\in\Delta$ let $\phi:D\ra D\spcheck$ be a morphism in
$\calC$ such that $\phi(\wp)\neq 0$. For any positive integer $i$
let
\begin{equation}\label{eq:pii}
\pi_i:D\spcheck\lra D\spcheck/\wp^iD\spcheck
\end{equation}
be the canonical quotient map. Set
\begin{equation*}
D^i=\ker(\pi_i\circ \phi)\subset D,\quad J^iD(\wp)=(D^i+\wp D)/\wp D.
\end{equation*}
The \emph{Jantzen filtration} of $D(\wp)$ is the filtration
\begin{equation*}
D(\wp)=J^0D(\wp)\supset J^1D(\wp)\supset\cdots.
\end{equation*}
\end{df}

To see that the Jantzen filtration is well defined, one notices
first that the morphism $\phi$ always exists because
$\Hom_{\calC}(D,D\spcheck)(\wp)\cong R(\wp)$. Further, the
filtration is independent of the choice of $\phi$. Because if
$\phi':D\ra D\spcheck$ is another morphism such that $\phi'(\wp)\neq
0$, the fact that $\Hom_{\calC}(D,D\spcheck)\cong R$ and
$\phi(\wp)\neq 0$ implies that there exists an element $a$ in $R$
such that $\phi'=a\phi$. Moreover $\phi'(\wp)\neq 0$ implies that
$a$ is invertible in $R$. So $\phi$ and $\phi'$ define the same
filtration.

\begin{rk}\label{rk:jantzen}
If the category $\calC_K$ is semi-simple, then the Jantzen
filtration of any standard module $D(\wp)$ is finite. In fact, since
$\End_{\calC}(D)=R$ we have $\End_{\calC_K}(D_K)=K$. Therefore $D_K$
is an indecomposable object in $\calC_K$. So the semi-simplicity of $\calC_K$ implies that $D_K$ is simple. Similarly
$D\spcheck_K$ is also simple. So the morphism $\phi_K:D_K\ra
D\spcheck_K$ is an isomorphism. In particular $\phi$ is injective.
Now, consider the intersection
$$\bigcap_iJ^iD(\wp)=\bigcap_i(D^i+\wp D)/\wp D.$$
Since we have $D^i\supset D^{i+1}$, the intersection on the right
hand side is equal to $((\bigcap_iD^i)+\wp D)/\wp D$. The
injectivity of $\phi$ implies that $\bigcap_iD^i=\ker\phi$ is zero.
Hence $\bigcap_iJ^iD(\wp)=0$. Since $D(\wp)\in\calC(\wp)$ has a
finite length, we have $J^iD(\wp)=0$ for $i$ large enough.

\end{rk}

\subsection{Equivalences of highest weight categories and Jantzen filtrations.}\label{ss:equivjantzen}

Let $(\calC_1,\Delta_1)$, $(\calC_2,\Delta_2)$ be highest weight
$R$-categories (resp. $\C$-categories or $K$-categories). A functor
$F:\calC_1\ra\calC_2$ is an \emph{equivalence of highest weight
categories} if it is an equivalence of categories and if for any
$D_1\in\Delta_1$ there exists $D_2\in\Delta_2$ such that
$F(D_1)\cong D_2$. Note that for such an equivalence $F$ we also
have
\begin{equation}\label{eq:Fdual}
F(D_1\spcheck)\cong D_2\spcheck,
\end{equation}
because the two properties in Proposition \ref{prop:costandard}
which characterize the costandard objects are preserved by $F$.

Let $F:\calC_1\ra\calC_2$ be an exact functor. Since $\calC_1$ is
equivalent to $A\modu$ for some finite projective $R$-algebra $A$,
the functor $F$ is represented by a projective object $P$ in
$\calC_1$, i.e., we have $F\cong\Hom_{\calC_1}(P,-)$. Set
$$F(\wp)=\Hom_{\calC_1(\wp)}(P(\wp),-):
\calC_1(\wp)\ra\calC_2(\wp).$$ Note that the functor $F(\wp)$ is unique up to equivalence of categories. It is an exact functor, and it is isomorphic to the functor
$\Hom_{\calC_1}(P,-)(\wp)$, see Lemma \ref{lem:useful}. In particular, for $D\in\Delta_1$ there
are canonical isomorphisms
\begin{equation}\label{eq:Fstand}
F(D)(\wp)\cong F(\wp)(D(\wp)),\quad F(D\spcheck)(\wp)\cong
F(\wp)(D\spcheck(\wp)).
\end{equation}

\begin{prop}\label{prop:jantiso}
Let $(\calC_1,\Delta_1)$, $(\calC_2,\Delta_2)$ be two equivalent
highest weight $R$-categories. Fix an equivalence $F: \calC_1\ra
\calC_2$. Then the following holds.

(a) The functor $F(\wp)$ is an equivalence of highest weight
categories.

(b) The functor $F(\wp)$ preserves the Jantzen filtration of
standard modules, i.e., for any $D_1\in\Delta_1$ let
$D_2=F(D_1)\in\Delta_2$, then
\begin{equation*}
F(\wp)(J^iD_1(\wp))=J^iD_2(\wp),\quad \forall\ i\in\N.
\end{equation*}
\end{prop}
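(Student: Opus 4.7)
The plan is to treat (a) first, and then to reduce (b) to a bookkeeping verification about how $F$ interacts with the construction in Definition \ref{df:jantzen}. For (a), since $F$ is an equivalence of $R$-linear categories and $\calC_1$ is equivalent to $A_1\modu$ for some finite projective $R$-algebra $A_1$, the projective object $P\in\calC_1$ representing $F$ is a progenerator. By Lemma \ref{lem:useful}(a), $P(\wp)$ is projective in $\calC_1(\wp)$, and Nakayama's lemma shows that it remains a progenerator; hence $F(\wp)\cong\Hom_{\calC_1(\wp)}(P(\wp),-)$ is an equivalence. That standard objects are sent to standard objects is immediate from (\ref{eq:Fstand}): if $F(D_1)\cong D_2\in\Delta_2$, then $F(\wp)(D_1(\wp))\cong F(D_1)(\wp)\cong D_2(\wp)$.

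For (b), fix $D_1\in\Delta_1$, set $D_2=F(D_1)\in\Delta_2$, and pick a morphism $\phi:D_1\to D_1\spcheck$ with $\phi(\wp)\neq 0$. Applying $F$ and using (\ref{eq:Fdual}) to identify $F(D_1\spcheck)\cong D_2\spcheck$, we obtain $F(\phi):D_2\to D_2\spcheck$. Its reduction corresponds to $F(\wp)(\phi(\wp))$ under (\ref{eq:Fstand}); since $F(\wp)$ is faithful by part (a), this is nonzero, so $F(\phi)$ is a legitimate choice for computing the Jantzen filtration of $D_2(\wp)$, and by the independence of the filtration on that choice we use it throughout. It thus suffices to prove $F(D_1^i)=D_2^i$ as subobjects of $D_2$, where $D_j^i=\ker(\pi_i^{(j)}\circ\phi_j)$ with $\phi_1=\phi$ and $\phi_2=F(\phi)$.

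To establish this equality, note that $F$ is $R$-linear and exact, hence sends $\wp^i\cdot\id_{D_1\spcheck}$ to $\wp^i\cdot\id_{D_2\spcheck}$ and preserves images; so $F(\wp^iD_1\spcheck)=\wp^iD_2\spcheck$ and $F(\pi_i^{(1)})$ is identified with $\pi_i^{(2)}$ under the natural isomorphism $F(D_1\spcheck/\wp^iD_1\spcheck)\cong D_2\spcheck/\wp^iD_2\spcheck$. Exactness of $F$ then yields $F(D_1^i)=\ker(\pi_i^{(2)}\circ\phi_2)=D_2^i$. Reducing modulo $\wp$, and using exactness of $F(\wp)$ together with the canonical isomorphism (\ref{eq:Fstand}), we conclude
\begin{equation*}
F(\wp)(J^iD_1(\wp))=F(\wp)\bigl(\Im(D_1^i(\wp)\to D_1(\wp))\bigr)=\Im(D_2^i(\wp)\to D_2(\wp))=J^iD_2(\wp).
\end{equation*}
The only subtle point is the identification $F(\pi_i^{(1)})=\pi_i^{(2)}$, which amounts to the compatibility of $F$ with multiplication by elements of $\wp^i$ on objects; this is automatic from $F\cong\Hom_{\calC_1}(P,-)$ being $R$-linear.
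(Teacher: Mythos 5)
Your proof is correct and follows essentially the same route as the paper for part (b), which is the substantive part: you choose $\phi$, transport it to $\phi_2=F(\phi)$, use $R$-linearity and exactness of $F$ to identify $F(\pi_{1,i})$ with $\pi_{2,i}$ and hence $F(D_1^i)$ with $D_2^i$, then reduce modulo $\wp$ via (\ref{eq:Fstand}). The only difference is in part (a), where the paper observes directly that a quasi-inverse $G$ induces $G(\wp)$ quasi-inverse to $F(\wp)$, whereas you run the Morita-theoretic argument through a representing progenerator $P$ and Lemma \ref{lem:useful}; both are fine, though the paper's version is a one-liner and sidesteps the (minor, but worth spelling out) point that $\End_{\calC_1(\wp)}(P(\wp))\cong\End_{\calC_1}(P)(\wp)$, which is what actually identifies the target category with $\calC_2(\wp)$.
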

\begin{proof}
(a) If $G: \calC_2\ra \calC_1$ is a quasi-inverse of $F$ then
$G(\wp)$ is a quasi-inverse of $F(\wp)$. So $F(\wp)$ is an
equivalence of categories. It maps a standard object to a standard
one because of the first isomorphism in (\ref{eq:Fstand}).

(b) The functor $F$ yields an isomorphism of $R$-modules
$$\Hom_{\calC_1}(D_1,D_1\spcheck)\simra\Hom_{\calC_2}(F(D_1),F(D_1\spcheck)),$$
where the right hand side identifies with
$\Hom_{\calC_2}(D_2,D_2\spcheck)$ via the isomorphism
(\ref{eq:Fdual}). Let $\phi_1$ be an element in
$\Hom_{\calC_1}(D_1,D_1\spcheck)$ such that $\phi_1(\wp)\neq 0$. Let $$\phi_2=F(\phi_1):D_2\ra D_2\spcheck.$$ Then we also
have $\phi_2(\wp)\neq 0$.

For $a=1,2$ and $i\in\N$ let $\pi_{a,i}:D_a\spcheck\ra
D_a\spcheck(\wp^i)$ be the canonical quotient map. Since $F$ is
$R$-linear and exact, the isomorphism $F(D_1\spcheck)\cong
D_2\spcheck$ maps $F(\wp^iD_1\spcheck)$ to $\wp^iD_2\spcheck$ and
induces an isomorphism
$$F(D_1\spcheck(\wp^i))\cong D_2\spcheck(\wp^i).$$
Under these isomorphisms the morphism $F(\pi_{1,i})$ is identified
with $\pi_{2,i}$. So we have
\begin{eqnarray*}
F(D_1^i)&=& F(\ker(\pi_{1,i}\circ \phi_1))\\
&=&\ker(F(\pi_{1,i})\circ F(\phi_1))\\
&\cong&\ker(\pi_{2,i}\circ \phi_2)\\
&=&D_2^i.
\end{eqnarray*}
Now, apply $F$ to the short exact sequence
\begin{equation}
0\ra \wp D_1\ra D_1^i+\wp D_1\ra J^iD_1(\wp)\ra 0,
\end{equation}
we get
\begin{eqnarray*}
F(J^iD_1(\wp))&\cong& (F(D_1^i)+\wp F(D_1))/\wp F(D_1)\\
&\cong& J^iD_2(\wp).
\end{eqnarray*}
Since $F(J^iD_1(\wp))=F(\wp)(J^iD_1(\wp))$, the proposition is proved.
\end{proof}

\vskip1cm

\section{Affine parabolic category $\calO$ and $v$-Schur algebras}\label{s:affineO}

\subsection{The affine Lie algebra}\label{ss:notation2}

Fix an integer $m>1$. Let $G_0\supset B_0\supset T_0$ be respectively the linear algebraic group $GL_m(\C)$, the Borel subgroup
of upper triangular matrices and the maximal torus of
diagonal matrices. Let $\g_0\supset\frakb_0\supset\frakt_0$ be their Lie
algebras. Let
$$\g=\g_0\otimes\C[t,t^{-1}]\oplus\C\bfone\oplus\C\partial$$
be the affine Lie algebra of $\g_0$. Its Lie bracket is given by
$$[\xi\otimes t^a+x \bfone+y\partial,\xi'\otimes
t^b+x'\bfone+y'\partial]=[\xi,\xi']\otimes
t^{a+b}+a\delta_{a,-b}\tr(\xi\xi')\bfone+by\xi'\otimes t^b-ay'\xi\otimes t^a,$$ where $\tr:\g_0\ra\C$ is the trace
map. Set $\frakt=\frakt_0\oplus\C\bfone\oplus\C\partial.$

For any Lie algebra $\fraka$ over $\C$, let $\calU(\fraka)$ be its
enveloping algebra. For any $\C$-algebra $R$, we will abbreviate
$\fraka_R=\fraka\otimes R$ and $\calU(\fraka_R)=\calU(\fraka)\otimes
R$.

\emph{In the rest of the paper, we will
fix once for all an integer $c$ such that}
\begin{equation}\label{eq:kappa}
  \kappa=c+m\in\Z_{<0}.
\end{equation}
Let $\calU_\kappa$ be the quotient of $\calU(\g)$
by the two-sided ideal generated by $\bfone-c$. The
$\calU_\kappa$-modules are precisely the $\g$-modules of level $c$.

Given a $\C$-linear map $\lam: \frakt\ra R$ and a $\g_R$-module
$M$ we set
\begin{equation}\label{eq:weight}
M_\lam=\{v\in M\,|\, hv=\lam(h)v,\ \forall\ h\in\frakt\}.
\end{equation}
Whenever $M_\lam$ is non zero, we call $\lam$ a \emph{weight} of $M$.

We equip $\frakt^\ast=\Hom_{\C}(\frakt,\C)$ with the basis
$\ep_1,\ldots,\ep_m$, $\omega_0$, $\delta$ such that
$\ep_1,\ldots,\ep_m\in\frakt_0^\ast$ is dual to the canonical basis
of $\frakt_0$,
$$\delta(\partial)=\omega_0(\bfone)=1,\quad
\omega_0(\frakt_0\oplus\C\partial)=\delta(\frakt_0\oplus\C\bfone)=0.$$
Let $\pair{\bullet:\bullet}$ be the symmetric bilinear form on $\frakt^\ast$ such that
$$\pair{\ep_i:\ep_j}=\delta_{ij},\quad \pair{\omega_0:\delta}=1,\quad
\pair{\frakt_0^\ast\oplus\C\delta:\delta}=\pair{\frakt_0^\ast\oplus\C\omega_0:\omega_0}=0.$$
For $h\in\frakt^\ast$ we will write $||h||^2=\pair{h:h}$. The weights of a $\calU_\kappa$-module belong to
\begin{equation*}
  {}_\kappa\!\frakt^\ast=\{\lam\in\frakt^\ast\,|\,\pair{\lam:\delta}=c\}.
\end{equation*}
Let $a$ denote the projection from $\frakt^\ast$ to $\frakt_0^\ast$.
Consider the map
\begin{equation}\label{eq:z}
z:\frakt^\ast\ra\C
\end{equation}
such that $\lam\mapsto z(\lam)\delta$ is the projection
$\frakt^\ast\ra\C\delta$. 

Let $\Pi$ be the root system of $\g$ with simple roots
$\al_i=\ep_i-\ep_{i+1}$ for $1\leqs i\leqs m-1$ and
$\al_0=\delta-\sum_{i=1}^{m-1}\al_i$. The root system $\Pi_0$ of
$\g_0$ is the root subsystem of $\Pi$ generated by
$\al_1,\ldots,\al_{m-1}$. We will write $\Pi^+$, $\Pi_0^+$ for the
sets of positive roots in $\Pi$, $\Pi_0$ respectively. 

The affine Weyl group $\frakS$ is a Coxeter group with simple
reflections $s_i$ for $0\leqs i\leqs m-1$. It is isomorphic to the
semi-direct product of the symmetric group $\frakS_0$ with the
lattice $\Z\Pi_0$. There is a linear action of $\frakS$ on
$\frakt^\ast$ such that $\frakS_0$ fixes $\omega_0$, $\delta$, and
acts on $\frakt_0^\ast$ by permuting $\ep_i$'s, and an element
$\tau\in\Z\Pi_0$ acts by
\begin{equation}\label{eq:weylaction}
\tau(\delta)=\delta,
\quad\tau(\omega_0)=\tau+\omega_0-\pair{\tau:\tau}\delta/2,
\quad
\tau(\lam)=\lam-\pair{\tau:\lam}\delta,\quad \forall\ \lam\in\frakt_0^\ast.
\end{equation}
Let $\rho_0$ be the half sum of
positive roots in $\Pi_0$ and $\rho=\rho_0+m\omega_0$. The dot action
of $\frakS$ on $\frakt^\ast$ is given by
$w\cdot\lam=w(\lam+\rho)-\rho$. For $\lam\in\frakt^\ast$ we will
denote by $\frakS(\lam)$ the stabilizer of $\lam$ in $\frakS$ under
the dot action. Let $l:\frakS\ra\N$ be the length function.

\subsection{The parabolic Verma modules and their deformations}\label{ss:paraverma}

The subset $\Pi_0$ of $\Pi$ defines a standard parabolic Lie
subalgebra of $\g$, which is given by
$$\frakq=\g_0\otimes\C[t]\oplus\C\bfone\oplus\C\partial.$$ It has a
Levi subalgebra $$\frakl=\g_0\oplus \C\bfone\oplus\C\partial.$$ The
parabolic Verma modules of $\calU_\kappa$ associated with $\frakq$
are given as follows. Let $\lam$ be an element in
$$\Lam^+=\{\lam\in{}_\kappa\!\frakt^\ast\,|\, \pair{\lam:\al}\in\N,\ \forall\ \al\in\Pi_0^+\}.$$ Then there is a unique finite dimensional
simple $\g_0$-module $V(\lam)$ of highest weight $a(\lam)$. It can
be regard as a $\frakl$-module by letting
$h\in\C\bfone\oplus\C\partial$ act by the scalar $\lam(h)$. It is
further a $\frakq$-module if we let the nilpotent radical of
$\frakq$ act trivially. The \emph{parabolic Verma module} of highest
weight $\lam$ is given by
$$M_\kappa(\lam)=\calU(\g)\otimes_{\calU(\frakq)}V(\lam).$$
It has a unique simple quotient, which we denote by
$L_\kappa(\lam)$.

Recall that $R=\C[[s]]$ and $\wp$ is its maximal ideal. Set
$$\bfc=c+s\qquad\textrm{and}\qquad\bfk=\kappa+s.$$ They are elements in $R$. Write
$\calU_\bfk$ for the quotient of $\calU(\g_R)$ by the two-sided ideal
generated by $\bfone-\bfc$. So if $M$ is a $\calU_\bfk$-module, then
$M(\wp)$ is a $\calU_\kappa$-module. Now, note that $R$ admits a
$\frakq_R$-action such that $\g_0\otimes\C[t]$ acts
trivially and $\frakt$ acts by the weight $s\omega_0$. Denote this $\frakq_R$-module by
$R_{s\omega_0}$. For $\lam\in\Lam^+$ the \emph{deformed parabolic
Verma module} $M_\bfk(\lam)$ is the $\g_R$-module induced from the
$\frakq_R$-module $V(\lam)\otimes R_{s\omega_0}$. It is a
$\calU_\bfk$-module of highest weight $\lam+s\omega_0$, and we have a
canonical isomorphism
$$M_\bfk(\lam)(\wp)\cong M_\kappa(\lam).$$
We will abbreviate $\lam_s=\lam+s\omega_0$ and will write
$${}_\bfk\!\frakt^\ast=\{\lam_s\,|\,\lam\in{}_\kappa\!\frakt^\ast\}.$$
\begin{lemma}\label{lem:MKsimple}
The $\g_K$-module $M_\bfk(\lam)_K=M_\bfk(\lam)\otimes_RK$ is simple.
\end{lemma}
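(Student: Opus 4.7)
The strategy is to show that $M_\bfk(\lam)_K$ contains no singular vectors besides the scalar multiples of its canonical generator $v_{\lam_s}$; since any nonzero submodule of a highest weight module must contain such a singular vector, this will establish simplicity.

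First I would use the PBW decomposition to write $M_\bfk(\lam) \cong \calU(\frakn^-) \otimes_\C V(\lam) \otimes_\C R$ as an $R$-module, where $\frakn^- = \g_0 \otimes t^{-1}\C[t^{-1}]$ is the nilpotent radical of the parabolic opposite to $\frakq$. After base change to $K$, the weights of $M_\bfk(\lam)_K$ are of the form $\lam_s - \eta$, where $\eta$ is a non-negative integer combination of positive affine roots lying outside $\Pi_0^+$, possibly shifted by weights of the finite-dimensional $\frakl$-module $V(\lam)$. A nontrivial singular vector would therefore sit at such a weight strictly below $\lam_s$, and by the Kac--Kazhdan criterion (in its parabolic form, using the Shapovalov determinant formula for affine Lie algebras together with the fact that the $\Pi_0^+$-directions are already exhausted by $V(\lam)$), it can only occur if
\begin{equation*}
2\pair{\lam_s + \rho : \beta} = k\,\pair{\beta:\beta}
\end{equation*}
for some positive affine root $\beta \notin \Pi_0^+$ and some positive integer $k$.

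The key computation is then to rule out this identity over $K$. Write $\beta = \alpha + n\delta$ with $n > 0$ (the real roots not in $\Pi_0^+$); then $\pair{\beta:\beta}=2$ and
\begin{equation*}
\pair{\lam_s + \rho : \beta} = \pair{\lam + \rho : \alpha} + n\kappa + n s = \pair{\lam+\rho:\alpha} + n\bfk,
\end{equation*}
using $\pair{\lam+\rho:\delta} = c + m = \kappa$ and $\pair{\omega_0:\beta} = n$. Thus the Kac--Kazhdan condition becomes $\pair{\lam+\rho:\alpha} + n\bfk = k$ as an equation in $K$. Since $n \neq 0$, the coefficient of $s$ on the left is $n \neq 0$, whereas the right hand side is a constant in $\C$; as $s$ is transcendental over $\C$ inside $K$, no such equality can hold. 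For an imaginary root $\beta = n\delta$ one has $\pair{\beta:\beta}=0$, and the condition reduces to $\bfk = 0$, which is again impossible since $\bfk = \kappa + s \neq 0$ in $K$.

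Combining these two cases, no singular vector exists at any weight strictly below $\lam_s$, whence $M_\bfk(\lam)_K$ is simple. The main technical point I expect to require care is the invocation of the correct parabolic version of the Kac--Kazhdan / Shapovalov determinant formula, namely ensuring that the only roots $\beta$ one needs to test are those outside $\Pi_0^+$ (the $\Pi_0^+$-directions being handled by the projection onto the finite-dimensional module $V(\lam)$).
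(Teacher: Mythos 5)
Your argument is correct but takes a genuinely different route from the paper. The paper's proof stays at the Weyl-group level: it invokes the affine linkage principle to write the weight $\mu_s$ of a putative singular vector (with $\mu\in\Lam^+$) as $\mu_s=w\cdot\lam_s$ for some $w\in\frakS$, then compares the $s$-linear terms in $\mu_s+\rho=w(\lam_s+\rho)$ to force $w(\omega_0)=\omega_0$, hence $w\in\frakS_0$, and concludes $w=1$ because both $\lam$ and $\mu$ lie in $\Lam^+$. You instead unpack the linkage principle into the Kac--Kazhdan/Shapovalov condition and test it root by root; your identity $\pair{\lam_s+\rho:\alpha+n\delta}=\pair{\lam+\rho:\alpha}+n\bfk$ is correct, and the fact that $s\notin\C$ rules out every $\beta\in\Pi^+\setminus\Pi_0^+$, real or imaginary. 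Both proofs exploit the same underlying phenomenon --- the deformation shifts the level to the generic value $\bfk=\kappa+s$ --- but the paper's version is shorter and treats the linkage principle as a black box, whereas yours gives a more explicit handle on singular vectors at the cost of invoking a parabolic irreducibility criterion for deformed affine parabolic Verma modules, a heavier piece of machinery that you should cite precisely. You should also note that the Kac--Kazhdan criterion a priori tests a chain of weights rather than a single condition at $\lam_s$; in your setting the obstruction is the same at each step of the chain (every intermediate weight stays of the form $\nu+s\omega_0$ with $\nu$ independent of $s$), so the reduction is easy, but it deserves a sentence rather than being absorbed into the phrase ``parabolic form.''
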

\begin{proof}
Assume that $M_\bfk(\lam)_K$ is not simple. Then it contains a nontrivial submodule. This
submodule must have a highest weight vector of weight $\mu_s$ for some
$\mu\in\Lam^+$, $\mu\neq\lam$. By the linkage principle, there
exists $w\in\frakS$ such that $\mu_s=w\cdot\lam_s$. Therefore $w$
fixes $\omega_0$, so it belongs to $\frakS_0$. But then we must have
$w=1$, because $\lam$, $\mu\in\Lam^+$. So $\lam=\mu$. This is a contradiction.
\end{proof}

\subsection{The Jantzen filtration of parabolic Verma modules}\label{ss:jantverma}

For $\lam\in\Lam^+$ the Jantzen
filtration of $M_\kappa(\lam)$ is given as follows. Let $\sigma$ be the
$R$-linear anti-involution on $\g_R$ such that
$$\sigma(\xi\otimes t^n)={}^t\xi\otimes t^{-n},\quad\sigma(\bfone)=\bfone,\quad\sigma(\partial)=\partial.$$
Here $\xi\in\g_0$ and ${}^t\xi$ is the transposed matrix. Let $\g_R$
act on $\Hom_R(M_\bfk(\lam),R)$ via $(xf)(v)=f(\sigma(x)v)$ for
$x\in\g_R$, $v\in M_\bfk(\lam)$. Then
\begin{equation}\label{eq:dualdeform}
\bfD M_\bfk(\lam)=\bigoplus_{\mu\in{}_\bfk\!\frakt^\ast}\Hom_R
\bigl(M_\bfk(\lam)_\mu,
R\bigr)
\end{equation}
is a $\g_R$-submodule of $\Hom_R(M_\bfk(\lam),R)$. It is
the \emph{deformed dual parabolic Verma module} with highest weight
$\lam_s$. The $\lam_s$-weight spaces of $M_\bfk(\lam)$ and $\bfD
M_\bfk(\lam)$ are both free $R$-modules of rank one. Any isomorphism
between them yields, by the universal property of Verma modules, a
$\g_R$-module morphism
$$\phi: M_\bfk(\lam)\ra \bfD M_\bfk(\lam)$$ such that $\phi(\wp)\neq 0$. The Jantzen filtration $\bigl(J^iM_\kappa(\lam)\bigr)$ of $M_\kappa(\lam)$ defined by \cite{J} is the filtration given by Definition \ref{df:jantzen} using the morphism $\phi$ above.

\subsection{The deformed parabolic category $\calO$}\label{ss:plan}

The \emph{deformed parabolic category} $\calO$, denoted by
$\calO_\bfk$, is the category of $\calU_\bfk$-modules $M$ such that
\begin{itemize}
  \item[$\bullet$] $M=\bigoplus_{\lam\in{}_\bfk\!\frakt^\ast}M_\lam$ with
  $M_\lam\in R\modu$,
  \item[$\bullet$] for any $m\in M$ the $R$-module $\calU(\frakq_R)m$ is finitely
  generated.
\end{itemize}
It is an abelian category and contains deformed parabolic Verma
modules. Replacing $\bfk$ by $\kappa$ and $R$ by $\C$ we get the
usual parabolic category $\calO$, denoted $\calO_\kappa$.

Recall the map $z$ in (\ref{eq:z}). For any integer $r$ set
$${}^r_\kappa\!\frakt^\ast=\{\mu\in{}_\kappa\!\frakt^\ast\,|\,
r-z(\mu)\in\Z_{\geqs 0}\}.$$ Define ${}^r_\bfk\!\frakt^\ast$ in the
same manner. Let ${}^r\!\calO_\kappa$ (resp. ${}^r\!\calO_\bfk$) be the
Serre subcategory of $\calO_\kappa$ (resp. $\calO_\bfk$) consisting
of objects $M$ such that $M_\mu\neq 0$ implies that $\mu$ belongs to
${}^r_\kappa\!\frakt^\ast$ (resp. ${}^r_\bfk\!\frakt^\ast$). Write
${}^r\!\Lam^+=\Lam^+\cap{}^r_\kappa\!\frakt^\ast$. We have the following
lemma.

\begin{lemma}\label{lem:fiebig}
(a) For any finitely generated projective object $P$ in
${}^r\!\calO_\bfk$ and any $M\in {}^r\!\calO_\bfk$ the $R$-module
$\Hom_{{}^r\!\calO_\bfk}(P,M)$ is finitely generated and the canonical
map
$$\Hom_{{}^r\!\calO_\bfk}(P,M)(\wp)\ra\Hom_{{}^r\!\calO_\kappa}(P(\wp),M(\wp))$$
is an isomorphism. Moreover, if $M$ is free over $R$, then
$\Hom_{{}^r\!\calO_\bfk}(P,M)$ is also free over $R$.

(b) The assignment $M\mapsto M(\wp)$ yields a functor
$${}^r\!\calO_\bfk\ra{}^r\!\calO_\kappa.$$
This functor gives a bijection between the isomorphism classes of
simple objects and a bijection between the isomorphism classes of
indecomposable projective objects.
\end{lemma}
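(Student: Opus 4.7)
The plan is to treat both parts via Fiebig-style deformation theory for the parabolic category $\calO$, reducing statements to module categories over finite projective $R$-algebras where the base-change toolkit of Lemma \ref{lem:useful} applies. The overall strategy is: decompose ${}^r\!\calO_\bfk$ into blocks, each Morita-equivalent to $A\modu$ for some finite projective $R$-algebra $A$ with $A(\wp)$ governing the corresponding block of ${}^r\!\calO_\kappa$; then both statements become routine consequences of this structure.

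For part (a), I would first check that every finitely generated object $M$ of ${}^r\!\calO_\bfk$ has each weight space $M_\mu$ finitely generated over $R$, using PBW applied to $\g_R$ together with the truncation condition defining ${}^r\!\calO_\bfk$ (only finitely many weights above a given one contribute). Next I would show that a finitely generated projective $P$ admits a filtration by deformed parabolic Verma modules $M_\bfk(\mu)$; this is the standard Bernstein--Gelfand--Gelfand type argument adapted by Fiebig to the deformed affine parabolic setting. Since $\Hom_{{}^r\!\calO_\bfk}(M_\bfk(\mu), M)$ embeds into $M_{\mu_s}$, iterating gives finite generation of $\Hom_{{}^r\!\calO_\bfk}(P, M)$ over $R$. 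For the base-change isomorphism, I would apply the exact functor $\Hom_{{}^r\!\calO_\bfk}(P,-)$ to the short exact sequence
\begin{equation*}
0 \lra M \xrightarrow{\;s\;} M \lra M/sM \lra 0,
\end{equation*}
which yields $\Hom(P,M)(\wp) \simra \Hom_{{}^r\!\calO_\bfk}(P, M(\wp))$; the latter coincides with $\Hom_{{}^r\!\calO_\kappa}(P(\wp), M(\wp))$ because any morphism into a $\wp$-torsion module factors through $P(\wp)$. Freeness follows from the same sequence: if $M$ is $R$-free then multiplication by $s$ is injective on $M$, hence on $\Hom(P,M)$, so the latter is a finitely generated torsion-free module over the discrete valuation ring $R$, therefore free.

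For part (b), the assignment $M \mapsto M(\wp)$ respects weight-space decompositions and preserves the finite generation of $\calU(\frakq_R)m$, so it defines a functor. To produce the bijection on isomorphism classes of indecomposable projectives, I would lift each indecomposable projective cover $P_\kappa(\lam)$ of $L_\kappa(\lam)$ in ${}^r\!\calO_\kappa$ to an indecomposable projective $P_\bfk(\lam)$ in ${}^r\!\calO_\bfk$ by a Nakayama/Hensel-type lifting argument inside the block algebra; part (a) guarantees $\End(P_\bfk(\lam))(\wp) \cong \End(P_\kappa(\lam))$, which is local, so $\End(P_\bfk(\lam))$ is local too and $P_\bfk(\lam)$ is indecomposable. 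The uniqueness of the lift and the surjectivity on isomorphism classes both follow from the bijection between idempotents of the block algebra and of its specialization. For simples, I would set $L_\bfk(\lam) := \head P_\bfk(\lam)$; by construction $L_\bfk(\lam)(\wp) \cong L_\kappa(\lam)$, and any simple object $L$ of ${}^r\!\calO_\bfk$ is the head of some indecomposable projective cover, so arises this way.

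The main obstacle is the structural input in the first paragraph: establishing that each block of ${}^r\!\calO_\bfk$ really is equivalent to $A\modu$ for a finite projective $R$-algebra $A$, or equivalently that finitely generated projectives admit deformed Verma filtrations. This is the genuinely nontrivial content, requiring both the truncation by the parameter $r$ (to control the poset of weights) and a careful adaptation of Fiebig's arguments to the affine parabolic level $c$ setting. Once this is in place, all the remaining steps are essentially formal.
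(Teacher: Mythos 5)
Your proposal follows the same overall route the paper uses: the paper's ``proof'' of Lemma \ref{lem:fiebig} is a three-sentence pointer to Fiebig's deformation arguments \cite[Section 2]{F} adapted to the parabolic setting via \cite{RW}, and your plan — truncate the weight poset via the parameter $r$, establish that finitely generated projectives in ${}^r\!\calO_\bfk$ carry deformed Verma flags and that the relevant block algebras are finite projective over $R$, then deduce base change and the bijections on simples and projectives — is precisely that program. You also correctly identify the genuinely nontrivial input (the existence of enough projectives with Verma flags in the deformed parabolic category) and observe the rest is formal.

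One step as written has a gap. You apply $\Hom_{{}^r\!\calO_\bfk}(P,-)$ to
\begin{equation*}
0 \lra M \xrightarrow{\;s\;} M \lra M/sM \lra 0,
\end{equation*}
but this sequence is exact only when multiplication by $s$ is injective on $M$, i.e.\ when $M$ is $s$-torsion-free. Objects of ${}^r\!\calO_\bfk$ need not be $s$-torsion-free (take $M = M(\wp)$ itself, where multiplication by $s$ is zero), so for a general $M$ this argument breaks. The fix is standard: apply $\Hom(P,-)$ to the exact sequence $0 \to sM \to M \to M(\wp) \to 0$ instead, giving $\Hom(P,M)/\Hom(P,sM) \cong \Hom(P,M(\wp))$, and then use projectivity of $P$ (lifting maps along the surjection $M \twoheadrightarrow sM$) to see that $\Hom(P,sM)$, viewed inside $\Hom(P,M)$, equals $s\Hom(P,M)$. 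Alternatively, following Fiebig more closely, one computes $\Hom(P,M)$ as a successive extension of weight spaces $M_{\mu_s}$ along a deformed Verma flag of $P$; base change is then visible weight space by weight space. Your freeness argument, where you do assume $M$ is $R$-free, is unaffected. In part (b) you describe lifting indecomposable projectives from ${}^r\!\calO_\kappa$ up to ${}^r\!\calO_\bfk$ by a Hensel argument on idempotents; Fiebig's route constructs the projectives directly in the deformed category and specializes downward, but once you have established, as you say you must, that the block algebras are finite projective over $R = \C[[s]]$ (which is complete local, so idempotents lift), the two directions are equivalent, so this is a stylistic rather than substantive difference.
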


For any $\lam\in{}^r\!\Lam^+$ there is a unique finitely generated
projective cover ${}^r\!P_\kappa(\lam)$ of $L_\kappa(\lam)$ in
${}^r\!\calO_\kappa$, see \cite[Lemma 4.12]{RW}. Let $L_\bfk(\lam)$,
${}^r\!P_\bfk(\lam)$ be respectively the simple object and the
indecomposable projective object in ${}^r\!\calO_\bfk$ that map respectively to $L_\kappa(\lam)$,
$P_\kappa(\lam)$ by the bijections in Lemma \ref{lem:fiebig}(b).
\begin{lemma}\label{lem:fiebigrw}
  The object ${}^r\!P_\bfk(\lam)$ is, up to isomorphism, the unique finitely generated projective cover of $L_\bfk(\lam)$ in ${}^r\!\calO_\bfk$. It has a filtration by deformed parabolic Verma modules. In particular, it is a free $R$-module.
\end{lemma}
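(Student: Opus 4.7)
The plan is to verify the three assertions in order: that ${}^r\!P_\bfk(\lam)$ is the (unique) finitely generated projective cover of $L_\bfk(\lam)$, that it carries a filtration by deformed parabolic Verma modules, and that it is $R$-free. Throughout I would exploit Lemma \ref{lem:fiebig} to pass freely between ${}^r\!\calO_\bfk$ and its specialization ${}^r\!\calO_\kappa$.

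First I would produce the surjection ${}^r\!P_\bfk(\lam)\twoheadrightarrow L_\bfk(\lam)$. By Lemma \ref{lem:fiebig}(a) the specialization map
$$\Hom_{{}^r\!\calO_\bfk}({}^r\!P_\bfk(\lam),L_\bfk(\lam))(\wp)\simra\Hom_{{}^r\!\calO_\kappa}({}^r\!P_\kappa(\lam),L_\kappa(\lam))$$
is an isomorphism onto a one-dimensional $\C$-vector space, so by Nakayama the source is a cyclic $R$-module. Lifting a generator gives $f:{}^r\!P_\bfk(\lam)\ra L_\bfk(\lam)$ with $f(\wp)$ the canonical projection; a weight-by-weight Nakayama argument, using that each weight space of $L_\bfk(\lam)$ is finitely generated over $R$, then forces $f$ to be surjective. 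Uniqueness follows because any other finitely generated projective cover would specialize to ${}^r\!P_\kappa(\lam)$, and the bijection in Lemma \ref{lem:fiebig}(b) combined with the same lift-and-Nakayama recipe produces an isomorphism upstairs.

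To obtain the standard filtration I would apply Proposition \ref{prop:specialization} block-by-block. Fix the block $\calB_\bfk$ of ${}^r\!\calO_\bfk$ containing $\lam_s$; the truncation by $r$ forces it to contain only finitely many simple objects $L_\bfk(\mu)$. Let $P=\bigoplus_\mu{}^r\!P_\bfk(\mu)$ be the corresponding projective generator and set $A=\End_{\calB_\bfk}(P)^{\op}$. Lemma \ref{lem:fiebig} shows that $A$ is finitely generated over $R$ and that $A(\wp)$ controls the block $\calB_\kappa$ of ${}^r\!\calO_\kappa$, which is a highest weight $\C$-category with standard modules the parabolic Verma modules $M_\kappa(\mu)$ (the classical structure of the truncated affine parabolic category $\calO$). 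Proposition \ref{prop:specialization} then upgrades $\calB_\bfk$ to a highest weight $R$-category with standard modules the deformed Vermas $M_\bfk(\mu)$, and the defining property of such a category endows ${}^r\!P_\bfk(\lam)$ with a Verma flag.

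The $R$-freeness conclusion is then immediate: each $M_\bfk(\mu)=\calU(\g_R)\otimes_{\calU(\frakq_R)}(V(\mu)\otimes R_{s\omega_0})$ is $R$-free on every weight space by PBW, and over the discrete valuation ring $R$ a successive extension of finitely generated free modules is again free, since $\Ext^1_R$ vanishes between such modules. The principal obstacle is verifying that $A$ above is in fact finite \emph{projective} over $R$, as required to invoke Proposition \ref{prop:specialization}; this is a priori entangled with the very $R$-freeness we are trying to establish. I would break this potential circularity by first constructing the standard flag directly, inductively lifting the Verma filtration of ${}^r\!P_\kappa(\lam)$ step by step using the projectivity of ${}^r\!P_\bfk(\lam)$ together with the $\Hom$-isomorphism of Lemma \ref{lem:fiebig}(a), after which the $R$-freeness of $A$, and hence the consistency of the highest weight $R$-category structure, becomes a consequence rather than a hypothesis.
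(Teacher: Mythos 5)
Your overall strategy --- use Lemma \ref{lem:fiebig} to pass between $\bfk$ and $\kappa$, establish the projective cover by lift-and-Nakayama, and produce the Verma flag so that $R$-freeness falls out --- has the right shape, and you correctly spot the circularity lurking in any direct appeal to Proposition \ref{prop:specialization}. But the mechanism you propose to resolve it, ``inductively lifting the Verma filtration of ${}^r\!P_\kappa(\lam)$ step by step using the projectivity of ${}^r\!P_\bfk(\lam)$,'' breaks down after the first step. That first step is fine: projectivity of ${}^r\!P_\bfk(\lam)$ lifts ${}^r\!P_\kappa(\lam)\twoheadrightarrow M_\kappa(\mu_0)$ to a map to $M_\bfk(\mu_0)$, Nakayama makes it surjective weight-by-weight, and $R$-flatness of $M_\bfk(\mu_0)$ ensures the kernel $K$ satisfies $K(\wp)=P_1$. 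But to lift $P_1\twoheadrightarrow M_\kappa(\mu_1)$ to a map $K\to M_\bfk(\mu_1)$ you would need $K$ to be projective (or at least Lemma \ref{lem:fiebig}(a), which is stated only for projective sources), and it is not. The induction stalls after one step, so the flag is not produced.

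The argument the paper has in mind --- \cite[Section 2]{F}, adapting \cite{RW} --- avoids this by constructing the projective objects from scratch as iterated universal extensions of deformed Verma modules, so the standard flag is present by construction rather than lifted from the special fibre; one then identifies the objects so built with the indecomposable projectives ${}^r\!P_\bfk(\lam)$. You should replace the lifting step by that construction, or find a formulation that does not require projectivity of the intermediate kernels. One further small point: surjectivity of $f:{}^r\!P_\bfk(\lam)\to L_\bfk(\lam)$ alone is not yet the projective cover property; you also need the kernel to be superfluous, which you get from indecomposability of ${}^r\!P_\bfk(\lam)$ (Lemma \ref{lem:fiebig}(b)) and the locality of its endomorphism ring.
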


The proof of Lemmas \ref{lem:fiebig}, \ref{lem:fiebigrw} can be
given by imitating \cite[Section 2]{F}. There, Fiebig proved the
analogue of these results for the (nonparabolic) deformed category
$\calO$ by adapting arguments of \cite{RW}. The proof here goes in
the same way, because the parabolic case is also treated in
\cite{RW}. We left the details to the reader. Note that the deformed parabolic category $\calO$ for reductive Lie algebras has also been studied in \cite{St}.

\subsection{The highest weight category
$\calE_\bfk$}\label{ss:calEk}

Fix a positive integer $n\leqs m$. Let $\calP_n$ denote the set of partitions of $n$. Recall that a \emph{partition} $\lam$ of $n$ is a sequence of integers $\lam_1\geqs\ldots\geqs\lam_m\geqs 0$ such that $\sum_{i=1}^m\lam_i=n$. We associate $\lam$ with the element $\sum_{i=1}^m\lam_i\ep_i\in\frakt^\ast_0$, which we denote again by $\lam$. We will identify $\calP_n$ with a subset of $\Lam^+$ by the following inclusion
\begin{equation}\label{eq:calPn}
  \calP_n\ra\Lam^+,\quad \lam\mapsto
  \lam+c\omega_0-\frac{\pair{\lam:\lam+2\rho_0}}{2\kappa}\delta.
\end{equation}
We will also fix an integer $r$ large enough such that $\calP_n$ is contained in ${}^r\!\Lam^+$. Equip $\Lam^+$ with the partial order $\preceq$ given by $\lam\preceq\mu$ if and only if there exists $w\in\frakS$
such that $\mu=w\cdot\lam$ and $a(\mu)-a(\lam)\in\N\Pi_0^+$. Let $\unlhd$ denote the dominance order on $\calP_n$ given by
$$\lam\unlhd\mu\ \iff\ \sum_{j=1}^i\lam_j\leqs\sum_{j=1}^i\mu_j,\quad\forall\ 1\leqs i\leqs m.$$ Note that for $\lam$, $\mu\in\calP_n$ we have
\begin{equation}\label{eq:compareorder}
\lam\preceq\mu\Longrightarrow\lam\unlhd\mu,
\end{equation}
because $\lam\preceq\mu$ implies that
$\mu-\lam\in\N\Pi_0^+$, which implies that
$$\sum_{j=1}^i\mu_j-\sum_{j=1}^i\lam_j=\pair{\mu-\lam,
\ep_1+\cdots+\ep_i}\geqs 0,\quad \forall\ 1\leqs i\leqs m.$$

Now consider the following subset of ${}^r\!\Lam^+$ $$E=\{\mu\in{}^r\!\Lam^+\,|\,\mu=w\cdot\lam\text{ for some
}w\in\frakS,\lam\in\calP_n\}.$$
\begin{lemma}
  The set $E$ is finite.
\end{lemma}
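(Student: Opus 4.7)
The plan is to exploit the invariance of the quadratic form $||\mu+\rho||^2$ under the dot action of $\frakS$, combined with the sign hypothesis $\kappa<0$ from (\ref{eq:kappa}). Since $\calP_n$ is finite, it is enough to show that for each fixed $\lam\in\calP_n$ the dot orbit $\frakS\cdot\lam$ meets ${}^r\!\Lam^+$ in a finite set.

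Fix $\lam\in\calP_n$ and write a weight $\mu\in{}_\kappa\!\frakt^\ast$ in coordinates as $\mu=\sum_{i=1}^m\mu_i\ep_i+c\omega_0+z(\mu)\delta$. Unpacking the pairings recalled in Section \ref{ss:notation2} (in particular $\pair{\omega_0:\omega_0}=\pair{\delta:\delta}=0$ and $\pair{\omega_0:\delta}=1$) gives at once
\begin{equation*}
||\mu+\rho||^2=\sum_{i=1}^m(\mu_i+\rho_{0,i})^2+2\kappa\, z(\mu),
\end{equation*}
where $\rho_{0,i}$ denotes the $\ep_i$-coordinate of $\rho_0$. Since $\pair{\cdot:\cdot}$ is $\frakS$-invariant, the left hand side is constant on $\frakS\cdot\lam$, so it equals $||\lam+\rho||^2$ for every $\mu$ in the orbit. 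If in addition $\mu\in{}^r\!\Lam^+$, the definition of ${}^r_\kappa\!\frakt^\ast$ forces $z(\mu)\leqs r$; together with $\kappa<0$ this yields the upper bound
\begin{equation*}
\sum_{i=1}^m(\mu_i+\rho_{0,i})^2\leqs ||\lam+\rho||^2-2\kappa r.
\end{equation*}

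Next, the formulas (\ref{eq:weylaction}) show that the $\ep_i$-coordinates of every element of $\frakS\cdot\lam$ are integers: $\frakS_0$ permutes these coordinates, while a translation by $\tau\in\Z\Pi_0$ shifts them by $\kappa\tau\in\Z^m$. Hence each $\mu_i+\rho_{0,i}$ varies in a fixed discrete subset of the real line, and the displayed inequality leaves only finitely many possibilities for the tuple $(\mu_1,\ldots,\mu_m)$. Once these are fixed, the relation $||\mu+\rho||^2=||\lam+\rho||^2$ determines $z(\mu)$ uniquely, hence $\mu$ itself. Summing over the finite set $\calP_n$ gives the finiteness of $E$. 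The only genuinely used ingredient is the sign of $\kappa$: it is what converts the inequality $z(\mu)\leqs r$ into an \emph{upper} bound on $\sum_i(\mu_i+\rho_{0,i})^2$, and this is the only delicate point in an otherwise mechanical argument.
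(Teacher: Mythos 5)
Your proof is correct and rests on the same mechanism as the paper's: the constraint $z(\mu)\leqs r$ together with $\kappa<0$ forces an upper bound on a positive-definite quadratic form evaluated on a discrete set, leaving only finitely many possibilities in each orbit $\frakS\cdot\lam$. The paper phrases this by computing $z(\tau\cdot\lam)$ directly from (\ref{eq:weylaction}) and bounding $||\tau+(\lam+\rho)/\kappa||^2$ for $\tau\in\Z\Pi_0$, while you package the same inequality via the $\frakS$-invariance of $||\mu+\rho||^2$; the two are related by an affine change of variables and carry the same content.
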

\begin{proof}
  Since $\calP_n$ is finite, it is enough to show that for each
  $\lam\in\calP_n$ the set $\frakS\cdot\lam\cap{}^r\!\Lam^+$ is
  finite. Note that for $w\in\frakS_0$ and $\tau\in\Z\Pi_0$ we have
  $z(w\tau\cdot\lam)=z(\tau\cdot\lam)$. By
(\ref{eq:weylaction}) we have
  $$z(\tau\cdot\lam)=z(\lam)-\frac{\kappa}{2}(||\tau+\frac{\lam+\rho}{\kappa}||^2-||\frac{\lam+\rho}{\kappa}||^2).$$
  If $z(\tau\cdot\lam)\leqs r$, then
  $$||\tau+\frac{\lam+\rho}{\kappa}||^2\leqs\frac{2}{-\kappa}(r-z(\lam))+||\frac{\lam+\rho}{\kappa}||^2.$$
  There exists only finitely many $\tau\in\Z\Pi_0$ which satisfies
  this condition, hence the set $E$ is finite.
\end{proof}
Let $\calE_\kappa$ be the full subcategory of ${}^r\!\calO_\kappa$
consisting of objects $M$ such that
$$\mu\in{}^r\!\Lam^+,\ \mu\notin E\quad\Longrightarrow\quad\Hom_{{}^r\!\calO_\kappa}({}^r\!P_\kappa(\mu),M)=0.$$
Note that since ${}^r\!P_\kappa(\mu)$ is projective in ${}^r\!\calO_\kappa$, an object $M\in{}^r\!\calO_\kappa$ is in $\calE_\kappa$ if and only if each simple subquotient of $M$ is isomorphic to $L_\kappa(\mu)$ for $\mu\in E$. In particular $\calE_\kappa$ is abelian and it is a Serre
subcategory of ${}^r\!\calO_\kappa$. Further $\calE_\kappa$ is also an artinian category. In fact, each object $M\in\calE_\kappa$ has a
finite length because $E$ is finite and for each $\mu\in E$ the
multiplicity of $L_\kappa(\mu)$ in $M$ is finite because $\dim_\C M_\mu<\infty$. Let $\g'$ denote the Lie subalgebra of $\g$ given by
$$\g'=\g_0\otimes\C[t,t^{-1}]\oplus\C\bfone.$$ Forgetting the $\partial$-action yields an equivalence of categories from $\calE_\kappa$ to a category of
$\g'$-modules, see \cite[Proposition 8.1]{So} for details. Since $\kappa$ is negative, this category of $\g'$-modules is equal to the category studied in \cite{KL3}.

\begin{lemma}\label{lem:KL}
  (a) For $\lam\in E$, $\mu\in {}^r\!\Lam^+$ such that
  $[M_\kappa(\lam):L_\kappa(\mu)]\neq 0$ we have $\mu\in E$ and $\mu\preceq\lam$.

  (b) The module ${}^r\!P_\kappa(\lam)$ admits a filtration by
  $\calU_\kappa$-modules
  $${}^r\!P_\kappa(\lam)=P_0\supset P_1\supset\cdots\supset P_l=0$$ such
  that $P_0/P_1$ is isomorphic to $M_\kappa(\lam)$ and
  $P_i/P_{i+1}\cong M_\kappa(\mu_i)$ for some $\mu_i\succ\lam$.

  (c) The category $\calE_\kappa$ is a highest weight
  $\C$-category with standard objects $M_\kappa(\lam)$, $\lam\in E$. The indecomposable projective objects in $\calE_\kappa$ are the modules ${}^r\!P_\kappa(\lam)$ with $\lam\in E$.
\end{lemma}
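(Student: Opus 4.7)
The plan is to establish (a) first and then extract (b) and (c) from it using the standard machinery of highest weight categories; part (a) carries the real input, while (b) and (c) reduce to bookkeeping once (a) is known.

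For (a), I would suppose $[M_\kappa(\lam):L_\kappa(\mu)]\neq 0$ and invoke the generalized Casimir operator of $\g$: it acts on $M_\kappa(\lam)$, and hence on each of its composition factors, by one and the same scalar, a quadratic function of the highest weight plus $\rho$. Equating the scalars for $\lam$ and $\mu$, and using that $\mu$ is a weight of $M_\kappa(\lam)$, so $\lam-\mu\in\N\Pi^+$, the Kac--Kazhdan linkage principle for affine Lie algebras will give $\mu=w\cdot\lam$ for some $w\in\frakS$. Since $\lam\in E$ means $\lam=u\cdot\nu$ with $\nu\in\calP_n$ and $u\in\frakS$, this forces $\mu=wu\cdot\nu$, so $\mu\in E$ by the hypothesis $\mu\in{}^r\!\Lam^+$. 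To promote the bare linkage to $\mu\preceq\lam$, I would decompose $w=w_0\tau$ with $w_0\in\frakS_0$, $\tau\in\Z\Pi_0$ and exploit the explicit formula (\ref{eq:weylaction}) for the action of $\tau$, combined with the $\g_0$-dominance of both $a(\lam)$ and $a(\mu)$ (both lie in $\Lam^+$) and the projection of $\lam-\mu\in\N\Pi^+$ to $\frakt_0^\ast$, to conclude $a(\lam)-a(\mu)\in\N\Pi_0^+$.

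For (b), Lemma \ref{lem:fiebigrw} already supplies a filtration of ${}^r\!P_\kappa(\lam)$ by parabolic Verma modules, so the task is to identify the factors. The topmost factor must be $M_\kappa(\lam)$ itself, since $L_\kappa(\lam)$ sits in the head of both ${}^r\!P_\kappa(\lam)$ and $M_\kappa(\lam)$ and the universal property of Vermas then identifies $P_0/P_1$ with $M_\kappa(\lam)$. For each remaining factor $M_\kappa(\mu_i)$, BGG reciprocity $({}^r\!P_\kappa(\lam):M_\kappa(\mu_i))=[M_\kappa(\mu_i):L_\kappa(\lam)]$ is nonzero, so part (a) forces $\lam\preceq\mu_i$; strict inequality $\mu_i\succ\lam$ follows because the contribution $\mu_i=\lam$ has already been accounted for at the top of the filtration.

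For (c), I would first observe, using (a) and (b), that every composition factor of $M_\kappa(\lam)$ or of ${}^r\!P_\kappa(\lam)$ for $\lam\in E$ is of the form $L_\kappa(\mu)$ with $\mu\in E$; hence $\calE_\kappa$ is exactly the full subcategory of ${}^r\!\calO_\kappa$ whose objects have all Jordan--H\"older factors in $\{L_\kappa(\mu)\,:\,\mu\in E\}$, the ${}^r\!P_\kappa(\lam)$ for $\lam\in E$ lie in $\calE_\kappa$, and together they form a projective generator whose endomorphism algebra is finite-dimensional over $\C$ by finiteness of $E$. The highest-weight-category axioms are then immediate: $\End(M_\kappa(\lam))=\C$ is a standard Verma fact; any nonzero $\Hom(M_\kappa(\lam_1),M_\kappa(\lam_2))$ routes through a common composition factor and so yields $\lam_1\preceq\lam_2$ by (a); the projective cover with standard-filtered kernel is (b); and the vanishing condition $\Hom(D,M)=0$ for all standard $D$ forces $M=0$ since every simple of $\calE_\kappa$ is the head of some $M_\kappa(\lam)$. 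I expect the chief obstacle to be the final combinatorial step in (a), namely upgrading the abstract linkage $\mu=w\cdot\lam$ to the dominance-type inequality $a(\lam)-a(\mu)\in\N\Pi_0^+$; everything else is routine once this is in hand, largely paralleling the nonparabolic case of \cite{F} in the spirit of \cite{RW}.
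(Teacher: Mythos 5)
Your plan is a genuinely different route from the paper's and, as you half-suspect, the step you call ``the chief obstacle'' is where it breaks down. The Casimir operator together with the Kac--Kazhdan linkage principle gives only the \emph{soft} linkage statement: if $[M_\kappa(\lam):L_\kappa(\mu)]\neq 0$ then $\mu=w\cdot\lam$ for some $w\in\frakS$. It does not give the strong conclusion $a(\lam)-a(\mu)\in\N\Pi_0^+$. This is not a combinatorial afterthought: writing $\lam-\mu=\sum_{i\geqs 0}c_i\al_i$ with $c_i\geqs 0$ (from $\mu$ being a weight of $M_\kappa(\lam)$), the projection is $a(\lam-\mu)=\sum_{i\geqs 1}(c_i-c_0)\al_i$, so the needed inequality $c_i\geqs c_0$ for all $i\geqs 1$ is exactly what must be established, and it does \emph{not} follow just from $\mu$ lying in the same $\frakS$-orbit, from $a(\lam),a(\mu)$ both being $\g_0$-dominant, or from $\kappa<0$. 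Decomposing $w=w_0\tau$ and plugging into (\ref{eq:weylaction}) gives you formulas but no a priori sign control on the coefficients. The statement you want is (via the dictionary $M_\kappa(\lam)\leftrightarrow V_v(a(\lam))$) precisely the \emph{strong} linkage principle, which is a genuine theorem that goes beyond central-character bookkeeping and is typically proved with translation functors or Jantzen sum formulas.

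The paper avoids this by routing through the Kazhdan--Lusztig tensor equivalence to the category of finite-dimensional $\bfU_v$-modules, sending $M_\kappa(\lam)$ to the Weyl module of highest weight $a(\lam)$, and then applying Andersen's strong linkage principle for quantum groups at a root of unity; parts (b) and (c) are then cited from loc.~cit.\ and deduced formally. If you want a direct affine argument you would have to invoke (or prove) an affine analogue of the strong linkage principle for parabolic Verma modules at negative level -- merely decomposing $w$ and inspecting (\ref{eq:weylaction}) will not do it. A secondary issue worth flagging: in your proof of (b) you invoke BGG reciprocity before the highest weight structure of $\calE_\kappa$ has been established, which is logically delicate (the paper instead cites \cite[Proposition I.3.9]{KL3} for the filtration structure on the quantum side, where everything is already known). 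Your sketch of (c) from (a) and (b) is fine and matches the paper's one-line reduction.
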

\begin{proof}
  Let $\bfU_v$ be the quantized enveloping algebra of
  $\g_0$ with parameter $v=\exp(2\pi i/\kappa)$. Then the Kazhdan-Lusztig's
  tensor equivalence \cite[Theorem IV.38.1]{KL3} identifies
  $\calE_\kappa$ with a full subcategory of the category of
  finite dimensional $\bfU_v$-modules. It maps the module
  $M_\kappa(\lam)$ to the Weyl module of $\bfU_v$ with highest weight
  $a(\lam)$. Since $v$ is a root of unity, part (a) follows
  from the strong linkage principle for $\bfU_v$,
  see \cite[Theorem 3.1]{An}. Part (b) follows from
  (a) and \cite[Proposition I.3.9]{KL3}. Finally, part (c) follows directly from parts (a), (b).
\end{proof}
Now, let us consider the deformed version. Let $\calE_\bfk$ be the full subcategory of ${}^r\!\calO_\bfk$
consisting of objects $M$ such that
$$\mu\in{}^r\!\Lam^+,\ \mu\notin E\quad\Longrightarrow\quad\Hom_{{}^r\!\calO_\bfk}({}^r\!P_\bfk(\mu),M)=0.$$

\begin{lemma}\label{lem:calEk}
  An object $M\in{}^r\!\calO_\bfk$ belongs to $\calE_\bfk$ if and only
  if $M(\wp)$ belongs to $\calE_\kappa$. In particular $M_\bfk(\lam)$ and ${}^r\!P_\bfk(\lam)$ belong to $\calE_\bfk$ for $\lam\in E$.
\end{lemma}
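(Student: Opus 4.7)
The plan is to first establish the equivalence $M\in\calE_\bfk \iff M(\wp)\in\calE_\kappa$, and then to derive the particular cases from it. The key ingredient for the equivalence is the base-change isomorphism of Lemma \ref{lem:fiebig}(a), applied to the finitely generated projective object ${}^r\!P_\bfk(\mu)$ (which is projective by Lemma \ref{lem:fiebigrw}) together with the identification ${}^r\!P_\bfk(\mu)(\wp)\cong{}^r\!P_\kappa(\mu)$ coming from the construction of ${}^r\!P_\bfk(\mu)$ via Lemma \ref{lem:fiebig}(b):
\begin{equation*}
\Hom_{{}^r\!\calO_\bfk}({}^r\!P_\bfk(\mu),M)(\wp)\cong\Hom_{{}^r\!\calO_\kappa}({}^r\!P_\kappa(\mu),M(\wp)).
\end{equation*}
The forward direction is immediate. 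For the converse, Lemma \ref{lem:fiebig}(a) also guarantees that the left hand side is finitely generated over the local ring $R$, so Nakayama's lemma upgrades vanishing modulo $\wp$ to genuine vanishing.

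For the ``in particular'' statement I would use that $M_\bfk(\lam)(\wp)\cong M_\kappa(\lam)$ by construction of the deformed Verma module, and ${}^r\!P_\bfk(\lam)(\wp)\cong{}^r\!P_\kappa(\lam)$ by construction of ${}^r\!P_\bfk(\lam)$. The equivalence just proved then reduces everything to verifying $M_\kappa(\lam),{}^r\!P_\kappa(\lam)\in\calE_\kappa$ for $\lam\in E$. Using the characterization recorded in the paragraph preceding the lemma (namely, $M\in\calE_\kappa$ iff every simple subquotient of $M$ is some $L_\kappa(\mu)$ with $\mu\in E$), the Verma case is Lemma \ref{lem:KL}(a) directly. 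For the projective cover I invoke Lemma \ref{lem:KL}(b) to obtain a filtration of ${}^r\!P_\kappa(\lam)$ by Verma modules $M_\kappa(\mu_i)$ with $\mu_i\succeq\lam$, and then apply Lemma \ref{lem:KL}(a) to each Verma subquotient to conclude that every simple subquotient of ${}^r\!P_\kappa(\lam)$ is of the required form.

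The step most deserving of care is the orbit bookkeeping for the projective case: one must check that each weight $\mu_i$ appearing in the Verma filtration of ${}^r\!P_\kappa(\lam)$ actually lies in $E$. Since $\mu_i\succ\lam$, the definition of $\preceq$ forces $\mu_i=w_i\cdot\lam$ for some $w_i\in\frakS$, so $\mu_i$ is in the $\frakS$-orbit of $\lam$, hence of some $\nu\in\calP_n$ (because $\lam\in E$). As $\mu_i$ must also lie in ${}^r\!\Lam^+$ (being a highest weight of a subquotient of an object of ${}^r\!\calO_\kappa$), one concludes $\mu_i\in E$. This is really the only step that uses the specific shape of $E$, and it amounts to unwinding definitions rather than any genuine obstacle.
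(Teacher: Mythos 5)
Your proof is correct and follows essentially the same route as the paper's: the equivalence is obtained via Lemma \ref{lem:fiebig}(a) together with Nakayama's lemma, exactly as in the paper. For the ``in particular'' part the paper simply cites Lemma \ref{lem:KL}(c), whereas you unwind that statement to its ingredients (a) and (b) — but since the paper derives (c) from (a) and (b) in the same way, this is the same argument written out.
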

\begin{proof}
  By Lemma \ref{lem:fiebig}(a) for any $\mu\in{}^r\!\Lam^+$ the $R$-module
  $\Hom_{{}^r\!\calO_\bfk}({}^r\!P_\bfk(\mu),M)$ is finitely generated and we
  have $$\Hom_{{}^r\!\calO_\bfk}({}^r\!P_\bfk(\mu),M)(\wp)=
  \Hom_{{}^r\!\calO_\kappa}({}^r\!P_\kappa(\mu),M(\wp)).$$
  Therefore $\Hom_{{}^r\!\calO_\bfk}({}^r\!P_\bfk(\mu),M)$ is nonzero if
  and only if $\Hom_{{}^r\!\calO_\kappa}({}^r\!P_\kappa(\mu),M(\wp))$ is
  nonzero by Nakayama's lemma. So the first statement follows from the
  definition of $\calE_\bfk$ and $\calE_\kappa$. The rest follows
  from Lemma \ref{lem:KL}(c).
\end{proof}
Let $$P_\bfk(E)=\bigoplus_{\lam\in E}{}^r\!P_\bfk(\lam), \quad
P_\kappa(E)=\bigoplus_{\lam\in E}{}^r\!P_\kappa(\lam).$$ We have the
following corollary.
\begin{cor}
  (a) The category $\calE_\bfk$ is abelian.

  (b) For $M\in\calE_\bfk$ there exists a positive integer $d$ and a surjective map
  $$P_\bfk(E)^{\oplus d}\lra M.$$

  (c) The functor $\Hom_{{}^r\!\calO_\bfk}(P_\bfk(E),-)$ yields an
  equivalence of $R$-categories
  $$\calE_\bfk\cong\End_{{}^r\!\calO_\bfk}(P_\bfk(E))^{\op}\modu.$$
\end{cor}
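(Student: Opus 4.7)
The plan is to establish the three parts in order, each building on properties already proved in the preceding lemmas.

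For (a), I will show that $\calE_\bfk$ is in fact a Serre subcategory of the abelian category ${}^r\!\calO_\bfk$. The key observation is that for $\mu\notin E$ the functor $\Hom_{{}^r\!\calO_\bfk}({}^r\!P_\bfk(\mu),-)$ is exact, since ${}^r\!P_\bfk(\mu)$ is projective in ${}^r\!\calO_\bfk$. Hence the defining vanishing condition of $\calE_\bfk$ is preserved under subobjects, quotients, and extensions, which yields abelianness immediately.

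For (b), the strategy is to reduce modulo $\wp$, solve the problem in the artinian category $\calE_\kappa$, and then lift and apply Nakayama. Given $M\in\calE_\bfk$, Lemma \ref{lem:calEk} places $M(\wp)$ in $\calE_\kappa$, which was observed in the paragraph preceding Lemma \ref{lem:KL} to be artinian with composition factors among the $L_\kappa(\lam)$, $\lam\in E$. Thus a surjection $\bar f:P_\kappa(E)^{\oplus d}\twoheadrightarrow M(\wp)$ exists for some $d\geqs 1$. Using the base-change isomorphism of Lemma \ref{lem:fiebig}(a), I can realize $\bar f$ as the mod-$\wp$ reduction of some $f:P_\bfk(E)^{\oplus d}\ra M$. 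It then remains to promote $f$ from a lift of a surjection to a surjection itself. For this I would use that an object of ${}^r\!\calO_\bfk$ decomposes as a direct sum of weight spaces $M_\mu$ which are finitely generated over the local ring $R$; on each such weight space the surjectivity of $(f_\mu)(\wp)$ upgrades, via Nakayama's lemma, to the surjectivity of $f_\mu$, and hence of $f$ globally.

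For (c), the remaining work is of Morita-theoretic character. By Lemma \ref{lem:calEk}, $P_\bfk(E)$ lies in $\calE_\bfk$; since it is projective in the ambient category ${}^r\!\calO_\bfk$ and $\calE_\bfk$ is a full subcategory, any lift of a morphism automatically lives in $\calE_\bfk$, so $P_\bfk(E)$ remains projective there. Part (b) then identifies it as a projective generator. Lemma \ref{lem:fiebigrw} ensures that $P_\bfk(E)$ is free over $R$, so by Lemma \ref{lem:fiebig}(a) the $R$-algebra $A=\End_{{}^r\!\calO_\bfk}(P_\bfk(E))^{\op}$ is a finitely generated free $R$-module, and hence a finite projective $R$-algebra. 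A standard Morita argument then produces the desired equivalence $\Hom_{{}^r\!\calO_\bfk}(P_\bfk(E),-):\calE_\bfk\simra A\modu$.

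The principal technical subtlety is the Nakayama step in (b), which relies crucially on the weight-space decomposition of objects of ${}^r\!\calO_\bfk$ into finitely generated $R$-modules. Once this is in place, part (c) is essentially the routine assembly of Lemmas \ref{lem:fiebig}, \ref{lem:fiebigrw}, and \ref{lem:calEk} through the standard recognition criterion for module categories.
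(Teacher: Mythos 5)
Your proof is correct and follows essentially the same route as the paper: lift a surjection from $\calE_\kappa$ and promote it with Nakayama's lemma weight space by weight space, then conclude by Morita theory. The one minor variation is in part (a), where you derive closure under quotients directly from exactness of $\Hom_{{}^r\!\calO_\bfk}({}^r\!P_\bfk(\mu),-)$, whereas the paper handles subobjects directly but detours through the reduction modulo $\wp$ and Lemma \ref{lem:calEk} for quotients --- your uniform argument via the Serre-subcategory property is a slight streamlining, though both ultimately rest on projectivity of ${}^r\!P_\bfk(\mu)$.
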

\begin{proof}
  Let $M\in\calE_\bfk$, $N\in{}^r\!\calO_\bfk$. First assume that
  $N\subset M$. For $\mu\in{}^r\!\Lam^+$ if $\Hom_{{}^r\!\calO_\bfk}({}^r\!P_\bfk(\mu),N)\neq
  0$, then $\Hom_{{}^r\!\calO_\bfk}({}^r\!P_\bfk(\mu),M)\neq
  0$, so $\mu$ belongs to $E$. Hence $N$ belongs to $\calE_\bfk$.
  Now, if $N$ is a quotient of $M$, then $N(\wp)$ is a quotient of
  $M(\wp)$. Since $M(\wp)$ belongs to $\calE_\kappa$, we also have
  $N(\wp)\in\calE_\kappa$. Hence $N$ belongs to $\calE_\bfk$ by
  Lemma \ref{lem:calEk}. This proves part (a). Let us
  concentrate on (b). Since $M\in\calE_\bfk$ we have
  $M(\wp)\in\calE_\kappa$. The category $\calE_\kappa$ is artinian
  with $P_\kappa(E)$ a projective generator. Hence there exists a positive integer $d$ and a surjective map
  $$f: P_\kappa(E)^{\oplus d}\lra M(\wp).$$
  Since $P_\bfk(E)^{\oplus d}$ is projective in ${}^r\!\calO_\bfk$,
  this map lifts to a map of $\calU_\bfk$-modules $\tilde{f}:P_\bfk(E)^{\oplus
  d}\ra M$ such that the following diagram commute
  $$\xymatrix{P_\bfk(E)^{\oplus
  d}\ar[r]^{\tilde{f}}\ar@{>>}[d] & M\ar@{>>}[d]\\ P_\kappa(E)^{\oplus d}\ar@{>>}[r]^{f} & M(\wp).}$$
  Now, since the map $\tilde{f}$ preserves weight spaces and all the weight spaces of $P_\bfk(E)^{\oplus r}$ and $M$ are finitely generated
  $R$-modules, by Nakayama's lemma, the surjectivity of $f$
  implies that $\tilde{f}$ is surjective. This proves (b). 
  Finally part (c) is a direct consequence
  of parts (a), (b) by Morita theory.
\end{proof}
\begin{prop}
  The category $\calE_\bfk$ is a highest weight $R$-category with
  standard modules $M_\bfk(\mu)$, $\mu\in E$.
\end{prop}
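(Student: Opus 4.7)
The plan is to invoke Proposition \ref{prop:specialization} with $\calC=\calE_\bfk$ and $\Delta=\{M_\bfk(\mu)\,|\,\mu\in E\}$, equipped with the restriction of the partial order $\preceq$. The three hypotheses to check are: (i) $\calE_\bfk$ is equivalent to $A\modu$ for some finite projective $R$-algebra $A$; (ii) each $M_\bfk(\mu)$ belongs to $\calE_\bfk\cap R\proj$; (iii) the specialization at $\wp$ gives a highest weight $\C$-category with the expected standard objects.

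For (i), the previous corollary (part (c)) gives an equivalence $\calE_\bfk\cong A\modu$ with $A=\End_{{}^r\!\calO_\bfk}(P_\bfk(E))^{\op}$. By Lemma \ref{lem:fiebigrw} each summand ${}^r\!P_\bfk(\lam)$ of $P_\bfk(E)$ is a finitely generated projective object that is free over $R$, and then Lemma \ref{lem:fiebig}(a) applied to $P={}^r\!P_\bfk(\lam)$ and $M={}^r\!P_\bfk(\lam')$ shows that $A$ is finitely generated and projective (in fact free) over $R$. For (ii), membership $M_\bfk(\mu)\in\calE_\bfk$ is Lemma \ref{lem:calEk}, while freeness over $R$ is immediate from the construction of $M_\bfk(\mu)$ as a module induced from the finitely generated free $R$-module $V(\mu)\otimes R_{s\omega_0}$.

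For (iii), Lemma \ref{lem:fiebig}(a) yields a canonical isomorphism $A(\wp)\cong\End_{{}^r\!\calO_\kappa}(P_\kappa(E))^{\op}$, and a parallel application of the corollary (in the specialized setting, which is already known by Lemma \ref{lem:KL}) gives an equivalence $\calE_\kappa\cong A(\wp)\modu$ via $\Hom_{{}^r\!\calO_\kappa}(P_\kappa(E),-)$. Under this identification the object $M_\bfk(\mu)(\wp)$ of $\calE_\bfk(\wp)$ goes to $M_\kappa(\mu)$, using the canonical isomorphism $M_\bfk(\mu)(\wp)\cong M_\kappa(\mu)$ established in Section \ref{ss:paraverma}. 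Lemma \ref{lem:KL}(c) then tells us that $(\calE_\bfk(\wp),\Delta(\wp))=(\calE_\kappa,\{M_\kappa(\mu)\,|\,\mu\in E\})$ is a highest weight $\C$-category with the order $\preceq$. Applying Proposition \ref{prop:specialization} therefore concludes that $(\calE_\bfk,\Delta)$ is a highest weight $R$-category.

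The proof is essentially formal once the machinery of Section \ref{ss:s} is in place; no step is really an obstacle. The only point requiring a little care is bookkeeping the identification of the specialization: one must check that the equivalences $\calE_\bfk\cong A\modu$ and $\calE_\kappa\cong A(\wp)\modu$ are compatible under the functor $M\mapsto M(\wp)$, and that standard objects go to standard objects. This compatibility is exactly what Lemma \ref{lem:fiebig}(a) provides for $\Hom$ spaces computed against the common projective generator $P_\bfk(E)$, whose specialization is $P_\kappa(E)$ by Lemma \ref{lem:fiebig}(b).
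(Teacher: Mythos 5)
Your proof is correct and follows exactly the same route as the paper's: exhibit $\calE_\bfk\cong A\modu$ with $A=\End_{{}^r\!\calO_\bfk}(P_\bfk(E))^{\op}$ a finite projective $R$-algebra via Lemmas \ref{lem:fiebig} and \ref{lem:fiebigrw}, note that $\calE_\kappa$ is a highest weight $\C$-category by Lemma \ref{lem:KL}(c), and apply Proposition \ref{prop:specialization}. Your writeup simply spells out more of the bookkeeping (specialization of $\Hom$-spaces, membership of $M_\bfk(\mu)$ in $\calE_\bfk\cap R\proj$) than the paper's two-line proof.
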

\begin{proof}
  Note that $\End_{{}^r\!\calO_\bfk}(P_\bfk(E))^{\op}$ is a finite
  projective $R$-algebra by Lemmas \ref{lem:fiebig},
  \ref{lem:fiebigrw}. Since $\calE_\kappa$ is a highest weight
  $\C$-category by Lemma \ref{lem:KL}(c), the result follows from Proposition
  \ref{prop:specialization}.
\end{proof}

\subsection{The highest weight category $\calA_\bfk$}

By definition $\calP_n$ is a subset of $E$. Let
$\calA_\bfk$ be the full subcategory of $\calE_\bfk$ consisting of the objects $M$ such that
$$\Hom_{{}^r\!\calO_\bfk}(M_\bfk(\lam), M)=0,\quad\forall\ \lam\in E,\ \lam\notin\calP_n.$$ We define the subcategory
$\calA_\kappa$ of $\calE_\kappa$ in the same way. Let
$$\Delta_\bfk=\{M_\bfk(\lam)\,|\,\lam\in\calP_n\},\quad
\Delta_\kappa=\{M_\kappa(\lam)\,|\,\lam\in\calP_n\}.$$ Recall that
$E\subset{}^r\!\Lam^+$ is equipped with the partial order $\preceq$,
and that $\calP_n\subset E$. We have the following lemma
  \begin{lemma}\label{lem:ideal}
  The set $\calP_n$ is an ideal in $E$, i.e., for $\lam\in E$,
  $\mu\in\calP_n$, if $\lam\preceq\mu$ then we have $\lam\in\calP_n$.
  \end{lemma}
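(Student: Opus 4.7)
The plan is to give an explicit characterization of the image of the embedding $\calP_n\hookrightarrow\Lam^+$ from~(\ref{eq:calPn}) and to verify it for $\lam$. Concretely, a weight $\nu\in\Lam^+$ lies in the image of $\calP_n$ if and only if $a(\nu)$, written in coordinates as $\sum_{j=1}^m\nu_j\ep_j$, satisfies $\nu_1\geqs\cdots\geqs\nu_m\geqs 0$ with $\sum_j\nu_j=n$, and the $\delta$-coefficient $z(\nu)$ equals $-\pair{a(\nu):a(\nu)+2\rho_0}/(2\kappa)$. Both properties are read off directly from~(\ref{eq:calPn}), and I will verify them for $\lam$ separately.

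For the finite part, the relation $\lam\preceq\mu$ supplies non-negative integers $c_1,\ldots,c_{m-1}$ with $a(\mu)-a(\lam)=\sum_{i=1}^{m-1}c_i\al_i$. Extracting the coefficient of $\ep_j$, with the convention $c_0=c_m=0$, gives $\lam_j=\mu_j-c_j+c_{j-1}$. Telescoping yields $\sum_j\lam_j=\sum_j\mu_j=n$, and $\lam_m=\mu_m+c_{m-1}\geqs 0$ since $\mu\in\calP_n$ and $c_{m-1}\in\N$. The dominance $\lam\in\Lam^+$ forces $\lam_j-\lam_{j+1}\in\N$ for $1\leqs j\leqs m-1$, so the $\lam_j$ are integers with $\lam_1\geqs\cdots\geqs\lam_m\geqs 0$; that is, $a(\lam)$ is a partition of $n$.

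For the $\delta$-coefficient, $\lam\preceq\mu$ provides $w\in\frakS$ with $\mu=w\cdot\lam$, and since the dot action preserves $\|\bullet+\rho\|^2$, we have $\|\lam+\rho\|^2=\|\mu+\rho\|^2$. A direct computation using $\rho=\rho_0+m\omega_0$ together with the orthogonality of $\frakt_0^\ast$ to $\omega_0,\delta$ and $\pair{\omega_0:\delta}=1$ gives
\begin{equation*}
\|\nu+\rho\|^2=\|a(\nu)+\rho_0\|^2+2\kappa z(\nu)
\end{equation*}
for any $\nu\in{}_\kappa\!\frakt^\ast$. Plugging in $\nu=\mu$ and using the formula for $z(\mu)$ from~(\ref{eq:calPn}) collapses the right-hand side to $\|\rho_0\|^2$, hence $\|\lam+\rho\|^2=\|\rho_0\|^2$ as well. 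Comparing with the same expansion applied to $\lam$ yields $z(\lam)=-\pair{a(\lam):a(\lam)+2\rho_0}/(2\kappa)$, as required.

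I do not foresee a serious obstacle: the argument is bookkeeping with the affine dot action and the explicit embedding~(\ref{eq:calPn}). The only point needing a moment's care is the non-negativity of $\lam_m$, which crucially uses that $\Pi_0^+=\{\al_1,\ldots,\al_{m-1}\}$ omits the affine simple root $\al_0$, so that subtracting an element of $\N\Pi_0^+$ from a partition can only \emph{add} to the last coordinate rather than decrease it.
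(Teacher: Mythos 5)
Your proof is correct and uses the same key step as the paper: extracting the $\ep_m$-coefficient from $a(\mu)-a(\lam)\in\N\Pi_0^+$ gives $\lam_m=\mu_m+c_{m-1}\geqs 0$, which combined with the $\Lam^+$-dominance condition shows $a(\lam)$ is a partition of $n$. You are more explicit than the paper in checking the $\delta$-coefficient condition via $\|\lam+\rho\|^2=\|\rho_0\|^2$ (the paper treats this as implicit, since membership in $E$ already places $\lam$ in the dot-orbit of a weight coming from $\calP_n$, which pins down $z(\lam)$), but the essential argument is the same.
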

\begin{proof}
  Let $\lam\in E$ and $\mu\in\calP_n$ and assume that
  $\lam\preceq\mu$. Recall that $E\subset{}_\kappa\!\frakt^\ast$,
  so we can write $a(\lam)=\sum_{i=1}^m\lam_i\ep_i$.
  Since $E\subset{}^r\!\Lam^+$
  we have $\lam_i\in\Z$ and $\lam_i\geqs\lam_{i+1}$.
  We need to show that $\lam_m\in\N$.
  Since $\lam\preceq\mu$ there exist $r_i\in\N$ such that
  $a(\mu)-a(\lam)=\sum_{i=1}^{m-1}r_i\al_i$.
  Therefore we have $\lam_m=\mu_m+r_{m-1}\geqs 0$.
\end{proof}
Now, we can prove the following proposition.
\begin{prop}
  The category $(\calA_\bfk, \Delta_\bfk)$ is a highest weight $R$-category with respect to the partial order $\unlhd$ on $\calP_n$. The highest weight category
  $(\calA_\bfk(\wp), \Delta_\bfk(\wp))$ given by base change is equivalent to
  $(\calA_\kappa, \Delta_\kappa)$.
\end{prop}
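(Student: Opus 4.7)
The plan is to apply Proposition \ref{prop:specialization}. We need (i) an identification $\calA_\bfk\cong A\modu$ for some finite projective $R$-algebra $A$, with $\Delta_\bfk\subset\calA_\bfk\cap R\proj$, and (ii) the equivalence $(\calA_\bfk(\wp),\Delta_\bfk(\wp))\cong(\calA_\kappa,\Delta_\kappa)$ of highest weight $\C$-categories.

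I would first characterize $\calA_\kappa$ as the Serre subcategory of $\calE_\kappa$ whose objects have all composition factors of the form $L_\kappa(\mu)$ with $\mu\in\calP_n$. The inclusion of this Serre subcategory into $\calA_\kappa$ is immediate: a nonzero map $M_\kappa(\lam)\to M$ realizes $L_\kappa(\lam)$ as a composition factor of the image, so if every composition factor lies in $\calP_n$ then $\lam\in\calP_n$. For the converse, I would induct on the length of $M\in\calA_\kappa$ (which is finite since $\calE_\kappa$ is artinian): any simple $L_\kappa(\nu)$ in the head of $M$ provides a $\frakq$-highest weight vector of weight $\nu$, hence a nonzero morphism $M_\kappa(\nu)\to M$, forcing $\nu\in\calP_n$; since $\rad M\in\calA_\kappa$ by left-exactness of $\Hom$, the induction closes. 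Combined with Lemma \ref{lem:KL}(a) and Lemma \ref{lem:ideal}, this in particular shows $M_\kappa(\lam)\in\calA_\kappa$ for every $\lam\in\calP_n$.

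Next, I would verify that $(\calA_\kappa,\Delta_\kappa)$ is a highest weight $\C$-category. For each $\lam\in\calP_n$, let $Q_\kappa(\lam)$ be the largest quotient of ${}^r\!P_\kappa(\lam)$ lying in $\calA_\kappa$; this exists because $\calE_\kappa$ is artinian and $\calA_\kappa$ is a Serre subcategory closed under finite intersections of kernels. Using the standard filtration of ${}^r\!P_\kappa(\lam)$ from Lemma \ref{lem:KL}(b), together with the coideal property of $E\setminus\calP_n$ (dual to Lemma \ref{lem:ideal}), one can reorder the filtration so that the $M_\kappa(\mu_i)$ with $\mu_i\in E\setminus\calP_n$ occupy the deepest terms; the corresponding submodule is precisely the kernel of ${}^r\!P_\kappa(\lam)\twoheadrightarrow Q_\kappa(\lam)$, and $Q_\kappa(\lam)$ inherits a filtration topped by $M_\kappa(\lam)$ with further successive quotients $M_\kappa(\mu_i)$ where $\mu_i\in\calP_n$ and $\mu_i\succ\lam$. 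The remaining highest weight axioms are inherited from $\calE_\kappa$ with respect to $\preceq$, and the coarser order $\unlhd$ is also valid by the implication $\lam\preceq\mu\Rightarrow\lam\unlhd\mu$ from (\ref{eq:compareorder}).

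For the deformed setting I would construct $Q_\bfk(\lam)\in\calA_\bfk$ by the analogous truncation of ${}^r\!P_\bfk(\lam)$, using the deformed standard filtration from Lemma \ref{lem:fiebigrw}, and verify that $Q_\bfk(\lam)(\wp)\cong Q_\kappa(\lam)$. The direct sum $P=\bigoplus_{\lam\in\calP_n}Q_\bfk(\lam)$ is then a projective generator of $\calA_\bfk$, and Lemma \ref{lem:fiebig} shows $A=\End_{\calA_\bfk}(P)^{\op}$ is a finite projective $R$-algebra with $A(\wp)$ equal to the endomorphism algebra over $\C$ of the specialized generator, giving equivalences $\calA_\bfk\cong A\modu$ and $\calA_\bfk(\wp)\cong\calA_\kappa$. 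Since each $M_\bfk(\lam)$ is free over $R$ with $M_\bfk(\lam)(\wp)=M_\kappa(\lam)\in\Delta_\kappa$, the result then follows from Proposition \ref{prop:specialization}. The main obstacle is showing that the $\Hom$-vanishing condition defining $\calA_\bfk$ is equivalent, after base change, to the composition-factor characterization of $\calA_\kappa$, and dually the compatibility $Q_\bfk(\lam)(\wp)\cong Q_\kappa(\lam)$; both rely on Nakayama's lemma applied to the finitely generated $R$-modules furnished by Lemma \ref{lem:fiebig}(a).
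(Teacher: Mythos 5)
Your approach is genuinely different from the paper's. The paper disposes of this proposition in one line, invoking Rouquier's truncation result \cite[Proposition 4.14]{R} directly for the highest weight $R$-category $\calE_\bfk$ and the ideal $\calP_n\subset E$, then coarsening the partial order from $\preceq$ to $\unlhd$ via (\ref{eq:compareorder}). You instead re-derive the truncation from first principles through the specialization criterion (Proposition \ref{prop:specialization}), constructing the truncated projectives $Q_\kappa(\lam)$ and $Q_\bfk(\lam)$ by hand. This is a legitimate, more self-contained route, but as written it contains a genuine error in the very first step.

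The error is in the converse direction of your composition-factor characterization of $\calA_\kappa$. You claim that a simple $L_\kappa(\nu)$ in the head of $M$ provides a $\frakq$-highest weight vector of weight $\nu$ in $M$, and hence a nonzero map $M_\kappa(\nu)\to M$. This is false: a surjection $M\twoheadrightarrow L_\kappa(\nu)$ does not produce an $\frakn$-singular vector of weight $\nu$ inside $M$. Already in the ordinary BGG category $\calO$ for $\mathfrak{sl}_2$, the dual Verma module of highest weight $0$ has head $L(-2)$, yet the Verma module of highest weight $-2$ admits no nonzero map into it, because the socle is $L(0)$. The induction should instead use a maximal weight: take $\nu$ with $M_\nu\neq 0$ maximal in the root order; any nonzero $v\in M_\nu$ is then $\frakn$-singular, and since $M$ is $\frakl$-locally finite this yields a nonzero map $M_\kappa(\nu)\to M$, so $\nu\in\calP_n$. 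Its image $M'$ has all composition factors $L_\kappa(\mu)$ with $\mu\preceq\nu$, hence $\mu\in\calP_n$ by Lemma \ref{lem:ideal}. The induction then closes on $M/M'$ rather than on $\rad M$: one checks $\Ext^1_{\calE_\kappa}(M_\kappa(\lam),L_\kappa(\mu))=0$ for $\lam\notin\calP_n$ and $\mu\in\calP_n$ (from the highest weight axioms and the ideal property of $\calP_n$), whence $\Hom(M_\kappa(\lam),M/M')=0$ and $M/M'$ is again in $\calA_\kappa$. Swapping head for socle and inducting on $M/\soc M$ would be circular, since closure of $\calA_\kappa$ under quotients is exactly what this characterization is meant to establish.
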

\begin{proof}
  Since $\calE_\bfk$ is a highest weight $R$-category
  and $\calP_n$ is an ideal of $E$, \cite[Proposition
  4.14]{R} implies that $(\calA_\bfk, \Delta_\bfk)$ is a highest
  weight $R$-category with respect to the
  partial order $\preceq$ on $\calP_n$. By (\ref{eq:compareorder}) this implies that $(\calA_\bfk, \Delta_\bfk)$ is also a highest
weight $R$-category with respect to $\unlhd$. Finally, the equivalence
$\calA_\bfk(\wp)\cong\calA_\kappa$ follows from the equivalence
$\calE_\bfk(\wp)\cong\calE_\kappa$ and loc.~cit.
\end{proof}

\subsection{Costandard objects of $\calA_\bfk$}\label{ss:duality}

Consider the (contravariant) \emph{duality} functor
$\bfD $ on $\calO_\bfk$
given by
\begin{equation}\label{ss:dualfunctor}
\bfD M=\bigoplus_{\mu\in{}_\bfk\!\frakt^\ast}\Hom_R(M_\mu,R),
\end{equation}
where the action of $\calU_\bfk$ on $\bfD  M$ is given as in Section \ref{ss:jantverma}, with the module $M_\bfk(\lam)$ there replaced by $M$. Similarly, we define the (contravariant) \emph{duality} functor $\bfD$ on $\calO_\kappa$ by
\begin{equation}\label{ss:dualfunctorcc}
\bfD M=\bigoplus_{\mu\in{}_\bfk\!\frakt^\ast}\Hom(M_\mu,\C),
\end{equation}
with the $\calU_\kappa$-action given in the same way. This functor
fixes the simple modules in $\calO_\kappa$. Hence it restricts to a duality functor on
$\calA_\kappa$, because $\calA_\kappa$ is a Serre subcategory of
$\calO_\kappa$. Therefore $(\calA_\kappa,\Delta_\kappa)$ is a
highest weight category with duality in the sense of \cite{CPS}. It
follows from \cite[Proposition 1.2]{CPS} that the costandard module
$M_\kappa(\lam)\spcheck$ in $\calA_\kappa$ is isomorphic to $\bfD
M_\kappa(\lam)$.

\begin{lemma}\label{lem:dualverma}
  The costandard module $M_\bfk(\lam)\spcheck$ in $\calA_\bfk$ is
  isomorphic to $\bfD  M_\bfk(\lam)$ for any $\lam\in\calP_n$.
\end{lemma}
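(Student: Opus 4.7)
\emph{Plan.} I would apply Lemma~\ref{lem:costandsp} with $\spcheck D = \bfD M_\bfk(\mu)$ to identify the costandard object. To sidestep having to prove $\bfD M_\bfk(\lam) \in \calA_\bfk$ directly, I would work first inside the larger highest weight category $\calE_\bfk$, where standards are indexed by all of $E$, and then transfer the conclusion to $\calA_\bfk$ using that $\calP_n$ is an ideal of $E$.

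The three hypotheses of Lemma~\ref{lem:costandsp} to verify for each $\mu \in E$ are (i)~$\bfD M_\bfk(\mu) \in \calE_\bfk \cap R\proj$, (ii)~$(\bfD M_\bfk(\mu))(\wp) \cong M_\kappa(\mu)\spcheck$, and (iii)~$(\bfD M_\bfk(\mu))_K \cong M_\bfk(\mu)_K$. The PBW theorem applied to $M_\bfk(\mu) = \calU(\g_R) \otimes_{\calU(\frakq_R)}(V(\mu)\otimes R_{s\omega_0})$ shows each weight space is a finitely generated free $R$-module, hence so is each weight space of $\bfD M_\bfk(\mu)$, and the base change $(\bfD M_\bfk(\mu))(\wp) \cong \bfD M_\kappa(\mu)$ follows from the commutation of $\Hom_R(-,R)$ with specialization on free modules. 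Since $\bfD$ on $\calO_\kappa$ fixes simples, it restricts to a duality on the Serre subcategory $\calE_\kappa$, and \cite[Proposition~1.2]{CPS} gives $\bfD M_\kappa(\mu) \cong M_\kappa(\mu)\spcheck$; together with Lemma~\ref{lem:calEk} this yields both (i) and (ii). For (iii), I would use the universal property of parabolic Verma modules on the rank-one $\mu_s$-weight line of $\bfD M_\bfk(\mu)$ to produce a morphism $\phi\colon M_\bfk(\mu) \to \bfD M_\bfk(\mu)$ that is an isomorphism on the highest-weight line; after applying $-\otimes_R K$, simplicity of $M_\bfk(\mu)_K$ from Lemma~\ref{lem:MKsimple} forces $\phi_K$ to be injective, and the fact that the two sides have identical formal characters upgrades $\phi_K$ to an isomorphism.

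Lemma~\ref{lem:costandsp} then identifies $\bfD M_\bfk(\mu)$ with the costandard of $\calE_\bfk$ at $\mu$ for every $\mu \in E$. Since $\calP_n$ is an ideal in $E$ by Lemma~\ref{lem:ideal} and $(\calA_\bfk,\Delta_\bfk)$ is the highest weight subcategory of $\calE_\bfk$ cut out by this ideal via \cite[Proposition~4.14]{R}, the costandard of $\calA_\bfk$ at $\lam \in \calP_n$ coincides with that of $\calE_\bfk$ at $\lam$, giving $M_\bfk(\lam)\spcheck \cong \bfD M_\bfk(\lam)$. The most delicate point is (iii): producing the morphism $\phi$ requires checking that the $\mu_s$-weight line of $\bfD M_\bfk(\mu)$ is annihilated by the nilpotent radical of $\frakq_R$ and that the $\frakl$-action on the finite-dimensional piece of $\bfD M_\bfk(\mu)$ supported in weights $a(\mu)+s\omega_0-\N\Pi_0^+$ is the dual Weyl module of $V(\mu)$, so that the parabolic induction property actually applies; this is where the explicit form of the anti-involution $\sigma$ used to define $\bfD$ enters.
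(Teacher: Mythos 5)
Your argument is correct and uses the same key tool as the paper---Lemma~\ref{lem:costandsp}---but routes through the larger category $\calE_\bfk$ rather than applying the lemma directly to $(\calA_\bfk,\Delta_\bfk)$, which is what the paper does. The paper's proof verifies the two numerical hypotheses of Lemma~\ref{lem:costandsp}, namely $(\bfD M_\bfk(\lam))(\wp)\cong M_\kappa(\lam)\spcheck$ (quoting Section~\ref{ss:duality}) and $(\bfD M_\bfk(\lam))_K\cong M_\bfk(\lam)_K$ (from simplicity in Lemma~\ref{lem:MKsimple}), and then invokes Lemma~\ref{lem:costandsp} for $\calA_\bfk$; it leaves implicit the hypothesis that $\bfD M_\bfk(\lam)$ lies in $\calA_\bfk\cap R\proj$. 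Your detour through $\calE_\bfk$ is designed precisely to handle that membership cleanly: Lemma~\ref{lem:calEk} characterizes $\calE_\bfk$-membership by specialization, so once $(\bfD M_\bfk(\mu))(\wp)\cong\bfD M_\kappa(\mu)\in\calE_\kappa$ is known, $\bfD M_\bfk(\mu)\in\calE_\bfk$ is immediate. After identifying $\bfD M_\bfk(\mu)$ as the $\calE_\bfk$-costandard, the Ext-orthogonality of Proposition~\ref{prop:costandard}(b) gives $\Hom_{\calE_\bfk}(M_\bfk(\lam),\bfD M_\bfk(\mu))=0$ for $\lam\neq\mu$, so $\bfD M_\bfk(\mu)\in\calA_\bfk$ for $\mu\in\calP_n$, and the transfer of the costandard property down the ideal $\calP_n\subset E$ (Lemma~\ref{lem:ideal} plus \cite[Proposition~4.14]{R}) finishes. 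Your verification of $(\bfD M_\bfk(\mu))_K\cong M_\bfk(\mu)_K$ by constructing the canonical morphism $\phi$ from the universal property and then comparing formal characters is a slightly more hands-on version of the paper's one-line appeal to simplicity of $M_\bfk(\mu)_K$; the two arguments are equivalent (and the $\phi$ in question is exactly the one already introduced in Section~\ref{ss:jantverma}, so the ``delicate'' checks about the $\frakq_R$-action on the highest weight line are the same ones the paper relies on there). In short: your proof is correct, slightly longer, and makes explicit a membership verification the paper leaves unstated.
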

\begin{proof}
  By definition we have a canonical isomorphism
  $$(\bfD  M_\bfk(\lam))(\wp)\cong \bfD(M_\kappa(\lam))\cong M_\kappa(\lam)\spcheck.$$
  Recall from Lemma \ref{lem:MKsimple} that   $M_\bfk(\lam)_K$ is a simple $\calU_{\bfk,K}$-module. Therefore we
  have $(\bfD  M_\bfk(\lam))_K\cong M_\bfk(\lam)_K$. So the lemma
  follows from Lemma \ref{lem:costandsp} applied to the
  highest weight category $(\calA_\bfk,\Delta_\bfk)$ and the set $\{\bfD
M_\bfk(\lam)\,|\,\lam\in\calP_n\}$.
\end{proof}

\subsection{Comparison of the Jantzen filtrations}

By Definition \ref{df:jantzen} for any $\lam\in\calP_n$ there is a
Jantzen filtration of $M_\kappa(\lam)$ associated with the highest
weight category $(\calA_\bfk,\Delta_\bfk)$. Lemma
\ref{lem:dualverma} implies that this Jantzen filtration coincides
with the one given in Section \ref{ss:jantverma}.

\subsection{The $v$-Schur algebra}\label{ss:schur}

\emph{In this section let $R$ denote an arbitrary integral domain.} Let $\bfv$ be an invertible element in $R$. The Hecke algebra $\scrH_\bfv$ over $R$ is a $R$-algebra, which is free as a $R$-module with basis $\{T_w\,|\,w\in\frakS_0\}$, the multiplication is given by
\begin{eqnarray*}
  &T_{w_1}T_{w_2}=T_{w_1w_2},&\text{ if }\ l(w_1w_2)=l(w_1)+l(w_2),\\
  &\quad(T_{s_i}+1)(T_{s_i}-\bfv)=0,&\qquad 1\leqs i\leqs m-1.
\end{eqnarray*}
Next, recall that a composition of $n$ is a sequence
$\mu=(\mu_1,\ldots,\mu_d)$ of positive integers such that
$\sum_{i=1}^d\mu_i=n$. Let $\calX_n$ be the set of compositions of
$n$. For $\mu\in\calX_n$ let $\frakS_\mu$ be the subgroup of
$\frakS_0$ generated by $s_i$ for all $1\leqs i\leqs d-1$ such that
$i\neq\mu_1+\cdots+\mu_j$ for any $j$. Write
\begin{equation*}
x_\mu=\sum_{w\in\frakS_\mu}T_w\quad\text{ and }\quad
y_\mu=\sum_{w\in\frakS_\mu}(-\bfv)^{-l(w)}T_w.
\end{equation*}
The $v$-\emph{Schur algebra} $\bfS_{\bfv}$ of parameter $\bfv$ is the endomorphism algebra of the right $\scrH_\bfv$-module
$\bigoplus_{\mu\in\calX_n}x_\mu\scrH_\bfv$. We will abbreviate
$$\calA_\bfv=\bfS_\bfv\modu.$$ Consider the composition $\varpi$ of $n$ such that $\varpi_i=1$ for $1\leqs i\leqs n$. Then
$x_{\varpi}\scrH_\bfv=\scrH_\bfv$. So the Hecke algebra $\scrH_\bfv$
identifies with a subalgebra of $\bfS_{\bfv}$ via the canonical
isomorphism $\scrH_\bfv\cong\End_{\scrH_\bfv}(\scrH_\bfv)$.

For $\lam\in\calP_n$ let $\lam'$ be the transposed partition of
$\lam$. Let $\varphi_\lam$ be the element in
$\bfS_{\bfv}$ given by $\varphi_\lam(h)=x_\lam h$ for $h\in
x_\varpi\scrH_\bfv$ and $\varphi_\lam(x_\mu\scrH_\bfv)=0$ for any composition $\mu\neq\varpi$. Then there is a particular element $w_\lam\in\frakS_0$ associated
with $\lam$ such that the \emph{Weyl module} $W_\bfv(\lam)$ is the
left ideal in $\bfS_\bfv$ generated by the element
$$z_\lam=\varphi_\lam T_{w_\lam}y_{\lam'}\in\bfS_{\bfv}.$$ See \cite{JM} for
details. We will write
$$\Delta_\bfv=\{W_\bfv(\lam)\,|\,\lam\in\calP_n\}.$$

\subsection{The Jantzen filtration of Weyl modules}

Now, set again $R=\C[[s]]$. Fix
$$v=\exp(2\pi
i/\kappa)\in\C\qquad\textrm{and}\qquad\bfv=\exp(2\pi i/\bfk)\in R.$$
Below we will consider the $v$-Schur algebra over $\C$ with the
parameter $v$, and the $v$-Schur algebra over $R$ with the parameter
$\bfv$. The category $(\calA_v, \Delta_v)$ is a highest weight
$\C$-category. Write $L_v(\lam)$ for the simple quotient of
$W_v(\lam)$. The canonical algebra isomorphism
$\bfS_\bfv(\wp)\cong\bfS_v$ implies that $(\calA_\bfv,\Delta_\bfv)$
is a highest weight $R$-category and there is a canonical
equivalence
\begin{equation*}
(\calA_{\bfv}(\wp),\Delta_{\bfv}(\wp))\cong(\calA_v,\Delta_v).
\end{equation*}
We define the Jantzen filtration $(J^iW_v(\lam))$ of $W_v(\lam)$ by applying
Definition \ref{df:jantzen} to $(\calA_\bfv,\Delta_\bfv)$. This
filtration coincides with the one defined in \cite{JM}, because the
contravariant form on $W_\bfv(\lam)$ used in \cite{JM}'s definition
is equivalent to a morphism from $W_\bfv(\lam)$ to the dual standard
module $W_\bfv(\lam)\spcheck=\Hom_R(W_\bfv(\lam),R)$.

\subsection{Equivalence of $\calA_\bfk$ and $\calA_\bfv$}

In this section we will show that the highest weight $R$-categories $\calA_\bfk$ and $\calA_\bfv$ are equivalent. The proof uses rational double affine Hecke algebras and Suzuki's functor. Let us
first give some reminders. Let $\h=\C^n$, let $y_1,\ldots,y_n$ be its standard basis and
$x_1,\ldots, x_n\in\h^\ast$ be the dual basis. Let $\bfH_{1/\kappa}$
be the rational double affine Hecke algebra associated with
$\frakS_n$ with parameter $1/\kappa$. It is the quotient of the smash product of
the tensor algebra $T(\h\oplus\h^\ast)$ with $\C\frakS_n$
by the relations
$$[y_i,x_i]=1+\frac{1}{\kappa}\sum_{j\neq i}s_{ij},\quad
[y_i,x_j]=\frac{-1}{\kappa}s_{ij},\quad 1\leqs i,j\leqs n,\ i\neq j.$$ Here
$s_{ij}$ denotes the element of $\frakS_n$ that permutes $i$ and
$j$. Denote by $\calB_\kappa$ the category $\calO$ of $\bfH_{1/\kappa}$ as defined in \cite{GGOR}. It is a highest weight $\C$-category. Let $\{B_\kappa(\lam)\,|\,\lam\in\calP_n\}$ be the set of standard modules.

Now, let $V=\C^m$ be the dual of the vectorial representation of
$\g_0$. For any object $M$ in $\calA_\kappa$ consider the action of the Lie algebra $\g_0\otimes\C[z]$ on the vector space
$$T(M)=V^{\otimes n}\otimes M \otimes \C[\h]$$ given by
$$(\xi\otimes z^a)(v\otimes m\otimes f)=\sum_{i=1}^n\xi_{(i)}(v)\otimes m\otimes
x_i^af+v\otimes (-1)^a(\xi\otimes t^{-a})m \otimes f$$ for
$\xi\in\g_0$, $a\in\N$, $v\in V^{\otimes n}$, $m\in M$,
$f\in\C[\h]$. Here $\xi_{(i)}$ is the operator on $V^{\otimes n}$
that acts on the $i$-th copy of $V$ by $\xi$ and acts
on the other copies of $V$ by identity. Suzuki defined a natural action of
$\bfH_{1/\kappa}$ on the space of coinvariants
$$\frakE_\kappa(M)=H_0(\g_0\otimes\C[z],T(M)).$$
The assignment $M\mapsto\frakE_\kappa(M)$ gives a right exact
functor
\begin{equation*}
  \frakE_\kappa:\calA_\kappa\ra\calB_\kappa.
\end{equation*}
See \cite{Su} or \cite[Section 2]{VV} for details. We have
\begin{equation*}
  \frakE_\kappa(M_\kappa(\lam))=B_\kappa(\lam),
\end{equation*}
and $\frakE_\kappa$ is an equivalence of highest weight categories
\cite[Theorem A.5.1]{VV}.

Next, we consider the rational double affine Hecke algebra
$\bfH_{1/\bfk}$ over $R$ with parameter $1/\bfk$. The category
$\calO$ of $\bfH_{1/\bfk}$ is defined in the obvious way. It is a
highest weight $R$-category. We will denote it by $\calB_\bfk$. The standard modules will be denoted by $B_\bfk(\lam)$. The
Suzuki functor over $R$
$$\frakE_{\bfk}: \calA_\bfk\ra\calB_\bfk,\quad M\mapsto H_0(\g_0\otimes\C[z],T(M))$$
is defined in the same way. It has the following properties.

\begin{lemma}\label{lem:suzuki}
(a) We have $\frakE_\bfk(M_\bfk(\lam))=B_\bfk(\lam)$ for $\lam\in\calP_n.$

(b) The functor $\frakE_\bfk$ restricts to an exact functor
$\calA^\Delta_\bfk\ra\calB^\Delta_\bfk.$

(c) The functor $\frakE_\bfk$ maps a projective generator of
$\calA_\bfk$ to a projective generator of $\calB_\bfk$.
\end{lemma}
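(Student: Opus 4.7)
\emph{Proof plan.} The strategy is to mirror the $\C$-case proof of \cite[Theorem A.5.1]{VV} in the deformed setting, using the $R$-freeness of deformed parabolic Verma modules and of the standard modules $B_\bfk(\lam)$ to transport statements from $\calA_\kappa$, $\calB_\kappa$ up to $\calA_\bfk$, $\calB_\bfk$ via the specialization $R\twoheadrightarrow R/\wp=\C$.

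For (a), the PBW decomposition
\[
M_\bfk(\lam)\cong\calU\bigl(t^{-1}\g_0[t^{-1}]\bigr)\otimes_\C\bigl(V(\lam)\otimes R_{s\omega_0}\bigr)
\]
as $R$-modules shows that $T(M_\bfk(\lam))=V^{\otimes n}\otimes M_\bfk(\lam)\otimes\C[\h]$ is $R$-free. The vector $1\otimes v_\lam\otimes 1$ has the correct $\frakt$-weight and $\frakS_n$-transformation type to descend, in the coinvariants, to a generator of $\frakE_\bfk(M_\bfk(\lam))$ on which $\bfH_{1/\bfk}$ acts through the defining relations of $B_\bfk(\lam)$, so the universal property of the standard module yields a natural surjection $B_\bfk(\lam)\twoheadrightarrow\frakE_\bfk(M_\bfk(\lam))$. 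Both sides are finitely generated $R$-modules in each weight, and their $(\wp)$-specializations are isomorphic by \cite[Theorem A.5.1]{VV}, so Nakayama's lemma promotes the surjection to an isomorphism.

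For (b), right exactness of $\frakE_\bfk$ is automatic from its definition as a coinvariants functor. To upgrade to exactness on $\calA^\Delta_\bfk$, I would establish the vanishing $H_i(\g_0\otimes\C[z],T(M_\bfk(\lam)))=0$ for all $i>0$ and $\lam\in\calP_n$; using a Chevalley--Eilenberg resolution of the trivial $\calU(\g_0[z])$-module together with the $R$-freeness of $T(M_\bfk(\lam))$, this descends to the analogous vanishing over $\C$ known from \cite{VV}. Induction on the length of a standard filtration then yields exactness on $\calA^\Delta_\bfk$, and part (a) identifies the successive quotients of the image with the expected $B_\bfk(\mu)$'s, so the image lies in $\calB^\Delta_\bfk$.

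For (c), take $P_\bfk=\bigoplus_{\lam\in\calP_n}{}^r\!P_\bfk(\lam)$; by Lemma \ref{lem:fiebigrw} it has a standard filtration, so by (b) its image $\frakE_\bfk(P_\bfk)$ lies in $\calB^\Delta_\bfk$. Its specialization $\frakE_\bfk(P_\bfk)(\wp)\cong\frakE_\kappa(P_\bfk(\wp))$ is a projective generator of $\calB_\kappa$ by \cite[Theorem A.5.1]{VV}; Lemma \ref{lem:useful}(a) then upgrades projectivity to $\calB_\bfk$, and Nakayama's lemma upgrades the generating property. The main obstacle I foresee is (b): one must carefully control the higher derived Tor terms in the base-change comparison between $\frakE_\bfk$ and $\frakE_\kappa$, so that genuine exactness, and not merely right exactness, descends to $\calA^\Delta_\bfk$.
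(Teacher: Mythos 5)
Parts (a) and (c) of your plan match the paper's in spirit: for (a) the paper simply says the argument is the same as in the nondeformed case, and your PBW/Nakayama route fits; for (c) you use the same specialization-plus-Nakayama and universal-coefficient mechanism as the paper, modulo choosing an explicit projective generator.

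Part (b) is where your proposal has a real gap. You want to prove exactness on $\calA^\Delta_\bfk$ by establishing the vanishing $H_i\bigl(\g_0\otimes\C[z],T(M_\bfk(\lam))\bigr)=0$ for all $i>0$, and you assert this ``descends to the analogous vanishing over $\C$ known from \cite{VV}.'' But \cite{VV} establishes that $\frakE_\kappa$ is an equivalence of highest weight categories; it says nothing about the higher Lie algebra homology groups $H_i$ for $i>0$, and I see no reason to expect such a vanishing statement to be available or even true. So your plan rests on an unestablished (and possibly false) claim, and then on a base-change comparison of derived functors that would have to be spelled out carefully even if the $\C$-vanishing were known. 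The paper avoids the issue entirely. Since $\frakE_\bfk$ is right exact, it suffices to show that $\frakE_\bfk$ preserves injections $M\hookrightarrow N$ between objects of $\calA^\Delta_\bfk$. Because $M_\bfk(\lam)_K$ is a simple $\calU_{\bfk,K}$-module (Lemma \ref{lem:MKsimple}), the categories $\calA_{\bfk,K}$ and $\calB_{\bfk,K}$ are semisimple and $\frakE_{\bfk,K}$ is an equivalence, hence exact. So $\frakE_\bfk(f)\otimes_RK$ is injective, and since the $R$-modules $\frakE_\bfk(M)$, $\frakE_\bfk(N)$ are free (hence embed into their $K$-localizations), $\frakE_\bfk(f)$ itself is injective. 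The essential tool you are missing is precisely the passage to the generic fiber $K$ and the semisimplicity it buys — this is what Lemma \ref{lem:MKsimple} is for, and it replaces any control of higher derived functors.
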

\begin{proof}
The proof of part (a) is the same as in the nondeformed case. For part (b), since $\frakE_\bfk$ is right exact over $\calA_\bfk$, it is
enough to prove that for any injective morphism $f:M\ra N$ with $M$,
$N\in\calA^\Delta_\bfk$ the map
$$\frakE_\bfk(f):\frakE_\bfk(M)\ra\frakE_\bfk(N)$$ is injective.
Recall from Lemma \ref{lem:MKsimple} that the
$\calU_{\bfk,K}$-module $M_\bfk(\lam)_K$ is simple for any $\lam$.
So the functor
$$\frakE_{\bfk,K}:\calA_{\bfk,K}\ra\calB_{\bfk,K}$$
is an equivalence. Hence the map
$$\frakE_\bfk(f)\otimes_RK:\frakE_{\bfk,K}(M_K)\ra
\frakE_{\bfk,K}(N_K)$$ is injective. Since both $\frakE_\bfk(M)$ and
$\frakE_\bfk(N)$ are free $R$-modules, this implies that
$\frakE_\bfk(f)$ is also injective. Now, let us concentrate on
(c). Let $P$ be a projective generator of $\calA_\bfk$. Then
$P(\wp)$ is a projective generator of $\calA_\kappa$. Since
$\frakE_\kappa$ is an equivalence of categories, we have
$\frakE_\kappa(P(\wp))$ is a projective generator of $\calB_\kappa$.
By (b) the object $\frakE_\bfk(P)$ belongs to
$\calB^\Delta_\bfk$, so it is free over $R$. Therefore by the
Universal Coefficient Theorem we have
$$(\frakE_\bfk(P))(\wp)\cong\frakE_\kappa(P(\wp)).$$
Hence $\frakE_\bfk(P)$ is a projective object of $\calB_\bfk$. Note that
for any $\lam\in\calP_n$ there is a surjective map $P\ra
M_\bfk(\lam)$. The right exact functor $\frakE_\bfk$ sends it to a surjective map
$\frakE_\bfk(P)\ra B_\bfk(\lam)$. This proves that $\frakE_\bfk(P)$
is a projective generator of $\calB_\bfk$.
\end{proof}

\begin{prop}\label{prop:equiv}
Assume that $\kappa\leqs -3$. Then there exists an equivalence
of highest weight $R$-categories
$$\calA_\bfk\simra\calA_\bfv,$$
which maps $M_\bfk(\lam)$ to $W_\bfv(\lam)$ for any
$\lam\in\calP_n$.
\end{prop}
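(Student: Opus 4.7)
The plan is to factor the desired functor as a composition $\calA_\bfk\xrightarrow{\frakE_\bfk}\calB_\bfk\simra\calA_\bfv$, where the first arrow is the Suzuki functor set up in Lemma \ref{lem:suzuki} and the second is the deformed version of Rouquier's equivalence between the category $\calO$ of the rational double affine Hecke algebra in type $A$ and the module category of the $v$-Schur algebra.

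First I would upgrade the Suzuki functor $\frakE_\bfk$ to an equivalence of highest weight $R$-categories. Fix a projective generator $P$ of $\calA_\bfk$; by Lemma \ref{lem:suzuki}(c), $\frakE_\bfk(P)$ is a projective generator of $\calB_\bfk$, and the functor $\frakE_\bfk$ induces an $R$-algebra homomorphism
$$\phi:\End_{\calA_\bfk}(P)^{\op}\lra\End_{\calB_\bfk}(\frakE_\bfk(P))^{\op}.$$
Both endomorphism algebras are finite projective over $R$, hence free as $R$-modules, because $R=\C[[s]]$ is local. The specialization $\phi(\wp)$ coincides with the corresponding map for the $\C$-linear Suzuki functor $\frakE_\kappa$, which is an isomorphism by \cite[Theorem A.5.1]{VV}. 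Nakayama's lemma thus shows that $\phi$ is surjective, and a surjection between free $R$-modules of the same finite rank is automatically an isomorphism, so $\phi$ is itself an isomorphism. Morita theory then yields that $\frakE_\bfk:\calA_\bfk\simra\calB_\bfk$ is an equivalence, and by Lemma \ref{lem:suzuki}(a) it sends $M_\bfk(\lam)$ to $B_\bfk(\lam)$ for every $\lam\in\calP_n$.

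Second, I would invoke Rouquier's theorem (\cite{R}) to obtain an equivalence of highest weight $R$-categories $\calB_\bfk\simra\calA_\bfv$ sending $B_\bfk(\lam)$ to $W_\bfv(\lam)$ for every $\lam\in\calP_n$. The hypothesis $\kappa\leqs-3$ enters precisely here, via Rouquier's faithfulness criterion for the Knizhnik-Zamolodchikov functor: it rules out the degenerate small values of $|\kappa|$ at which the KZ functor fails to identify $\calA_\bfv$ with the corresponding quotient of $\calB_\bfk$ in a manner compatible with the highest weight structure. Composing the two equivalences yields the desired functor $\calA_\bfk\simra\calA_\bfv$ sending $M_\bfk(\lam)$ to $W_\bfv(\lam)$. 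The main technical hurdle is the first step, where one must promote the known $\C$-linear Suzuki equivalence $\frakE_\kappa$ to an $R$-linear equivalence over the formal deformation ring $R=\C[[s]]$; the crucial input enabling this is Lemma \ref{lem:suzuki}(c), since the preservation of projective generators under deformation is exactly what lets one reduce the question to a finite dimensional check on endomorphism algebras.
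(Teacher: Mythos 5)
Your proof takes essentially the same route as the paper: first use the deformed Suzuki functor $\frakE_\bfk$ and Lemma~\ref{lem:suzuki}(c) to get an isomorphism of endomorphism algebras of projective generators over $R$ (deduced from the specialized isomorphism via Nakayama), and then compose with Rouquier's equivalence $\calB_\bfk\simra\calA_\bfv$, which is exactly where $\kappa\leqs -3$ (equivalently $v\neq -1$) is needed. The one place where the paper is more careful is the passage from the algebra isomorphism to the statement about standard modules. Since $\frakE_\bfk$ is a priori only \emph{right} exact, the paper does not directly assert that $\frakE_\bfk$ itself is the equivalence; it instead defines $\Phi$ abstractly by Morita theory and then proves $\Phi(M_\bfk(\lam))\cong B_\bfk(\lam)$ via the natural comparison map $\varphi$ together with a separate Nakayama argument. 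Your shortcut ``Morita theory then yields that $\frakE_\bfk$ is an equivalence'' is correct in substance, but to justify it one should note that a right exact $R$-linear functor between module categories over finite projective $R$-algebras that sends a projective generator $P$ to a projective generator $Q$ and induces an isomorphism $\End(P)\simra\End(Q)$ is automatically an equivalence (an Eilenberg--Watts/presentation argument, using that $\Hom(Q,-)$ commutes with the cokernels of maps $Q^{\oplus a}\to Q^{\oplus b}$ coming from presentations of $M$ by $P$). Once that is granted, your appeal to Lemma~\ref{lem:suzuki}(a) for the standard modules is cleaner than the paper's additional Nakayama step.
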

\begin{proof}
We first give an equivalence of highest weight categories
$$\Phi:\calA_\bfk\ra\calB_\bfk$$ as follows. Let $P$ be a projective generator of
$\calA_\bfk$. Then $Q=\frakE_\bfk(P)$ is a projective generator of
$\calB_\bfk$ by Lemma \ref{lem:suzuki}(c). By Morita theory we
have equivalences of categories
\begin{eqnarray*}
\Hom_{\calA_\bfk}(P,-):\calA_\bfk\simra\End_{\calA_\bfk}(P)^{\op}\modu,\\
\Hom_{\calB_\bfk}(Q,-):\calB_\bfk\simra\End_{\calB_\bfk}(Q)^{\op}\modu.
\end{eqnarray*}
We claim that the algebra homomorphism
\begin{equation}\label{eq:isopq}
\End_{\calA_\bfk}(P)\ra\End_{\calB_\bfk}(Q),\quad f\mapsto\frakE_\bfk(f),
\end{equation}
is an isomorphism. To see this, note that we have
$$Q(\wp)=\frakE_\kappa(P(\wp)),\quad(\End_{\calA_\bfk}(P))(\wp)=\End_{\calA_\kappa}(P(\wp)),\quad
(\End_{\calB_\bfk}(Q))(\wp)=\End_{\calB_\kappa}(Q(\wp)).$$ Since
$\frakE_\kappa$ is an equivalence, it yields an isomorphism
$$\End_{\calA_\kappa}(P(\wp))\simra\End_{\calB_\kappa}(Q(\wp)),\quad f\mapsto\frakE_\kappa(f).$$
Since both $\End_{\calA_\bfk}(P)$ and $\End_{\calB_\bfk}(Q)$ are
finitely generated free $R$-modules, by Nakayama's lemma the morphism
(\ref{eq:isopq}) is an isomorphism. In particular, it yields an
equivalence of categories
$$\End_{\calA_\bfk}(P)^{\op}\modu\cong\End_{\calB_\bfk}(Q)^{\op}\modu.$$
Combined with the other two equivalences above, we get an
equivalence of categories
$$\Phi: \calA_\bfk\ra\calB_\bfk.$$
It remains to show that
$$\Phi(M_\bfk(\lam))\cong B_\bfk(\lam),\quad\lam\in\calP_n.$$
Note that the functor $\frakE_\bfk$ yields a morphism of finitely generated $R$-modules
\begin{eqnarray*}
\Hom_{\calB_\bfk}(Q,\Phi(M_\bfk(\lam)))
&=&\End_{\calB_\bfk}(Q)^{\op}\otimes_{\End_{\calA_\bfk}(P)^{\op}}\Hom_{\calA_\bfk}(P,M_\bfk(\lam))\\
&\ra&\Hom_{\calB_\bfk}(Q,\frakE_\bfk(M_\bfk(\lam)))\\
&=&\Hom_{\calB_\bfk}(Q,B_\bfk(\lam)).
\end{eqnarray*}
Let us denote it by $\varphi$. Note also that we have isomorphisms
\begin{eqnarray*}
\Hom_{\calA_\bfk}(P,M_\bfk(\lam))(\wp)
&=&\Hom_{\calA_\kappa}(P(\wp),M_\kappa(\lam)),\\
\Hom_{\calB_\bfk}(Q,B_\bfk(\lam))(\wp)
&=&\Hom_{\calB_\kappa}(Q(\wp),B_\kappa(\lam)),
\end{eqnarray*}
and note that $\frakE_\kappa$ is an equivalence of categories. So the map
$\varphi(\wp)$ is an isomorphism. Further $\Hom_{\calB_\bfk}(Q,B_\bfk(\lam))$ is free over $R$, so Nakayama's lemma implies
that $\varphi$ is also an isomorphism. The preimage of $\varphi$
under the equivalence $\Hom_{\calB_\bfk}(Q,-)$ yields an isomorphism
$$\Phi(M_\bfk(\lam))\simeq B_\bfk(\lam).$$
Finally, if $v\neq -1$, i.e., $\kappa\leqs -3$, then by \cite[Theorem 6.8]{R}
the categories $\calB_\bfk$ and $\calA_\bfv$ are equivalent highest
weight $R$-categories with $B_\bfk(\lam)$ corresponding to
$W_\bfv(\lam)$. This equivalence composed with $\Phi$ gives the desired equivalence in the proposition.
\end{proof}

\begin{cor}\label{cor:jantid}
Assume that $\kappa\leqs -3$. Then for any $\lam$,
$\mu\in\calP_n$ and $i\in\N$ we have
\begin{equation}\label{eq:samefilt}
[J^iM_\kappa(\lam)/J^{i+1}M_\kappa(\lam):L_\kappa(\mu)]=
[J^iW_v(\lam)/J^{i+1}W_v(\lam):L_v(\mu)].
\end{equation}
\end{cor}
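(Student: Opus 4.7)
The plan is to deduce the corollary directly from Proposition \ref{prop:equiv} and Proposition \ref{prop:jantiso}, together with the identification of the two flavors of Jantzen filtration already established in the excerpt.

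First I would invoke Proposition \ref{prop:equiv} to fix an equivalence of highest weight $R$-categories
$$F:(\calA_\bfk,\Delta_\bfk)\simra(\calA_\bfv,\Delta_\bfv),\qquad F(M_\bfk(\lam))\cong W_\bfv(\lam),\quad \lam\in\calP_n.$$
Applying Proposition \ref{prop:jantiso}(a) to $F$, the specialized functor $F(\wp):\calA_\bfk(\wp)\ra\calA_\bfv(\wp)$ is an equivalence of highest weight $\C$-categories. Under the canonical equivalences $\calA_\bfk(\wp)\cong\calA_\kappa$ and $\calA_\bfv(\wp)\cong\calA_v$ established in the preceding subsections, $F(\wp)$ becomes an equivalence $\calA_\kappa\simra\calA_v$ sending $M_\kappa(\lam)$ to $W_v(\lam)$. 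Since any equivalence of abelian categories sends simple objects to simple objects and preserves heads of standard modules, it sends $L_\kappa(\mu)$ to $L_v(\mu)$, and in particular preserves Jordan--H\"older multiplicities.

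Next I would apply Proposition \ref{prop:jantiso}(b) to $F$: for every $\lam\in\calP_n$ and every $i\in\N$,
$$F(\wp)\bigl(J^iM_\kappa(\lam)\bigr)=J^iW_v(\lam),$$
where on the left the Jantzen filtration is the one associated, via Definition \ref{df:jantzen}, to the highest weight $R$-category $(\calA_\bfk,\Delta_\bfk)$, and on the right the Jantzen filtration is the one associated to $(\calA_\bfv,\Delta_\bfv)$. The comparison results already stated in the excerpt identify these filtrations with the classical ones: the filtration on $M_\kappa(\lam)$ agrees with the Jantzen filtration of parabolic Verma modules from Section \ref{ss:jantverma} (see the subsection \qo Comparison of the Jantzen filtrations\qo), while the filtration on $W_v(\lam)$ agrees with the Jantzen--Mathas filtration of Weyl modules (see the subsection \qo The Jantzen filtration of Weyl modules\qo).

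Combining these two points, the exact equivalence $F(\wp)$ induces isomorphisms between the successive subquotients $J^iM_\kappa(\lam)/J^{i+1}M_\kappa(\lam)$ and $J^iW_v(\lam)/J^{i+1}W_v(\lam)$, and it identifies $L_\kappa(\mu)$ with $L_v(\mu)$. Taking Jordan--H\"older multiplicities on both sides yields the desired equality
$$[J^iM_\kappa(\lam)/J^{i+1}M_\kappa(\lam):L_\kappa(\mu)]=[J^iW_v(\lam)/J^{i+1}W_v(\lam):L_v(\mu)].$$
There is no serious obstacle: all the hard work has been packaged into Propositions \ref{prop:equiv} and \ref{prop:jantiso}, together with the two compatibility lemmas identifying the category-theoretic Jantzen filtrations with the classical ones. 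The only thing to be careful about is that the Jantzen filtrations in Proposition \ref{prop:jantiso}(b) are intrinsically those defined via Definition \ref{df:jantzen}, so one must explicitly cite the two comparison subsections to pass to the Jantzen--Mathas and Jantzen parabolic Verma filtrations.
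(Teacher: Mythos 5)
Your argument is exactly the one the paper uses: the corollary is deduced from Proposition \ref{prop:equiv} (the equivalence $\calA_\bfk\simra\calA_\bfv$ sending $M_\bfk(\lam)$ to $W_\bfv(\lam)$) and Proposition \ref{prop:jantiso} (equivalences of highest weight $R$-categories specialize to equivalences that preserve Jantzen filtrations), together with the compatibility of the abstract Jantzen filtrations with the classical ones established in the two comparison subsections. Your write-up simply spells out the details that the paper leaves implicit in its one-line proof.
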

\begin{proof}
This follows from the proposition above and Proposition
\ref{prop:jantiso}.
\end{proof}

To prove the main theorem, it remains to compute the left hand side of (\ref{eq:samefilt}). This will be done by generalizing the approach of \cite{BB} to the affine parabolic
case. To this end, we first give some reminders on $\scrD$-modules on affine flag varieties.

\vskip1cm

\section{Generalities on $\scrD$-modules on ind-schemes}\label{s:indscheme}

In this section, we first recall basic notion for $\scrD$-modules on (possibly singular) schemes. We will also discuss twisted $\scrD$-modules and holonomic $\scrD$-modules. Then we introduce the notion of $\scrD$-modules on ind-schemes following \cite{BD} and \cite{KV}.

\subsection{Reminders on $\scrD$-modules}\label{ss:dmodreminder}

Unless specified otherwise, all the schemes will be assumed to be of finite type over $\C$, quasi-separated and quasi-projective. Although a large number of statements are true in a larger generality, we will only use them for quasi-projective schemes. For any scheme $Z$, let $\scrO_Z$ be the structure sheaf over $Z$. We write $\bfO(Z)$ for the category of quasi-coherent $\scrO_Z$-modules on $Z$. Note that we abbreviate
$\scrO_Z$-module for sheaf of $\scrO_Z$-modules over $Z$. For
$f:Z\ra Y$ a morphism of schemes, we write $f_*$, $f^\ast$ for the
functors of direct and inverse images on $\bfO(Z)$, $\bfO(Y)$. If
$f$ is a closed embedding and $\scrM\in \bfO(Y)$, we consider the
quasi-coherent $\scrO_Z$-module
$$f^!\scrM=f^{-1}\homo_{\scrO_Y}(f_\ast\scrO_Z,\scrM).$$
It is the restriction to $Z$ of the subsheaf of $\scrM$ consisting
of sections supported scheme-theoretically on $f(Z)\subset Y$.

Let $Z$ be a smooth scheme. Let $\scrD_Z$ be the ring of
differential operators on $Z$. We denote by $\bfM(Z)$ the category of right $\scrD_Z$-modules that are
quasi-coherent as $\scrO_Z$-modules. It is an abelian category. Let $\Omega_Z$ denote the sheaf of differential forms of highest degree on $Z$. The category of right $\scrD_Z$-modules is equivalent to the category of left $\scrD_Z$-modules via $\scrM\mapsto\Omega_Z\otimes_{\scrO_Z}\scrM$. Let $i:Y\ra Z$ be a morphism of smooth schemes. We consider the $(\scrD_Y, i^{-1}\scrD_Z)$-bimodule
$$\scrD_{Y\ra Z}=i^\ast\scrD_Z=\scrO_Y\otimes_{i^{-1}\scrO_{Z}}i^{-1}\scrD_{Z}.$$
We define the following functors
\begin{eqnarray*}
  &i^\ast:\bfM(Z)\ra\bfM(Y),\quad&\scrM\mapsto
  \Omega_Y\otimes_{\scrO_Y}\bigl(\scrD_{Y\ra Z}\otimes_{\scrD_Z}(\Omega_Z\otimes_{\scrO_Z}\scrM)\bigr),\\
  &i_\bullet: \bfM(Y)\ra\bfM(Z), \quad&\scrM\mapsto i_\ast(\scrM\otimes_{\scrD_Y}\scrD_{Y\ra Z}).
\end{eqnarray*}
For any $\scrM\in\bfM(Y)$ let $\scrM^{\scrO}$ denote the underlying $\scrO_Y$-module of $\scrM$. Then we have $$i^\ast(\scrM^{\scrO})=i^\ast(\scrM)^{\scrO}.$$
If $i$ is a locally closed affine embedding, then the functor $i_\bullet$ is exact. For any closed subscheme $Z'$ of $Z$, we denote by $\bfM(Z,Z')$ the full subcategory of $\bfM(Z)$ consisting of $\scrD_Z$-modules supported set-theoretically on $Z'$. If $i:Y\ra Z$ is a closed embedding of smooth varieties, then by a theorem of Kashiwara, the functor $i_\bullet$ yields an equivalence of categories
\begin{equation}\label{eq:equivkas}
  \bfM(Y)\cong\bfM(Z,Y).
\end{equation}
We refer to \cite{HTT} for more details about $\scrD$-modules on smooth schemes

Now, let $Z$ be a possibly singular scheme. We consider the abelian category $\bfM(Z)$ of right $\scrD$-modules on $Z$ with a faithful \emph{forgetful} functor
$$\bfM(Z)\ra\bfO(Z),\quad\scrM\mapsto\scrM^{\scrO}$$
as in \cite[7.10.3]{BD}. If $Z$ is smooth, it is equivalent to the category $\bfM(Z)$ above, see \cite[7.10.12]{BD}. For any closed embedding
$i:Z\ra X$ there is a left exact functor
$$i^!:\bfM(X)\ra\bfM(Z)$$ such that $(i^!(\scrM))^\scrO=i^!(\scrM^{\scrO})$ for all $\scrM$. It admits an exact left adjoint functor
$$i_\bullet:\bfM(Z)\ra\bfM(X).$$
In the smooth case these functors coincide with the one before.
If $X$ is smooth, then $i_\bullet$ and $i^!$ yield mutually inverse equivalences of categories
\begin{equation}\label{eq:kassing}
  \bfM(Z)\cong\bfM(X,Z).
\end{equation}
such that $i^!\circ i_\bullet=\Id$, see \cite[7.10.11]{BD}. Note that when $Z$ is smooth, this is Kashiwara's equivalence (\ref{eq:equivkas}). We will always consider $\scrD$-modules on a (possibly singular) scheme $Z$ which is given an embedding into a smooth scheme. Finally, if $j:Y\ra Z$ is a locally closed affine embedding and $Y$ is smooth, then we have the following exact functor
\begin{equation}\label{eq:ibullet}
  j_\bullet=i^!\circ(i\circ j)_\bullet: \bfM(Y)\ra\bfM(Z).
\end{equation}
Its definition is independent of the choice of $i$.

\subsection{Holonomic $\scrD$-modules}\label{ss:holoD-mod}

Let $Z$ be a scheme. If $Z$ is smooth, we denote by $\bfM_h(Z)$ the category of holonomic $\scrD_Z$-modules, see e.g., \cite[Definition 2.3.6]{HTT}. Otherwise, let $i:Z\ra X$ be a closed embedding into a smooth
scheme $X$. We define $\bfM_h(Z)$ to be the full subcategory of $\bfM(Z)$ consisting of objects $\scrM$ such that $i_\bullet\scrM$ is holonomic. The category $\bfM_h(Z)$ is abelian. There
is a (contravariant) \emph{duality} functor on $\bfM_h(Z)$ given by
$$\bbD:\bfM_h(Z)\ra\bfM_h(Z),\quad\scrM\mapsto i^!\bigl(\Omega_X\otimes_{\scrO_X}\ext^{\dim
X}_{\scrD_X}(i_\bullet\scrM,\scrD_X)\bigr).$$
For a locally closed affine embedding $i:Y\ra
Z$ with $Y$ a smooth scheme, the functor $i_\bullet$ given
by (\ref{eq:ibullet}) maps $\bfM_h(Y)$ to $\bfM_h(Z)$. We put $$i_!=\bbD\circ i_\bullet\circ\bbD:\ \bfM_h(Y)\ra \bfM_h(Z).$$
There is a canonical morphism of functors
$$\psi: i_!\ra i_\bullet.$$ The \emph{intermediate extension} functor is
given by
$$i_{!\bullet}:\bfM_h(Y)\ra\bfM_h(Z),\quad
\scrM\mapsto\Im(\psi(\scrM):i_!\scrM\ra i_\bullet\scrM).$$
Note that the functors $i_\bullet$, $i_!$ are exact. Moreover, if the embedding $i$ is closed, then $\psi$ is an isomorphism of functors $i_!\cong i_\bullet$.

\subsection{Weakly equivariant $\scrD$-modules}

Let $T$ be a linear group. For any $T$-scheme $Z$ there is an abelian category $\bfM^T(Z)$ of weakly $T$-equivariant right $\scrD$-modules on $Z$ with a faithful forgetful functor
\begin{equation}\label{eq:forgetT}
\bfM^T(Z)\ra\bfM(Z).
\end{equation}
If $Z$ is smooth, an object $\scrM$ of $\bfM^T(Z)$ is an object $\scrM$ of $\bfM(Z)$ equipped with a structure of
$T$-equivariant $\scrO_Z$-module such that the action map
$\scrM\otimes_{\scrO_Z}\scrD_Z\ra\scrM$ is $T$-equivariant. For any $T$-scheme $Z$ with a $T$-equivariant closed embedding $i:Z\ra X$ into a smooth $T$-scheme $X$, the functor $i_\bullet$ yields
an equivalence
$\bfM^T(Z)\cong\bfM^T(X,Z)$,
where $\bfM^T(X,Z)$ is the subcategory of $\bfM^T(X)$ consisting of objects supported set-theoretically on $Z$.

\subsection{Twisted $\scrD$-modules}\label{ss:twisted}

Let $T$ be a torus, and let $\frakt$ be its Lie algebra. Let
$\pi:Z^\dag\ra Z$ be a right $T$-torsor over the scheme $Z$. For any
object $\scrM\in\bfM^T(Z^\dag)$ the $\scrO_Z$-module
$\pi_\ast(\scrM^\scrO)$ carries a $T$-action. Let
$$\scrM^\dag=\pi_\ast(\scrM^\scrO)^T$$ be the $\scrO_Z$-submodule of $\pi_\ast(\scrM^\scrO)$ consisting of
the $T$-invariant local sections. We have
$$\Gamma(Z,\scrM^\dag)=\Gamma(Z^\dag,\scrM)^T.$$
For any weight $\lam\in\frakt^\ast$ we define the categories
$\bfM^{\tilde{\lam}}(Z)$, $\bfM^{\lam}(Z)$ as follows.

First, assume that $Z$ is a smooth scheme. Then $Z^\dag$ is also
smooth. So we have a sheaf of algebras on $Z$ given by
$$\scrD_Z^\dag=(\scrD_{Z^\dag})^\dag,$$ and $\scrM^\dag$ is a
right $\scrD_Z^\dag$-module for any $\scrM\in\bfM^T(Z^\dag)$.
For any open subscheme $U\subset Z$ the $T$-action on $\pi^{-1}(U)$
yields an algebra homomorphism
\begin{equation}\label{eq:taction}
\delta_r:\ \calU(\frakt)\ra\Gamma(U,\scrD_Z^\dag),
\end{equation}
whose image lies in the center of the right hand side. Thus there is
also an action of $\calU(\frakt)$ on $\scrM^\dag$ commuting with the
$\scrD_Z^\dag$-action. For $\lam\in\frakt^\ast$ let
$\frakm_\lam\subset\calU(\frakt)$ be the ideal generated by
$$\{h+\lam(h)\,\,|\,\, h\in\frakt\}.$$
We define $\bfM^\lam(Z)$ (resp. $\bfM^{\tilde{\lam}}(Z)$) to be the
full subcategory of $\bfM^T(Z^\dag)$ consisting of the objects
$\scrM$ such that the action of $\frakm_\lam$ on $\scrM^\dag$ is
zero (resp. nilpotent). In particular $\bfM^\lam(Z)$ is a full
subcategory of $\bfM^{\tilde\lam}(Z)$ and both categories are abelian. We will write
\begin{eqnarray}\label{eq:gammadag}
  \Gamma(Z,\scrM)&=&\Gamma(Z,\scrM^\dag),\quad\forall\ \scrM\in\bfM^{\tilde{\lam}}(Z).
\end{eqnarray}

Now, let $Z$ be any scheme. We say that a $T$-torsor $\pi:Z^\dag\ra Z$ is \emph{admissible} if there exists a $T$-torsor $X^\dag\ra X$ with
$X$ smooth and a closed embedding $i:Z\ra X$ such that $Z^\dag\cong
X^\dag\times_XZ$ as a $T$-scheme over $Z$. We will only use admissible $T$-torsors. Let $\bfM^\lam(X,Z)$, $\bfM^{\tilde\lam}(X,Z)$ be respectively the subcategories of $\bfM^\lam(X)$, $\bfM^{\tilde\lam}(X)$ consisting of objects supported on $Z^\dag$.
We define $\bfM^\lam(Z)$, $\bfM^{\tilde{\lam}}(Z)$ to be the full
subcategories of $\bfM^T(Z^\dag)$ consisting of objects $\scrM$ such that $i_\bullet(\scrM)$ belongs to $\bfM^\lam(X,Z)$, $\bfM^{\tilde\lam}(X,Z)$ respectively. Their definition only depends on $\pi$.

\begin{rk}
Let $Z$ be a smooth scheme. Let $\bfM(\scrD_Z^\dag)$ be the
category of right $\scrD_Z^\dag$-modules on $Z$ that are
quasi-coherent as $\scrO_Z$-modules. The functor
\begin{equation}\label{eq:equivdag}
    \bfM^T(Z^\dag)\simra\bfM(\scrD_Z^\dag),\quad\scrM\mapsto\scrM^\dag
\end{equation}
is an equivalence of categories. A quasi-inverse is given by
$\pi^\ast$, see e.g., \cite[Lemma 1.8.10]{BB}.
\end{rk}

\begin{rk}\label{rk:omega}
We record the following fact for a further use. For any smooth $T$-torsor $\pi:Z^\dag\ra Z$, the exact sequence of
relative differential $1$-forms
$$\pi^\ast(\Omega^1_{Z})\lra\Omega^1_{Z^\dag}\lra\Omega^1_{Z^\dag/Z}\lra
0$$ yields an isomorphism
$$\Omega_{Z^\dag}=\pi^\ast(\Omega_{Z})\otimes_{\scrO_{Z^\dag}}
\Omega_{Z^\dag/Z}.$$
Since $\pi$ is a $T$-torsor we have indeed
$$\Omega_{Z^\dag/Z}=\scrO_{Z^\dag}$$
as a line bundle. Therefore we have an isomorphism of $\scrO_{Z^\dag}$-modules
$$\Omega_{Z^\dag}=\pi^\ast(\Omega_{Z}).$$
Below, we will identify them whenever needed.
\end{rk}

\subsection{Twisted holonomic $\scrD$-modules and duality functors}\label{ss:twistholonome}

Let $\pi:Z^\dag \ra Z$ be an admissible $T$-torsor. We define $\bfM^T_h(Z^\dag)$ to be the full subcategory of $\bfM^T(Z^\dag)$ consisting of objects $\scrM$ whose image via the functor (\ref{eq:forgetT}) belongs to $\bfM_h(Z^\dag)$. We define the categories $\bfM^{\lam}_h(Z)$, $\bfM^{\tilde\lam}_h(Z)$ in the same manner.

Assume that $Z$ is smooth. Then the category $\bfM^T(Z^\dag)$ has enough injective objects, see e.g., \cite[Proposition 3.3.5]{Kas} and the references there. We define a (contravariant) duality functor on $\bfM^T_h(Z^\dag)$ by
$$\bbD': \bfM^T_h(Z^\dag)\ra\bfM^T_h(Z^\dag),\quad\scrM\mapsto
\Omega_{Z^\dag}\otimes_{\scrO_{Z^\dag}}\ext^{\dim Z^\dag}_{\bfM^T(Z^\dag)}(\scrM,\scrD_{Z^\dag}).$$
We may write $\bbD'=\bbD'_Z$. Note that by Remark \ref{rk:omega} and the equivalence (\ref{eq:equivdag}) we have
\begin{equation}
(\bbD'_Z\scrM)^\dag=\Omega_{Z}\otimes_{\scrO_{Z}}\ext^{\dim Z^\dag}_{\scrD_Z^\dag}(\scrM^\dag,\scrD_Z^\dag),\quad\forall\ \scrM\in\bfM^T(Z^\dag).
\end{equation}
For any $\lam\in\frakt^\ast$ the functor $\bbD'$ restricts to (contravariant) equivalences of categories
\begin{equation}\label{eq:bbD'lam}
\bbD': \bfM^{\tilde\lam}_h(Z)\ra\bfM^{\widetilde{-\lam}}_h(Z),\qquad
\bbD': \bfM^{\lam}_h(Z)\ra\bfM^{-\lam}_h(Z),\end{equation}
see e.g., \cite[Remark 2.5.5(iv)]{BB}. In particular, if $\lam=0$ then $\bbD'$ yields a duality on $\bfM^{0}_h(Z^\dag)$. It is compatible with the duality functor $\bbD$ on $\bfM_h(Z)$ defined in Section \ref{ss:holoD-mod} via the equivalence
$$\Phi:\bfM^{0}_h(Z)\ra  \bfM_h(Z),\quad \scrM\mapsto\scrM^\dag.$$
given by (\ref{eq:equivdag}). More precisely, we have the following lemma
\begin{lemma}\label{lem:interduality}
We have
$\Phi\circ\bbD'=\bbD\circ\Phi.$
\end{lemma}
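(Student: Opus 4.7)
For $\scrM\in\bfM^0_h(Z)$ the sheaf $\scrM^\dag$ carries a trivial $\frakt$-action via $\delta_r$, hence factors through the central quotient $\scrD_Z^\dag/\scrD_Z^\dag\cdot\frakt\cong\scrD_Z$ and so is naturally a $\scrD_Z$-module. Combining the explicit formula for $(\bbD'\scrM)^\dag$ displayed just before the lemma with the definition of $\bbD$ on $\bfM_h(Z)$ (taking $i=\id_Z$, since $Z$ is smooth), the desired identity $\Phi\circ\bbD'=\bbD\circ\Phi$ reduces to the construction of a canonical isomorphism of right $\scrD_Z$-modules
\begin{equation*}
\ext^{\dim Z^\dag}_{\scrD_Z^\dag}(\scrM^\dag,\scrD_Z^\dag)\;\simeq\;\ext^{\dim Z}_{\scrD_Z}(\scrM^\dag,\scrD_Z),
\end{equation*}
compatible with the identification $\Omega_{Z^\dag}\cong\pi^\ast\Omega_Z$ of Remark \ref{rk:omega}.

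To build this isomorphism, I would exploit the fact that $\delta_r(\calU(\frakt))\subset\scrD_Z^\dag$ is central, by (\ref{eq:taction}), and that $\frakt$ is a regular sequence in $\scrD_Z^\dag$. The Koszul complex
\begin{equation*}
0\to\scrD_Z^\dag\otimes_\C\Lambda^{\dim T}\frakt\to\cdots\to\scrD_Z^\dag\otimes_\C\frakt\to\scrD_Z^\dag\to\scrD_Z\to 0
\end{equation*}
is then a locally free left $\scrD_Z^\dag$-resolution of $\scrD_Z$, and dualising over $\scrD_Z^\dag$ shows that $\ext^q_{\scrD_Z^\dag}(\scrD_Z,\scrD_Z^\dag)$ vanishes for $q<\dim T$ and is canonically $\Lambda^{\dim T}\frakt^\ast\otimes_\C\scrD_Z$ for $q=\dim T$. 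Plugging this into the change-of-rings spectral sequence
\begin{equation*}
\ext^p_{\scrD_Z}\bigl(\scrM^\dag,\ext^q_{\scrD_Z^\dag}(\scrD_Z,\scrD_Z^\dag)\bigr)\Longrightarrow\ext^{p+q}_{\scrD_Z^\dag}(\scrM^\dag,\scrD_Z^\dag)
\end{equation*}
associated to the quotient $\scrD_Z^\dag\twoheadrightarrow\scrD_Z$, one sees it collapses into a single column, yielding
\begin{equation*}
\ext^{\dim Z^\dag}_{\scrD_Z^\dag}(\scrM^\dag,\scrD_Z^\dag)\;\simeq\;\Lambda^{\dim T}\frakt^\ast\otimes_\C\ext^{\dim Z}_{\scrD_Z}(\scrM^\dag,\scrD_Z)
\end{equation*}
in top degree, using $\dim Z^\dag=\dim Z+\dim T$.

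To finish, I would cancel the extra factor $\Lambda^{\dim T}\frakt^\ast$ against the canonical trivialisation $\Omega_{Z^\dag/Z}\cong\scrO_{Z^\dag}$ used in Remark \ref{rk:omega}, which is exactly the datum of a top-degree invariant form on $T$, i.e.\ a generator of $\Lambda^{\dim T}\frakt^\ast$. Thus both sides pick up the same one-dimensional twist, which cancels to yield a canonical isomorphism $(\bbD'\scrM)^\dag\simeq\bbD(\scrM^\dag)$, natural in $\scrM$ by functoriality of each step.

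The main obstacle is this last piece of bookkeeping: one must verify that the two appearances of $\Lambda^{\dim T}\frakt^\ast$—one from dualising the Koszul complex, the other from the trivialisation of the relative canonical bundle of the torsor—really are matched, and not off by a sign or scalar. Working Zariski-locally, where $Z^\dag\to Z$ trivialises as $Z\times T\to Z$ and one can pick explicit bases of $\frakt$, makes this comparison computational; the sheaf-theoretic globalisation is then routine, and the desired equality of functors follows.
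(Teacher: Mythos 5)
The paper omits this proof, declaring it ``standard, and left to the reader,'' so there is no paper argument to compare against; you supply a complete and reasonable one. Your route---recognize $\scrM^\dag$ as a right $\scrD_Z$-module via the central quotient $\scrD_Z^\dag\twoheadrightarrow\scrD_Z$, resolve $\scrD_Z$ by the Koszul complex on the central elements $\delta_r(h)$, then apply the change-of-rings spectral sequence to get
$\ext^{\dim Z^\dag}_{\scrD^\dag_Z}(\scrM^\dag,\scrD^\dag_Z)\cong\Lambda^{\dim T}\frakt^\ast\otimes_\C\ext^{\dim Z}_{\scrD_Z}(\scrM^\dag,\scrD_Z)$---is the correct calculation and is indeed what makes the lemma ``standard.'' You should make explicit that $\delta_r(\frakt)$ \emph{is} a central regular sequence in $\scrD_Z^\dag$; this is visible Zariski-locally, where $Z^\dag\cong Z\times T$ gives $\scrD_Z^\dag\cong\scrD_Z\otimes_\C\Sym(\frakt)$ with $\frakt$ acting as polynomial variables.

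One bookkeeping remark. The paper's displayed formula $(\bbD'_Z\scrM)^\dag=\Omega_Z\otimes_{\scrO_Z}\ext^{\dim Z^\dag}_{\scrD^\dag_Z}(\scrM^\dag,\scrD^\dag_Z)$ has \emph{already} absorbed one canonical factor of $\Lambda^{\dim T}\frakt^\ast$ in passing from $\Omega_{Z^\dag}$ to $\pi^\ast\Omega_Z$ via Remark~\ref{rk:omega}; the Koszul computation then produces a \emph{second} copy of $\Lambda^{\dim T}\frakt^\ast$, which lives entirely on the $\bbD'$ side. So the two twists do not literally ``cancel'' as you phrase it (``both sides pick up the same one-dimensional twist''): rather, the fixed generator of $\Lambda^{\dim T}\frakt^\ast$ chosen in Remark~\ref{rk:omega}---the trivialising invariant top form on $T$---must simply be invoked a second time to kill the remaining one-dimensional factor. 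Since the paper explicitly says it identifies $\Omega_{Z^\dag}$ with $\pi^\ast\Omega_Z$ ``whenever needed,'' this is legitimate, and the resulting isomorphism of functors $\Phi\circ\bbD'\cong\bbD\circ\Phi$ is well defined up to the scalar ambiguity inherent in that choice, which is harmless for the statement. With that clarification your argument is sound.
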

The proof is standard, and is left to the reader.

\vskip1mm

Similarly, for an admissible $T$-torsor $\pi:Z^\dag\ra Z$ with an embedding $i$ into a smooth $T$-torsor $X^\dag\ra X$, we define the functor
$$\bbD': \bfM^T_h(Z^\dag)\ra \bfM^T_h(Z^\dag),\quad \scrM\mapsto i^!\bbD'_X(i_\bullet(\scrM)).$$
Its definition only depends on $\pi$. The equivalence (\ref{eq:bbD'lam}) and Lemma \ref{lem:interduality} hold again.

\vskip1mm

A weight $\lam\in\frakt^\ast$ is \emph{integral} if it is given by the differential of a character $e^\lam: T\ra\C^\ast.$ For such a $\lam$ we consider the invertible sheaf
$\scrL^\lam_Z\in\bfO(Z)$ defined by
$$\Gamma(U,\scrL^\lam_Z)=\{\gamma\in\Gamma(\pi^{-1}(U),
\scrO_{Z^\dag})\,|\, \gamma(xh^{-1})=e^\lam(h)\gamma(x),\quad
(x,h)\in \pi^{-1}(U)\times T\}$$ for any open set $U\subset Z$.
We define the following \emph{translation functor}
$$\Theta^\lam:\bfM^T_h(Z^\dag)\ra\bfM^T_h(Z^\dag),\quad \scrM\mapsto\scrM\otimes_{\scrO_{Z^\dag}} \pi^\ast(\scrL^\lam_{Z}).$$
It is an equivalence of categories. A quasi-inverse is given by $\Theta^{-\lam}$.
For any $\mu\in\frakt^\ast$ the restriction of $\Theta^\lam$ yields equivalences of categories
\begin{equation}\label{eq:thetalam}
\Theta^\lam: \bfM_h^{\tilde{\mu}}(Z)\ra \bfM_h^{\widetilde{\mu+\lam}}(Z),\quad \Theta^\lam: \bfM_h^{\mu}(Z)\ra \bfM_h^{\mu+\lam}(Z).
\end{equation}
We define the \emph{duality} functor on $\bfM_h^{\tilde{\lam}}(Z)$ to be
$$\bbD:\bfM_h^{\tilde{\lam}}(Z)\ra \bfM_h^{\tilde{\lam}}(Z),\quad\scrM\mapsto
\Theta^{2\lam}\circ\bbD'(\scrM).$$
It restricts to a duality functor on $\bfM_h^\lam(Z)$, which we denote again by $\bbD$. To avoid any confusion, we may write $\bbD=\bbD^\lam$. The equivalence $\Theta^\lam$ intertwines the duality functors, i.e., we have
\begin{equation}\label{eq:interthetaduality}
  \bbD^{\lam+\mu}\circ\Theta^\lam=\Theta^\lam\circ\bbD^\mu.
\end{equation}

For any locally closed affine embedding of $T$-torsors $i: Z\ra Y$ with $Z$ smooth, we define the functor
$$i_!=\bbD\circ i_\bullet\circ \bbD:\bfM^{\tilde\lam}_h(Z)\ra \bfM^{\tilde{\lam}}_h(Y).$$
As in Section \ref{ss:holoD-mod}, we have a morphism of exact functors $\psi:i_!\ra i_\bullet$ which is an isomorphism if $i$ is a closed embedding. The intermediate extension functor $i_{!\bullet}$ is defined in the same way.

\begin{rk}
  Assume that $Z$ is smooth. Let $\scrM\in\bfM^\lam_h(Z)$. Put $\mu=0$ in (\ref{eq:thetalam}). Using the equivalence $\Phi$ we see that $\scrM^\dag$ is a right module over the sheaf of algebras
  $$\scrD^\lam_Z=\scrL^{-\lam}_Z\otimes_{\scrO_Z}
  \scrD_Z\otimes_{\scrO_Z}\scrL^\lam_Z.$$
  Further, we have
  $$\bbD(\scrM)^\dag=\Omega_{Z}\otimes_{\scrO_{Z}}
  \otimes_{\scrO_{Z}}\scrL^{2\lam}_Z\otimes_{\scrO_Z}\ext^{\dim Z}_{\scrD^\lam_{Z}}(\scrM^\dag,\scrD^\lam_{Z})$$
   by Lemma \ref{lem:interduality} and (\ref{eq:interthetaduality}), compare \cite[(2.1.2)]{KT1}.
\end{rk}

\subsection{Injective and projective limit of categories}\label{ss:pilimcat}

Let us introduce the following notation. Let $A$ be a filtering poset. For any inductive system of
categories $(\calC_\al)_{\al\in A}$ with functors
$i_{\al\beta}:\calC_\al\ra\calC_\beta$, $\al\leqs\beta$, we denote
by $\tilim\calC_\al$ its \emph{inductive limit}, i.e., the category
whose objects are pairs $(\al, M_\al)$ with $\al\in A$,
$M_\al\in\calC_\al$ and
$$\Hom_{\tilim\calC_\al}((\al, M_\al),
(\beta,N_\beta))=\ilim_{\gamma\geqs\al,\beta}\Hom_{\calC_\gamma}(i_{\al\gamma}(M_\al),
i_{\beta\gamma}(N_\beta)).$$
For any projective system of categories $(\calC_\al)_{\al\in A}$
with functors $j_{\al\beta}:\calC_\beta\ra\calC_\al$,
$\al\leqs\beta$, we denote by $\tplim\calC_\al$ its \emph{projective
limit}, i.e., the category whose objects are systems consisting of
objects $M_\al\in\calC_\al$ given for all $\al\in A$ and
isomorphisms $j_{\al\beta}(M_\beta)\ra M_\al$ for each
$\al\leqs\beta$ and satisfying the compatibility condition for each
$\al\leqs\beta\leqs\gamma$. Morphisms are defined in the obvious
way. See e.g., \cite[3.2, 3.3]{KV}.

\subsection{The $\scrO$-modules on ind-schemes}\label{ss:O-modules}

An \emph{ind-scheme} $X$ is a filtering
inductive system of schemes $(X_\al)_{\al\in A}$ with closed
embeddings $i_{\al\beta}:X_\al\ra X_\beta$ for $\al\leqs \beta$ such
that $X$ represents the ind-object $``\ilim"X_\al$. See
\cite[1.11]{KS} for details on ind-objects. Below we will simply
write $\ilim$ for $``\ilim"$, hoping this does not create any
confusion. The categories $\bfO(X_\al)$ form a projective system via
the functors $i_{\al\beta}^!:\bfO(X_\beta)\ra\bfO(X_\al).$ Following
\cite[7.11.4]{BD} and \cite[3.3]{KV} we define the category of
$\scrO$-modules on $X$ as
$$\bfO(X)=\tplim\bfO(X_\al).$$
It is an abelian category. An object $\scrM$ of $\bfO(X)$ is represented by
$$\scrM=(\scrM_\al,\ \varphi_{\al\beta}: i_{\al\beta}^!\scrM_\beta\ra \scrM_\al)$$ where $\scrM_\al$ is an object of
$\bfO(X_\al)$ and $\varphi_{\al\beta}$, $\al\leqs\beta$, is an
isomorphism in $\bfO(X_\al)$.

Note that any object $\scrM$ of $\bfO(X)$ is an inductive limit of
objects from $\bfO(X_\al)$. More precisely, we first identify
$\bfO(X_\al)$ as a full subcategory of $\bfO(X)$ in the following
way: since the poset $A$ is filtering, to any $\scrM_\al\in\bfO(X_\al)$ we
may associate a canonical object $(\scrN_\beta)$ in $\bfO(X)$ such that
$\scrN_{\beta}=i_{\al\beta\ast}(\scrM_\al)$ for $\al\leqs\beta$ and
the structure isomorphisms $\varphi_{\beta\gamma}$, $\beta\leqs\gamma$, are the obvious
ones. Let us denote this object in $\bfO(X)$ again by $\scrM_\al$.
Given any object $\scrM\in\bfO(X)$ represented by
$\scrM=(\scrM_\al,\varphi_{\al\beta})$, these $\scrM_\al\in\bfO(X)$,
$\al\in A$, form an inductive system via the canonical morphisms
$\scrM_\al\ra\scrM_\beta$. Then, the ind-object $\ilim\scrM_\al$ of
$\bfO(X)$ is represented by $\scrM$. So, we define the space of
global sections of $\scrM$ to be the inductive limit of vector
spaces
\begin{equation}\label{eq:gammaindM}
  \Gamma(X,\scrM)=\ilim\Gamma(X_\al,\scrM_\al).
\end{equation}

We will also use the category $\hat\bfO(X)$ defined as the limit of
the projective system of categories
$(\bfO(X_\al),i_{\al\beta}^\ast)$, see \cite[7.11.3]{BD} or
\cite[3.3]{KV}. Note that the canonical isomorphisms
$i_{\al\beta}^\ast\scrO_{X_\beta}=\scrO_{X_\al}$ yield an object
$(\scrO_{X_\al})_{\al\in A}$ in $\hat\bfO(X)$. We denote this
object by $\scrO_X$. An object $\scrF\in\hat\bfO(X)$ is said to be
\emph{flat} if each $\scrF_\al$ is a flat $\scrO_{X_\al}$-module.
Such a $\scrF$ yields an exact functor
\begin{equation}\label{eq:tensor}
\bfO(X)\ra\bfO(X),\quad\scrM\mapsto
\scrM\otimes_{\scrO_X}\scrF=(\scrM_\al\otimes_{\scrO_{X_\al}}\scrF_\al)_{\al\in A}.
\end{equation}
For $\scrF\in\hat\bfO(X)$ the vector spaces
$\Gamma(X_\al,\scrF_\al)$ form a projective system with the
structure maps induced by the functors $i^*_{\al\beta}$. We set
\begin{equation}\label{eq:gammaF}
\Gamma(X,\scrF)=\plim\Gamma(X_\al,\scrF_\al).
\end{equation}

\subsection{The $\scrD$-modules on ind-schemes}\label{ss:D-mod}

The category of $\scrD$-modules on the ind-scheme $X$ is defined as
the limit of the inductive system of categories $(\bfM(X_\al),
i_{\al\beta\bullet})$, see e.g., \cite[3.3]{KV}. We will denote it
by $\bfM(X)$. Since $\bfM(X_\al)$ are abelian categories and
$i_{\al\beta\bullet}$ are exact functors, the category $\bfM(X)$ is
abelian. Recall that an object of $\bfM(X)$ is represented by a pair
$(\al,\scrM_\al)$ with $\al\in  A$, $\scrM_\al\in\bfM(X_\al)$.
There is an exact and faithful forgetful functor
\begin{equation*}
\bfM(X)\ra\bfO(X),\quad \scrM=(\al,\scrM_\al)\ra
\scrM^\scrO=(i_{\al\beta\bullet}(\scrM_\al)^{\scrO})_{\beta\geqs\al}.
\end{equation*}
The \emph{global sections} functor on $\bfM(X)$ is defined by
$$\Gamma(X,\scrM)=\Gamma(X,\scrM^\scrO).$$

Next, we say that $X$ is a \emph{$T$-ind-scheme} if $X=\ilim X_\al$ with each
$X_\al$ being a $T$-scheme and $i_{\al\beta}:X_\al\ra X_\beta$ being
$T$-equivariant. We define $\bfM^T(X)$ to be the abelian category given by the limit
of the inductive system of categories
$(\bfM^T(X_\al),i_{\al\beta\bullet})$. The functors (\ref{eq:forgetT}) for each $X_\al$
yield an exact and faithful functor
\begin{equation}
\bfM^T(X)\ra\bfM(X).
\end{equation}
The functor $\Gamma$ on $\bfM^T(X)$ is
given by the functor $\Gamma$ on $\bfM(X)$.

Finally, given a $T$-ind-scheme $X=\ilim X_\al$ let $\pi:X^\dag\ra X$ be a $T$-torsor over $X$, i.e., $\pi$ is the limit of an inductive
system of $T$-torsors $\pi_\al: X^\dag_\al\ra X_\al$. We say that
$\pi$ is \emph{admissible} if each of the $\pi_\al$ is admissible.
Assume this is the case. Then the categories $\bfM^{\lam}(X_\al)$,
$\bfM^{\tilde{\lam}}(X_\al)$ form,
respectively, two inductive systems of categories via
$i_{\al\beta\bullet}$. Let
$$\bfM^\lam(X)=\tilim\bfM^{\lam}(X_\al),\qquad
\bfM^{\tilde\lam}(X)=\tilim\bfM^{\tilde\lam}(X_\al).$$ They are
abelian subcategories of $\bfM^T(X^\dag)$. For any object
$\scrM=(\al,\scrM_\al)$ of $\bfM^T(X^\dag)$, the
$\scrO_{X_\beta}$-modules $(i_{\al\beta\bullet}\scrM_\al)^\dag$ with
$\beta\geqs\al$ give an object of $\bfO(X)$. We will denote it by
$\scrM^\dag$. The functor
$$\bfM^T(X^\dag)\ra\bfO(X),\quad \scrM\mapsto\scrM^\dag$$
is exact and faithful. For $\scrM\in\bfM^{\tilde\lam}(X)$ we will write
\begin{equation}\label{eq:gammaM}
  \Gamma(\scrM)=\Gamma(X,\scrM^\dag).
\end{equation}
Note that it is also equal to $\Gamma(X^\dag,\scrM)^T$. We will consider the following categories
$$\bfM^{\tilde\lam}_h(X)=\tilim\bfM^{\tilde\lam}_h(X_\al),\quad \bfM^{\lam}_h(X)=\tilim\bfM^{\lam}_h(X_\al).$$ Let $Y$ be a smooth scheme.
A locally closed affine embedding $i:Y\ra X$ is the composition of
an affine open embedding $i_1:Y\ra\olY$ with a closed embedding $i_2:\olY\ra
X$. For such a morphism the functor
$i_\bullet:\bfM^{\tilde\lam}_h(Y)\ra\bfM^{\tilde\lam}_h(X)$ is defined by $i_\bullet=i_{2\bullet}\circ i_{1\bullet}$, and the
functor $i_!:\bfM^{\tilde\lam}_h(Y)\ra\bfM^{\tilde\lam}_h(X)$ is defined by $i_!=i_{2\bullet}\circ i_{1!}$.

\subsection{The sheaf of differential operators on a formally smooth ind-scheme.}

Let $X$ be a formally smooth\footnote{See \cite[7.11.1]{BD} and the references there for the definition of formally smooth.} ind-scheme. Fix $\beta\geqs\al$ in $ A$. Let $\diff_{\beta,\al}$ be the
$\scrO_{X_\beta\times X_\al}$-submodule of $\homo_\C(\scrO_{X_\beta},
i_{\al\beta\ast}\scrO_{X_\al})$ consisting of local sections supported
set-theoretically on the diagonal $X_\al\subset X_\beta\times
X_\al$. Here $\homo_\C(\scrO_{X_\beta},
i_{\al\beta\ast}\scrO_{X_\al})$ denotes the sheaf of morphisms between the sheaves of $\C$-vector spaces associated with $\scrO_{X_\beta}$ and $i_{\al\beta\ast}\scrO_{X_\al}$. As a left $\scrO_{X_\al}$-module $\diff_{\beta,\al}$ is quasi-coherent, see e.g., \cite[Section 1.1]{BB}. So it is an object in $\bfO(X_\al)$. For $\beta\leqs\gamma$ the functor $i_{\beta\gamma\ast}$ and the canonical map
$\scrO_{X_\gamma}\ra i_{\beta\gamma\ast}\scrO_{X_\beta}$ yield a morphism of $\scrO_{X_\al}$-modules
$$\homo_\C(\scrO_{X_\beta}, i_{\al\beta\ast}\scrO_{X_\al})\ra\homo_\C(\scrO_{X_\gamma}, i_{\al\gamma\ast}\scrO_{X_\al}).$$
It induces a morphism
$\diff_{\beta,\al}\ra \diff_{\gamma,\al}$ in $\bfO(X_\al)$. The $\scrO_{X_\al}$-modules $\diff_{\beta,\al}$, $\beta\geqs\al$, together with these maps form an inductive system. Let
$$\diff_\al=\ilim_{\beta\geqs\al}\diff_{\beta,\al}\in\bfO(X_\al).$$
The system consisting of the $\diff_\al$'s and the
canonical isomorphisms $i_{\al\beta}^\ast\diff_\beta\ra\diff_\al$
is a flat object in $\hat\bfO(X)$, see \cite[7.11.11]{BD}. We will
call it the \emph{sheaf of differential operators} on $X$ and denote
it by $\scrD_X$. It carries canonically a structure of
$\scrO_X$-bimodules, and a structure of algebra given by
$$\diff_{\gamma,\beta}\otimes_{\scrO_{X_\beta}}\diff_{\beta,\al}\ra\diff_{\gamma,\al},\quad (g,f)\mapsto g\circ f,\quad\al\leqs\beta\leqs\gamma.$$
Any object $\scrM\in\bfM(X)$ admits a canonical right $\scrD_X$-action given by a morphism
\begin{equation}\label{eq:daction}
\scrM\otimes_{\scrO_X}\scrD_X\ra \scrM
\end{equation}
in $\bfO(X)$ which is compatible with the multiplication in $\scrD_X$.

\vskip1cm

\section{Localization theorem for affine Lie algebras of negative level}\label{s:localization}

In this section we first consider the affine localization theorem which relates right $\scrD$-modules on the affine flag variety (an ind-scheme) to a category of modules over the affine Lie algebra with integral weights and a negative level. Then we compute the $\scrD$-modules corresponding to Verma and parabolic Verma modules. All the constructions here hold for a general simple linear group. We will only use the case of $SL_m$, since the multiplicities on the left hand side of (\ref{eq:samefilt}) that we want to compute are the same for $\mathfrak{sl}_m$ and $\mathfrak{gl}_m$. We will use, for $\mathfrak{sl}_m$, the same notation as in Section \ref{s:affineO} for $\mathfrak{gl}_m$. In
particular $\g_0=\mathfrak{sl}_m$ and $\frakt_0^\ast$ is now given the basis consisting of the
weights $\ep_i-\ep_{i+1}$ with $1\leqs i\leqs m-1$. We identify
$\calP_n$ as a subset of $\frakt_0^\ast$ via the map
\begin{equation*}
\calP_n\ra\frakt^\ast_0,\quad\lam=(\lam_1,\ldots,\lam_m)\mapsto
\sum_{i=1}^{m}(\lam_i-n/m)\ep_i.
\end{equation*}
Finally, we will modify slightly
the definition of $\g$ by extending $\C[t,t^{-1}]$ to $\C((t))$,
i.e., from now on we set
$$\g=\g_0\otimes\C((t))\oplus\C\bfone\oplus\C\partial.$$
The bracket is given in the same way as before. We will again denote
by $\frakb$, $\frakn$, $\frakq$, etc., the corresponding Lie
subalgebras of $\g$.

\subsection{The affine Kac-Moody group}

Consider the group ind-scheme $LG_0=G_0(\C((t)))$ and the group
scheme $L^+G_0=G_0(\C[[t]])$. Let $I\subset L^+G_0$ be the Iwahori
subgroup. It is the preimage of $B_0$ via the canonical map
$L^+G_0\ra G_0$. For $z\in\C^\ast$ the loop rotation $t\mapsto zt$
yields a $\C^\ast$-action on $LG_0$. Write
$$\widehat{LG}_0=\C^\ast\ltimes LG_0.$$ Let $G$ be the Kac-Moody
group associated with $\g$. It is a group ind-scheme which is a
central extension
$$1\ra\C^\ast\ra G\ra \widehat{LG}_0\ra 1,$$
see e.g., \cite[Section 13.2]{Ku2}. There is an obvious projection
$\pr:G\ra LG_0.$ We set
$$B=\pr^{-1}(I),\quad
Q=\pr^{-1}(L^+G_0),\quad T=\pr^{-1}(T_0).$$ Finally, let $N$ be the
prounipotent radical of $B$. We have
$$\g=\Lie(G),\quad
\frakb=\Lie(B),\quad\frakq=\Lie(Q),\quad\frakt=\Lie(T),\quad\frakn=\Lie(N).$$

\subsection{The affine flag variety}\label{ss:affineflag}

Let $X=G/B$ be the affine flag variety. It is a formally smooth
ind-scheme. The enhanced affine flag variety $X^\dag=G/N$ is a
$T$-torsor over $X$ via the canonical projection
\begin{equation}
\pi: X^\dag\ra X.
\end{equation}
The $T$-action on $X^\dag$ is given by $gN\mapsto gh^{-1}N$ for
$h\in T$, $g\in G$. The $T$-torsor $\pi$ is admissible, see the end of Section \ref{ss:compKT}. The ind-scheme $X^\dag$ is
also formally smooth. For any subscheme $Z$ of $X$ we will write
$Z^\dag=\pi^{-1}(Z)$. The $B$-orbit decomposition of $X$ is
\begin{equation*}
  X=\bigsqcup_{w\in\frakS}X_w,\qquad X_w=B\dot{w}B/B,
\end{equation*}
where $\dot{w}$ is a representative of $w$ in the normalizer of $T$ in
$G$. Each $X_w$ is an affine space of dimension $l(w)$. Its closure
$\olX_w$ is an irreducible projective variety. We have
\begin{equation*}
  \olX_w=\bigsqcup_{w'\leqs w}X_{w'},\qquad
  X=\ilim_w\olX_w.
\end{equation*}

\subsection{Localization theorem}\label{ss:functorgamma}

Recall the sheaf of differential operators $\scrD_{X^\dag}\in\hat\bfO(X^\dag)$.
The space of sections of $\scrD_{X^\dag}$ is defined as in (\ref{eq:gammaF}). The left action of $G$ on $X^\dag$ yields an algebra homomorphism
\begin{equation}\label{eq:g}
  \delta_l:\ \calU(\g)\ra\Gamma(X^\dag,\scrD_{X^\dag}).
\end{equation}
Since the $G$-action on $X^\dag$ commutes with the right $T$-action, the image of the map above lies in the $T$-invariant part of $\Gamma(X^\dag,\scrD_{X^\dag})$.
So for $\scrM\in\bfM^T(X^\dag)$ the $\scrD_{X^\dag}$-action on $\scrM$ given by (\ref{eq:daction}) induces a $\g$-action\footnote{More precisely, here by $\g$-action we mean the $\g$-action on the associated sheaf of vector spaces $(\scrM^{\dag})^{\C}$, see Step $1$ of the proof of Proposition \ref{prop:trans} for details.} on $\scrM^\dag$ via $\delta_l$. In particular the vector space
$\Gamma(\scrM)$ as defined in (\ref{eq:gammaM}) is a $\g$-module. Let $\bfM(\g)$ be the category of $\g$-modules. We say that a weight $\lam\in\frakt^\ast$ is \emph{antidominant} (resp.
\emph{dominant}, \emph{regular}) if for any $\al\in\Pi^+$ we have
$\pair{\lam:\al}\leqs 0$ (resp. $\pair{\lam:\al}\geqs 0$,
$\pair{\lam:\al}\neq 0$).

\begin{prop}\label{prop:BD}

  (a) The functor
  $$\Gamma:\bfM^\lam(X)\ra\bfM(\g),\quad\scrM\mapsto\Gamma(\scrM)$$
  is exact if $\lam+\rho$ is antidominant. 

  (b) The functor
  $$\Gamma:\bfM^{\tilde{\lam}}(X)\ra\bfM(\g),\quad\scrM\mapsto\Gamma(\scrM)$$
  is exact if $\lam+\rho$ is antidominant.
\end{prop}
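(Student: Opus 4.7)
The plan is to follow the classical Beilinson-Bernstein localization strategy in the affine, negative-level form established by Kashiwara-Tanisaki. The argument proceeds in three stages: first reduce (b) to (a); then reduce (a) on the ind-scheme $X$ to a cohomology vanishing on the individual Schubert varieties $\overline{X}_w$; and finally invoke a Kempf-type vanishing on those (possibly singular) Schubert varieties.

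First I would handle the reduction of (b) to (a). For $\scrM\in\bfM^{\tilde\lam}(X)$ the action of $\frakm_\lam\subset\calU(\frakt)$ on $\scrM^\dag$ is nilpotent, so the increasing filtration of $\scrM^\dag$ by the kernels of powers of $\frakm_\lam$ is exhaustive with successive quotients lying in $\bfM^\lam(X)$. Granting (a), one applies $\Gamma$ to the short exact sequences coming from this filtration: the vanishing of $H^1$ on each subquotient, combined with the fact that $\Gamma(X,-)$ commutes with filtered colimits (\ref{eq:gammaindM}), propagates exactness through the filtration and yields the exactness of $\Gamma$ on $\bfM^{\tilde\lam}(X)$.

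For (a), I would reduce from $X$ to a single Schubert variety. An object of $\bfM^\lam(X)=\tilim\bfM^\lam(\overline{X}_w)$ is represented by some $\scrM_w\in\bfM^\lam(\overline{X}_w)$, and by (\ref{eq:gammaindM}) the functor $\Gamma(X,-)$ is identified with the filtered colimit of $\Gamma(\overline{X}_{w'},-)$ for $w'\geqs w$. Since $i_{w,w'\bullet}$ is exact and, being an affine morphism, preserves global sections of the underlying $\scrO$-sheaves, and since directed colimits in $\bfM(\g)$ are exact, the exactness of $\Gamma$ on $\bfM^\lam(X)$ reduces to showing
$$H^i(\overline{X}_w,\scrM^\dag)=0,\qquad i>0,$$
for every $\scrM\in\bfM^\lam(\overline{X}_w)$ and every $w\in\frakS$.

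Finally I would prove this vanishing. Since higher cohomology commutes with directed colimits of quasi-coherent sheaves on the Noetherian scheme $\overline{X}_w$, it is enough to treat the case of $\scrD^\lam$-coherent $\scrM$. The order filtration on $\scrD^\lam$ then endows $\scrM^\dag$ with a good $\scrO_{\overline{X}_w}$-module filtration whose associated graded is a quotient of sheaves of the form $\scrG\otimes_{\scrO}\scrL^\lam_{\overline{X}_w}$ with $\scrG$ coherent, reducing the problem to
$$H^i(\overline{X}_w,\scrG\otimes\scrL^\lam_{\overline{X}_w})=0,\qquad i>0,$$
for all such $\scrG$. For $\lam+\rho$ antidominant this is the Kempf-type vanishing on the affine Schubert variety $\overline{X}_w$, proved by Kumar and Mathieu via Frobenius splittings (or, equivalently, via Bott-Samelson desingularizations). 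This last vanishing is the main technical obstacle and is the input that I would invoke as a black box; all the preceding steps are formal.
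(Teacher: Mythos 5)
Your reduction of (b) to (a), via the exhaustive filtration of $\scrM^\dag$ by kernels of powers of $\frakm_\lam$ together with the commutation of $R^k\Gamma(X,-)$ with filtered colimits, is exactly the paper's argument. So part (b) is fine.

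For part (a), however, there is a gap. The paper does not prove (a); it cites \cite[Theorem 7.15.6]{BD} and \cite[Theorem 2.2]{FG} and indicates that a detailed proof can be given ``using similar technics as in the proof of Proposition \ref{prop:trans}'' --- i.e.\ the Casimir/translation-functor method, in which one tensors with global sections of a finite-dimensional $\g_0$-module, produces a $\g$-module filtration whose graded pieces are twists $\scrM^\dag\otimes\scrL^{\nu_i}$, and uses the generalized Casimir to split off the relevant summand. Your proposal goes a different route: a reduction to Kempf-type vanishing on $\olX_w$ via the order filtration on $\scrD^\lam$. The problem is the step claiming that a good filtration on a coherent $\scrD^\lam$-module has associated graded pieces that are (quotients of) $\scrG\otimes_{\scrO}\scrL^\lam_{\olX_w}$ with $\scrG$ coherent, and that the needed vanishing is then $H^i(\olX_w,\scrG\otimes\scrL^\lam)=0$ for all coherent $\scrG$. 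The last statement is simply false for arbitrary coherent $\scrG$: for instance $\scrG$ a coherent sheaf pushed forward from a $\bbP^1$ contained in $\olX_w$ and of sufficiently negative degree already gives nonzero $H^1$. The actual graded pieces of a good filtration are coherent $\gr\scrD^\lam$-modules, pushed forward to $\olX_w$ as essentially arbitrary coherent $\scrO$-modules; the $\scrL^\lam$-twist is not detachable in the way your argument needs, and Kempf vanishing alone does not control them. This is precisely why the standard proofs (Beilinson--Bernstein in the finite case, Kashiwara--Tanisaki, Beilinson--Drinfeld, and Frenkel--Gaitsgory in the affine negative-level case) do \emph{not} argue through the order filtration and Kempf vanishing but instead through the translation-functor/Casimir mechanism (or, for \cite{FG}, through a dg/derived category argument). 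If you want an honest proof of (a), the route the paper points to is Proposition \ref{prop:trans}'s technique applied to $R\Gamma$ in place of $\Gamma$; your Kempf-based reduction does not go through as written.
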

\begin{proof}
A proof of part (a) is sketched in \cite[Theorem 7.15.6]{BD}. A
detailed proof can be given using similar technics as in the proof
of the Proposition \ref{prop:trans} below. This is left to the
reader. See also \cite[Theorem 2.2]{FG} for another proof of this
result. Now, let us concentrate on part (b). Let
$\scrM=(\al,\scrM_\al)$ be an object in $\bfM^{\tilde{\lam}}(X)$. By
definition the action of $\frakm_\lam$ on $\scrM^\dag$ is nilpotent.
Let $\scrM_n$ be the maximal subobject of $\scrM$ such that the
ideal $(\frakm_\lam)^n$ acts on $\scrM^\dag_n$ by zero. We have
$\scrM_{n-1}\subset\scrM_n$ and $\scrM=\ilim\scrM_n$. Write
$R^k\Gamma(X,-)$ for the $k$-th derived functor of the global
sections functor $\Gamma(X,-)$. Given $n\geqs 1$, suppose that
$$R^k\Gamma(X,\scrM^\dag_n)=0,\qquad\forall\,k>0.$$ Since
$\scrM_{n+1}/\scrM_n$ is an object of $\bfM^\lam(X)$, by part (a)
we have
$$R^k\Gamma(X,(\scrM_{n+1}/\scrM_n)^\dag)=0,\quad\forall\,k>0.$$ The
long exact sequence for $R\Gamma(X,-)$ applied to the short exact
sequence
$$0\lra\scrM^\dag_n\lra\scrM^\dag_{n+1}\lra(\scrM_{n+1}/\scrM_n)^\dag\lra 0$$
implies that $R^k\Gamma(X,\scrM^\dag_{n+1})=0$ for any $k>0$.
Therefore by induction the vector space
$R^k\Gamma(X,\scrM^\dag_{n})$ vanishes for any $n\geqs 1$ and $k>0$.
Finally, since the functor $R^k\Gamma(X,-)$ commutes with direct
limits, see e.g., \cite[Lemma B.6]{TT}, we have
$$R^k\Gamma(X,\scrM^\dag)=\ilim R^k\Gamma(X,\scrM^\dag_n)=0,\qquad\forall\,\, k>0.$$
\end{proof}

\subsection{The category $\tilde{\calO}_\kappa$ and Verma modules}\label{ss:verma}

For a $\frakt$-module $M$ and $\lam\in\frakt^\ast$ let
\begin{equation}\label{eq:genweight}
  M_{\tilde{\lam}}=\{m\in M\,|\,(h-\lam(h))^Nm=0,\ \forall\ h\in\frakt,\ N\gg 0\}.
\end{equation}
We call a $\frakt$-module $M$ a \textit{generalized weight module} if it satisfies the conditions
$$M=\bigoplus_{\lam\in{}_\kappa\!\frakt^\ast}M_{\tilde{\lam}},$$
$$\dim_\C M_{\tilde{\lam}}<\infty,\qquad\forall\ \lam\in\frakt^\ast.$$
Its character $\ch(M)$ is defined as the formal sum
\begin{equation}\label{eq:ch}
\ch(M)=\sum_{\lam\in\frakt^\ast}\dim_\C(M_{\tilde{\lam}})e^\lam.
\end{equation}
Let $\tilde{\calO}$ be the category consisting of the $\calU(\g)$-modules $M$ such that
\begin{itemize}
  \item[$\bullet$] as a $\frakt$-module $M$ is a generalized weight module,
  \item[$\bullet$] there exists a finite subset $\Xi\subset\frakt^\ast$ such that $M_{\tilde{\lam}}\neq 0$ implies that $\lam\in\Xi+\sum_{i=0}^{m-1}\Z_{\leqs 0}\al_i$.
\end{itemize}
\vskip2mm
The category $\tilde{\calO}$ is abelian. We define the duality functor $\bfD$ on $\tilde{\calO}$ by
\begin{equation}\label{eq:duality}
  \bfD M=\bigoplus_{\lam\in\frakt^\ast} \Hom(M_{\tilde{\lam}},\C),
\end{equation}
with the action of $\g$ given by the involution $\sigma$, see Section \ref{ss:jantverma}. Let $\tilde{\calO}_\kappa$ be the full subcategory of $\tilde{\calO}$ consisting of the $\g$-modules on which $\bfone-(\kappa-m)$ acts locally nilpotently. The category $\tilde{\calO}_\kappa$ is also abelian. It is stable under the duality functor, because $\sigma(\bfone)=\bfone$. The category $\calO_\kappa$ is a Serre
subcategory of $\tilde{\calO}_\kappa$.
For $\lam\in{}_\kappa\!\frakt^\ast$ we consider the \emph{Verma module}
$$N_\kappa(\lam)=\calU(\g)\otimes_{\calU(\frakb)}\C_\lam.$$
Here $\C_\lam$ is the one dimensional $\frakb$-module such that $\frakn$ acts trivially and $\frakt$ acts by $\lam$. It is an object of $\tilde{\calO}_\kappa$. Let $L_\kappa(\lam)$ be the unique simple quotient of $N_\kappa(\lam)$. We have $\bfD L_\kappa(\lam)=L_\kappa(\lam)$ for any $\lam$. A simple subquotient of a module $M\in\tilde{\calO}_\kappa$ is isomorphic to $L_\kappa(\lam)$ for some $\lam\in{}_\kappa\!\frakt^\ast$. The classes $[L_\kappa(\lam)]$ form a basis of the vector space $[\tilde{\calO}_\kappa]$, because the characters of the $L_\kappa(\lam)$'s are linearly independent.

Denote by $\Lam$ the set of integral weights in ${}_\kappa\!\frakt^\ast$. Let $\lam\in\Lam$ and $w\in\frakS$. Recall the line bundle $\scrL^\lam_{X_w}$ from Section \ref{ss:twistholonome}. Let
\begin{equation}\label{eq:Aw}
  \scrA^\lam_w=\Omega_{X^\dag_w}\otimes_{\scrO_{X^\dag_w}}
  \pi^\ast(\scrL_{X_w}^\lam).
\end{equation}
It is an object of $\bfM^\lam_h(X_w)$ and
$$\bbD(\scrA^\lam_w)=\scrA^\lam_w.$$
Let $i_w:X^\dag_w\ra X^\dag$ be the canonical embedding. It is locally closed and affine. We have the
following objects in $\bfM^\lam_h(X)$,
\begin{equation*}
  \scrA_{w!}^\lam=i_{w!}(\scrA^\lam_w), \quad \scrA_{w!\bullet}^\lam=i_{w!\bullet}(\scrA^\lam_w), \quad
  \scrA_{w\bullet}^\lam=i_{w\bullet}(\scrA^\lam_w)
\end{equation*}
We will consider the Serre subcategory $\bfM^\lam_0(X)$ of $\bfM_h^\lam(X)$ generated by the simple objects $\scrA^\lam_{w!\bullet}$ for $w\in\frakS$. It is an artinian category. Since $\bbD(\scrA^\lam_{w!\bullet})=\scrA^\lam_{w!\bullet}$, the category $\bfM^\lam_0(X)$ is stable under the duality. We have the following proposition.

\begin{prop}\label{prop:nonregular}
  Let $\lam\in \Lam$ such that $\lam+\rho$ is antidominant. Then
  \vskip2mm
  (a) $\Gamma(\scrA^\lam_{w!})=N_\kappa(w\cdot\lam),$
  \vskip2mm
  (b) $\Gamma(\scrA^\lam_{w\bullet})=\bfD N_\kappa(w\cdot\lam),$
  \vskip2mm
  (c) $\Gamma(\scrA^\lam_{w!\bullet})=\begin{cases}
     L_\kappa(w\cdot\lam) &\text{if $w$ is the shortest element in
     }w\frakS(\lam),\\
     0 &\text{else.}
   \end{cases}$
  \end{prop}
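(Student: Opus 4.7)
My plan is to prove the three parts in sequence, using exactness of $\Gamma$ (Proposition~\ref{prop:BD}) throughout. For (a), I would first reduce from the ind-scheme $X$ to a finite-dimensional Schubert subvariety $\olX_v$ containing $\olX_w$, so that $\scrA^\lam_{w!}$ becomes a classical holonomic twisted $\scrD$-module on a projective variety. On the cell $X_w\cong\bbA^{l(w)}$, which is an $N$-orbit through $\dot{w}B/B$, the twisted $\scrD$-module $\scrA^\lam_w$ is free of rank one; trivializing at the base point produces a canonical $T$-eigensection whose weight, after accounting for the $\rho$-shift coming from the factor $\Omega_{X^\dag_w}$ in the definition of $\scrA^\lam_w$, equals $w\cdot\lam$, and this section is annihilated by $\frakn$ under the $\g$-action from~(\ref{eq:g}). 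By the universal property of Verma modules this yields a morphism $N_\kappa(w\cdot\lam)\to\Gamma(\scrA^\lam_{w!})$; to prove it is an isomorphism I would compute characters, using that $i_w$ is a locally closed affine embedding so $i_{w!}$ is exact, and that the explicit description of $\scrA^\lam_w$ on $X_w$ produces exactly the Verma character.

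For (b), I combine $i_{w\bullet}\cong\bbD\circ i_{w!}\circ\bbD$ with the self-duality $\bbD(\scrA^\lam_w)=\scrA^\lam_w$ to identify $\scrA^\lam_{w\bullet}\cong\bbD(\scrA^\lam_{w!})$; the claim then reduces to the compatibility $\Gamma\circ\bbD\cong\bfD\circ\Gamma$ between the geometric duality on $\bfM_h^\lam(X)$ and the algebraic duality on $\tilde\calO_\kappa$. This follows because the anti-involution $\sigma$ defining $\bfD$ is precisely the one induced on $\Gamma(X^\dag,\scrD_{X^\dag})$ by $\bbD'$, with the factor $\Theta^{2\lam}$ entering $\bbD$ cancelling the shift produced by $\Omega_{X^\dag}$ so that the $T$-weight decomposition is preserved. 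For (c), exactness of $\Gamma$ applied to $\scrA^\lam_{w!\bullet}=\Im(\scrA^\lam_{w!}\to\scrA^\lam_{w\bullet})$ identifies $\Gamma(\scrA^\lam_{w!\bullet})$ with the image of the canonical map $N_\kappa(w\cdot\lam)\to\bfD N_\kappa(w\cdot\lam)$, which is either $0$ or $L_\kappa(w\cdot\lam)$ since the ambient Hom space is one-dimensional. In the shortest-element case I would show non-vanishing by evaluating at the cyclic generator from (a), where the canonical morphism $i_{w!}\to i_{w\bullet}$ restricts to the identity on the cell and therefore cannot send the generator to zero.

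For the non-shortest case, my plan is to pick a regular antidominant integral weight $\mu$ with $\lam$ in the closure of its alcove and apply the translation functor $\Theta^{\lam-\mu}$: geometrically this is an equivalence sending $\scrA^\mu_{w!\bullet}$ to $\scrA^\lam_{w!\bullet}$, and on global sections it should intertwine with an algebraic translation functor on $\tilde\calO_\kappa$ that is known to kill $L_\kappa(w\cdot\mu)$ exactly when $w$ is not the shortest element of $w\frakS(\lam)$. Combined with the regular-$\lam$ case of~(c), which is the classical Beilinson-Bernstein/Kashiwara-Tanisaki theorem, this yields the required vanishing. The principal obstacle I anticipate is precisely this translation-compatibility in the ind-scheme setting: one must verify on each finite Schubert piece $\olX_v$ that $\Gamma$ commutes with $-\otimes\pi^\ast(\scrL^{\lam-\mu}_X)$ and matches it with tensoring by a finite-dimensional $\g$-representation followed by projection to the relevant central-character block, and then pass to the direct limit over $v$; this is formal once set up, but carrying it through carefully is the delicate technical point of the argument.
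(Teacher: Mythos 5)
Your plan for part (c) and the translation-functor reduction to the regular case matches the structure of the paper's proof, and you have correctly isolated the translation-compatibility $\Gamma\circ\Theta^{\nu}\cong\theta^{\nu}\circ\Gamma$ (the paper's Proposition~\ref{prop:trans}) as the delicate technical point; the paper establishes it by a generalized Casimir operator argument on the ind-scheme. However, for parts (a) and (b) your proposal departs substantially from the paper and contains real gaps.

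The paper does \emph{not} re-derive (a) and (b) from a cell trivialization. It imports all three statements in the regular case wholesale from Kashiwara--Tanisaki (Proposition~\ref{prop:KT}, citing \cite[Theorem~3.4.1]{KT1}, after verifying $\Gamma=\overline{\Gamma}$ on the modules in question) and then obtains the singular case \emph{uniformly} for (a), (b), (c) at once by writing $\Gamma(\scrA^{\lam}_{w\sharp})=\theta^{\nu}\bigl(\Gamma(\scrA^{\mu}_{w\sharp})\bigr)$ and invoking Kumar's description of what $\theta^{\nu}$ does to Verma, dual Verma and simple modules (\cite[Proposition~1.7]{Ku1}). Your direct route for (a) has a concrete obstruction: for an open affine embedding $j:X_w\hra\olX_w$, the $!$-pushforward does \emph{not} extend sections; the natural restriction map runs $\Gamma(\scrA^{\lam}_{w!})\ra\Gamma(X_w,\scrA^{\lam}_w)$ and its surjectivity is not free, so the ``canonical $T$-eigensection'' of $\scrA^{\lam}_{w!}$ you want as a highest-weight vector is not available a priori. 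The character computation is also not as elementary as stated: $\olX_w$ is singular, the $\scrD$-module structure is only defined via embedding into a smooth ambient scheme, and the only candidates are the quotients $\frakX^y_l$ of Kashiwara's thick flag variety --- exactly the $\hat{\Gamma}/\overline{\Gamma}$ machinery of Appendix A.1--A.4 that you would end up reconstructing. Finally, the duality compatibility $\Gamma\circ\bbD\cong\bfD\circ\Gamma$ you use to get (b) from (a) is asserted, not proved; the paper never states it as a general fact and sidesteps it by working only with the specific objects where the identification is pinned down by \cite{KT1} and then transported by $\theta^{\nu}$ (using $\theta^{\nu}\circ\bfD=\bfD\circ\theta^{\nu}$). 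In short, your strategy for (c) is sound and matches the paper, but your treatment of (a) and (b) would amount to re-proving Kashiwara--Tanisaki from scratch with the highest-weight-vector step left unjustified; the paper's cleaner route is to cite the regular case and translate.
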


The proof of the proposition will be given in Appendix \ref{s:appendix}. It relies on results of Kashiwara-Tanisaki \cite{KT1} and uses translation functors for the affine category $\calO$.
\vskip3mm

\subsection{The parabolic Verma modules.}

Let $\QS$ be the set of the longest
representatives of the cosets $\frakS_0\backslash\frakS$. Let $w_0$
be the longest element in $\frakS_0$. Recall the following basic
facts.

\begin{lemma}\label{lem:longest}
  For $w\in\frakS$ if $w\cdot\lam\in\Lam^+$ for some $\lam\in\Lam$ with $\lam+\rho$ antidominant, then $w\in\QS$. Further, if $w\in\QS$ then we have

  (a) the element $w$ is the unique element $v$ in $\frakS_0w$ such that $\Pi^+_0\subset-v(\Pi^+)$,

  (b) for any $v\in\frakS_0$ we have $l(vw)=l(w)-l(v)$,

  (c) the element $w_0w$ is the shortest element in $\frakS_0w$.
\end{lemma}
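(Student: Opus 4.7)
The plan is to reduce all three parts to standard Coxeter-theoretic facts about the finite parabolic subgroup $\frakS_0\subset\frakS$, and then use the resulting characterization of longest representatives to handle the antidominance hypothesis.

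I would first establish (a) on its own. The coset $\frakS_0 w$ is finite because $\frakS_0$ is, so it contains a unique longest element $u$, and by definition of $\QS$ this is exactly what $\QS$ selects. The element $u$ is characterized inside $\frakS_0 w$ by $l(s_i u)<l(u)$ for every simple reflection $s_i\in\frakS_0$, which via the standard length/inversion criterion $l(s_\al v)<l(v)\iff v^{-1}(\al)\in -\Pi^+$ translates to $u^{-1}(\al_i)\in -\Pi^+$ for every simple $\al_i\in\Pi_0^+$. Since any $\al\in\Pi_0^+$ is a non-negative integral combination of those $\al_i$ and $u^{-1}$ sends roots to roots, the image $u^{-1}(\al)$ is a root that expands as a non-positive combination of the affine simple roots, hence $u^{-1}(\al)\in -\Pi^+$. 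Reading the equivalence backwards shows that any $v\in\frakS_0 w$ satisfying $\Pi_0^+\subset -v(\Pi^+)$ must be this unique longest element, giving both existence and uniqueness.

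For the preliminary claim, assume $w\cdot\lam\in\Lam^+$ with $\lam+\rho$ antidominant. For each $\al\in\Pi_0^+$ the definition of $\Lam^+$ yields $\pair{w(\lam+\rho)-\rho:\al}\in\N$, so $\pair{\lam+\rho:w^{-1}(\al)}\geqs\pair{\rho_0:\al}>0$. The antidominance of $\lam+\rho$ then forbids $w^{-1}(\al)\in\Pi^+$, so $w^{-1}(\al)\in -\Pi^+$. Part (a) identifies $w$ as the longest representative of $\frakS_0 w$, i.e.\ $w\in\QS$. For (b) and (c) I would use the parabolic factorization: let $w_{\min}$ be the shortest representative of $\frakS_0 w$, so that $w=w_0\, w_{\min}$ with $l(w)=l(w_0)+l(w_{\min})$, which holds precisely because $w$ is the longest element in its coset. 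Combined with the classical identity $l(v w_0)=l(w_0)-l(v)$ for $v\in\frakS_0$ and the additivity $l(x w_{\min})=l(x)+l(w_{\min})$ for $x\in\frakS_0$ (the defining property of a minimum coset representative), we obtain
\begin{equation*}
l(vw)=l(vw_0)+l(w_{\min})=l(w_0)-l(v)+l(w_{\min})=l(w)-l(v),
\end{equation*}
proving (b). Taking $v=w_0$ gives $w_0 w=w_0^2 w_{\min}=w_{\min}$, which is the shortest element of $\frakS_0 w$ by construction, proving (c).

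The argument is entirely standard Coxeter combinatorics applied to the finite parabolic $\frakS_0$ sitting inside the affine Weyl group $\frakS$; the only affine-specific bookkeeping is verifying that the simple-root length criterion extends to all of $\Pi_0^+$, which uses only that $\Pi_0$ is the positive root subsystem generated by $\al_1,\dots,\al_{m-1}$ and that $v^{-1}$ is a root-system automorphism. I do not expect any serious obstacle beyond organizing these standard facts cleanly.
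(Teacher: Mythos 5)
The paper states Lemma \ref{lem:longest} as ``basic facts'' and does not include a proof, so there is no argument of the paper's to compare against. Your proposal supplies a correct proof via standard Coxeter combinatorics: you characterize the longest coset representative $u\in\frakS_0 w$ by the left-descent condition $l(s_iu)<l(u)$ for each simple $s_i\in\frakS_0$, translate this through $l(s_\al v)<l(v)\iff v^{-1}(\al)\in-\Pi^+$ to get the root-theoretic condition in (a), derive the preliminary claim from $\pair{\lam+\rho:w^{-1}(\al)}\geqs\pair{\rho_0:\al}>0$ and antidominance of $\lam+\rho$ (using $\frakS$-invariance of the bilinear form), and deduce (b) and (c) from the parabolic factorization $w=w_0w_{\min}$ together with $l(vw_0)=l(w_0)-l(v)$ and the additivity $l(xw_{\min})=l(x)+l(w_{\min})$ for $x\in\frakS_0$. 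All steps are sound; one might only add a sentence making explicit that the left-descent condition characterizes the longest element \emph{uniquely} within the coset (which follows from the unique factorization $u=yw_{\min}$, $y\in\frakS_0$, $l(u)=l(y)+l(w_{\min})$), but this is standard and your argument implicitly relies on exactly that.
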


The $Q$-orbit decomposition of $X$ is given by
$$X=\bigsqcup_{w\in{}^{\sss{Q}}\negmedspace\frakS}Y_w,\qquad Y_w=Q\dot{w}B/B.$$
Each $Y_w$ is a smooth subscheme of $X$, and $X_w$ is open and dense
in $Y_w$. The closure of $Y_w$ in $X$ is an irreducible projective variety of dimension $l(w)$ given by
\begin{equation*}
  \olY_w=\bigsqcup_{w'\in{}^{\sss{Q}}\negmedspace\frakS,\,w'\leqs w}Y_{w'}.
\end{equation*}
Recall that $Y^\dag_w=\pi^{-1}(Y_w)$. The canonical embedding $j_w: Y^\dag_w\ra X^\dag$ is locally closed and affine, see Remark \ref{rk:bordfunction}(b). For $\lam\in\Lam$ and $w\in\QS$ let
\begin{equation}\label{eq:Bw}
  \scrB^\lam_w=\Omega_{Y^\dag_w}\otimes_{\scrO_{Y^\dag_w}}\pi^\ast(\scrL_{Y_w}^\lam).
\end{equation}
We have the following objects in $\bfM^\lam_h(X)$
\begin{equation*}
  \scrB_{w!}^\lam=j_{w!}(\scrB^\lam_w), \quad \scrB_{w!\bullet}^\lam=j_{w!\bullet}(\scrB^\lam_w), \quad
  \scrB_{w\bullet}^\lam=j_{w\bullet}(\scrB^\lam_w).
\end{equation*}
Now, consider the canonical embedding $r:X_w^\dag\ra Y_w^\dag$. Since $r$ is open and affine, we have $r^\ast=r^!$ and the functors
$$(r_!, r^!=r^\ast, r_\bullet)$$
form a triple of adjoint functors between the categories $\bfM_h^{\tilde\lam}(Y_w)$ and $\bfM_h^{\tilde\lam}(X_w)$. Note that $r^\ast(\scrB^\lam_w)\cong\scrA^\lam_w$.
\begin{lemma}\label{lem:vermaquotient}
For $\lam\in\Lam$ and $w\in\QS$ the following holds.

  (a) The adjunction map $r_!r^\ast\ra\Id$ yields a surjective morphism in $\bfM_h^{\lam}(Y_w)$
  \begin{equation}\label{eq:eqsurjective}
    r_!(\scrA^\lam_w)\ra\scrB^\lam_w.
  \end{equation}

  (b) The adjunction map $\Id\ra r_\bullet r^\ast$ yields an injective morphism in $\bfM_h^{\lam}(Y_w)$
  \begin{equation}\label{eq:eqinjective}
    \scrB^\lam_w\ra r_\bullet(\scrA^\lam_w).
  \end{equation}
\end{lemma}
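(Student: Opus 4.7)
The plan is to combine the simplicity of $\scrB^\lam_w$ as an object of $\bfM^\lam_h(Y_w)$ with the adjunction triple $(r_!,r^\ast,r_\bullet)$ recalled immediately before the statement, together with the identification $r^\ast\scrB^\lam_w\cong\scrA^\lam_w$ noted there.

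First I would verify that $\scrB^\lam_w$ is simple in $\bfM^\lam_h(Y_w)$. By its very definition $\scrB^\lam_w=\Omega_{Y_w^\dag}\otimes_{\scrO_{Y_w^\dag}}\pi^\ast(\scrL^\lam_{Y_w})$ is a tensor product of two invertible $\scrO_{Y_w^\dag}$-modules, so under the equivalence $(-)^\dag$ of Section~\ref{ss:twisted} it corresponds to a line bundle on the smooth variety $Y_w$ carrying its canonical integrable $\scrD^\lam_{Y_w}$-connection. Any nonzero $\scrD^\lam_{Y_w}$-submodule is in particular an $\scrO_{Y_w}$-subsheaf of a coherent sheaf on the noetherian scheme $Y_w$, hence itself coherent, and therefore a rank-one torsion-free subsheaf of a line bundle; the only such subsheaf that is stable under every local vector field is the line bundle itself. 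Thus $\scrB^\lam_w$ is simple.

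With simplicity in hand, the two adjunction maps
\[
r_!r^\ast\scrB^\lam_w\lra\scrB^\lam_w\qquad\text{and}\qquad \scrB^\lam_w\lra r_\bullet r^\ast\scrB^\lam_w
\]
correspond, under the adjunctions $(r_!,r^\ast)$ and $(r^\ast,r_\bullet)$ respectively, to the identity of $r^\ast\scrB^\lam_w\cong\scrA^\lam_w$, so neither is the zero morphism. For (a) the image of the counit is a nonzero subobject of the simple module $\scrB^\lam_w$ and must equal $\scrB^\lam_w$; this is the desired surjectivity. For (b) the kernel of the unit is a proper subobject of $\scrB^\lam_w$, hence zero, which yields the injectivity.

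The only step that is not purely formal is the simplicity claim, and that step is itself short; everything else is pure adjunction nonsense resting on the hypothesis $r^\ast\scrB^\lam_w\cong\scrA^\lam_w$ already recorded in the excerpt, so I do not anticipate any serious obstacle.
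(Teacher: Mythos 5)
Your proof is correct, though it follows a genuinely different route from the paper's. The paper addresses only part (b) directly and does so at the level of underlying $\scrO$-modules: for the open embedding $r$ one has $(r_\bullet r^\ast\scrB^\lam_w)^\scrO=r_\ast r^\ast(\scrB^\lam_w)^\scrO$, and the canonical exact sequence $0\ra i_\ast i^!\ra\Id\ra r_\ast r^\ast$ for the closed complement $i:Y^\dag_w-X^\dag_w\hra Y^\dag_w$ shows the unit is injective precisely because $(\scrB^\lam_w)^\scrO$ is locally free and therefore has no nonzero local sections supported on $Y^\dag_w-X^\dag_w$; part (a) is then obtained by applying the contravariant exact duality $\bbD$, which intertwines $r_!$ and $r_\bullet$. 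You instead isolate the simplicity of $\scrB^\lam_w$ in $\bfM^\lam_h(Y_w)$ (a rank-one $\scrO$-coherent twisted $\scrD$-module on the smooth irreducible variety $Y_w$ has no proper nonzero coherent $\scrD^\lam$-submodule, since any such submodule would be locally free of rank one with torsion quotient, hence everything) and deduce (a) and (b) symmetrically from the nonvanishing of the two adjunction maps, each of which corresponds to $\id_{\scrA^\lam_w}$ under the adjunction triple. Both arguments are short and valid. Your approach gives a uniform treatment of (a) and (b) without invoking duality and makes explicit the simplicity of $\scrB^\lam_w$ — a fact that the paper uses implicitly every time it regards $\scrB^\lam_{w!\bullet}$ as a simple intermediate extension — while the paper's proof of (b) is marginally more elementary in that it relies only on torsion-freeness of a line bundle rather than on the structure theory of $\scrO$-coherent $\scrD$-modules, at the cost of needing duality to reach (a).
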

\begin{proof}
We will only prove part (b). Part (a) follows from (b) by applying the duality functor $\bbD$. To prove (b), it is enough to show that the $\scrO_{Y^\dag_w}$-module morphism
\begin{equation}\label{eq:oo}
(\scrB^\lam_w)^{\scrO}\ra \bigl(r_\bullet r^\ast (\scrB^\lam_w)\bigr)^{\scrO}
\end{equation} is injective.
Since $r$ is an open embedding, the right hand side is equal to $r_\ast r^\ast\bigl((\scrB^\lam_w)^{\scrO}\bigr).$ Now, consider the closed embedding $$i: Y^\dag_w - X^\dag_w\lra
Y^\dag_w.$$
The morphism (\ref{eq:oo}) can be completed into the following exact sequence in $\bfO(Y^\dag_w)$,
$$0\ra i_\ast i^!\bigl((\scrB^\lam_w)^{\scrO}\bigr)\ra(\scrB^\lam_w)^{\scrO}\ra r_\ast r^\ast\bigl((\scrB^\lam_w)^{\scrO}\bigr),$$
see e.g., \cite[Proposition 1.7.1]{HTT}. The $\scrO_{Y^\dag_w}$-module $(\scrB^\lam_w)^{\scrO}$ is locally free. So it has no subsheaf supported on the closed subscheme $Y^\dag_w - X^\dag_w$. We deduce that $i_\ast i^!\bigl((\scrB^\lam_w)^{\scrO}\bigr)=0$. Hence the morphism (\ref{eq:oo}) is injective.
\end{proof}
\begin{lemma}\label{lem:isosimple}
  For $\lam\in \Lam$ and $w\in \QS$ we have
  \begin{equation*}
    \scrA^\lam_{w!\bullet}\cong\scrB^\lam_{w!\bullet}.
  \end{equation*}
\end{lemma}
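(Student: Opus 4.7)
The plan is to decompose $i_w = j_w \circ r$, where $r: X_w^\dag \hookrightarrow Y_w^\dag$ is the open affine embedding and $j_w: Y_w^\dag \hookrightarrow X^\dag$ is the locally closed affine embedding, and then reduce the claim to the identification $r_{!\bullet}(\scrA^\lam_w) \cong \scrB^\lam_w$. The transitivity of the intermediate extension functor under composition of locally closed affine embeddings, $(j_w \circ r)_{!\bullet} \cong j_{w!\bullet} \circ r_{!\bullet}$, will then immediately give $\scrA^\lam_{w!\bullet} \cong j_{w!\bullet}(\scrB^\lam_w) = \scrB^\lam_{w!\bullet}$.

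To identify $r_{!\bullet}(\scrA^\lam_w)$ with $\scrB^\lam_w$, I would combine the definition $r_{!\bullet}(\scrA^\lam_w) = \Im\bigl(\psi(\scrA^\lam_w) : r_!(\scrA^\lam_w) \to r_\bullet(\scrA^\lam_w)\bigr)$ with the surjection $\alpha: r_!(\scrA^\lam_w) \twoheadrightarrow \scrB^\lam_w$ and the injection $\beta: \scrB^\lam_w \hookrightarrow r_\bullet(\scrA^\lam_w)$ furnished by Lemma \ref{lem:vermaquotient}. The key step is to verify that $\beta \circ \alpha$ coincides with $\psi(\scrA^\lam_w)$. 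For this, note that since $r$ is an open embedding we have $r^\ast r_! \cong \id \cong r^\ast r_\bullet$, and the restriction $r^\ast(\psi(\scrA^\lam_w))$ is the identity of $\scrA^\lam_w$ by definition. On the other hand, $\alpha$ and $\beta$ arise from the counit $r_! r^\ast \to \Id$ and unit $\Id \to r_\bullet r^\ast$ evaluated at $\scrB^\lam_w$, so by the triangle identities for these two adjunctions their restrictions $r^\ast \alpha$ and $r^\ast \beta$ are identities under the isomorphism $r^\ast(\scrB^\lam_w) \cong \scrA^\lam_w$. Hence $r^\ast(\beta \circ \alpha) = \id$. The adjunction identification $\Hom(r_! \scrF, r_\bullet \scrG) \cong \Hom(\scrF, r^\ast \scrG)$ shows that a morphism $r_!(\scrA^\lam_w) \to r_\bullet(\scrA^\lam_w)$ is determined by its restriction to the open piece, so we conclude $\beta \circ \alpha = \psi(\scrA^\lam_w)$. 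Consequently $r_{!\bullet}(\scrA^\lam_w) = \Im(\beta \circ \alpha) \cong \scrB^\lam_w$, thanks to the surjectivity of $\alpha$ and the injectivity of $\beta$.

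The main technical obstacle is justifying the transitivity of $!\bullet$ in the twisted monodromic setting on the ind-scheme $X^\dag$. On smooth quasi-projective schemes this is classical for holonomic $\scrD$-modules, and in the present case everything can be placed inside some $\olX_v^\dag$ with $v \geqs w$, which contains $\olY_w^\dag = \olX_w^\dag$ as a closed subscheme, reducing the statement to its classical form after checking compatibility with the twist and with the $T$-equivariant structure. As a conceptual alternative that bypasses transitivity altogether, one may observe that $\scrA^\lam_{w!\bullet}$ and $\scrB^\lam_{w!\bullet}$ are both simple objects of $\bfM^\lam_h(X)$ with irreducible support $\olX_w^\dag = \olY_w^\dag$ and with restriction $\scrA^\lam_w$ to the dense smooth open subscheme $X_w^\dag \subset Y_w^\dag$, and then invoke the uniqueness of simple extensions.
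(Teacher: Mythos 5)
Your proposal is correct, and both the main route and the one-line alternative at the end give valid proofs. The paper's own argument is shorter and more economical: it applies the exact functor $j_{w\bullet}$ to the injection $\scrB^\lam_w\hookrightarrow r_\bullet(\scrA^\lam_w)$ from Lemma \ref{lem:vermaquotient}(b) to realize $\scrB^\lam_{w\bullet}$ as a subobject of $\scrA^\lam_{w\bullet}$; since $\scrB^\lam_{w!\bullet}$ is a simple subobject of $\scrB^\lam_{w\bullet}$ and $\scrA^\lam_{w!\bullet}$ is the unique simple subobject (socle) of $\scrA^\lam_{w\bullet}$, the isomorphism follows. This only invokes half of Lemma \ref{lem:vermaquotient} and sidesteps transitivity of $!\bullet$ entirely.

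Your main route, in contrast, establishes two intermediate facts of independent interest: that $r_{!\bullet}(\scrA^\lam_w)\cong\scrB^\lam_w$ (which follows immediately since $\scrB^\lam_w$ is a simple line bundle with flat connection on the smooth $Y_w^\dag$ and $X_w^\dag$ is dense open, so it \emph{is} the unique simple extension), and the transitivity $(j_w\circ r)_{!\bullet}\cong j_{w!\bullet}\circ r_{!\bullet}$ for the composed affine locally closed embedding. Your verification that $\beta\circ\alpha=\psi(\scrA^\lam_w)$ via the triangle identities and the adjunction $\Hom(r_!\scrF,r_\bullet\scrG)\cong\Hom(\scrF,\scrG)$ is sound. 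The trade-off is that you need to (and you correctly flag this) justify transitivity of intermediate extension in the twisted monodromic ind-scheme setting, which is where the paper's route saves effort. Your closing alternative — that both sides are simple objects with the same irreducible support $\olX^\dag_w=\olY^\dag_w$ and the same restriction $\scrA^\lam_w$ to the dense smooth open $X^\dag_w$, hence coincide by uniqueness of simple extensions — is essentially equivalent to the paper's argument and is the cleanest way to phrase it.
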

\begin{proof}
  By applying the exact functor $j_{w\bullet}$ to the map (\ref{eq:eqinjective}) we see that $\scrB^\lam_{w\bullet}$ is a subobject of
  $\scrA^\lam_{w\bullet}$ in $\bfM_h^\lam(X)$.
  In particular $\scrB^\lam_{w!\bullet}$ is a simple subobject of
  $\scrA^\lam_{w\bullet}$. So it is isomorphic to
  $\scrA^\lam_{w!\bullet}$.
\end{proof}

\begin{prop}\label{prop:localization}
Let $\lam\in\Lam$ such that $\lam+\rho$ is antidominant, and
let $w\in\QS$.

(a) If there exists $\al\in\Pi^+_0$ such that
$\pair{w(\lam+\rho):\al}=0$, then
$$\Gamma(\scrB_{w!}^\lam)=0.$$

(b) We have
$$\pair{w(\lam+\rho):\al}\neq 0,\quad\forall\ \al\in\Pi^+_0\quad \iff\quad
w\cdot\lam\in\Lam^+.$$ In this case, we have
\begin{equation*}
\Gamma(\scrB_{w!}^\lam)=M_\kappa(w\cdot\lam),\qquad
\Gamma(\scrB_{w\bullet}^\lam)=\bfD M_\kappa(w\cdot\lam).
\end{equation*}

(c) We have
\begin{equation*}
  \Gamma(\scrB^\lam_{w!\bullet})=\begin{cases}
     L_\kappa(w\cdot\lam) &\text{ if $w$ is the shortest element in
     }w\frakS(\lam),\\
     0 &\text{ else. }
  \end{cases}
\end{equation*}
\end{prop}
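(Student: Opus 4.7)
My plan treats (c) and the equivalence of conditions in (b) first, and then the computation of global sections in (a) and (b). Part (c) is immediate: Lemma \ref{lem:isosimple} gives $\scrB^\lam_{w!\bullet}\cong\scrA^\lam_{w!\bullet}$, and Proposition \ref{prop:nonregular}(c) then supplies the claimed formula. For the equivalence in (b), since $w\in\QS$, Lemma \ref{lem:longest}(a) gives $w^{-1}(\al)\in\Pi^-$ for every $\al\in\Pi_0^+$, so antidominance of $\lam+\rho$ together with integrality $\lam\in\Lam$ yield $\pair{w(\lam+\rho):\al}=-\pair{\lam+\rho:-w^{-1}(\al)}\in\Z_{\geqs 0}$. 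For a simple root $\al_i\in\Pi_0^+$ we have $\pair{\rho:\al_i}=1$, so $\pair{w\cdot\lam:\al_i}\in\N$ is equivalent to $\pair{w(\lam+\rho):\al_i}\neq 0$. Any $\al\in\Pi_0^+$ expands in the simple roots with nonnegative integer coefficients, and the simple-root pairings are themselves nonnegative, so nonvanishing at all of $\Pi_0^+$ is equivalent to nonvanishing at the simple roots, and hence to $w\cdot\lam\in\Lam^+$.

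The main content of (a) and (b) is the computation of $\Gamma(\scrB^\lam_{w!})$ and $\Gamma(\scrB^\lam_{w\bullet})$. Applying the exact functors $j_{w!}$ and $j_{w\bullet}$ to Lemma \ref{lem:vermaquotient} and using the factorization $j_w\circ r=i_w$ produces, in $\bfM^\lam_h(X)$, a surjection $\scrA^\lam_{w!}\twoheadrightarrow\scrB^\lam_{w!}$ and an injection $\scrB^\lam_{w\bullet}\hookrightarrow\scrA^\lam_{w\bullet}$. Since $\lam+\rho$ is antidominant, $\Gamma$ is exact on $\bfM^\lam(X)$ by Proposition \ref{prop:BD}(a), so Proposition \ref{prop:nonregular}(a)--(b) realizes $\Gamma(\scrB^\lam_{w!})$ as a quotient of $N_\kappa(w\cdot\lam)$ and $\Gamma(\scrB^\lam_{w\bullet})$ as a submodule of $\bfD N_\kappa(w\cdot\lam)$. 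To pin these down I would exploit the canonical projection $p\colon Y_w\to Z_w$, where $Z_w\subset G/Q$ is the $L^+G_0$-orbit of $\dot w Q$ in the affine Grassmannian. For $w\in\QS$ (the longest representative in its coset $\frakS_0w$), the dimension count $\dim Y_w-\dim Z_w=l(w)-l(w_0w)=l(w_0)=\dim G_0/B_0$ is consistent with $p$ being a smooth surjection whose fibers are isomorphic to the finite flag variety $G_0/B_0$, and $\scrB^\lam_w$ restricts on each fiber to the holonomic $\scrD$-module attached to a $G_0$-equivariant line bundle whose weight is determined by $w\cdot\lam$. Classical Borel-Weil then gives, in the case $w\cdot\lam\in\Lam^+$, that the fiberwise global sections equal the finite-dimensional simple $\g_0$-module $V(w\cdot\lam)$; successive pushforward along $p$ and the closed embedding $\olY_w\hookrightarrow X$ realizes $\Gamma(\scrB^\lam_{w!})$ as the parabolically induced module $\calU(\g)\otimes_{\calU(\frakq)}V(w\cdot\lam)=M_\kappa(w\cdot\lam)$. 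In the non-regular case of (a), Borel-Weil-Bott vanishing on the finite fiber forces $\Gamma(\scrB^\lam_{w!})=0$. The companion identity $\Gamma(\scrB^\lam_{w\bullet})=\bfD M_\kappa(w\cdot\lam)$ follows from $\bbD\scrB^\lam_{w!}\cong\scrB^\lam_{w\bullet}$ together with the compatibility of $\bbD$ with the module duality $\bfD$ on $\tilde{\calO}_\kappa$ under the exact functor $\Gamma$ (already confirmed in the $\scrA$-case by Proposition \ref{prop:nonregular}(a)--(b)).

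The main obstacle is executing this relative $\scrD$-module analysis cleanly in the twisted, ind-scheme setting: one must verify the smooth fibration $p\colon Y_w\to Z_w$ at the level of twisted $\scrD$-modules, confirm that $\Gamma$ factors through the relative pushforward $p_\bullet$, and handle the $T$-torsor $\pi$ carefully so that $\scrB^\lam_w$ restricts on each fiber to the line-bundle $\scrD$-module of the correct weight. This parallels the approach of Kashiwara-Tanisaki and is of the same flavor as the argument expected to be sketched for Proposition \ref{prop:nonregular} in the appendix.
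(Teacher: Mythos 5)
Your treatment of part (c) and of the equivalence in part (b) is correct and matches the paper, and the initial steps (applying $j_{w!}$, $j_{w\bullet}$ to Lemma~\ref{lem:vermaquotient}, using exactness of $\Gamma$ and Proposition~\ref{prop:nonregular} to realize $\Gamma(\scrB^\lam_{w!})$ as a quotient of $N_\kappa(w\cdot\lam)$ and $\Gamma(\scrB^\lam_{w\bullet})$ as a submodule of $\bfD N_\kappa(w\cdot\lam)$) are exactly what the paper does at the start of its argument for part (b). The observation that $\Gamma(\scrB^\lam_{w!})$ is $\frakq$-locally finite, hence a quotient of $M_\kappa(w\cdot\lam)$, is also correct and appears in the paper.

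The gap is in the step where you try to pin down these modules exactly. The claimed projection $p\colon Y_w\to Z_w$ is not a smooth fibration with fiber $G_0/B_0$ for general $w\in\QS$, and the dimension count $\dim Z_w = l(w_0w)$ is false. A concrete counterexample already occurs for $\widehat{\fraksl}_2$: take $w=s_1s_0\in\QS$. Then $\dim Y_w=l(w)=2$, but $Z_w=Q\dot{s_0}Q/Q$ is the spherical orbit of the cocharacter $\al_1^\vee$ in the affine Grassmannian, of dimension $\pair{\al_1^\vee:2\rho_0}=2\neq l(w_0w)=l(s_0)=1$. In fact one checks (by computing $\frakq\cap\Ad_{\dot w}\frakq$ versus $\frakq\cap\Ad_{\dot w}\frakb$) that $p$ is generically an isomorphism here, so the fiber is a point, not $G_0/B_0$. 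The condition under which the fiber is $G_0/B_0$ is that $w(\Pi^-_0)\subset\Pi(\frakq)$, which for $w\in\QS$ is an extra constraint essentially forcing $w$ to also be maximal in $w\frakS_0$; it fails for most $w\in\QS$. Consequently the relative Borel--Weil--Bott analysis you propose cannot be carried out, and the ``main obstacle'' you flag is in fact a dead end. The paper instead argues in the Grothendieck group: it invokes the Kazhdan--Lusztig character identity $[\scrB^\lam_{w!}]=\sum_{y\in\frakS_0}(-1)^{l(y)}[\scrA^\lam_{yw!}]$ from Remark~\ref{rk:charpara}, applies the exact functor $\Gamma$ and Proposition~\ref{prop:nonregular} to get $[\Gamma(\scrB^\lam_{w!})]=\sum_{y\in\frakS_0}(-1)^{l(y)}[N_\kappa(yw\cdot\lam)]$, deduces part (a) from a pairwise cancellation using $l(w)=l(s_\al w)+1$ when $s_\al$ stabilizes $w\cdot\lam$, and in part (b) recognizes the alternating sum as $[M_\kappa(w\cdot\lam)]$ via the BGG resolution; combining this class equality with the $\frakq$-local-finiteness and the surjection you already identified forces $\Gamma(\scrB^\lam_{w!})=M_\kappa(w\cdot\lam)$. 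The $\bullet$-case is dual. If you want to rescue your proposal you would have to abandon the fibration picture and use instead the Grothendieck-group/BGG argument (or some other device to control the kernel of $M_\kappa(w\cdot\lam)\twoheadrightarrow\Gamma(\scrB^\lam_{w!})$).
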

\begin{proof}
  The proof is inspired by the proof in the finite type case, see e.g., \cite[Theorem G.2.10]{Mi}. First, by Kazhdan-Lusztig's algorithm (see Remark \ref{rk:charpara}), the following equality holds in $[\bfM^\lam_0(X)]$
  \begin{equation}\label{eq:charr}
  [\scrB^\lam_{w!}]=\sum_{y\in\frakS_0}(-1)^{l(y)}[\scrA^\lam_{yw!}].
  \end{equation}
  Since $\lam+\rho$ is antidominant, the functor $\Gamma$ is exact on $\bfM^\lam_0(X)$ by Proposition \ref{prop:BD}(a). Therefore we have the following equalities in $[\tilde{\calO}_\kappa]$
\begin{eqnarray}
    [\Gamma(\scrB^\lam_{w!})]&=&\sum_{y\in\frakS_0}(-1)^{l(y)}[\Gamma(\scrA^\lam_{yw!})]\nonumber\\
    &=&\sum_{y\in\frakS_0}(-1)^{l(y)}[N_\kappa(yw\cdot\lam)].\label{eq:charpverma}
  \end{eqnarray}
Here the second equality is given by Proposition \ref{prop:nonregular}. Now, suppose that there exists $\al\in\Pi^+_0$ such that
  $\pair{w(\lam+\rho):\al}=0$. Let $s_\al$ be the corresponding reflection in
  $\frakS_0$. Then we have $$s_\al w\cdot\lam=w\cdot\lam\,.$$ By Lemma \ref{lem:longest}(b) we have $l(w)=l(s_\al w)+1$. So the right hand side of
  (\ref{eq:charpverma}) vanishes. Therefore we have $\Gamma(\scrB^\lam_{w!})=0$. This proves part (a).
Now, let us concentrate on part (b). Note that $\lam+\rho$ is antidominant. Thus by Lemma \ref{lem:longest}(a) we have $\pair{w(\lam+\rho):\al}\in\N$ for any $\al\in\Pi^+_0$. Hence
$$\pair{w(\lam+\rho):\al}\neq 0\quad\iff\quad\pair{w(\lam+\rho):\al}\geqs 1\quad\iff\quad\pair{w\cdot\lam:\al}\geqs 0.$$
By consequence $\pair{w(\lam+\rho):\al}\neq 0$ for all $\al\in\Pi^+_0$ if and only if $w\cdot\lam$ belongs to $\Lam^+$. In this case, the right hand side of (\ref{eq:charpverma}) is equal to $[M_\kappa(w\cdot\lam)]$ by the BGG-resolution. We deduce that
  \begin{equation}\label{eq:isogroth}
    [\Gamma(\scrB^\lam_{w!})]=[M_\kappa(w\cdot\lam)].
  \end{equation}
Now, applying the exact functor $j_!$ to the surjective morphism in (\ref{eq:eqsurjective}) yields a quotient map
$\scrB^\lam_{w!}\ra\scrA^\lam_{w!}$ in $\bfM^\lam(X)$. The exactness of $\Gamma$ implies that $\Gamma(\scrB^\lam_{w!})$ is a quotient
  of $N_\kappa(w\cdot\lam)=\Gamma(\scrA^\lam_{w!})$. Since $M_\kappa(w\cdot\lam)$ is the maximal $\frakq$-locally-finite quotient of
  $N_\kappa(w\cdot\lam)$ and $\Gamma(\scrB^\lam_{w!})$ is $\frakq$-locally finite, we deduce that $\Gamma(\scrB^\lam_{w!})$ is a quotient of
  $M_\kappa(w\cdot\lam)$. So the first equality in
  part (b) follows from (\ref{eq:isogroth}). The proof of the second one is similar. Finally, part (c) follows from Lemma \ref{lem:isosimple} and Proposition \ref{prop:nonregular}.
\end{proof}

\begin{rk}\label{rk:condition}
  Note that if $w\in\QS$ is a shortest element in $w\frakS(\lam)$, then we have $\pair{w(\lam+\rho):\al}\neq 0$ for all $\al\in\Pi^+_0$. Indeed, if there exists $\al\in\Pi^+_0$ such that $\pair{w(\lam+\rho):\al}=0$. Let $s'=w^{-1}s_\al w$. Then $s'$ belongs to $\frakS(\lam)$. Therefore we have $l(ws')>l(w)$. But $ws'=s_\al w$ and $s_\al\in\frakS_0$, by Lemma \ref{lem:longest} we have $l(ws')=l(s_\al w)<l(w)$. This is a contradiction.
\end{rk}

\section{The geometric construction of the Jantzen filtration}\label{s:geojant}

In this part, we give the geometric construction of the Jantzen filtration in the affine parabolic case by generalizing the result of \cite{BB}.

\subsection{Notation}\label{ss:overR}

Let $R$ be any noetherian $\C$-algebra. To any abelian category $\calC$ we
associate a category $\calC_R$ whose objects are the pairs
$(M,\mu_M)$ with $M$ an object of $\calC$ and
$\mu_M:R\ra\End_{\calC}(M)$ a ring homomorphism. A morphism
$(M,\mu_M)\ra(N,\mu_N)$ is a morphism $f:M\ra N$ in $\calC$ such
that $\mu_N(r)\circ f=f\circ\mu_M(r)$ for $r\in R$. The category
$\calC_R$ is also abelian. We have a faithful forgetful functor
\begin{equation}\label{eq:forget}
  for:\calC_R\ra\calC,\qquad (M,\mu_M)\ra M.
\end{equation} Any functor $F:
\calC\ra\calC'$ gives rise to a functor
$$F_R:\calC_R\ra\calC'_R,\quad(M,\mu_M)\mapsto(F(M),\,\mu_{F(M)})$$
such that $\mu_{F(M)}(r)=F(\mu_M(r))$ for $r\in R$. The functor
$F_R$ is $R$-linear. If $F$ is exact, then $F_R$ is also exact. We
have $for\circ F_R=F\circ for$. Given an inductive system of categories $(\calC_\al,i_{\al\beta})$, it yields an inductive
system $((\calC_\al)_R, (i_{\al\beta})_R)$, and we have a canonical
equivalence
$$(\tilim\calC_\al)_R=\tilim((\calC_\al)_R).$$

\subsection{The function $f_w$}\label{ss:ff}

Let $Q'=(Q,Q)$ be the commutator subgroup of $Q$. It acts
transitively on $Y_w^\dag$ for $w\in\QS$. We have the following lemma.

\begin{lemma}\label{lem:f}
  For any $w\in\QS$ there exists a regular function
  $f_w:\olY^\dag_w\ra\C$ such that $f_w^{-1}(0)=\olY_w^\dag-Y_w^\dag$ and
  \begin{equation*}
    f_w(qxh^{-1})=e^{w^{-1}\omega_0}(h)f_w(x),\qquad q\in
    Q',\ x\in Y^\dag_w,\ h\in T.
  \end{equation*}
\end{lemma}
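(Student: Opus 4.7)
Plan. I would construct $f_w$ by exploiting the $Q$-homogeneity of $Y_w$. The group $Q\times T$ acts on $X^\dag$ by $(q,h)\cdot gN:=qgh^{-1}N$; the subvariety $Y_w^\dag$ is a single orbit through $\dot{w}N$, with stabilizer
\begin{equation*}
  H_w=\{(q_0,h(q_0))\mid q_0\in Q^w\},
\end{equation*}
where $Q^w:=Q\cap\dot{w}B\dot{w}^{-1}$ and $h(q_0)\in T$ denotes the $T$-component of $\dot{w}^{-1}q_0\dot{w}\in B=TN$. Thus $Y_w^\dag\cong(Q\times T)/H_w$, and any function satisfying properties (ii) and (iii) must be of the form $f_w(q\dot{w}h^{-1}N)=\phi(qQ')\,e^{w^{-1}\omega_0}(h)$ for some $\phi:Q/Q'\to\C$. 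Well-definedness along the $H_w$-orbits forces $\phi(q_0Q')=e^{-w^{-1}\omega_0}(h(q_0))$ for every $q_0\in Q^w$; in particular, since $\phi$ is trivial on $Q'$, one needs
\begin{equation*}
  e^{-w^{-1}\omega_0}(h(q_0))=1,\qquad\forall\,q_0\in Q^w\cap Q'.
\end{equation*}

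To verify this compatibility, I would use Lemma \ref{lem:longest}(a), which gives $w^{-1}\Pi_0^+\subset\Pi^-$ and hence $w^{-1}\Pi_0^-\subset\Pi^+$. The first inclusion yields $Q^w\cap G_0=B_0^-$ (opposite Borel of $G_0=SL_m$), while the second gives $\dot{w}^{-1}N_0^-\dot{w}\subset N$. For $q_0$ in $N_0^-$ or in the pro-unipotent subgroup $Q^w\cap U_Q$ (with $U_Q$ the pro-unipotent radical of $Q$), the conjugate $\dot{w}^{-1}q_0\dot{w}$ lies in $N$, so $h(q_0)=1$ and the condition is automatic. For $q_0=t_0\in T_0\subset B_0^-$, the $T$-component is $w^{-1}(t_0)$, and the Weyl-group transformation of characters gives
\begin{equation*}
  e^{-w^{-1}\omega_0}(w^{-1}(t_0))=e^{-\omega_0}(t_0)=1,
\end{equation*}
since $\omega_0$ vanishes on $\frakt_0$. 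As these generators exhaust $Q^w\cap Q'$ as a pro-algebraic group, $\phi$ exists and defines $f_w$ on $Y_w^\dag$.

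It remains to extend $f_w$ as a regular function on $\olY_w^\dag$ with zero locus exactly $\olY_w^\dag-Y_w^\dag$. For this I would identify $f_w$, up to scalar, with a Pl\"ucker-type section in the basic representation $V(\omega_0)$ of $G$. Since $\frakS_0$ fixes $\omega_0$, for $w\in\QS$ the extremal weight vector $v_w:=\dot{w}^{-1}v_+\in V(\omega_0)$ (where $v_+$ is the highest weight vector) is well defined up to scalar and has $T$-weight $w^{-1}\omega_0$. By the affine Borel-Weil theorem (\cite{Ku2}), the Demazure submodule generated by $v_w$ provides a canonical section $s_w\in H^0(\olY_w,\scrL^{w^{-1}\omega_0})$: non-vanishing on $Y_w$ follows from $Q$-equivariance starting at $\dot{w}B$, while vanishing on each smaller cell $Y_{w'}\subset\olY_w-Y_w$ follows from the absence of the weight $w^{-1}\omega_0$ in the smaller Demazure submodule. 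Under the correspondence of Section \ref{ss:twisted}, $s_w$ corresponds to $f_w$, yielding the desired regular extension.

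The main obstacle is the character-compatibility check on $Q^w\cap Q'$; once that is verified as sketched above (exploiting the hypothesis $w\in\QS$ through Lemma \ref{lem:longest}(a)), the rest of the construction is formal.
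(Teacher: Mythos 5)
Your proposal is correct and is essentially the paper's own argument: both realize $f_w$ as the matrix coefficient $g\mapsto l_w(g^{-1}v_+)$, where $v_+$ is the $Q'$-fixed highest-weight vector of the basic representation $V(\omega_0)$ and $l_w$ is the extremal weight functional dual to $v_w=\dot{w}^{-1}v_+$, the boundary vanishing coming from $v_{w'}$ having strictly higher weight than $v_w$ for $w'<w$. Your first half (the stabilizer-compatibility check on the open orbit, via Lemma~\ref{lem:longest}(a)) is a valid sanity check but is made redundant by the Borel--Weil/Demazure construction in the second half; when identifying $s_w$ with $f_w$ you should state explicitly that $s_w$ is $Q'$-invariant precisely because $v_+$ is fixed by $Q'$, which is the step your outline leaves implicit.
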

\begin{proof}
  Let $V$ denote the simple $\g$-module of highest weight $\omega_0$.
  It is integrable, hence it admits an action of $G$. Let $v_0\in V$ be a nonzero vector in the weight space $V_{\omega_0}$. It is fixed under the action of $Q'$. So the map
  $$\varphi: G\ra V,\quad g\mapsto g^{-1}v_0$$
  maps $Q\dot{w}B$ to $B\dot{w}^{-1}v_0$ for any $w\in\QS$. Let
  $V(w^{-1})$ be the $\calU(\frakb)$-submodule of $V$ generated by the weight space
  $V_{w^{-1}\omega_0}$. We have $B\dot{w}^{-1}v_0\subset
  V(w^{-1})$. Recall that for $w'\in\QS$ we have
  \begin{eqnarray}
    w'<w&\Longleftrightarrow&(w')^{-1}<w^{-1}\nonumber\\
    &\Longleftrightarrow& (\dot{w}')^{-1}v_0\in\frakn
    \dot{w}^{-1}v_0,\label{eq:norder}
  \end{eqnarray}
  see e.g., \cite[Proposition 7.1.20]{Ku2}.
  Thus, if $w'\leqs w$ then $\varphi(Q\dot{w}'B)\subset V(w^{-1})$. The $\C$-vector space $V(w^{-1})$ is finite
  dimensional. We choose a linear form $l_w: V(w^{-1})\ra\C$
  such that $$l_w(\dot{w}^{-1}v_0)\neq
  0\quad\text{and}\quad l_w(\frakn \dot{w}^{-1}v_0)=0.$$
  Set $\tilde{f}_w=l_w\circ\varphi$. Then
  for $q\in Q'$, $h\in T$, $u\in N$ we have
  \begin{eqnarray*}
    \tilde{f}_w(q\dot{w}h^{-1}u)&=&l_w(u^{-1}h\dot{w}^{-1}v_0)\\
    &=&e^{w^{-1}\omega_0}(h)l_w(\dot{w}^{-1}v_0)\\
    &=&e^{w^{-1}\omega_0}(h)\tilde{f}_w(\dot{w}^{-1}).
  \end{eqnarray*}
  A similar calculation together with (\ref{eq:norder}) yields
  that $\tilde{f}_w(Q\dot{w}'B)=0$ for $w'<w$. Hence
  $\tilde{f}_w$ defines a regular function on $\bigsqcup_{w'\leqs
  w}Q\dot{w}'B$ which is invariant under the right action of
  $N$. By consequence it induces a regular function $f_w$ on
  $\olY^\dag_w$ which has the required properties.
\end{proof}

\begin{rk}\label{rk:bordfunction}
  (a) The function $f_w$ above is completely determined by its value on the point $\dot{w}N/N$,
  hence is unique up to scalar.

  (b) The lemma implies that the embedding $j_w: Y^\dag_w\ra X^\dag$ is affine.

  (c) The function $f_w$ is an analogue of the function defined in \cite[Lemma 3.5.1]{BB} in the finite type case. Below we will use it to define the Jantzen filtration on $\scrB^\lam_{w!}$. Note that \cite{BB}'s function is defined on the whole enhanced flag variety (which is a smooth scheme). Although our $f_w$ is only defined on the singular scheme $\olY^\dag_w$, this does not create any problem, because the definition of the Jantzen filtration is local (see Section \ref{ss:geojant}), and each point of $\olY_w^\dag$ admits a neighborhood $V$ which can be embedded into a smooth scheme $U$ such that $f_w$ extends to $U$. The choice of such an extension will not affect the filtration, see \cite[Remark 4.2.2(iii)]{BB}.
\end{rk}

\subsection{The $\scrD$-module $\scrB^{(n)}$}\label{ss:scrB}

Fix $\lam\in \Lam$ and $w\in\QS$. In the rest of Section \ref{s:geojant}, we will
abbreviate
$$j=j_w,\quad f=f_w,\quad\scrB=\scrB^\lam_w,\quad \scrB_!=\scrB^\lam_{w!},\quad \text{etc.}$$
Following \cite{BB} we introduce the deformed version of $\scrB$.
Recall that $R=\C[[s]]$ and $\wp$ is the maximal ideal. Let $x$ denote a coordinate on $\C$. For each integer $n>0$ set
$R^{(n)}=R(\wp^n)$. Consider the left $\scrD_{\C^\ast}$-module
$$\scrI^{(n)}=(\scrO_{\C^\ast}\otimes R^{(n)})x^s.$$
It is a rank one $\scrO_{\C^\ast}\otimes R^{(n)}$-module generated
by a global section $x^s$ such that the action of $\scrD_{\C^\ast}$
is given by $x\partial_x(x^s)=s(x^s)$. The restriction of $f$ yields a map $Y^\dag_w\ra\C^\ast$. Thus $f^\ast\scrI^{(n)}$ is a
left $\scrD_{Y^\dag_w}\otimes R^{(n)}$-module. So we get a right $\scrD_{Y^\dag_w}\otimes R^{(n)}$-module
$$\scrB^{(n)}=\scrB\otimes _{\scrO_{Y^\dag_w}}f^\ast\scrI^{(n)}.$$
\begin{lemma}\label{lem:basic}
  The right $\scrD_{Y^\dag_w}\otimes R^{(n)}$-module $\scrB^{(n)}$
  is an object of $\bfM^{\tilde\lam}_h(Y_w)$.
\end{lemma}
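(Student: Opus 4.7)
The plan is to verify the three conditions implicit in membership of $\bfM^{\tilde\lam}_h(Y_w)$: weak $T$-equivariance on $Y^\dag_w$, nilpotent action of $\frakm_\lam$ on the dagger part, and holonomicity of the underlying $\scrD_{Y^\dag_w}$-module. Since $Y_w$ is smooth and $\scrB$ itself lies in $\bfM^\lam_h(Y_w)$, the work is essentially to understand how the twist by $f^\ast\scrI^{(n)}$ affects these properties.

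First I would equip $f^\ast\scrI^{(n)}$ with a $T$-equivariant structure via Lemma \ref{lem:f}. That lemma says $f$ is a $T$-semi-invariant of weight $w^{-1}\omega_0$, so $f$ becomes $T$-equivariant as a map $Y^\dag_w\to\C^\ast$ once we let $T$ act on $\C^\ast$ through the character $e^{-w^{-1}\omega_0}$. Pulling back the $T$-equivariant structure on $\scrI^{(n)}$ in which the generator $x^s$ has weight $sw^{-1}\omega_0$ yields a $T$-equivariant structure on $f^\ast\scrI^{(n)}$; the fact that $s\in R^{(n)}$ is nilpotent is exactly what makes this rigorous, as the formal expression for $f^s$ becomes a polynomial in $s$. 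Tensoring with the already weakly $T$-equivariant module $\scrB$ then gives $\scrB^{(n)}\in\bfM^T(Y^\dag_w)$.

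Second, differentiating this $T$-action, the operator $h\in\frakt$ acts on $(\scrB^{(n)})^\dag$ on sections of the form (weight-$\lam$ section of $\scrB^\dag$)$\otimes f^s$ as $\lam(h)+sw^{-1}\omega_0(h)$. Hence $h-\lam(h)$ is multiplication by $sw^{-1}\omega_0(h)\in\wp R^{(n)}$, and since $\wp^n$ annihilates $R^{(n)}$, the ideal $\frakm_\lam$ acts nilpotently on $(\scrB^{(n)})^\dag$, placing $\scrB^{(n)}$ in $\bfM^{\tilde\lam}(Y_w)$. For holonomicity I would use the finite $s$-adic filtration $s^k\scrI^{(n)}$, $k=0,\dots,n$: modulo $s^{k+1}$ the twist of $x\partial_x$ by $s$ disappears, so each graded piece is isomorphic to $\scrO_{\C^\ast}$ as a left $\scrD_{\C^\ast}$-module. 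Pulling back along $f$ and tensoring with $\scrB$ produces a finite filtration of $\scrB^{(n)}$ whose subquotients are all isomorphic to $\scrB$, and since $\scrB$ is holonomic and holonomicity is preserved by extensions, $\scrB^{(n)}$ is holonomic.

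The only genuinely delicate step is making precise the $T$-equivariant structure on $f^\ast\scrI^{(n)}$; everything else reduces to straightforward sheaf-theoretic and filtration arguments on the smooth scheme $Y^\dag_w$. The key observation making the first step work is that $R^{(n)}$ is a local Artinian ring, so the formal symbol $f^s$ can be treated as a section transforming by a well-defined character with values in $R^{(n)}$.
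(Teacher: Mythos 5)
Your proof has the right three-part structure (weak $T$-equivariance of $\scrB^{(n)}$, nilpotent action of $\frakm_\lam$ on $(\scrB^{(n)})^\dag$, holonomicity), which matches the paper's proof. For holonomicity you use the $s$-adic filtration with graded pieces $\cong\scrB$ and preservation of holonomicity under extensions, while the paper observes directly that $\scrB^{(n)}$ is $\scrO_{Y^\dag_w}$-locally free of finite rank $n=\dim_\C R^{(n)}$, hence holonomic; both arguments are valid.

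There is one imprecision worth fixing. You propose a $T$-equivariant $\scrO$-module structure on $\scrI^{(n)}$ in which $x^s$ has ``weight'' $sw^{-1}\omega_0$. But $sw^{-1}\omega_0\notin X^\ast(T)$, and a genuine algebraic $T$-equivariant $\scrO\otimes R^{(n)}$-module structure still decomposes into honest characters of $T$; the nilpotency of $s$ does not make $h\mapsto e^{s\,w^{-1}\omega_0(\log h)}$ a well-defined character. The paper's resolution is the cleaner one: take the $T$-equivariant $\scrO$-structure in which $x^s$ (and hence $f^s$) is genuinely $T$-\emph{invariant}. The $s$-twist then appears not in the $T$-weight of the generator, but in the $\delta_r$-action coming from the right $\scrD^\dag_{Y_w}$-module structure on $(\scrB^{(n)})^\dag$, for which one computes $f^s\cdot\delta_r(h)=-sw^{-1}\omega_0(h)f^s$; this is the honest source of the shift in $\frakm_\lam$. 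Relatedly, note that in this paper $\frakm_\lam$ is generated by $h+\lam(h)$, not $h-\lam(h)$, so the sign bookkeeping in your second paragraph should be adjusted: the generator $h+\lam(h)$ acts by $-sw^{-1}\omega_0(h)\in\wp R^{(n)}$, which is what gives nilpotency. With these two clarifications your argument matches the paper's.
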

\begin{proof}
Since $R^{(n)}$ is a $\C$-algebra of dimension $n$ and $\scrB$ is
locally free of rank one over $\scrO_{Y^\dag_w}$, the
$\scrO_{Y^\dag_w}$-module $\scrB^{(n)}$ is locally free of rank $n$.
Hence it is a holonomic $\scrD_{Y^\dag_w}$-module. Note
that the $\scrD_{\C^\ast}$-module $\scrI^{(n)}$ is weakly $T$-equivariant such that $x^s$ is a $T$-invariant global section. Since the map $f$ is $T$-equivariant, the $\scrD_{Y^\dag_w}$-module $f^\ast\scrI^{(n)}$ is also weakly $T$-equivariant. Let $f^s$ be the global section of $f^\ast\scrI^{(n)}$ given by the image of $x^s$ under the inclusion
$$\Gamma(\C^\ast,\scrI^{(n)})\subset \Gamma(Y_w^\dag,f^\ast\scrI^{(n)}).$$
Then $f^s$ is $T$-invariant. It is nowhere vanishing on $Y^\dag_w$. Thus it yields an isomorphism of $\scrO_{Y^\dag_w}\otimes R^{(n)}$-modules
\begin{equation*}
 f^\ast\scrI^{(n)}\cong\scrO_{Y^\dag_w}\otimes R^{(n)}.
\end{equation*}
By consequence we have the following isomorphism
\begin{eqnarray*}
  (\scrB^{(n)})^\dag&=&
  \pi_\ast(\pi^\ast(\Omega_{Y_w}\otimes_{\scrO_{Y_w}}\scrL^\lam_{Y_w})\otimes_{\scrO_{Y^\dag_w}}f^\ast\scrI^{(n)})^T\\
  &=&\Omega_{Y_w}\otimes_{\scrO_{Y_w}}\scrL^\lam_{Y_w}\otimes_{\scrO_{Y_w}}\pi_\ast(f^\ast\scrI^{(n)})^T\\
  &\cong&\Omega_{Y_w}\otimes_{\scrO_{Y_w}}\scrL^\lam_{Y_w}\otimes_{\scrO_{Y_w}}(\scrO_{Y_w}\otimes
  R^{(n)}).
\end{eqnarray*}
See Remark \ref{rk:omega} for the first equality. Next, recall from (\ref{eq:taction}) that the right $T$-action on $Y^\dag_w$ yields a morphism of Lie algebras
$$\delta_r: \frakt\ra \Gamma(Y^\dag_w,\scrD_{Y^\dag_w}).$$
The right $\scrD^\dag_{Y_w}$-module structure of $f^\ast(\scrI^{(n)})$ is such that $$(f^s\cdot\delta_r(h))(m)=sw^{-1}\omega_0(-h)f^s(m),\quad\forall\ m\in Y^\dag_w.$$ So the action of the element
$$h+\lam(h)+sw^{-1}\omega_0(h)\in\calU(\frakt)\otimes R^{(n)}$$ on $(\scrB^{(n)})^\dag$ via the map $\delta_r$ vanishes. Since the multiplication by $s$ on $(\scrB^{(n)})^\dag$ is nilpotent, the action of the ideal $\frakm_\lam$ is also
nilpotent. Therefore $\scrB^{(n)}$ belongs to the category
$\bfM^{\tilde\lam}_h(Y_w)$.
\end{proof}
It follows from the lemma that we have the following objects in $\bfM^{\tilde{\lam}}_h(X)$
$$\scrB^{(n)}_!=j_!(\scrB^{(n)}),\quad
\scrB^{(n)}_{!\bullet}=j_{!\bullet}(\scrB^{(n)}),
\quad\scrB^{(n)}_\bullet=j_\bullet(\scrB^{(n)}).$$

\subsection{Deformed parabolic Verma modules}

Fix $\lam\in\Lam$ and $w\in\QS$ as before. Let $n>0$. If $w\cdot\lam\in\Lam^+$ we will abbreviate
$$M_\kappa=M_\kappa(w\cdot\lam),\quad M_\bfk=M_\bfk(w\cdot\lam),
\quad M^{(n)}_\bfk=M_\bfk(\wp^n),\quad \bfD M^{(n)}_\bfk=(\bfD
M_\bfk)(\wp^n).$$
Note that the condition is satisfied when $w$ is a shortest element in $w\frakS(\lam)$, see Remark \ref{rk:condition}.

\begin{prop}\label{prop:parvermadeformed}
Assume that $\lam+\rho$ is antidominant and that $w$ is a shortest element in $w\frakS(\lam)$. Then there are isomorphisms of
  $\g_{R^{(n)}}$-modules
  \begin{equation*}
    \Gamma(\scrB^{(n)}_!)=M^{(n)}_\bfk,\quad
    \Gamma(\scrB^{(n)}_\bullet)=\bfD M^{(n)}_\bfk.
  \end{equation*}
\end{prop}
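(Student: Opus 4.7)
The plan is to induct on $n$. The base case $n=1$ is exactly Proposition \ref{prop:localization}(b), which applies since, by Remark \ref{rk:condition}, the hypothesis that $w$ is a shortest element in $w\frakS(\lam)$ ensures $w\cdot\lam\in\Lam^+$, so that $M_\bfk=M_\bfk(w\cdot\lam)$ is defined. The inductive mechanism is driven by the $R^{(n)}$-module structure on $\scrB^{(n)}$ inherited from the factor $R^{(n)}$ in $\scrI^{(n)}$: the short exact sequence of $R^{(n)}$-modules
$$0\to\C\xrightarrow{s^{n-1}}R^{(n)}\to R^{(n-1)}\to 0,$$
tensored with $\scrB\otimes_{\scrO_{Y^\dag_w}}f^\ast(\scrO_{\C^\ast}\cdot x^s)$, yields a short exact sequence in $\bfM^{\tilde\lam}_h(Y_w)$
$$0\to\scrB\xrightarrow{s^{n-1}}\scrB^{(n)}\to\scrB^{(n-1)}\to 0.$$

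Since $j_!$ is exact on $\bfM^{\tilde\lam}_h$ and $\Gamma$ is exact on $\bfM^{\tilde\lam}(X)$ by Proposition \ref{prop:BD}(b), applying them, together with Proposition \ref{prop:localization}(b) and the inductive hypothesis, produces the short exact sequence of $\g_{R^{(n)}}$-modules
$$0\to M_\kappa\xrightarrow{s^{n-1}}\Gamma(\scrB^{(n)}_!)\to M^{(n-1)}_\bfk\to 0.$$
The same $R^{(n)}$-module sequence, applied weight-space-by-weight-space to $M_\bfk$ (whose weight spaces are free of finite rank over $R$), gives the parallel algebraic short exact sequence
$$0\to M_\kappa\xrightarrow{s^{n-1}}M^{(n)}_\bfk\to M^{(n-1)}_\bfk\to 0.$$

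To compare them I will construct a natural $\g_{R^{(n)}}$-module morphism $\varphi_n\colon M^{(n)}_\bfk\to\Gamma(\scrB^{(n)}_!)$ by exhibiting a $\frakq_{R^{(n)}}$-highest-weight vector in $\Gamma(\scrB^{(n)}_!)$ of weight $w\cdot\lam+s\omega_0\pmod{\wp^n}$ and invoking the universal property of $M^{(n)}_\bfk=M_\bfk(w\cdot\lam)(\wp^n)$. The candidate is the global section of $\scrB^{(n)}_!$ produced by tensoring the canonical generating section of $\scrB=\scrA^\lam_w$ (the one which gives the highest weight vector of $M_\kappa(w\cdot\lam)$ in Proposition \ref{prop:localization}(b)) with the $T$-invariant global section $f^s$ of $f^\ast\scrI^{(n)}$. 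A local computation near $\dot{w}N/N$, converting the right-$T$ weight $sw^{-1}\omega_0$ of $f^s$ recorded in Lemma \ref{lem:basic} into the left $\g$-action via $\delta_l$ of (\ref{eq:g}), should show the resulting section is $\frakn$-invariant, has $T$-weight $w\cdot\lam+s\omega_0\pmod{\wp^n}$, and generates a copy of $V(w\cdot\lam)\otimes R^{(n)}_{s\omega_0}$ under the $\frakl$-action with the nilradical of $\frakq$ acting trivially. This yields $\varphi_n$, whose reduction $\varphi_n(\wp)$ is the isomorphism of Proposition \ref{prop:localization}(b). Fitting $\varphi_n$ into the ladder between the two short exact sequences above, with left vertical map the identity of $M_\kappa$ and right vertical map $\varphi_{n-1}$ (an isomorphism by induction), the five-lemma forces $\varphi_n$ to be an isomorphism.

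The statement for $\bullet$ follows by the same scheme with $j_!$ replaced by the exact functor $j_\bullet$, invoking the second half of Proposition \ref{prop:localization}(b) and the fact that $\bfD M_\bfk$ is also weight-space-wise free over $R$. The main obstacle is the weight bookkeeping underlying the construction of $\varphi_n$: one must track carefully how the right-$T$ twist $sw^{-1}\omega_0$ on $f^s$ interacts with the left $\g$-action on $\Gamma(\scrB^{(n)}_!)$ to produce precisely the weight $w\cdot\lam+s\omega_0$ (and not some $\frakS$-conjugate), and one must verify that the parabolic highest-weight relations defining $M^{(n)}_\bfk$ — triviality of the nilradical of $\frakq_{R^{(n)}}$ together with the correct $\frakl$-isotype — are genuinely satisfied on the candidate vector, so that the universal property applies cleanly.
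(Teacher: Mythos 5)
Your strategy is genuinely different from the paper's: you propose an induction on $n$ via the short exact sequence $0\to\scrB\xrightarrow{s^{n-1}}\scrB^{(n)}\to\scrB^{(n-1)}\to 0$ and a five-lemma argument, whereas the paper does not induct at all — it first establishes the Verma-module analogue $\Gamma(\scrA_{w!}^{(n)})=N^{(n)}_\bfk$ (Lemma \ref{lem:vermadeformed}), deduces a surjection $N^{(n)}_\bfk\twoheadrightarrow\Gamma(\scrB^{(n)}_!)$ by applying $\Gamma\circ j_!$ to the adjunction epimorphism $r_!r^\ast(\scrB^{(n)})\to\scrB^{(n)}$, uses $\frakq$-local finiteness to factor this through $M^{(n)}_\bfk$, and then concludes by matching generalized weight spaces and Nakayama. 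Your version, if it worked, would be shorter and would bypass Lemma \ref{lem:vermadeformed} entirely; but there is a real gap in the step you yourself flag.

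The gap is in the construction of $\varphi_n$, and it is more serious than "weight bookkeeping." You propose to obtain a global section of $\scrB^{(n)}_!$ by tensoring a generating section of $\scrB$ (which, incidentally, is $\scrB^\lam_w$ on $Y^\dag_w$, not $\scrA^\lam_w$) with $f^s$. But that procedure produces a section of $\scrB^{(n)}$ over $Y^\dag_w$, whose natural extension to $X^\dag$ lands in $\Gamma(\scrB^{(n)}_\bullet)=\Gamma(j_\bullet\scrB^{(n)})$, not in $\Gamma(\scrB^{(n)}_!)$. The $!$-extension $j_!=\bbD\circ j_\bullet\circ\bbD$ is defined through the duality functor; its global sections are not obtained by pushing forward sections of $\scrB^{(n)}$, and in general the canonical map $\Gamma(\scrB^{(n)}_!)\to\Gamma(\scrB^{(n)}_\bullet)$ is neither injective nor surjective. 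This is precisely the difficulty the paper addresses (in the Verma case, in Step 2 of the proof of Lemma \ref{lem:vermadeformed}) by showing that the chain $\Gamma(\scrA^{(n)}_{v!})\twoheadrightarrow\Gamma(\scrA^{(n)}_{v!\bullet})\hookrightarrow\Gamma(\scrA^{(n)}_{v\bullet})$ is an isomorphism on the highest generalized weight space — and that argument uses the explicit Kashiwara–Tanisaki description of $\hat\Gamma(\scrA^{(n)}_{v\bullet})$. Without some such comparison, the existence of your candidate highest-weight vector inside $\Gamma(\scrB^{(n)}_!)$ is unproved. Secondary to this, even granting the vector, the universal property of $M^{(n)}_\bfk$ requires a $\frakq_{R^{(n)}}$-module morphism from the full inducing module $V(w\cdot\lam)\otimes R^{(n)}_{s\omega_0}$; your claim that the vector "generates a copy of $V(w\cdot\lam)\otimes R^{(n)}_{s\omega_0}$" would itself need a Nakayama-type argument (the generated $\frakl_{R^{(n)}}$-module is \emph{a priori} only a highest-weight quotient). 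A cleaner route, still within your inductive framework, would be to first produce the map from $N^{(n)}_\bfk$ (so only a single $\frakb$-highest-weight vector is needed) and then invoke $\frakq$-local finiteness of the target, exactly as the paper does — but establishing the highest-weight vector in the $!$-side remains the sticking point and seems hard to avoid.
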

The proof will be given in Appendix \ref{s:appendixb}.

\subsection{The geometric Jantzen filtration}\label{ss:geojant}

Now, we define the Jantzen filtration on $\scrB_!$ following \cite[Sections 4.1,4.2]{BB}. Recall that $\scrB^{(n)}$ is an object of $\bfM^{\tilde\lam}_h(Y_w)$. Consider the map $$\mu:R^{(n)}\ra
\End_{\bfM^{\tilde\lam}_h(Y_w)}(\scrB^{(n)}),\qquad
\mu(r)(m)=rm,$$ where $m$ denotes a local section of $\scrB^{(n)}$.
Then the pair $(\scrB^{(n)},\mu)$ is an object of the category
$\bfM^{\tilde\lam}_h(Y_w)_{R^{(n)}}$ defined in Section
\ref{ss:overR}. We will abbreviate $\scrB^{(n)}=(\scrB^{(n)},\mu)$.
Fix an integer $a\geqs 0$. Recall the morphism of functors
$\psi:j_!\ra j_\bullet$. We consider the morphism
$$\psi(a,n):\scrB^{(n)}_!\ra \scrB^{(n)}_\bullet$$ in the category
$\bfM^{\tilde\lam}_h(X)_{R^{(n)}}$ given by the composition of the chain of maps
\begin{equation}\label{eq:nearby}
\xymatrix@C=0.5cm{
   \scrB^{(n)}_! \ar[rr]^{j_!(\mu(s^a))} && \scrB^{(n)}_!
   \ar[rr]^{\psi(\scrB^{(n)})} && \scrB^{(n)}_\bullet}.
\end{equation}
The category $\bfM^{\tilde\lam}_h(X)_{R^{(n)}}$ is abelian. The
obvious projection $ R^{(n)}\ra R^{(n-1)}$ yields a canonical map
$$\Coker(\psi(a,n))\ra \Coker(\psi(a,n-1)).$$
By \cite[Lemma 2.1]{B} this map is an isomorphism when $n$ is
sufficiently large. We define
\begin{equation}\label{eq:pia}
\pi^a(\scrB)=\Coker(\psi(a,n)), \quad n\gg 0.
\end{equation}
This is an object of $\bfM^{\tilde\lam}_h(X)_{R^{(n)}}$. We
view it as an object of $\bfM^{\tilde\lam}_h(X)$ via the
forgetful functor (\ref{eq:forget}). Now, let us consider the maps
$$\al:\scrB_!\ra\pi^1(\scrB),\quad\beta:\pi^1(\scrB)\ra\pi^0(\scrB)$$
in $\bfM^{\tilde\lam}_h(X)$ given as follows. First, since
$$\pi^0(\scrB)=\Coker(\psi(\scrB^{(n)}))\quad\text{and}\quad
\pi^1(\scrB)=\Coker(\psi(\scrB^{(n)})\circ j_!(\mu(s))),$$
there is a canonical projection $\pi^1(\scrB)\ra\pi^0(\scrB)$. We define $\beta$ to be this map. Next, the morphism $\psi(\scrB^{(n)})$ maps
$j_!(s(\scrB^{(n)}))$ to $\Im(\psi(1,n))$. Hence it induces a
map $$j_!(\scrB^{(n)}/s(\scrB^{(n)}))\ra \pi^1(\scrB),\qquad
n\gg 0.$$ Composing it with the isomorphism
$\scrB\cong\scrB^{(n)}/s(\scrB^{(n)})$ we get the map $\al$. Let $\mu^1$ denote the
$R^{(n)}$-action on $\pi^1(\scrB)$. Then by \cite{B} the sequence
\begin{equation}\label{eq:exact1}
0 \lra \scrB_! \overset{\al}\lra \pi^1(\scrB) \overset{\beta}\lra
\pi^0(\scrB) \lra 0,
\end{equation}
is exact and $\al$ induces an
isomorphism
$$\scrB_!\ra\Ker(\mu^1(s):\pi^1(\scrB)\ra\pi^1(\scrB)).$$
The \emph{Jantzen filtration} of $\scrB_!$ is defined by
\begin{equation}\label{eq:jantgeo}
J^i(\scrB_!)=\Ker(\mu^1(s))\cap\Im(\mu^1(s)^i), \qquad\forall\,\, i\geqs 0.
\end{equation}

\subsection{Comparison of the Jantzen filtrations}\label{ss:gammajant}

Fix $\lam\in \Lam$ and $w\in\QS$. Consider the Jantzen filtration $(J^iM_\kappa)$ on $M_\kappa$ as defined in Section \ref{ss:jantverma}. The following proposition compares it with the geometric Jantzen filtration on $\scrB_!$.
\begin{prop}\label{prop:gammajant}
  Assume that $\lam+\rho$ is antidominant and that $w$ is a shortest element in $w\frakS(\lam)$. Then we have
  $$J^iM_\kappa=\Gamma(J^i\scrB_!),\quad\forall\,\, i\geqs 0.$$
\end{prop}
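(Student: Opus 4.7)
The plan is to apply the global sections functor $\Gamma$ to the geometric construction of Section \ref{ss:geojant} and match the result with the algebraic Jantzen filtration of Section \ref{ss:jantverma}. Since $\lam+\rho$ is antidominant and all the objects in sight lie in $\bfM^{\tilde\lam}(X)$, the functor $\Gamma$ is exact by Proposition \ref{prop:BD}(b), so it commutes with the cokernels, kernels, and images used to build the filtration.

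First, Proposition \ref{prop:parvermadeformed} gives canonical isomorphisms $\Gamma(\scrB_!^{(n)})\cong M_\bfk^{(n)}$ and $\Gamma(\scrB_\bullet^{(n)})\cong \bfD M_\bfk^{(n)}$ of $\calU_\bfk\otimes R^{(n)}$-modules. The crucial intermediate claim is that under these identifications the morphism $\Gamma(\psi(\scrB^{(n)}))$ coincides, up to a unit in $R^{(n)}$, with the reduction $\phi^{(n)}$ of the morphism $\phi:M_\bfk\to \bfD M_\bfk$ used to define the algebraic Jantzen filtration. To prove this, note that the universal property of the parabolic Verma module and the fact that the $(w\cdot\lam)_s$-weight space of $\bfD M_\bfk$ is free of rank one over $R$ yield $\Hom_{\calU_\bfk}(M_\bfk,\bfD M_\bfk)\cong R$, whence $\Hom(M_\bfk^{(n)},\bfD M_\bfk^{(n)})\cong R^{(n)}$; thus $\Gamma(\psi(\scrB^{(n)}))=u\cdot\phi^{(n)}$ for some $u\in R^{(n)}$. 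Reducing mod $\wp$ gives $\psi(\scrB^{(n)})(\wp)=\psi(\scrB)$, whose image is $\scrB_{!\bullet}$; applying $\Gamma$ and Proposition \ref{prop:localization}(c) the image becomes $L_\kappa(w\cdot\lam)\neq 0$. Hence $u$ is a unit, and after rescaling $\phi$ we may assume $\Gamma(\psi(\scrB^{(n)}))=\phi^{(n)}$.

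Applying the exact functor $\Gamma$ to the definition (\ref{eq:pia}) and to the exact sequence (\ref{eq:exact1}) then gives, for $n\gg 0$,
\begin{equation*}
\Gamma(\pi^a(\scrB))=\bfD M_\bfk^{(n)}\big/\,s^a\phi^{(n)}(M_\bfk^{(n)}),\qquad
0\to M_\kappa\to\Gamma(\pi^1(\scrB))\to\Gamma(\pi^0(\scrB))\to 0,
\end{equation*}
and identifies $M_\kappa$ with $\ker\bigl(s:\Gamma(\pi^1(\scrB))\to\Gamma(\pi^1(\scrB))\bigr)$. By the exactness of $\Gamma$ we also have $\Gamma(J^i\scrB_!)=\ker(s)\cap\Im(s^i)$ inside $\Gamma(\pi^1(\scrB))$. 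It thus remains to verify the purely algebraic identity
\begin{equation*}
J^iM_\kappa=\ker(s)\cap\Im(s^i)\subseteq\Gamma(\pi^1(\scrB)).
\end{equation*}

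This last step is a version of Beilinson's lemma \cite{B} for the $\pi^1$-construction applied to an injective morphism of free $R$-modules; the injectivity of $\phi$ follows from Lemma \ref{lem:MKsimple} (which makes $\phi_K$ an isomorphism between simple $\calU_{\bfk,K}$-modules). One verifies it weight space by weight space via the elementary divisor theorem on the finite-rank free $R$-modules $\phi^\mu:(M_\bfk)_\mu\to(\bfD M_\bfk)_\mu$, matching both filtrations with the partial sums coming from the elementary divisors of $\phi^\mu$. The main obstacle is the identification in the second paragraph: one must know precisely that $\Hom(M_\bfk,\bfD M_\bfk)$ is free of rank one over $R$ and that the reduction modulo $\wp$ of the geometric morphism $\psi(\scrB^{(n)})$ is nonzero, for only then does the geometric setup reproduce the algebraic $\phi$ and not merely a scalar multiple that could vanish at some order.
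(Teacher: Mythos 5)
Your proposal is correct and follows essentially the same overall strategy as the paper: identify $\Gamma(\scrB_!^{(n)})$, $\Gamma(\scrB_\bullet^{(n)})$ with $M_\bfk^{(n)}$, $\bfD M_\bfk^{(n)}$ via Proposition~\ref{prop:parvermadeformed}, use exactness of $\Gamma$ on $\bfM^{\tilde\lam}(X)$ to push the $\pi^a$-construction through $\Gamma$, and check that the resulting algebraic data reproduces the Jantzen filtration of $M_\kappa$.

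There are two places where you do slightly more work than necessary, and it is worth flagging them because the paper sidesteps them cleanly. First, you argue that the geometric $\Gamma(\psi(\scrB^{(n)}))$ agrees with the algebraic $\phi^{(n)}$ up to a unit, and for this you establish $\Hom_{\calU_\bfk}(M_\bfk,\bfD M_\bfk)\cong R$ and $\Hom(M_\bfk^{(n)},\bfD M_\bfk^{(n)})\cong R^{(n)}$. The paper avoids this by simply setting $\phi=\plim\Gamma(\psi(\scrB^{(n)}))$, noting $\phi(\wp)=\Gamma(\psi(\scrB))$ has nonzero image $\Gamma(\scrB_{!\bullet})=L_\kappa(w\cdot\lam)$ by Proposition~\ref{prop:localization}(c), and then invoking the fact (built into Definition~\ref{df:jantzen} and the discussion after it) that the Jantzen filtration is independent of the choice of $\phi$ with $\phi(\wp)\neq 0$. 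Your route is correct but imports a small amount of extra theory; the underlying point, that independence of the choice of $\phi$ is what reconciles the geometric and algebraic sides, is the same. Second, for the final identity $J^iM_\kappa=\Ker(\mu(s))\cap\Im(\mu(s)^i)$ inside $\bfD M_\bfk/s\phi(M_\bfk)$, you appeal to the elementary divisor theorem weight space by weight space. That works, but the paper gives a shorter argument: by injectivity of $\phi$ (Lemma~\ref{lem:MKsimple} and Remark~\ref{rk:jantzen}) and freeness of $\bfD M_\bfk$ over $R$, one has $\Ker(\mu(s))=\phi(M_\bfk)/s\phi(M_\bfk)$, $\Im(\mu(s)^i)=(s^i\bfD M_\bfk+s\phi(M_\bfk))/s\phi(M_\bfk)$, and the intersection is $(\phi(M_\bfk)\cap s^i\bfD M_\bfk+s\phi(M_\bfk))/s\phi(M_\bfk)$, which is exactly the formula for $J^iM_\kappa$ from Definition~\ref{df:jantzen}; no diagonalization is needed. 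So the argument is sound, and the two points where you diverge are simply heavier proofs of the same intermediate facts.
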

\begin{proof}
By Proposition \ref{prop:localization}(b) and Proposition \ref{prop:parvermadeformed} we have
$$\Gamma(\scrB_!)=M_\kappa,\quad\Gamma(\scrB^{(n)}_!)=M_\bfk^{(n)},\quad
\Gamma(\scrB^{(n)}_\bullet)=\bfD M_\bfk^{(n)}.$$
So the map
$$\phi^{(n)}=\Gamma(\psi(\scrB^{(n)})):\Gamma(\scrB^{(n)}_!)\ra\Gamma(\scrB^{(n)}_\bullet).$$
identifies with a $\g_{R^{(n)}}$-module homomorphism
$$\phi^{(n)}:M_\bfk^{(n)}\ra \bfD M_\bfk^{(n)}.$$ Consider the
projective systems $(M_\bfk^{(n)})$, $(\bfD M_\bfk^{(n)})$, $n>0$,
induced by the quotient map $R^{(n)}\ra R^{(n-1)}$. Their limits are
respectively $M_\bfk$ and $\bfD M_\bfk$. The morphisms $\phi^{(n)}$,
$n>0$, yield a morphism of $\g_R$-modules
$$\phi=\plim\phi^{(n)}:M_\bfk\ra \bfD M_\bfk$$
such that $$\phi(\wp)=\phi^{(1)}=\Gamma(\psi(\scrB)).$$
The functor $\Gamma$ is exact by Proposition \ref{prop:BD}. So
the image of $\phi(\wp)$ is $\Gamma(\scrB_{!\bullet})$. It is non zero by Proposition \ref{prop:localization}(c). Hence $\phi$
satisfies the condition of Definition \ref{df:jantzen} and we have
$$J^iM_\kappa=\big(\{x\in M_\bfk\,|\,\phi(x)\in s^i\bfD M_\bfk\}+sM_\bfk\big)/sM_\bfk.$$
By Lemma \ref{lem:MKsimple} and Remark \ref{rk:jantzen}
the map $\phi$ is injective. So the equality
above can be rewritten as
$$J^iM_\kappa=\big(\phi(M_\bfk)\cap
s^i\bfD M_\bfk+s\phi(M_\bfk)\big)/s\phi(M_\bfk).$$ Now, for $a\geqs 0$ let
$$\phi(a,n): M_\bfk^{(n)}\ra \bfD M_\bfk^{(n)}$$ be the
$\g_{R^{(n)}}$-module homomorphism given by the composition
\begin{equation}
\xymatrix{
   M_\bfk^{(n)} \ar[r]^{\mu(s^a)} & M_\bfk^{(n)}
   \ar[r]^{\phi^{(n)}} & \bfD M_\bfk^{(n)}.}
\end{equation}
Then we have $\Gamma(\psi(a,n))=\phi(a,n)$. Since $\Gamma$ is exact,
we have $$\Coker(\phi(a,n))=\Gamma\bigl(\Coker(\psi(a,n))\bigr).$$
So the discussion in Section \ref{ss:geojant} and the
exactness of $\Gamma$ yields that the canonical map $$\Coker(\phi(a,n))\ra
\Coker(\phi(a,n-1))$$ is an isomorphism if $n$ is large enough. We
deduce that
$$\bfD M_\bfk/s^aM_\bfk=\Coker(\phi(a,n))=\Gamma(\pi^a(\scrB)),\quad n\gg 0,$$
see (\ref{eq:pia}). The action of $\mu(s)$ on $\bfD M_\bfk/s^a\phi(M_\bfk)$ is
nilpotent, because $\mu(s)$ is nilpotent on $\bfD M^{(n)}_\bfk$. Further $\Gamma$ maps the exact sequence
(\ref{eq:exact1}) to an exact sequence
\begin{equation}\label{eq:exact2}
0\lra M_\kappa\lra \bfD M_\bfk/s\phi(M_\bfk)\lra \bfD
M_\bfk/\phi(M_\bfk)\lra 0,
\end{equation}
and the first map yields an isomorphism
$$M_\kappa=\Ker\big(\mu(s):\bfD M_\bfk/s\phi(M_\bfk)
\ra \bfD M_\bfk/s\phi(M_\bfk)\big).$$ Note that since $\bfD M_\bfk$
is a free $R$-module, for $x\in\bfD M_\bfk$ if $sx\in s\phi(M_\bfk)$
then $x\in\phi(M_\bfk)$. So by (\ref{eq:jantgeo}) and the exactness of $\Gamma$, we have for $i\geqs 0$,
\begin{eqnarray*}
  \Gamma(J^i\scrB_!)&=&\Ker(\mu(s))\cap\Im(\mu(s)^i)\\
  &=&\big(\phi(M_\bfk)\cap
s^i\bfD M_\bfk+s\phi(M_\bfk)\big)/s\phi(M_\bfk)\\
&=&J^iM_\kappa.
\end{eqnarray*}
The proposition is proved.
\end{proof}

\section{Proof of the main theorem}\label{s:proof}

\subsection{Mixed Hodge modules}

Let $Z$ be a smooth scheme. Let $\MHM(Z)$ be the category of mixed
Hodge modules on $Z$ \cite{Sa}. It is an abelian category. Each
object $\scrM$ of $\MHM(Z)$ carries a canonical filtration
$$W^\bullet\scrM=\cdots W^k\scrM\supset W^{k-1}\scrM\cdots,$$ called the \emph{weight filtration}. For each
$k\in\Z$ the \emph{Tate twist} is an auto-equivalence
$$(k):\MHM(Z)\ra\MHM(Z),\quad\scrM\mapsto\scrM (k)$$
such that $W^\bullet(\scrM (k))=W^{\bullet+2k}(\scrM)$. Let
$\Perv(Z)$ be the category of perverse sheaves on $Z$ with
coefficient in $\C$. There is an exact forgetful functor
$$\varrho:\MHM(Z)\ra\Perv(Z).$$
For any locally closed affine embedding $i:Z\ra Y$ we have exact
functors
$$i_!,\,\,i_\bullet:\MHM(Z)\ra\MHM(Y)$$ which
correspond via $\varrho$ to the same named functors on the
categories of perverse sheaves.

If $Z$ is not smooth we embed it into a smooth variety $Y$ and we
define $\MHM(Z)$ as the full subcategory of $\MHM(Y)$ consisting of
the objects supported on $Z$. It is independent of the choice of the
embedding for the same reason as for $\scrD$-modules.

\subsection{The graded multiplicities of $\scrB^\lam_{x!\bullet}$ in $\scrB^\lam_{w!}$}\label{ss:hodge}

Now, let us calculate the multiplicities of a simple object
$\scrB^\lam_{x!\bullet}$ in the successive quotients of the Jantzen
filtration of $\scrB^\lam_{w!}$ for $x$, $w\in\QS$ with $x\leqs w$.

We fix once for all an element $v\in\frakS$, and we consider the Serre subcategory $\bfM^\lam_0(\olX_v)$ of
$\bfM^\lam_{h}(\olX_v)$ generated by the objects
$\scrA^\lam_{w!\bullet}$ with $w\leqs v$, $w\in\frakS$. The De Rham functor yields
an exact fully faithful functor
\begin{equation*}
  \DR:\bfM_{0}^\lam(\olX_v)\lra\Perv(\olX_v).
\end{equation*}
See e.g., \cite[Section 4]{KT1}. Let $\MHM_0(\olX_v)$ be the full
subcategory of $\MHM(\olX_v)$ consisting of objects whose image by
$\varrho$ belong to the image of the functor $\DR$. There exists a
unique exact functor
\begin{equation*}
  \eta:\MHM_0(\olX_v)\ra\bfM_{0}^\lam(\olX_v)
\end{equation*}
such that $\DR\circ\eta=\varrho$. An object $\scrM$ in $\MHM_0(\olX_v)$ is \emph{pure of weight} $i$ if we have $W^k\scrM/W^{k-1}\scrM=0$ for any $k\neq i$. For any $w\in\frakS$, $w\leqs v$, there is a unique simple object $\tilde\scrA^\lam_w$ in $\MHM(X_w)$ pure of weight $l(w)$ such that $\eta(\tilde\scrA^\lam_w)=\scrA^\lam_w$, see e.g. \cite{KT2}. Let
$$\tilde\scrA^\lam_{w!}=(i_w)_!(\tilde\scrA^\lam_w),\qquad
\tilde\scrA^\lam_{w!\bullet}=(i_w)_{!\bullet}(\tilde\scrA^\lam_w).$$
They are objects of $\MHM_0(\olX_v)$ such that
$$\eta(\tilde\scrA^\lam_{w!})=\scrA^\lam_{w!},\qquad\eta(\tilde\scrA^\lam_{w!\bullet})=\scrA^\lam_{w!\bullet}.$$

Now, assume that $w\in\QS$ and $w\leqs v$.
Recall that $\scrB^\lam_w\in\bfM^\lam(Y_w)$, and that
$\scrB^\lam_{w!}\in\bfM^\lam(X)$ can be viewed as an object of
$\bfM^\lam(\olX_v)$. We define similarly the objects
$\tilde\scrB^\lam_w\in\MHM(Y_w)$ and $\tilde\scrB^\lam_{w!}$,
$\tilde\scrB^\lam_{w!\bullet}\in\MHM_0(\olX_v)$ such that
$$\eta(\tilde\scrB^\lam_{w!})=\scrB^\lam_{w!},\qquad\eta(\tilde\scrB^\lam_{w!\bullet})=\scrB^\lam_{w!\bullet}.$$
The object $\tilde\scrB^\lam_{w!}$ has a canonical weight
filtration $W^\bullet$. We set $J^{k}\scrB^\lam_{w!}=\scrB^\lam_{w!}$ for $k<0$. The following proposition is due to Gabber
and Beilinson-Bernstein \cite[Theorem 5.1.2, Corollary 5.1.3]{BB}.

\begin{prop}\label{prop:gabber}
We have $\eta(W^{l(w)-k}\tilde\scrB^\lam_{w!})=J^{k}\scrB^\lam_{w!}$ in
$\bfM^\lam_0(\olX_v)$ for all $k\in\Z$.
\end{prop}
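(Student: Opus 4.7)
The plan is to adapt directly the argument of Beilinson--Bernstein in \cite{BB}, whose two main ingredients are: (i) the identification of the deformation construction $\scrB^{(n)}$ with Beilinson's unipotent nearby cycles functor $\Psi^{un}_f$ along the boundary function $f=f_w$ (Lemma~\ref{lem:f}), and (ii) Gabber's purity theorem, which says that for a pure mixed Hodge module the monodromy filtration on $\Psi^{un}_f$ coincides, up to a shift by the weight, with the weight filtration. The claim is essentially local along the stratum $Y_w$, so the fact that $X$ is an ind-scheme and $\olY_w$ may be singular causes no trouble; one just works on a smooth ambient open $U\subset\olX_v$ containing a neighborhood of a boundary point and extends $f_w$ to $U$, as allowed by Remark~\ref{rk:bordfunction}(c).

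First I would reinterpret the constructions of Section~\ref{ss:geojant}. The module $\scrI^{(n)}=(\scrO_{\C^\ast}\otimes R^{(n)})x^s$ is precisely the Beilinson deformation system used to define $\Psi^{un}_f$, and the stabilization of $\Coker(\psi(a,n))$ for $n\gg 0$ expresses the fact that the unipotent nearby cycles can be computed as a cokernel of $j_!\bigl(\scrB\otimes f^\ast\scrI^{(n)}\bigr)\to j_\bullet\bigl(\scrB\otimes f^\ast\scrI^{(n)}\bigr)$. Under this translation, the objects $\pi^1(\scrB)$ and $\pi^0(\scrB)$ get identified with specific terms in Beilinson's exact sequence, the exact sequence (\ref{eq:exact1}) becomes the standard triangle linking $j_!\scrB$ with $\Psi^{un}_f\scrB$, and the endomorphism $\mu^1(s)$ corresponds to the logarithm $N=\log T^{un}$ of the unipotent part of the monodromy. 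In particular the filtration (\ref{eq:jantgeo}) is by definition the monodromy filtration $W^{mon}_\bullet$ of $(\Psi^{un}_f\scrB,N)$ restricted to $\Ker(N)\cong\scrB_!$.

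Second, I would lift everything to mixed Hodge modules. The object $\tilde\scrB^\lam_w\in\MHM(Y_w)$ is pure of weight $l(w)$. The Beilinson--Bernstein--Deligne--Saito formalism provides a mixed Hodge module $\Psi^{un}_f\tilde\scrB^\lam_w$ on $f^{-1}(0)$ with an endomorphism $N$, whose monodromy filtration $W^{mon}_\bullet$ coincides --- this is Gabber's theorem --- with the weight filtration of $\Psi^{un}_f\tilde\scrB^\lam_w$ shifted by $l(w)$. The exact sequence above then gives $\tilde\scrB^\lam_{w!}=\Ker(N)$ inside $\Psi^{un}_f\tilde\scrB^\lam_w(-1)$ and transfers this weight filtration onto $\tilde\scrB^\lam_{w!}$; tracking the conventions for Tate twists and indexing, one gets $W^{l(w)-k}\tilde\scrB^\lam_{w!}=W^{mon}_{-k}\cap\Ker(N)$. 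Applying the exact functor $\eta$ and using that $\eta$ intertwines $\Psi^{un}_f$ on the Hodge and $\scrD$-module sides and sends $N$ to $\mu^1(s)$, one concludes $\eta(W^{l(w)-k}\tilde\scrB^\lam_{w!})=\Ker(\mu^1(s))\cap\Im(\mu^1(s)^k)=J^k\scrB^\lam_{w!}$.

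The main obstacle is the bookkeeping in the second step: matching Saito's Tate twist conventions with the indexing $W^k/W^{k-1}$ of Beilinson--Bernstein and with our $J^k$, and checking that the functor $\eta$ really does intertwine the deformation $\tilde\scrB^\lam_w\mapsto\tilde\scrB^\lam_w\otimes\widetilde{f^\ast\scrI^{(n)}}$ in the Hodge setting with the one used in Section~\ref{ss:scrB}. Both points are purely formal once one accepts that the construction of $\scrB^{(n)}$ is the standard construction underlying $\Psi^{un}_f$; they are exactly the verifications carried out in \cite[\S5]{BB}, and no new input is required in the affine parabolic setting since the function $f_w$ of Lemma~\ref{lem:f} has all the properties used there.
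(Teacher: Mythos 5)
Your proposal is essentially the same route as the paper takes: the paper does not reprove this result but simply cites \cite[Theorem 5.1.2, Corollary 5.1.3]{BB} (Beilinson--Bernstein's theorem built on Gabber's purity of nearby cycles), and what you have written is a faithful sketch of the content behind that citation --- identifying $\scrB^{(n)}$ with Beilinson's gluing/nearby-cycles deformation, $\mu^1(s)$ with $N=\log T^{un}$, the geometric Jantzen filtration with the monodromy filtration restricted to $\Ker(N)$, and invoking Gabber's theorem for the pure object $\tilde\scrB^\lam_w$ of weight $l(w)$ to convert monodromy filtration into weight filtration. The one point worth stressing a bit more than you do is the reduction from the ind-scheme setting: the statement lives in $\bfM^\lam_0(\olX_v)$ with $v$ fixed, so one is really on the finite-dimensional (possibly singular) scheme $\olX_v$, and the Kashiwara-type embedding plus the local extension of $f_w$ noted in Remark \ref{rk:bordfunction}(c) are what make the Beilinson--Bernstein argument literally applicable there; but you flag this and it is indeed handled by the same remark in the source.
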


So the problem that we posed at the beginning of the section reduces to calculate the multiplicities of
$\tilde\scrB^\lam_{x!\bullet}$ in $\tilde\scrB^\lam_{w!}$ in the
category $\MHM_0(\olX_v)$. Let $q$ be a formal parameter. The Hecke
algebra $\scrH_q(\frakS)$ of $\frakS$ is a $\Z[q,q^{-1}]$-algebra
with a $\Z[q,q^{-1}]$-basis $\{T_w\}_{w\in\frakS}$ whose
multiplication is given by
$$T_{w_1}T_{w_2}=T_{w_1w_2},\qquad\text{if}\ l(w_1w_2)=l(w_1)+l(w_2),$$
$$(T_{s_i}+1)(T_{s_i}-q)=0, \quad 0\leqs i\leqs m-1.$$
On the other hand, the Grothendieck group $[\MHM_0(\olX_v)]$ is a
$\Z[q,q^{-1}]$-module such that
$$q^{k}[\scrM]=[\scrM(-k)],\qquad k\in\Z,\,\,\scrM\in\MHM_0(\olX_v).$$
For $x\in\frakS$ with $x\leqs v$ consider the closed embedding $$c_x:\pt\ra
\olX_v,\quad\pt\mapsto\dot{x}B/B.$$ There is an injective
$\Z[q,q^{-1}]$-module homomorphism, see e.g., \cite[(5.4)]{KT2},
\begin{eqnarray*}
  \Psi:[\MHM_0(\olX_v)]&\lra&\scrH_q(\frakS),\\
  {[\scrM]}&\longmapsto&\sum_{x\leqs v}\sum_{k\in\Z}(-1)^k[H^kc_x^\ast(\scrM)]T_x.
\end{eqnarray*}

The desired multiplicities are given by the following lemma.
\begin{lemma}\label{lem:simpar}
  For $w\in\QS$ we have
  \begin{equation*}
    \Psi([\tilde\scrB^\lam_{w!\bullet}])=\sum_{x\in\QS,x\leqs
    w}(-1)^{l(w)-l(x)}P_{x,w}\Psi([\tilde\scrB^\lam_{x!}]),
  \end{equation*}
  where $P_{x,w}\in\Z[q,q^{-1}]$ is the Kazhdan-Lusztig polynomial.
\end{lemma}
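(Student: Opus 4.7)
The strategy is to lift the problem to mixed Hodge modules on the full affine flag variety, apply the classical (non-parabolic) Kazhdan-Lusztig character formula, and then regroup the sum via the bijection $\frakS_0\times\QS\to\frakS$.

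First, I would upgrade Lemma~\ref{lem:isosimple} to mixed Hodge modules: since $w\in\QS$ is the longest element of $\frakS_0 w$, the cells $X_w\subset Y_w$ are both smooth, irreducible, of dimension $l(w)$, so $X_w$ is open and dense in $Y_w$ and $\olX_w=\olY_w$. Hence $\tilde\scrA^\lam_{w!\bullet}$ and $\tilde\scrB^\lam_{w!\bullet}$ are both the Hodge-theoretic intermediate extension of the same local system on $X_w$, so they coincide. By the classical Kazhdan-Lusztig character formula for the IC mixed Hodge module on the affine flag variety (cf.~\cite{KT2}),
\begin{equation*}
\Psi([\tilde\scrB^\lam_{w!\bullet}])=\Psi([\tilde\scrA^\lam_{w!\bullet}])
=\sum_{z\in\frakS,\,z\leqs w}(-1)^{l(w)-l(z)}P_{z,w}(q)\,\Psi([\tilde\scrA^\lam_{z!}]).
\end{equation*}

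Next, I would reorganise the sum by the unique factorisation $z=yx$ with $x\in\QS$ (the longest element of $\frakS_0 z$) and $y\in\frakS_0$; by Lemma~\ref{lem:longest}(b), $l(z)=l(x)-l(y)$, and for $w\in\QS$ the condition $z\leqs w$ in $\frakS$ is equivalent to $x\leqs w$ in $\QS$. The key combinatorial input is Deodhar's identity for parabolic cosets: since $w\in\QS$ is maximal in $\frakS_0 w$, one has $P_{yx,w}(q)=P_{x,w}(q)$ for every $y\in\frakS_0$ with $yx\leqs w$. Combined with the sign identity $(-1)^{l(w)-l(z)}=(-1)^{l(w)-l(x)}(-1)^{l(y)}$, this gives
\begin{equation*}
\Psi([\tilde\scrB^\lam_{w!\bullet}])
=\sum_{x\in\QS,\,x\leqs w}(-1)^{l(w)-l(x)}P_{x,w}(q)\sum_{y\in\frakS_0}(-1)^{l(y)}\Psi([\tilde\scrA^\lam_{yx!}]).
\end{equation*}
To finish, I would lift the BGG-type identity (\ref{eq:charr}) to mixed Hodge modules, so that upon applying $\Psi$ one obtains $\Psi([\tilde\scrB^\lam_{x!}])=\sum_{y\in\frakS_0}(-1)^{l(y)}\Psi([\tilde\scrA^\lam_{yx!}])$; substituting yields the claimed formula.

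The main obstacle is twofold. First, one must verify Deodhar's coset identity $P_{yx,w}=P_{x,w}$ in the relevant (affine, parabolic) form; this is a classical fact about Kazhdan-Lusztig polynomials, but the proof must be invoked precisely in our conventions (longest rather than shortest coset representatives, Bruhat order across cosets). Second, lifting (\ref{eq:charr}) to mixed Hodge modules requires controlling Tate twists in the Hodge-theoretic BGG resolution of $\tilde\scrB^\lam_{w!}$ by the $\tilde\scrA^\lam_{yw!}$; since one only needs the $\Psi$-image identity in the Hecke algebra and both sides can be computed stalk-by-stalk at the $T$-fixed points $\dot zB/B\in Y_w$, this can be handled by a direct stalk calculation using the $T$-equivariance of $\tilde\scrB^\lam_w$ together with the triangulated-category manipulations underlying the classical BGG-type identity.
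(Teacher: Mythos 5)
Your proposal is correct and follows essentially the same route as the paper: identify $\tilde\scrB^\lam_{w!\bullet}$ with $\tilde\scrA^\lam_{w!\bullet}$ (Lemma \ref{lem:isosimple}), apply the Kazhdan-Lusztig formula for the IC sheaf on the affine flag variety, reorganize via the bijection $\frakS_0\times\QS\to\frakS$, invoke the coset identity $P_{yx,w}=P_{x,w}$, and identify the resulting sum with $\Psi([\tilde\scrB^\lam_{x!}])$. The one place where you are slightly less direct than the paper is the last step: you frame it as lifting the BGG-type identity (\ref{eq:charr}) to $\MHM$, whereas the paper simply computes $\Psi([\tilde\scrB^\lam_{x!}])=(-1)^{l(x)}\sum_{y\in\frakS_0}T_{yx}$ directly from the stalk definition of $\Psi$ at the $T$-fixed points of $Y_x$ (this is eq.~(\ref{eq:charpara}), and (\ref{eq:charr}) is then \emph{derived} from it in Remark \ref{rk:charpara}, so your framing risks circularity). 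You do recognize that the whole thing reduces to a stalk computation, which is precisely what the paper does, so there is no real gap — just a detour through a statement that is logically downstream in the paper.
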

\begin{proof}
Since the choice for the element $v$ above is arbitrary, we may assume that $w\leqs v$. By the definition of $\Psi$ we have
\begin{equation}\label{eq:T_w}
  \Psi([\tilde\scrA^\lam_{w!}])=(-1)^{l(w)}T_w.
\end{equation}
By \cite{KL2}, \cite{KT1}, we have
\begin{equation}\label{eq:KL}
  \Psi([\tilde\scrA^\lam_{w!\bullet}])=(-1)^{l(w)}\sum_{x\in\frakS}P_{x,w}T_x.
\end{equation}
Next, for $x\in\QS$ with $x\leqs v$
we have
\begin{eqnarray}
  \Psi([\tilde\scrB^\lam_{x!}])&=&\sum_{y\in\frakS_0}\sum_{k\in\Z}(-1)^k[H^kc_{yx}^\ast(\tilde\scrB^\lam_{x!})]T_{yx}\nonumber\\
  &=&(-1)^{l(x)}\sum_{y\in\frakS_0}T_{yx}.\label{eq:charpara}
\end{eqnarray}
Since by Lemma \ref{lem:isosimple} we have
$$\tilde\scrA^\lam_{w!\bullet}=\tilde\scrB^\lam_{w!\bullet},$$
the following equalities hold
  \begin{eqnarray*}
    \Psi([\tilde\scrB^\lam_{w!\bullet}])&=&\Psi([\tilde\scrA^\lam_{w!\bullet}])\\
    &=&(-1)^{l(w)}\sum_{x\in{}^{\sss{Q}}\negmedspace\frakS,x\leqs w}\sum_{y\in\frakS_0}P_{yx,w}T_{yx}\\
    &=&(-1)^{l(w)}\sum_{x\in{}^{\sss{Q}}\negmedspace\frakS,x\leqs w}P_{x,w}\sum_{y\in\frakS_0}T_{yx}\\
    &=&\sum_{x\in{}^{\sss{Q}}\negmedspace\frakS,x\leqs
    w}(-1)^{l(w)-l(x)}P_{x,w}\Psi([\tilde\scrB^\lam_{x!}]).
  \end{eqnarray*}
Here the third equality is given by the well known identity:

$$P_{yx,w}=P_{x,w},\quad y\in\frakS_0,\
x\in\QS,\ x\leqs w.$$
\end{proof}

\begin{rk}\label{rk:charpara}
Let $x\in\QS$. Since $\Psi$ is injective, the equation
(\ref{eq:charpara}) yields that
\begin{equation*}
  [\tilde\scrB^\lam_{x!}]=\sum_{y\in\frakS_0}(-1)^{l(y)}[\tilde\scrA^\lam_{yx!}].
\end{equation*}
By applying the functor $\eta$ we get the following equality in
$[\bfM^\lam_0(X)]$
\begin{equation}\label{eq:char}
  [\scrB^\lam_{x!}]=\sum_{y\in\frakS_0}(-1)^{l(y)}[\scrA^\lam_{yx!}].
\end{equation}
\end{rk}

\medskip

\subsection{Proof of Theorem \ref{mthm}.}

Recall from (\ref{eq:calPn}) that we view $\calP_n$ as a subset of $\Lam^+$. By Corollary \ref{cor:jantid},
Theorem \ref{mthm} is a consequence of the following theorem.

\begin{thm}\label{thm2}
Let $\lam$, $\mu$ be partitions of $n$. Then for any negative integer $\kappa$ we have
\begin{equation}
d_{\lam'\mu'}(q)=\sum_{i\geqs 0}[J^iM_\kappa(\lam)/J^{i+1}M_\kappa(\lam):
L_\kappa(\mu)]q^i.
\end{equation}
Here $d_{\lam'\mu'}(q)$ is the polynomial defined in the introduction with $v=\exp(2\pi i/\kappa)$.
\end{thm}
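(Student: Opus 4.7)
The plan is to combine the geometric construction of the Jantzen filtration from Sections \ref{s:localization}--\ref{s:geojant} with the identification of parabolic affine Kazhdan-Lusztig polynomials with Leclerc-Thibon polynomials. For $\lam, \mu \in \calP_n \subset \Lam^+$, I would first choose an integral weight $\lam_0$ with $\lam_0 + \rho$ antidominant and elements $w, x \in \QS$ shortest in their cosets $w\frakS(\lam_0)$, $x\frakS(\lam_0)$ such that $\lam = w\cdot\lam_0$, $\mu = x\cdot\lam_0$; this is possible by linkage since $\kappa < 0$ and $\lam, \mu$ are dominant integral. Proposition \ref{prop:localization} then identifies $M_\kappa(\lam) = \Gamma(\scrB^{\lam_0}_{w!})$ and $L_\kappa(\mu) = \Gamma(\scrB^{\lam_0}_{x!\bullet})$.

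Next, Proposition \ref{prop:gammajant} combined with the exactness of $\Gamma$ (Proposition \ref{prop:BD}) yields
$$[J^i M_\kappa(\lam)/J^{i+1} M_\kappa(\lam) : L_\kappa(\mu)] = [J^i \scrB^{\lam_0}_{w!}/J^{i+1} \scrB^{\lam_0}_{w!} : \scrB^{\lam_0}_{x!\bullet}]$$
in the Grothendieck group $[\bfM^{\lam_0}_0(\olX_v)]$ for $v$ large enough. By Proposition \ref{prop:gabber} the geometric Jantzen filtration of $\scrB^{\lam_0}_{w!}$ coincides with the weight filtration of the mixed Hodge module $\tilde\scrB^{\lam_0}_{w!}$ shifted by $l(w)$, so the generating series $\sum_i [J^iM_\kappa(\lam)/J^{i+1}M_\kappa(\lam) : L_\kappa(\mu)]\,q^i$ records a graded multiplicity in $[\MHM_0(\olX_v)]$ with $q$ tracking Tate twists. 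Applying the injective character map $\Psi$ and using Lemma \ref{lem:simpar} (inverting the unitriangular transition matrix it produces between $[\tilde\scrB^{\lam_0}_{y!}]$ and $[\tilde\scrB^{\lam_0}_{y!\bullet}]$ for $y \in \QS$, $y \leqs v$), this generating series is, up to a normalizing power of $q$, an inverse parabolic Kazhdan-Lusztig polynomial of type $(\frakS, \frakS_0)$ attached to the pair $(x, w) \in \QS \times \QS$.

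The last step is to match this parabolic Kazhdan-Lusztig polynomial with the Leclerc-Thibon polynomial $d_{\lam'\mu'}(q)$. For this I would invoke the geometric realization of the level one Fock space $\calF_q$, which makes the parabolic affine Kazhdan-Lusztig polynomials for the pair $(\frakS, \frakS_0)$ equal to the entries of the transition matrix between the standard basis $\{\wdg{\nu}\}$ and the Leclerc-Thibon canonical basis of $\calF_q$; the transposition $\nu \mapsto \nu'$ enters through the bijection $\calP_n \leftrightarrow \QS$ combined with the conventions that distinguish $G^+$ from $G^-$. The main obstacle is bookkeeping: tracking grading and indexing conventions through this final identification, so that the parabolic Kazhdan-Lusztig polynomial produced by the weight filtration is exactly $d_{\lam'\mu'}(q)$ and not some variant. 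The non-graded statement ($q=1$) is Varagnolo-Vasserot's theorem \cite{VV1} recalled in the introduction, and the graded refinement reduces to checking that Tate twist corresponds to multiplication by $q$ in $\calF_q$ and that the labelling by shortest representatives in $\QS$ matches the partition labelling via transposition, which is a combinatorial verification at the level of characters.
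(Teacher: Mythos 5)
Your proposal follows essentially the same path as the paper's own proof: pass to the geometric side via Proposition \ref{prop:localization}, transfer the Jantzen filtration to the geometric one via Proposition \ref{prop:gammajant}, identify it with the weight filtration via Proposition \ref{prop:gabber}, invert the parabolic Kazhdan--Lusztig change of basis from Lemma \ref{lem:simpar}, and conclude by matching the resulting inverse parabolic Kazhdan--Lusztig polynomial with $d_{\lam'\mu'}(q)$. The only place where you are vaguer than the paper is the final combinatorial identification, which the paper settles by citing a specific result of Leclerc (Proposition~5 of \cite{L}) rather than by the Fock-space geometry you allude to; but that is a matter of reference choice, not of substance.
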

\begin{proof}
Let $\nu\in\Lam$ such that $\nu+\rho$ is antidominant. We may assume that $\mu$, $\lam$ belong to the same orbit of $\nu$ under the dot action of $\frakS$, see (\ref{eq:blockdecomposition}).
For any $\mu\in\Lam^+\cap(\frakS\cdot\nu)$ let $w(\mu)_\nu$ be the shortest element in the set
$$w(\mu)_\nu\frakS(\nu)=\{w\in\frakS\,|\, \mu=w\cdot\nu\}.$$ Note
that $w(\mu)_\nu\frakS(\nu)$ is contained in
$\QS$ by Lemma \ref{lem:longest}. We fix $v\in\frakS$ such that $v\geqs w(\gamma)_\nu$ for any $\gamma\in\calP_n$. Let $q^{1/2}$ be a formal variable. We identify $q=(q^{1/2})^2$. Let $\tilde{P}_{x,w}$ be the Kazhdan-Lusztig polynomial normalized as follows $$P_{x,w}(q)=q^{(l(w)-l(x))/2}\tilde{P}_{x,w}(q^{-1/2}).$$
Let $\tilde{Q}_{x,w}$ be the inverse Kazhdan-Lusztig polynomial given by
$$\sum_{x\in\frakS}\tilde{Q}_{x,z}(-q)\tilde{P}_{x,w}(q)=\delta_{z,w},\quad z, w\in\frakS.$$
Then by (\ref{eq:T_w}), (\ref{eq:KL}) we have
$$[\tilde{\scrA}^\nu_{x!}]=\sum_{w\in\frakS}q^{(l(x)-l(w))/2}
\tilde{Q}_{x,w}(q^{-1/2})[\tilde{\scrA}^\nu_{w!\bullet}],\quad\forall\ x\in\frakS.$$
By Remark \ref{rk:charpara} we see that
\begin{equation}\label{eq:tildeq}
  [\tilde{\scrB}^\nu_{x!}]=\sum_{w\in\QS}\Bigl(\sum_{s\in\frakS_0}(-1)^{l(s)}
q^{(l(sx)-l(w))/2}\tilde{Q}_{sx,w}(q^{-1/2})\Bigr)[\tilde{\scrB}^\nu_{w!\bullet}],\quad\forall\ x\in\QS.
\end{equation}
Now, let
$$[\bfM^\nu_0(\olX_v)]_q=[\bfM^\nu_0(\olX_v)]\otimes_\Z\Z[q^{1/2},q^{-1/2}], \qquad[\tilde{\calO}_\kappa]_q=[\tilde{\calO}_\kappa]\otimes_\Z\Z[q^{1/2},q^{-1/2}].$$ We have a $\Z[q,q^{-1}]$-module homomorphism
\begin{eqnarray*}
  \vep: [\MHM_0(\olX_v)]&\lra&[\bfM^\nu_0(\olX_v)]_q,\\
  {[\scrM]}&\longmapsto&\sum_{i\in\Z}[\eta(W^i\scrM/W^{i-1}\scrM)]q^{i/2}.
\end{eqnarray*}
Note that $\vep([\tilde\scrB^\nu_{x!\bullet}])=q^{l(x)/2}[\scrB^\nu_{x!\bullet}]$ and by Proposition \ref{prop:gabber} we have
$$\vep([\tilde\scrB^\nu_{x!}])=
\sum_{i\in\N}[J^i\scrB^\nu_{x!}/J^{i+1}\scrB^\nu_{x!}]q^{(l(x)-i)/2},\quad
x\in\QS,\ x\leqs v.$$
Next, let
$$[M_\kappa(\lam)]_q=\sum_{i\in\N}[J^iM_\kappa(\lam)/J^{i+1}M_\kappa(\lam)]q^{-i/2}.$$
Then by Proposition \ref{prop:gammajant}, we have
\begin{equation*}
\Gamma\vep([\tilde\scrB^\nu_{w(\lam)_\nu!}])=q^{l(w(\lam)_\nu)/2}[M_\kappa(\lam)]_q.
\end{equation*}
On the other hand, by (\ref{eq:tildeq}) we  have
\begin{eqnarray*}
&&\Gamma\vep([\tilde\scrB^\nu_{w(\lam)_\nu!}])\\
&=&\sum_{w\in\QS}\Bigl(\sum_{s\in\frakS_0}(-1)^{l(s)}
q^{(l(sw(\lam)_\nu)-l(w))/2}\tilde{Q}_{sw(\lam)_\nu,w}(-q^{1/2})\Bigr)
\Gamma\vep[\tilde{\scrB}^\nu_{w!\bullet}]\\
&=&\sum_{\mu\in\calP_n}\Bigl(\sum_{s\in\frakS_0}(-1)^{l(s)}
q^{(l(sw(\lam)_\nu)-l(w(\mu)_\nu))/2}\tilde{Q}_{sw(\lam)_\nu,w(\mu)_\nu}(q^{-1/2})\Bigr)
q^{l(w(\mu)_\nu)/2}[L_\kappa(\mu)].
\end{eqnarray*}
Here in the second equality we have used Proposition \ref{prop:localization}(c) and the fact that $\calP_n$ is an ideal in $\Lam^+$. Note that $l(sw(\lam)_\nu)=l(w(\lam)_\nu)-l(s)$ for $s\in\frakS_0$ by Lemma \ref{lem:longest}(b). We deduce that
$$[M_\kappa(\lam)]_q=\sum_{\mu\in\calP_n}\Bigl(\sum_{s\in\frakS_0}(-q^{-1/2})^{l(s)}
\tilde{Q}_{sw(\lam)_\nu,w(\mu)_\nu}(q^{-1/2})\Bigr)[L_\kappa(\mu)].$$
By \cite[Proposition 5]{L}, we have
$$d_{\lam',\mu'}(q^{-1/2})=\sum_{s\in\frakS_0}(-q^{-1/2})^{l(s)}
\tilde{Q}_{sw(\lam)_\nu,w(\mu)_\nu}(q^{-1/2}),$$
see also the beginning of the proof of Proposition 6 in loc. cit., and \cite[Lemma 2.2]{LT2} for instance. We deduce that
  \begin{equation*}
    \sum_{i\in\N}[J^iM_\kappa(\lam)/J^{i+1}M_\kappa(\lam)]q^i=\sum_{\mu\in\calP_n}d_{\lam',\mu'}(q)[L_\kappa(\mu)].
  \end{equation*}
The theorem is proved.
\end{proof}

\medskip

\begin{rk}
The $q$-multiplicities of the Weyl modules $W_v(\lam)$ have also been considered in \cite{Ar} and \cite{RT}. Both papers are of combinatorial nature, and are very different from the approach used here. In \cite{Ar} Ariki defined a grading on the $q$-Schur algebra and he proved that the $q$-multiplicities of the Weyl module with respect to this grading is also given by the same polynomials $d_{\lam',\mu'}$. However, it not clear to us how to relate this grading to the Jantzen filtration.
\end{rk}
\begin{rk}
The \emph{radical filtration} $C^\bullet(M)$ of an object $M$ in an abelian category $\calC$ is given by putting $C^0(M)=M$ and $C^{i+1}(M)$ to be the radical of $C^i(M)$ for $i\leqs 0$. It follows from \cite[Lemma 5.2.2]{BB} and Proposition \ref{prop:gammajant} that the Jantzen filtration of $\scrB_!$ coincides with the radical filtration. If $\lam\in\Lam$ such that $\lam+\rho$ is antidominant and regular, then the exact functor $\Gamma$ is faithful, see \cite[Theorem 7.15.6]{BD}. In this case, we have $$\Gamma(C^\bullet(\scrB_!))=C^\bullet(\Gamma(\scrB_!))=C^\bullet M_\kappa(\lam).$$
So the Jantzen filtration on $M_\kappa(\lam)$ coincides with the radical filtration. If we have further $\lam\in\calP_n$ and $\kappa\leqs -3$, then by the equivalence in Proposition \ref{prop:equiv} we deduce that the Jantzen filtration of $W_v(\lam)$ also coincides with the radical filtration. This is compatible with recent result of Parshall-Scott \cite{PS}, where they computed the radical filtration of $W_v(\lam)$ under the same assumption of regularity here but without assuming $\kappa\leqs -3$. We conjecture that for any $\lam$ the Jantzen filtration on $M_\kappa(\lam)$ coincides with the radical filtration.
\end{rk}
\begin{rk}
The results of Sections \ref{s:localization}, \ref{s:geojant}, \ref{s:proof} hold for any standard parabolic subgroup $Q$ of $G$ with the same proof. In particular, it allows us to calculate the graded decomposition matrices associated with the Jantzen filtration of the parabolic Verma modules in more general cases.
\end{rk}

\appendix

\section{Kashiwara-Tanisaki's construction, translation functors and Proof of Proposition \ref{prop:nonregular}}\label{s:appendix}

The goal of this appendix is to prove Proposition \ref{prop:nonregular}. We first consider the case when $\lam+\rho$ is regular. In this case, the result is essentially due to Kashiwara and Tanisaki \cite{KT1}. However, the setting of loc. cit. is slightly different from the one used here. So we will first recall their construction and adapt it to our setting to complete the proof of the Proposition in the regular case. Next, we give a geometric construction of the translation functor for the affine category $\calO$ inspired from \cite{BG}, and apply it to deduce the result for singular blocks. We will use the same notation as in Section \ref{s:localization}.

\subsection{The Kashiwara affine flag variety}\label{ss:compKT}

Recall that $\Pi$ is the root system of $\g$ and $\Pi^+$ is the set
of positive root. Write $\Pi^-=-\Pi^+$. For $\al\in\Pi$ we write
$$\g_\al=\{x\in\g\,\,|\,\,[h,x]=\al(h)x,\quad\forall\ h\in\frakt\}.$$
For any subset $\Upsilon$ of $\Pi^+$, $\Pi^-$ we set respectively
$$\frakn(\Upsilon)=\bigoplus_{\al\in\Upsilon}\g_\al,\qquad\frakn^-(\Upsilon)=\bigoplus_{\al\in\Upsilon}\g_\al.$$ For
$\al=\sum_{i=0}^{m-1}h_i\al_i\in\Pi$ we write
$\hight(\al)=\sum_{i=0}^{m-1}h_i$ and for $l\in\N$ we set
$$\Pi^-_l=\{\al\in\Pi^-\,|\,\hight(\al)\leqs
-l\},\qquad\frakn^-_l=\frakn^-(\Pi^-_l).$$ Consider the group scheme
$L^-G_0=G_0(\C[[t^{-1}]])$. Let $B^-$ be the preimage of $B_0^-$ by
the map
$$L^-G_0\ra G_0,\qquad t^{-1}\mapsto 0,$$ where $B_0^-$ is the Borel
subgroup of $G_0$ opposite to $B_0$. Let $N^-$ be the prounipotent radical of $B^-$. Let $N^-_l\subset B^-$ be the
group subscheme given by
$$N^-_l=\plim_k\exp(\frakn^-_l/\frakn^-_k).$$

Let $\frakX$ be the Kashiwara affine flag variety, see \cite{K}. It
is a quotient scheme $\frakX=G_\infty/B$, where $G_\infty$ is a
coherent scheme with a locally free left action of $B^-$ and a
locally free right action of $B$. The scheme $\frakX$ is coherent,
prosmooth, non quasi-compact, locally of countable type, with a left
action of $B^-$. There is a right $T$-torsor
$$\pi: \frakX^\dag=G_\infty/N\ra\frakX.$$
For any subscheme $Z$ of $\frakX$ let $Z^\dag$ be its preimage by $\pi$. Let
$$\frakX=\bigsqcup_{w\in\frakS}\overset{\circ\,\,\,}{\frakX^w}.$$
be the $B^-$-orbit decomposition. Then $\frakX$ is covered by
the following open sets
$$\frakX^w=\bigsqcup_{v\leqs w}\overset{\circ\,\,\,}{\frakX^v}.$$
For each $w$ there is a canonical closed embedding
$\olX_w\ra\frakX^w$. Moreover, for any integer $l$ that is large
enough, the group $N^-_l$ acts locally freely on $\frakX^w$,
$\frakX^{w\dag}$, the quotients
$$\frakX^w_l=N^-_l\backslash\frakX^w,\qquad \frakX^{w\dag}_l=N^-_l\backslash\frakX^{w\dag}$$
are smooth schemes\footnote{For $l$ large enough the scheme $\frakX^w_l$ is separated (hence quasi-separated). To see this, one uses the fact that $\frakX^w$ is separated and applies \cite[Proposition C.7]{TT}. }, and the induced morphism
\begin{equation}\label{eq:closedembedding}
  \olX_w\ra\frakX^w_l
\end{equation} is a closed immersion. See \cite[Lemma 2.2.1]{KT1}. Further we have
$$\olX^\dag_w=\olX_w\times_{\frakX^w_l}\frakX^{w\dag}_l.$$
In particular, we get a closed
embedding of $\olX_w^\dag\ra\olX_w$ into the
$T$-torsor $\frakX^{w\dag}_l\ra\frakX^w_l.$ This implies that the
$T$-torsor $\pi: X^\dag\ra X$ is admissible. Finally, let
$$p_{l_1l_2}:\frakX^{w\dag}_{l_1}\ra \frakX^{w\dag}_{l_2},
\quad p_l:\frakX^{w\dag}\ra\frakX^{w\dag}_l,\quad l_1\geqs l_2$$ be
the canonical projections. They are affine morphisms.

\subsection{The category $\bfH^{\tilde{\lam}}(X)$.}

Fix $w, y\in\frakS$ with $y\geqs w$. For
$l_1\geqs l_2$ large enough, the functor
$$(p_{l_1l_2})_\bullet:\bfM^{\tilde{\lam}}_h(\frakX^y_{l_1},\olX_w)\ra\bfM^{\tilde{\lam}}_h(\frakX^y_{l_2},\olX_w)$$
yields a filtering projective system of categories, and we set
$$\bfH^{\tilde{\lam}}(\frakX^y,\olX_w)=\tplim_l\bfM^{\tilde{\lam}}_h(\frakX^y_l,\olX_w).$$
For $z\geqs y$ let $j_{yz}: \frakX^{y\dag}\ra\frakX^{z\dag}$ be the
canonical open embedding. It yields a map
$j_{yz}:\frakX^{y\dag}_l\ra\frakX^{z\dag}_l$ for each $l$. The
pull-back functors by these maps yield, by base change, a morphism of projective
systems of categories
$$(\bfM^{\tilde{\lam}}_h(\frakX^z_l,\olX_w))_l\ra(\bfM^{\tilde{\lam}}_h(\frakX^y_l,\olX_w))_l.$$ Hence we get a map
$$\bfH^{\tilde{\lam}}(\frakX^z,\olX_w)\ra\bfH^{\tilde{\lam}}(\frakX^y,\olX_w).$$
As $y$, $z$ varies these maps yield again a projective system of categories and we set
$$\bfH^{\tilde{\lam}}(\olX_w)=\tplim_{y\geqs w}\bfH^{\tilde{\lam}}(\frakX^y,\olX_w).$$ Finally,
for $w\leqs v$ the category $\bfH^{\tilde{\lam}}(\olX_w)$ is canonically
a full subcategory of $\bfH^{\tilde{\lam}}(\olX_{v})$. We define
$$\bfH^{\tilde{\lam}}(X)=\tilim_{w}\bfH^{\tilde{\lam}}(\olX_w).$$
This definition is inspired from \cite{KT1}, where the authors
considered the categories $\bfM^\lam_h(\frakX^y_l,\olX_w)$ instead of the categories $\bfM^{\tilde\lam}_h(\frakX^y_l,\olX_w)$. Finally, note that since the
category $\bfH^{\tilde{\lam}}(\olX_w)$ is equivalent to
$\bfM^{\tilde{\lam}}_h(\frakX^y_{l},\olX_w)$ for $y$, $l$ large
enough, and since the latter is equivalent to
$\bfM^{\tilde{\lam}}_h(\olX_w),$ see Section \ref{ss:dmodreminder},
we have an equivalence of categories
$$\bfH^{\tilde{\lam}}(X)\cong\bfM^{\tilde{\lam}}_h(X).$$

\subsection{The functors $\hat{\Gamma}$ and $\overline{\Gamma}$}

For an object $\scrM$ of $\bfH^{\tilde{\lam}}(X)$, there exists
$w\in\frakS$ such that $\scrM$ is an object of the subcategory
$\bfH^{\tilde{\lam}}(\olX_w)$. Thus $\scrM$ is represented by a
system $(\scrM^y_l)_{y\geqs w, l}$, with $\scrM^y_l\in
\bfM^{\tilde{\lam}}_h(\frakX^y_{l},\olX_w)$ and $l$ large enough.
For $l_1\geqs l_2$ there is a canonical map
$$(p_{l_1l_2})_\ast(\scrM^y_{l_1})\ra (p_{l_1l_2})_\bullet(\scrM^y_{l_1})=\scrM^y_{l_2}.$$
It yields a map (see (\ref{eq:gammadag}) for the notation)
$$\Gamma(\frakX^y_{l_1},\scrM^y_{l_1})\ra
\Gamma(\frakX^y_{l_2},\scrM^y_{l_2}).$$ Next, for $y$, $z\geqs w$
and $l$ large enough, we have a canonical isomorphism
$$\Gamma(\frakX^y_{l},\scrM^y_{l})=\Gamma(\frakX^{z}_{l},\scrM^{z}_{l}).$$
Following \cite{KT1}, we choose a $y\geqs w$ and we set
$$\hat\Gamma(\scrM)=\plim_l\Gamma(\frakX^y_{l},\scrM^y_{l}).$$
This definition does not depend on the choice of $w$, $y$. Now,
regard $\scrM$ as an object of $\bfM^{\tilde{\lam}}_h(X)$. Recall
the object $\scrM^\dag\in\bfO(X)$ from Section \ref{ss:twisted}.
Suppose that $\scrM^\dag$ is represented by a system
$(\scrM^\scrO_y)_{y\geqs w}$ with $\scrM^\scrO_y\in\bfO(\olX_y)$.
By definition we have $\scrM^\scrO_y=(i^!\scrM^y_l)^\dag$,
where $i$ denotes the closed embedding
$\olX^\dag_y\ra\frakX^{y\dag}_l$, see (\ref{eq:closedembedding}).
Therefore we have
\begin{eqnarray}
  \Gamma(\olX_y,\scrM^\scrO_y)&=&\Gamma(\olX_y,
  i^!(\scrM^y_l)^\dag)\nonumber\\
  &\subset&\Gamma(\frakX^y_l,\scrM^y_l).\label{eq:inclua}
\end{eqnarray}
Next, recall that we have
$$\Gamma(\scrM)=
\Gamma(X,\scrM^\dag)=\ilim_{y}\Gamma(\olX_y,\scrM^\scrO_y).$$ So by
first taking the projective limit on the right hand side of
(\ref{eq:inclua}) with respect to $l$ and then taking the inductive
limit on the left hand side with respect to $y$ we get an inclusion
$$\Gamma(\scrM)\subset\hat\Gamma(\scrM).$$
It identifies $\Gamma(\scrM)$ with the subset of $\hat\Gamma(\scrM)$
consisting of the sections supported on subschemes (of finite type)
of $X$.

The vector space $\hat\Gamma(\scrM)$ has a $\g$-action, see
\cite[Section 2.3]{KT1}. The vector space $\Gamma(\scrM)$ has also a
$\g$-action by Section \ref{ss:functorgamma}. The inclusion is
compatible with these $\g$-actions. Following loc.~cit., let
$$\overline{\Gamma}(\scrM)\subset\hat\Gamma(\scrM)$$
be the set of $\frakt$-finite elements. It is a $\g$-submodule of
$\hat\Gamma(\scrM)$.

\subsection{The regular case}\label{ss:proofKT} In this subsection we prove Proposition \ref{prop:nonregular} in the regular case. More precisely, we prove the following result.

\begin{prop}\label{prop:KT}
  Let $\lam\in\Lam$ be such that $\lam+\rho$ is antidominant and regular. Then we have isomorphisms of $\g$-modules
\begin{equation}\label{eq:KTiso}
  \Gamma(\scrA^\lam_{v!})=N_\kappa(v\cdot\lam),\quad
  \Gamma(\scrA^\lam_{v\bullet})=\bfD N_\kappa(v\cdot\lam),\quad
  \Gamma(\scrA^\lam_{v!\bullet})=L_\kappa(v\cdot\lam),\quad\forall\ v\in\frakS.
\end{equation}
\end{prop}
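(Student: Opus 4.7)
The plan is to reduce the three identifications to the main localization theorem of Kashiwara--Tanisaki \cite{KT1}, which is formulated in terms of the functor $\bar\Gamma$ on the Kashiwara flag variety $\frakX$. Since we work on the ind-scheme $X$ with the functor $\Gamma$, the bulk of the argument is to verify that $\Gamma$ and $\bar\Gamma$ coincide on the three $\scrD$-modules in question, so that Kashiwara--Tanisaki's result can be transferred to our setting.

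First I would observe that each of $\scrA^\lam_{v!}$, $\scrA^\lam_{v\bullet}$, $\scrA^\lam_{v!\bullet}$ is supported set-theoretically on the finite-type projective scheme $\olX_v \subset X$. Via the closed embedding $\olX_v \hra \frakX^v_l$ from (\ref{eq:closedembedding}) (for $l$ sufficiently large) and the Kashiwara equivalence (\ref{eq:kassing}), each such $\scrD$-module is realized unambiguously as an object of $\bfH^{\tilde\lam}(X)$. Because their support is already contained in a finite-type closed subscheme, every element of $\hat\Gamma(\scrM)$ is automatically supported on a finite subscheme of $X$, so the inclusion $\Gamma(\scrM) \subset \hat\Gamma(\scrM)$ must be an equality. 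Sandwiching $\bar\Gamma$ between these, we conclude $\Gamma(\scrM) = \bar\Gamma(\scrM)$ as $\g$-modules.

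Second, I would invoke Kashiwara--Tanisaki's identification, which states that for $\lam \in \Lam$ with $\lam+\rho$ antidominant and regular, $\bar\Gamma$ sends the standard, costandard, and simple $\scrD$-module on the Kashiwara flag variety attached to $v$ to $N_\kappa(v\cdot\lam)$, $\bfD N_\kappa(v\cdot\lam)$, and $L_\kappa(v\cdot\lam)$ respectively. Combined with the previous paragraph, this yields all three equalities of (\ref{eq:KTiso}). As a sanity check (and as a self-contained route to the first two equalities that avoids citing a convention-dependent result), one may use the exactness of $\Gamma$ on $\bfM^\lam_0(X)$ (Proposition \ref{prop:BD}(a)) together with a BGG-type resolution to compute the character of $\Gamma(\scrA^\lam_{v!})$, then use the canonical quotient map $\scrA^\lam_{v!} \ra \scrA^\lam_{v!\bullet}$ to identify the highest weight and conclude via the universal property of Verma modules; the costandard case then follows by duality, using a version of Lemma \ref{lem:interduality} intertwining $\bbD$ with $\bfD$.

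The main obstacle will be the meticulous matching of conventions between our setup and that of \cite{KT1}: the sign convention on the twist $\lam$ and the implicit $\rho$-shift, the right versus left $\scrD$-module conventions (reflected in our use of $\Omega_{X^\dag_v}$ in the definition (\ref{eq:Aw}) of $\scrA^\lam_v$), and the precise match between their standard/costandard $\scrD$-modules and our $\scrA^\lam_{v!}/\scrA^\lam_{v\bullet}$. The cleanest way to fix conventions is to verify the case $v = e$ directly: there $\scrA^\lam_{e!} = \scrA^\lam_{e!\bullet}$ is supported on a point and the identification with $N_\kappa(\lam) = L_\kappa(\lam)$ can be checked by hand, which then propagates to arbitrary $v$ through the (co)standard functors $i_{v!}$, $i_{v\bullet}$.
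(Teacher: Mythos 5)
Your reduction to Kashiwara--Tanisaki \cite{KT1} is the right strategy, and the third paragraph on convention-matching is a reasonable concern to flag, but the central step of your argument contains a genuine error.

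You claim that because each of $\scrA^\lam_{v!}$, $\scrA^\lam_{v\bullet}$, $\scrA^\lam_{v!\bullet}$ is supported set-theoretically on the finite-type scheme $\olX_v$, every element of $\hat\Gamma(\scrM)$ is automatically supported on a finite-type subscheme, and hence $\Gamma(\scrM) = \hat\Gamma(\scrM)$. This is false. The space $\hat\Gamma(\scrM)$ is the \emph{projective limit} $\plim_l\Gamma(\frakX^y_l,\scrM^y_l)$, and even for $\scrM$ supported on $\olX_v$ this limit produces a genuine completion: a right $\scrD$-module on a smooth scheme that is set-theoretically supported on a closed subscheme carries, via Kashiwara's equivalence, transverse derivatives of arbitrary order, and as $l$ grows the number of available transverse directions on $\frakX^y_l$ tends to infinity. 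Concretely, for $\scrM = \scrA^\lam_{v\bullet}$ one computes $\hat\Gamma(\scrM) \cong \bigl(\plim_l\calU(\frakn^-/\frakn^-_l)\bigr)\otimes\C_{v\cdot\lam}$, a formal completion of $\calU(\frakn^-)\otimes\C_{v\cdot\lam}$ containing infinite sums that live on no finite-dimensional subscheme. So the inclusion $\Gamma(\scrM)\subset\hat\Gamma(\scrM)$ is strict, and the "sandwich" argument collapses. The true identity is $\Gamma(\scrM) = \overline\Gamma(\scrM)$ (the $\frakt$-locally-finite part of $\hat\Gamma$), and establishing this equality is precisely where the work lies: one must show that the $\frakt$-finite part corresponds exactly to $\calU(\frakn^-)\otimes\C_{v\cdot\lam}$ inside the completion, and that these are the sections supported on finite-dimensional subschemes. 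This is done by a weight-space computation via a distinguished $T$-invariant generating section, an argument that cannot be bypassed by the support observation alone.

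Your alternative route --- exactness of $\Gamma$ plus a BGG-type resolution plus characters plus the universal property of Verma modules --- is closer in spirit to how the paper handles the parabolic modules (Proposition \ref{prop:localization}), but as stated it risks circularity: to extract characters from a composition series by $\scrA^\lam_{w!\bullet}$'s you would need to know $\Gamma$ of the simples, which is one of the three statements being proved. One can avoid this by computing the character of $\Gamma(\scrA^\lam_{v!})$ directly from the geometry of the affine cell $X_v$, but that computation is essentially the same weight-space analysis alluded to above, and it should be carried out, not left implicit. The reduction to $v=e$ as a sanity check on sign conventions is sensible, but it does not by itself propagate to general $v$ without understanding how $\Gamma$ interacts with $i_{v!}$ and $i_{v\bullet}$, which again requires the $\Gamma = \overline\Gamma$ comparison.
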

\begin{proof}
By \cite[Theorem 3.4.1]{KT1} under the assumption of the proposition we have isomorphisms of $\g$-modules.
\begin{equation*}
  \overline{\Gamma}(\scrA^\lam_{v!})=N_\kappa(v\cdot\lam),\qquad
  \overline{\Gamma}(\scrA^\lam_{v\bullet})=\bfD N_\kappa(v\cdot\lam),\qquad
  \overline{\Gamma}(\scrA^\lam_{v!\bullet})= L_\kappa(v\cdot\lam),\quad\forall v\in\frakS.
\end{equation*}
We must check that for $\sharp=!,\ \bullet,$ or $!\bullet$, the $\g$-submodules $\Gamma(\scrA^\lam_{v\sharp})$ and $\overline{\Gamma}(\scrA^\lam_{v\sharp})$ of $\hat\Gamma(\scrA^\lam_{v\sharp})$ are equal. Let us first prove this for $\sharp=\bullet$. We will do this in several steps.

\emph{Step 1.} Following \cite{KT1} we first define a particular section $\vartheta$ in $\hat\Gamma(\scrA_{v\bullet}^\lam).$ Let $\omega$ be a nowhere vanishing section of $\Omega_{X_{v}}$. It is unique up to a nonzero scalar. Let $t^\lam$ be the nowhere vanishing section of $\scrL^\lam_{X_{v}}$ such that $t^\lam(u\dot{v} b)=e^{-\lam}(b)$ for $u\in N$, $b\in B$. Then $\omega\otimes t^\lam$ is a nowhere vanishing section of $\scrA^{\lam,\dag}_{v}$ over $X_v$. Now, for $y\geqs v$ and $l$ large enough, let
$i^v_l:X_v\ra\frakX^{y}_l$ be the composition of the
locally closed embedding $X_v\ra\olX_y$ and the closed embedding
$\olX_y\ra\frakX^{y}_l$ in (\ref{eq:closedembedding}). We will denote the corresponding embedding $X^\dag_v\ra\frakX^{y\dag}_l$ again by $i^v_l$. Note that $\bigl(i^v_{l\bullet}(\scrA^\lam_v)\bigr)_l$ represents the object $\scrA^\lam_{v\bullet}$ in $\bfH^{\tilde\lam}(X).$ Therefore we have
$$\hat\Gamma(\scrA^\lam_{v\bullet})=
\plim_l\Gamma(\frakX^y_l,i^v_{l\bullet}(\scrA^\lam_v)).$$
Consider the canonical inclusion of $\scrO_{\frakX^y_l}$-modules
$$i^{v}_{l\ast}(\scrA^{\lam}_{v})^\dag\lra
i^{v}_{l\bullet}(\scrA^{\lam}_{v})^\dag.$$ Let
$\vartheta_l\in\Gamma(\frakX^y_l,
i^{v}_{l\bullet}(\scrA^{\lam}_{v})^\dag)$ be the image of
$\omega\otimes t^\lam$ under this map. The family $(\vartheta_l)$ defines an element
$$\vartheta\in \hat\Gamma(\scrA_{v\bullet}^{\lam}).$$
\vskip1mm
\emph{Step 2.} Let $V^y=y B^-\cdot B/B$. It is an affine open set in
$\frakX^y$. For $l$ large enough, let $V^y_l$ be the image
of $V^y$ in $\frakX^y_l$ via the canonical projection
$\frakX^y\ra\frakX^y_l$. Write $j^y_l: V^y_l\ra\frakX^y_l$ for the
inclusion. Note that $V^y_l\cong N^-/N^-_l$ as affine spaces. Therefore, if $l$ is large enough such that $\Pi^-_l\subset\Pi^-\cap v\Pi^-$, then the right $\scrD^\dag_{X_v}$-module structure on $\scrA_v^{\lam,\dag}$ yields an isomorphism of sheaves of $\C$-vector spaces over $V^y_l$
\begin{eqnarray*}
j^{y\ast}_l(i^{v}_{l\ast}\scrO_{X_{v}})\otimes
\calU(\frakn^-(\Pi^-\cap v(\Pi^-))/\frakn^-_l)&\simra& j^{y\ast}_l\bigl(i^{v}_{l\bullet}(\scrA_{v}^{\lam})^\dag\bigr),\\
f\otimes p&\mapsto& (\vartheta\cdot f)\cdot \delta_l(p).
\end{eqnarray*}
This yields an isomorphism of $\frakt$-modules
\begin{equation}\label{eq:9}
\Gamma(\frakX^y_l,
i^{v}_{l\bullet}(\scrA^{\lam}_v))
=\calU(\frakn^-/\frakn^-_l)\otimes
\C_{v\cdot\lam},
\end{equation}
see \cite[Lemma 3.2.1]{KT1}. By consequence we have an isomorphism of $\frakt$-modules
\begin{equation}\label{eq:hatgamman}
\hat\Gamma(\scrA^{\lam}_{v\bullet})
=(\plim_l\calU(\frakn^-/\frakn^-_l))\otimes
\C_{v\cdot\lam}.
\end{equation}

\emph{Step 3.} Now, let us prove
$\Gamma(\scrA_{v\bullet}^{\lam})=\overline{\Gamma}(\scrA_{v\bullet}^{\lam}).$ First, by (\ref{eq:9}) the space\\ $\Gamma(\frakX^y_l,i^v_{l\bullet}(\scrA^\lam_v))$ is $\frakt$-locally finite. So (\ref{eq:inclua}) implies that
$\Gamma(\scrA^{\lam}_{v\bullet})$ is the inductive limit of a system of $\frakt$-locally finite submodules. Therefore it is itself $\frakt$-locally finite. Hence we have
$$\Gamma(\scrA^{\lam}_{v\bullet})\subset \overline{\Gamma}(\scrA^{\lam}_{v\bullet}).$$
To see that this is indeed an equality, note that if
$m\in\hat\Gamma(\scrA^{\lam}_{v\bullet})$ is not $\frakt$-locally
finite, then by (\ref{eq:hatgamman}) the section $m$ is represented
by an element in
$$\plim_l\calU(\frakn^-/\frakn^-_l)\otimes
\C_{v\cdot\lam}$$ which does not come from
$\calU(\frakn^-)\otimes \C_{v\cdot\lam}$ via the
obvious map. Then one sees that $m$ can not be supported on a finite
dimensional scheme, i.e., it can not belong to
$\Gamma(\scrA^{\lam}_{v\bullet})$. This proves that
$$\Gamma(\scrA^{\lam}_{v\bullet})=
\overline{\Gamma}(\scrA^{\lam}_{v\bullet}).$$

\vskip2mm

Now, we can prove the rest two equalities in the proposition. Since $\lam+\rho$ is antidominant, by Proposition \ref{prop:BD}(a) the functor $\Gamma$ is exact on $\bfM^\lam(X)$. So $\Gamma(\scrA^{\lam}_{v!\bullet})$ is a $\g$-submodule of $\Gamma(\scrA^{\lam}_{v\bullet})$. Therefore all the elements in $\Gamma(\scrA^{\lam}_{v!\bullet})$ are $\frakt$-finite, i.e., we have
$$\Gamma(\scrA^{\lam}_{v!\bullet})\subset\overline{\Gamma}(\scrA^{\lam}_{v!\bullet}).$$
On the other hand, by \cite[Theorem 3.4.1]{KT1} we have
$$\overline{\Gamma}(\scrA^{\lam}_{v!\bullet})\subset\overline{\Gamma}(\scrA^{\lam}_{v\bullet}).$$
Therefore, Step 3 yields that each section in $\overline{\Gamma}(\scrA^{\lam}_{v!\bullet})$ is supported on a finite dimensional scheme, and hence belongs to $\Gamma(\scrA^{\lam}_{v!\bullet})$. We deduce that
\begin{equation}\label{eq:eqsimple}
  \Gamma(\scrA^{\lam}_{v!\bullet})
  =\overline{\Gamma}(\scrA^{\lam}_{v!\bullet}).
\end{equation}

Finally, since $\scrA^{\lam}_{v!}$ has a finite composition series whose constituents are given by $\scrA^{\lam}_{w!\bullet}$ for $w\leqs v$. Since both $\Gamma$ and $\overline{\Gamma}$ are exact functors on $\bfM^\lam_0(X)$, see Proposition \ref{prop:BD} and \cite[Corollary 3.3.3, Theorem 3.4.1]{KT1}. We deduce from (\ref{eq:eqsimple}) that $\Gamma(\scrA^{\lam}_{v!})$ is $\frakt$-locally finite, and the sections of $\overline{\Gamma}(\scrA^{\lam}_{v!})$ are supported on finite dimensional subschemes. Therefore we have
$$\Gamma(\scrA^{\lam}_{v!})=\overline{\Gamma}(\scrA^{\lam}_{v!}).$$
The proposition is proved.
\end{proof}

\subsection{Translation functors}

In order to compute the images of $\scrA^\lam_{v!}$ and $\scrA^\lam_{v\bullet}$ in the case when $\lam+\rho$ is not regular, we need the translation functors. For $\lam\in{}_\kappa\!\frakt^\ast$ such that $\lam+\rho$ is anti-dominant, we define $\tilde{\calO}_{\kappa,\lam}$ to be the Serre subcategory of $\tilde{\calO}_\kappa$ generated by
$L_\kappa(w\cdot\lam)$ for all $w\in\frakS$. The same argument as in the proof of \cite[Theorem 4.2]{DGK} yields that each $M\in\tilde{\calO}_\kappa$ admits a decomposition
\begin{equation}\label{eq:blockdecomposition}
M=\bigoplus M^\lam,\qquad M^\lam\in\tilde{\calO}_{\kappa,\lam},
\end{equation}
where $\lam$ runs over all the weights in ${}_\kappa\!\frakt^\ast$ such that $\lam+\rho$ is antidominant. The projection $$\pr_{\lam}:\tilde{\calO}_\kappa\ra\tilde{\calO}_{\kappa,\lam},\quad M\mapsto M^\lam,$$ is an exact functor. Fix two integral weights $\lam$, $\mu$ in $\frakt^\ast$ such that $\lam+\rho$, $\mu+\rho$ are antidominant and the integral weight $\nu=\lam-\mu$ is dominant. Assume that $\lam\in{}_\kappa\!\frakt^\ast$, then $\mu$ belongs to ${}_{\kappa'}\frakt^\ast$ for an integer $\kappa'<\kappa$. Let $V(\nu)$ be the simple $\g$-module
of highest weight $\nu$. Then for any $M\in\tilde{\calO}_{\kappa'}$ the module $M\otimes V(\nu)$ belongs to $\tilde{\calO}_\kappa$. Therefore we can define the following translation functor
$$\theta^{\nu}:\tilde{\calO}_{\kappa',\mu}\ra\tilde{\calO}_{\kappa,\lam},\quad M\mapsto\pr_{\lam}(M\otimes V(\nu)),$$
see \cite{Ku1}. Note that the subcategory $\tilde{\calO}_{\kappa,\lam}$ of $\tilde{\calO}$ is stable under the duality $\bfD$, because $\bfD$ fixes simple modules.
We have a canonical isomorphism of functors
\begin{equation}\label{eq:transdual}
  \theta^{\nu}\circ\bfD=\bfD\circ\theta^{\nu}.
\end{equation}
Indeed, it follows from (\ref{eq:duality}) that $\bfD(M\otimes V(\nu))=\bfD(M)\otimes\bfD(V(\nu))$ as $\g$-modules. Since $V(\nu)$ is simple, we have $\bfD V(\nu)=V(\nu)$. The equality (\ref{eq:transdual}) follows.

On the geometric side, recall the $T$-torsor $\pi: X^\dag\ra X$. For any integral weight $\lam\in\frakt^\ast$ the family
of line bundles $\scrL^\lam_{\olX_w}$ (see Section \ref{ss:twistholonome}) with $w\in\frakS$ form a
projective system of $\scrO$-modules under restriction, yielding a
flat object $\scrL^\lam$ of $\hat\bfO(X)$. Note that $\pi^\ast(\scrL^\lam)$ is a line bundle on $X^\dag$.
For integral weights $\lam$, $\mu$ in $\frakt^\ast$ the translation functor
$$\Theta^{\lam-\mu}: \bfM^\mu_0(X)\ra\bfM^\lam_0(X),\quad
\scrM\mapsto\scrM\otimes_{\scrO_{X^\dag}}\pi^\ast(\scrL^{\lam-\mu}),$$ is an equivalence
of categories. A quasi-inverse is given by $\Theta^{\mu-\lam}$. By the projection formula we have
\begin{equation}\label{eq:transstandard}
\Theta^{\lam-\mu}(\calA^\mu_{w\sharp})=\calA^\lam_{w\sharp},\quad\textrm{for}\quad
\sharp=!, \,\,!\bullet, \,\,\bullet.\end{equation}
Now, assume that $\mu+\rho$ is antidominant. Consider the exact functor
$$\Gamma:\bfM^\mu(X)\ra\bfM(\g),\quad\scrM\mapsto\Gamma(\scrM)$$
as in Proposition \ref{prop:BD}. Note that if $\mu+\rho$ is regular, then $\Gamma$ maps $\scrA^\mu_{v!\bullet}$ to $L_\kappa(v\cdot\mu)$ by Proposition \ref{prop:KT}. Since the subcategory $\tilde{\calO}_\kappa$ of $\bfM(\g)$ is stable under extension, the exact functor $\Gamma$ restricts to a functor
$$\Gamma:\bfM^\mu_0(X)\ra\tilde{\calO}_{\kappa,\mu}.$$
The next proposition is an
affine analogue of \cite[Proposition 2.8]{BG}.
\begin{prop}\label{prop:trans}
Let $\lam$, $\mu$ be integral weights in $\frakt^\ast$ such
that $\lam+\rho$, $\mu+\rho$ are antidominant and $\nu=\lam-\mu$ is
dominant. Assume further that $\mu+\rho$ is regular. Then the functors $$\theta^\nu\circ\Gamma:\ \bfM^\mu_0(X)\ra
\tilde{\calO}_{\kappa,\lam}\subset\bfM(\g)\quad\text{and}\quad \Gamma\circ\Theta^\nu:\ \bfM^\mu_0(X)\ra\bfM(\g)$$
are isomorphic.
\end{prop}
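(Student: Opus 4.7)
The proof adapts \cite[Proposition 2.8]{BG} to the affine ind-scheme setting. The strategy is to construct a natural transformation $\eta:\Gamma\circ\Theta^\nu\to\theta^\nu\circ\Gamma$ using a geometric realization of the highest weight line of $V(\nu)$, then to show $\eta$ is an isomorphism by combining a filtration argument with block separation in $\tilde{\calO}_\kappa$.

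For the geometric input, since $\nu$ is dominant integral, the simple $\g$-module $V(\nu)$ is integrable and carries a compatible action of the Kac-Moody group $G$. A highest weight vector $v_\nu\in V(\nu)$ spans a $B$-stable line of weight $\nu$, producing a $B$-equivariant embedding $\scrL^\nu_X\hookrightarrow V(\nu)\otimes_\C\scrO_X$ and, after pullback by $\pi$, an inclusion
$$\iota: \pi^*\scrL^\nu\hookrightarrow V(\nu)\otimes_\C\scrO_{X^\dag}$$
which is $G$-equivariant for the diagonal structure on the target. More generally, filtering $V(\nu)$ by the height of $\nu-\tau$ through its $T$-weights $\tau$ equips $V(\nu)\otimes_\C\scrO_{X^\dag}$ with a $B$-stable filtration whose associated graded pieces are $V(\nu)_\tau\otimes_\C\pi^*\scrL^\tau$; the submodule $\pi^*\scrL^\nu$ corresponds to the maximal weight.

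For the natural transformation, given $\scrM\in\bfM^\mu_0(X)$, I tensor $\iota$ with $\scrM$ over $\scrO_{X^\dag}$, apply $\Gamma$, and use the canonical isomorphism $\Gamma(V(\nu)\otimes_\C\scrM)\cong V(\nu)\otimes\Gamma(\scrM)$ as $\g$-modules, where $V(\nu)$ carries the representation action and the right hand side has the diagonal action, matching $\delta_l$ on $V(\nu)\otimes_\C\scrO_{X^\dag}$ with its diagonal $G$-structure. This yields a $\g$-equivariant map $\Gamma(\Theta^\nu\scrM)\to V(\nu)\otimes\Gamma(\scrM)$. Since $\Theta^\nu\scrM\in\bfM^\lam(X)$ its image lies in $\tilde{\calO}_{\kappa,\lam}$, so composing with $\pr_\lam$ produces the desired natural transformation
$$\eta_\scrM: \Gamma(\Theta^\nu\scrM)\lra\pr_\lam\bigl(V(\nu)\otimes\Gamma(\scrM)\bigr)=\theta^\nu(\Gamma(\scrM)).$$

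To show $\eta_\scrM$ is an isomorphism, apply the exact functor $\Gamma$ (exact by Proposition \ref{prop:BD}, since $\mu+\rho$ is antidominant) to the filtration of $\scrM\otimes_{\scrO_{X^\dag}}(V(\nu)\otimes_\C\scrO_{X^\dag})$. This induces a $\g$-stable filtration on $V(\nu)\otimes\Gamma(\scrM)$ whose $\tau$-th subquotient is $V(\nu)_\tau\otimes\Gamma(\scrM\otimes\pi^*\scrL^\tau)$, lying in the block of $\tilde{\calO}_\kappa$ indexed by the antidominant representative of $\mu+\tau$. The projection $\pr_\lam$ kills such a subquotient unless $\mu+\tau=w\cdot\lam$ for some $w\in\frakS$, equivalently $\nu-\tau=(\lam+\rho)-w(\lam+\rho)$. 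As $\tau$ is a weight of $V(\nu)$ one has $\nu-\tau\in\N\Pi^+$, while antidominance of $\lam+\rho$ forces $w(\lam+\rho)-(\lam+\rho)\in\N\Pi^+$ for every $w\in\frakS$; combining these yields $\tau=\nu$. Hence only the bottom piece of the filtration survives under $\pr_\lam$, giving $\pr_\lam(V(\nu)\otimes\Gamma(\scrM))\cong\Gamma(\Theta^\nu\scrM)$ and proving that $\eta_\scrM$ is an isomorphism. The regularity of $\mu+\rho$ enters implicitly to ensure that the blocks $\tilde{\calO}_{\kappa,\mu+\tau}$ behave as genuine direct summands. The main technical obstacle is the construction of the filtration: since $V(\nu)$ is infinite-dimensional of positive level $\kappa-\kappa'$, one must build it carefully on each finite-type piece $\olX^\dag_w$ using the span of weight spaces with $\nu-\tau$ of bounded height and verify compatibility under restriction to obtain a well-defined filtration on the ind-scheme $X^\dag$.
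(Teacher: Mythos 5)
Your proof follows the same overall architecture as the paper's: filter $V(\nu)\otimes\scrO_{X^\dag}$ by height of the weights of $V(\nu)$, with subquotients $V(\nu)_\tau\otimes\pi^\ast\scrL^\tau$; identify $\Gamma$ of the ambient with $\Gamma(\scrM)\otimes V(\nu)$ via the projection formula; and argue that under $\pr_\lam$ only the top piece $\tau=\nu$ survives. Your combinatorial argument forcing $\tau=\nu$ (using $\nu-\tau\in\N\Pi^+$ together with $w(\lam+\rho)-(\lam+\rho)\in\N\Pi^+$ from antidominance) is the content of the paper's Step 6 in a slightly different but equivalent guise.

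Where your proof has a genuine gap is the assertion that the $\tau$-th subquotient $V(\nu)_\tau\otimes\Gamma(\scrM\otimes\pi^\ast\scrL^\tau)$ lies in the single block of $\tilde{\calO}_\kappa$ indexed by the antidominant representative of $\mu+\tau$ (and likewise that the top piece lies in block $\lam$). For $\tau\neq\nu$ the weight $\mu+\tau+\rho$ is generally \emph{not} antidominant, and in that regime nothing in the paper establishes that $\Gamma$ of an object of $\bfM^{\mu+\tau}_0(X)$ is concentrated in a single block: the antidominant regular case is Proposition \ref{prop:KT}, and the antidominant singular case (Remark \ref{rk:maptoO}) is itself deduced \emph{from} the present proposition, so invoking it here would be circular. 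This block-separation step is precisely what the paper's generalized Casimir argument (Steps 4--6 of the proof) is designed to supply without any antidominance hypothesis: after the nontrivial verification that $\delta_l(\frakC)$ is a well-defined global section on the ind-scheme, the scalar $||\mu+\nu_i+\rho||^2-||\rho||^2$ by which $\frakC$ acts on $(\scrM^\dag\otimes\scrL^{\nu_i})^\C$ is computed directly on the $\scrD$-module side via $\delta_l(\frakC)=\delta_r(-2\rho^\vee+\sum_ih^ih_i+2\partial\bfone)$, and this scalar, not the block, is what separates the top filtered piece from the rest under $\pr_\lam$ (since all objects of $\tilde{\calO}_{\kappa,\lam}$ have Casimir eigenvalue $||\lam+\rho||^2-||\rho||^2$). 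Appealing instead to general translation-functor or tensor-product linkage results for Kac-Moody algebras would fill the hole, but these results are themselves usually proved via the Casimir or the center of $\calU(\g)$, so the Casimir computation is not optional bookkeeping — it is the substance that your block argument currently takes for granted.
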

\begin{proof}
We will prove the proposition in several steps.

\emph{Step $1$.} First, we define a category $\Sh(X)$ of sheaves of $\C$-vector spaces on $X$ and we consider $\g$-modules in this category. To do this, for $w\in\frakS$ let $\Sh(\olX_w)$ be the category
of sheaves of $\C$-vector spaces on $\olX_w$. For $w\leqs x$ we have a closed embedding $i_{w,x}:\olX_w\ra\olX_x$, and an exact functor
$$i^!_{w,x}: \Sh(\olX_x)\ra \Sh(\olX_w),\quad\scrF\mapsto i^!_{w,x}(\scrF),$$ where $i^!_{w,x}(\scrF)$ is the
subsheaf of $\scrF$ consisting of the local sections supported
set-theoretically on $\olX_w$. We get a projective system of categories $$(\Sh(\olX_w),\ i^!_{w,x}).$$ Following \cite[7.15.10]{BD}
we define the category of sheaves of $\C$-vector spaces on $X$ to be the projective limit
$$\Sh(X)=\tplim\Sh(\olX_w).$$
This is an abelian category. By the same arguments as in the second
paragraph of Section \ref{ss:O-modules}, the category $\Sh(\olX_w)$
is canonically identified with a full subcategory of $\Sh(X)$, and each
object $\scrF\in\Sh(X)$ is a direct limit
$$\scrF=\ilim\scrF_w,\quad\scrF_w\in\Sh(\olX_w).$$
The space of global sections of an object of $\Sh(X)$ is given by
$$\Gamma(X,\scrF)=\ilim\Gamma(\olX_w,\scrF_w).$$
Next, consider the forgetful functor
$$\bfO(\olX_w)\ra\Sh(\olX_w),\quad\scrN\mapsto\scrN^\C.$$
Recall that for $\scrM\in\bfO(X)$ we have $\scrM=\ilim\scrM_w$ with
$\scrM_w\in\bfO(\olX_w)$. The tuple of sheaves of $\C$-vector spaces
$$\ilim_{x\geqs
w}i_{w,x}^!(\scrM_x^\C),\ w\in\frakS,$$ gives
an object in $\Sh(X)$. Let us denote it by $\scrM^\C$. The assignment
$\scrM\mapsto\scrM^\C$ yields a faithful exact functor
$$\bfO(X)\ra\Sh(X)$$
such that
\begin{equation}\label{eq:gamma}
\Gamma(X,\scrM)=\Gamma(X,\scrM^\C),
\end{equation}
see (\ref{eq:gammaindM}) for the definition of the left hand side. Now, let $\scrF=(\scrF_w)$ be an object in $\Sh(X)$. The vector spaces
$\End(\scrF_w)$ form a projective system via the maps
$$\End(\scrF_x)\ra\End(\scrF_w),\quad f\mapsto i_{w,x}^!(f).$$ We
set
\begin{equation}\label{eq:endM}
\End(\scrF)=\plim_w\End(\scrF_w).
\end{equation}
We say that an object $\scrF$ of $\Sh(X)$ is a \emph{$\g$-module} if it is equipped with
an algebra homomorphism $$\calU(\g)\ra\End(\scrF).$$ For instance,
for $\scrM\in\bfM^T(X^\dag)$ the object
$(\scrM^{\dag})^\C$ of $\Sh(X)$ is a $\g$-module via the
algebra homomorphism
\begin{equation}\label{eq:g'}
  \delta_l: \calU(\g)\ra\Gamma(X^\dag,\scrD_{X^\dag})
\end{equation}
See the beginning of Section \ref{ss:functorgamma}.

\vskip1mm

\emph{Step $2$.} Next, we define $G$-modules in $\hat\bfO(X)$. A \emph{standard parabolic subgroup} of $G$ is a group scheme of the form $P=Q\times_{G_0}P_0$ with $P_0$ a parabolic subgroup of $G_0$. Here the morphism $Q\ra G_0$ is the canonical one. We fix a subposet $\PS\subset\frakS$ such that for $w\in\PS$ the subscheme $\olX_w\subset X$ is stable under the $P$-action and
$$X=\ilim_{w\in\PS}\olX_w.$$ We say that an object $\scrF=(\scrF_w)$ of $\hat\bfO(X)$ has an \emph{algebraic $P$-action} if $\scrF_w$ has the structure of a $P$-equivariant quasicoherent $\scrO_{\olX_w}$-module for $w\in\PS$ and if the isomorphism $i_{w,x}^\ast\scrF_x\cong\scrF_w$ is $P$-equivariant for $w\leqs x$. Finally, we say that $\scrF$ is a \emph{$G$-module} if it is equipped with an action of the (abstract) group $G$ such that for any standard parabolic subgroup $P$, the $P$-action on $\scrF$ is algebraic.

We are interested in a family of $G$-modules $\scrV^i$ in $\hat\bfO(X)$ defined as follows. Fix a basis $(m_i)_{i\in\N}$ of $V(\nu)$ such that
each $m_i$ is a weight vector of weight $\nu_i$ and $\nu_j>\nu_i$
implies $j<i$. By assumption we have $\nu_0=\nu$. For each $i$ let
$V^i$ be the subspace of $V(\nu)$ spanned by the vectors $m_j$ for
$j\leqs i$. Then $$V^0\subset V^1\subset V^2\subset\dots$$ is a
sequence of $B$-submodule of $V(\nu)$. We write $V^\infty=V(\nu)$.
For $0\leqs i\leqs \infty$ we define a $\scrO_{\frakX}$-module
$\scrV^i_{\frakX}$ on $\frakX$ such that for any open set
$U\subset\frakX$ we have
$$\Gamma(U,\scrV^i_\frakX)=\{f: p^{-1}(U)\ra V^i\,\,|\,\,
f(gb^{-1})=bf(g),\quad g\in G_\infty,\ b\in B\},$$ where $p:G_\infty\ra\frakX$ is the quotient
map. Let $\scrV^i_w$ be the restriction of $\scrV^i_\frakX$ to $\olX_w$. Then $(\scrV^i_w)_{w\in\frakS}$ is a flat
$G$-module in $\hat\bfO(X)$. We will denote it by $\scrV^i$. Note that since $V(\nu)$ admits a $G$-action, the $G$-module
$\scrV^\infty\in\hat\bfO(X)$ is isomorphic to the $G$-module
$\scrO_X\otimes V(\nu)$ with $G$ acting diagonally. Therefore, for $\scrM\in\bfM^\mu_0(X)$ the projection formula yields a canonical isomorphism of vector spaces
\begin{eqnarray}
\Gamma(\scrM)\otimes V(\nu)&=&\Gamma(X,\scrM^\dag)\otimes V(\nu)\nonumber\\
&=&\Gamma(X,\scrM^\dag\otimes_{\scrO_X}\scrV^\infty).\label{eq:iso1}
\end{eqnarray}
On the other hand, we have
\begin{eqnarray}
\Gamma(\Theta^\nu(\scrM))&=&\Gamma\bigl(X,(\scrM\otimes_{\scrO_{X^\dag}}\pi^\ast(\scrL^\nu))^\dag\bigr)\nonumber\\
&=&\Gamma(X,\scrM^\dag\otimes_{\scrO_X}\scrL^\nu).\label{eq:iso2}
\end{eqnarray}
Our goal is to compare the $\g$-modules $\Gamma(\Theta^\nu(\scrM))$ and the direct factor $\theta^\nu(\Gamma(\scrM))$ of $\Gamma(\scrM)\otimes V(\nu)$. To this end, we first define in Step $3$ a $\g$-action on $(\scrM^\dag\otimes_{\scrO_X}\scrV^i)^\C$ for each $i$, then we prove in Steps $4$-$6$ that the inclusion
\begin{equation}\label{eq:split}
(\scrM^\dag\otimes_{\scrO_X}\scrL^\nu)^\C
\ra(\scrM^\dag\otimes_{\scrO_X}\scrV^\infty)^\C
\end{equation}
induced by the inclusion $\scrL^\nu=\scrV^0\subset\scrV^\infty$ splits as a $\g$-module homomorphism in $\Sh(X)$.

\emph{Step $3$.} Let $P$ be a standard parabolic subgroup of $G$, and let $\frakp$ be its Lie algebra.
Let $\PS\subset\frakS$ be as in Step $2$.
The $P$-action on $\scrV^i$ yields a Lie algebra homomorphism
$$\frakp\ra\End(\scrV^i_w),\quad\forall\ w\in\PS.$$
Consider the $\g$-action on $(\scrM^\dag)^\C$ given by the map $\delta_l$ in (\ref{eq:g'}). Note that for $w\leqs x$ in $\PS$, any element $\xi\in\frakp$ maps a local section of $\scrM^\dag_x$ supported on $\olX_w$ to a local section of $\scrM^\dag_x$ with the same property. In particular, for $w\in\PS$ we have a Lie algebra homomorphism
\begin{equation}\label{eq:paction}
 \frakp\ra\End\bigl((\scrM^\dag_w\otimes_{\scrO_{\olX_w}}\scrV^i_w)^\C\bigr),\quad\xi\mapsto\bigl(m\otimes v\mapsto\xi m\otimes v+m\otimes\xi v\bigr),
\end{equation}
where $m$ denotes a local section of $\scrM^\dag_w$, $v$ denotes a local section of $\scrV^i_w$. These maps are compatible with the restriction
$$\End\bigl((\scrM^\dag_x\otimes_{\scrO_{\olX_x}}\scrV^i_x)^\C\bigr)
\ra\End\bigl((\scrM^\dag_w\otimes_{\scrO_{\olX_w}}\scrV^i_w)^\C\bigr),\quad f\mapsto i_{w,x}^!(f).$$
They yield a Lie algebra homomorphism
$$\frakp\ra\End\bigl((\scrM^\dag\otimes_{\scrO_X}\scrV^i)^\C\bigr).$$
As $P$ varies, these maps glue together yielding a Lie algebra homomorphism
\begin{equation}
\g\ra\End\bigl((\scrM^\dag\otimes_{\scrO_X}\scrV^i)^\C\bigr).
\end{equation}
This defines a $\g$-action on $(\scrM^\dag\otimes_{\scrO_X}\scrV^i)^\C$ such that the obvious inclusions
$$(\scrM^\dag\otimes_{\scrO_X}\scrV^0)^\C\subset(\scrM^\dag\otimes_{\scrO_X}\scrV^1)^\C\subset\cdots$$
are $\g$-equivariant. So (\ref{eq:split}) is a $\g$-module homomorphism. Note that the flatness of $\scrV^i$ yields an
isomorphism in $\bfO(X)$
\begin{equation}\label{eq:5}
\scrM^\dag\otimes_{\scrO_X}\scrV^i/
\scrM^\dag\otimes_{\scrO_X}\scrV^{i-1}
\cong\scrM^\dag\otimes_{\scrO_X}\scrL^{\nu_i}.
\end{equation}

\emph{Step $4$.} In order to show that the $\g$-module homomorphism (\ref{eq:split}) splits, we consider the generalized Casimir operator of $\g$. Identify $\frakt$ and $\frakt^\ast$ via the pairing $\pair{\bullet:\bullet}$. Let $\rho\spcheck\in\frakt$ be the image of $\rho$. Let $h_i$ be a basis of $\frakt_0$, and let $h^i$ be its
dual basis in $\frakt_0$ with respect to the pairing $\pair{\bullet:\bullet}$. For $\xi\in\g_0$ and $n\in\Z$ we will abbreviate $\xi^{(n)}=\xi\otimes t^n$ and $\xi=\xi^{(0)}$.
The generalized Casimir operator is given by the formal sum
\begin{equation}\label{eq:casimir}
\frakC=2\rho\spcheck+\sum_ih^ih_i+2\partial\bfone+\sum_{i<j}e_{ji}e_{ij}+\sum_{n\geqs 1}\sum_{i\neq j}e_{ij}^{(-n)}e_{ji}^{(n)}
+\sum_{n\geqs 1}\sum_{i}h^{i,(-n)}h_i^{(n)},
\end{equation}
see e.g., \cite[Section 2.5]{Kac}. Let $\delta_l(\frakC)$ be the formal sum given by applying $\delta_l$ term by term to the right hand side of (\ref{eq:casimir}). We claim that $\delta_l(\frakC)$ is a well defined element in $\Gamma(X^\dag,\scrD_{X^\dag})$, i.e., the sum is finite at each point of $X^\dag$. More precisely, let
$$\Sigma=\{e_{ij}\,|\,i< j\}\cup\{e_{ji}^{(n)}, h_i^{(n)}\,|\,i\neq j,n\geqs 1\},$$ and let $e$ be the base point of $X^\dag$. We need to prove that the sets
$$\Sigma_g=\{\xi\in\Sigma\,|\,\delta_l(\xi)(ge)\neq 0\},\quad g\in G,$$
are finite. To show this, consider the adjoint action of $G$ on $\g$
$$\Ad: G\ra\End(\g),\quad g\mapsto\Ad_g,$$
and the $G$-action on $\scrD_{X^\dag}\in\hat\bfO(X^\dag)$ induced by the $G$-action on $X^\dag$. The map $\delta_l$ is $G$-equivariant with respect to these actions. So for $\xi\in\g$ and $g\in G$ we have
$$\delta_l(\xi)(ge)\neq 0\quad\iff\quad \delta_l(\Ad_{g^{-1}}(\xi))(e)\neq 0.$$
Further the right hand side holds if and only if $\Ad_{g^{-1}}(\xi)\notin\frakn$. Therefore
$$\Sigma_g=\{\xi\in\Sigma\,\,|\,\, \Ad_{g^{-1}}(\xi)\notin \frakn\}$$ is a finite set, the claim is proved. By consequence $\frakC$ acts on the $\g$-module $(\scrM^{\dag})^{\C}$ for any $\scrM\in\bfM^T(X^\dag)$. Next, we claim that the
action of $\frakC$ on the $\g$-module $(\scrM^\dag\otimes_{\scrO_{X}}\scrV^i)^\C$ is also well defined. It is enough to prove this for
$(\scrM\otimes_{\scrO_{X}}\scrV^\infty)^\C$. By (\ref{eq:paction}) the action of $\frakC$
on $(\scrM\otimes_{\scrO_{X}}\scrV^\infty)^\C$ is given by the
operator
$$\frakC\otimes 1+1\otimes\frakC-\sum_{n\in\Z, i\neq j}e_{ij}^{(-n)}\otimes e_{ji}^{(n)}-\sum_{n\in\Z,i}h^{i,(-n)}\otimes
h_i^{(n)}.$$ Since for both $\scrM^\dag$ and $\scrV^\infty$, at each point, there are only finitely many elements from $\Sigma$ which
act nontrivially on it, the action of $\frakC$ on the tensor product is
well defined.

\emph{Step $5$.} Now, let us calculate the action of $\frakC$ on
$(\scrM^\dag\otimes_{\scrO_X}\scrL^{\nu_i})^\C$. We have $$\Ad_{g^{-1}}(\frakC)=\frakC,\quad\forall\ g\in G.$$ Therefore the global section $\delta_l(\frakC)$ is $G$-invariant and its value at $e$ is
$$\delta_l(\frakC)(e)=
\delta_l(2\rho\spcheck+\sum_ih^ih_i+2\partial\bfone)(e).$$
On the other hand, the right $T$-action on $X^\dag$
yields a map
$$\delta_r:\frakt\ra\Gamma(X^\dag,\scrD_{X^\dag}).$$
Since the right $T$-action commutes with the left $G$-action, for any $h\in\frakt$ the global section $\delta_r(h)$ is $G$-invariant. We have $\delta_r(h)(e)=-\delta_l(h)(e)$ because the left and right $T$-actions on the
point $e$ are inverse to each other. Therefore the global sections
$\delta_l(\frakC)$ and $\delta_r(-2\rho\spcheck+\sum_ih^ih_i+2\partial\bfone)$ takes the same value at the point $e$. Since both of them are $G$-invariant, we deduce that
$$\delta_l(\frakC)=\delta_r(-2\rho\spcheck+\sum_ih^ih_i+2\partial\bfone).$$
Recall from Section \ref{ss:twisted} that for $\lam\in\frakt^\ast$ and $\scrM\in\bfM^\lam(X)$ the operator
$\delta_r(-2\rho\spcheck+\sum_ih^ih_i+2\partial\bfone)$ acts on
$\scrM^\dag$ by the scalar
$$-\lam(-2\rho\spcheck+\sum_ih^ih_i+2\partial\bfone)=||\lam+\rho||^2-||\rho||^2.$$
Therefore $\frakC$ acts on $\scrM^\dag$ by the same scalar. In
particular, for $\scrM\in\bfM^\mu(X)$ and $i\in\N$, the element $\frakC$ acts on
$\scrM^\dag\otimes_{\scrO_X}\scrL^{\nu_i}$ by
$||\mu+\nu_i+\rho||^2-||\rho||^2$. Note that the isomorphism (\ref{eq:5}) is compatible with the $\g$-actions. So $\frakC$ also acts by $||\mu+\nu_i+\rho||^2-||\rho||^2$ on $(\scrM^\dag\otimes_{\scrO_X}\scrV^i/
\scrM^\dag\otimes_{\scrO_X}\scrV^{i-1})^\C$.

\emph{Step $6$.} Now, we can complete the proof of the proposition. First, we claim that
\begin{equation}\label{eq:fact}
 ||\lam+\rho||^2-||\rho||^2=||\mu+\nu_i+\rho||^2-||\rho||^2\ \iff\ \nu_i=\nu.
\end{equation}
The ``if'' part is trivial. For the ``only
if'' part, we have by assumption
\begin{eqnarray*}
    ||\mu+\nu+ \rho||^2&=&||\mu+\nu_i+\rho||^2\\
    &=&||\mu+\nu+\rho||^2+||\nu-\nu_i||^2-2\pair{\mu+\nu+\rho:\nu-\nu_i}.
\end{eqnarray*}
Since $\nu-\nu_i\in\N\Pi^+$ and $\mu+\nu+\rho=\lam+\rho$ is
antidominant, the term $-2\pair{\lam+\rho:\nu-\nu_i}$ is positive.
Hence the equality implies that $||\nu-\nu_i||^2=0$. So $\nu-\nu_i$
belongs to $\N\delta$. But $\pair{\lam+\rho:\delta}=\kappa<0$. So we
have $\nu=\nu_i$. This proves the claim in (\ref{eq:fact}). A direct consequence of this claim and of Step $5$ is that the $\g$-module monomorphism (\ref{eq:split}) splits. It induces an isomorphism of $\g$-modules
\begin{equation}\label{eq:eq}
\Gamma(\scrM^\dag\otimes_{\scrO_X}\scrL^\nu)= \pr_\lam\Gamma(\scrM^\dag\otimes_{\scrO_X}\scrV^\infty),\quad\scrM\in\bfM^\mu_0(X).\end{equation}
Finally, note that the vector spaces isomorphisms (\ref{eq:iso1}) and (\ref{eq:iso2}) are indeed isomorphisms of $\g$-modules by the definition of the $\g$-actions on $(\scrM^\dag\otimes_{\scrO_X}\scrV^\infty)^\C$ and $(\scrM^\dag\otimes_{\scrO_X}\scrL^\nu)^\C$. Therefore (\ref{eq:eq}) yields an isomorphism of $\g$-modules
$$\Gamma(\Theta^\nu(\scrM))=\theta^\nu(\Gamma(\scrM)).$$
\end{proof}

\begin{rk}\label{rk:maptoO}
We have assumed $\mu+\rho$ regular in Proposition \ref{prop:trans} in order to have $\Gamma(\bfM^\mu_0(X))\subset\tilde{\calO}_{\kappa,\mu}$. It follows from Proposition \ref{prop:trans} that this inclusion still holds if $\mu+\rho$ is not regular. So Proposition \ref{prop:trans} makes sense without this regularity assumption, and the proof is the same in this case.
\end{rk}

\subsection{Proof of Proposition \ref{prop:nonregular}.}
  By Proposition \ref{prop:KT} it is enough to prove the proposition in the case when $\lam+\rho$ is not regular. Let $\omega_i$, $0\leqs i\leqs m-1$, be the fundamental weights in
  $\frakt^\ast$. Let
  $$\nu=\sum\omega_i,$$ where the sum runs over all $i=0,\ldots,m-1$ such that $\pair{\lam+\rho:\,\al_i}=0$. The weight $\nu$ is dominant. Let $\mu=\lam-\nu$. Then $\mu+\rho$ is an antidominant weight. It is moreover regular, because we have
  $$\pair{\mu+\rho:\al_i}=\pair{\lam+\rho:\al_i}-\pair{\nu:\al_i}<0,\quad 0\leqs i\leqs m-1.$$ Let $\kappa'=\pair{\mu+\rho:\delta}$. So  Propositions \ref{prop:KT}, \ref{prop:trans} and the equation (\ref{eq:transstandard}) implies that
  $$\Gamma(\scrA^\lam_{w!})=\theta^\nu(N_{\kappa'}(w\cdot\mu)),\quad
  \Gamma(\scrA^\lam_{w!\bullet})=\theta^\nu(L_{\kappa'}(w\cdot\mu)),\quad
  \Gamma(\scrA^\lam_{w\bullet})=\theta^\nu(\bfD N_{\kappa'}(w\cdot\mu)).$$
  So parts (a), (c) follow from the properties of the translation functor $\theta^\nu$ given in \cite[Proposition 1.7]{Ku1}. Part (b) follows from (a) and the equality (\ref{eq:transdual}).
\qed

\vskip5mm

\section{Proof of Proposition \ref{prop:parvermadeformed}}
\label{s:appendixb}

In this appendix, we prove Proposition \ref{prop:parvermadeformed}. We will first study localization of deformed Verma modules, see Lemmas \ref{lem:dualvermadeformed}, \ref{lem:vermadeformed}. Then we use them to deduce the parabolic version. In this appendix we will keep the notation of Section \ref{s:geojant}. In particular, recall that $j=j_w,\ f=f_w$, etc.
\vskip1mm
\subsection{Deformed Verma modules}\label{ss:dvm}

Fix $\lam\in\Lam$ and $w\in\QS$. Let $n>0$ be an integer. For $v=xw\in\frakS$ with $x\in\frakS_0$, let
$r_{x}:X^\dag_{v}\ra Y_w^\dag$ be the canonical inclusion. We have $i_{v}=j\circ r_x$. The tensor product
\begin{equation}\label{eq:notationA}
\scrA_{v}^{(n)}=\scrA^\lam_{v}\otimes_{\scrO_{X^\dag_{v}}}r_x^\ast f^\ast\scrI^{(n)}
\end{equation}
is equal to $r_x^\ast(\scrB^{(n)})$. By Lemma \ref{lem:basic} it is an object of $\bfM^{\tilde\lam}_h(X_{v})$. We consider the following objects in
$\bfM^{\tilde{\lam}}_h(X)$
$$\scrA_{v!}^{(n)}=i_{v!}(\scrA_{v}^{(n)}),
\quad\scrA_{v!\bullet}^{(n)}=i_{v!\bullet}(\scrA_{v}^{(n)}),\quad
\scrA_{v\bullet}^{(n)}=i_{v\bullet}(\scrA_{v}^{(n)}).$$

For
$\mu\in{}_\kappa\!\frakt^\ast$ we have defined the Verma module
$N_\kappa(\mu)$ in Section \ref{ss:verma}. The deformed Verma module is the $\calU_\bfk$-module given by
$$N_\bfk(\mu)=\calU(\g)\otimes_{\calU(\frakb)}R_{\mu+s\omega_0}.$$ Here the $\frakb$-module
$R_{\mu+s\omega_0}$ is a rank one $R$-module over which $\frakt$ acts
by $\mu+s\omega_0$, and $\frakn$ acts trivially. The deformed dual Verma module is
$$\bfD N_\bfk(\mu)=\bigoplus_{\lam\in{}_{\bfk}\frakt^\ast}\Hom_R(N_\bfk(\mu)_\lam, R),$$
see (\ref{eq:dualdeform}). We will abbreviate
$$N^{(n)}_\bfk(\mu)=N_\bfk(\mu)(\wp^n),
\quad \bfD N^{(n)}_\bfk(\mu)=\bfD N_\bfk(\mu)(\wp^n).$$ For any
$R^{(n)}$-module (resp. $R$-module) $M$ let $\mu(s^i):M\ra M$
be the multiplication by $s^i$ and write $s^iM$ for the image of
$\mu(s^i)$. We define a filtration
$$F^\bullet M=(F^0M\supset F^1M\supset F^2M\supset\ldots)$$
on $M$ by putting
$F^iM=s^iM.$ We say that it is of length $n$ if $F^nM=0$ and $F^{n-1}M\neq 0$. We set
$$\gr M=\bigoplus_{i\geqs 0}\gr^i M,\quad \gr^iM=F^iM/F^{i+1}M.$$
For any
$\g_{R^{(n)}}$-module $M$ let $\dch(M)$ be the
$\frakt_{R^{(n)}}$-module image of $M$ by the forgetful functor.

\begin{lemma}\label{lem:A1}
  If $\lam+\rho$ is antidominant then we have
  an isomorphism of $\frakt_{R^{(n)}}$-modules
  $$\dch(\Gamma(\scrA_{v\bullet}^{(n)}))= \dch(\bfD
N^{(n)}_\bfk(v\cdot\lam)).$$
\end{lemma}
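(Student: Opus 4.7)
The plan is to reduce the comparison to the base case $n=1$ (which is Proposition \ref{prop:nonregular}(b)) via the $s$-adic filtration, and then identify the deformation weight.

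First, I would consider the filtration $F^i\scrA_v^{(n)}=s^i\scrA_v^{(n)}$. Using the nowhere-vanishing global section $f^s$ of $f^\ast\scrI^{(n)}$ on $Y_w^\dag$ produced in the proof of Lemma \ref{lem:basic}, the sheaf $\scrA_v^{(n)}$ is locally free of rank $n$ over $\scrO_{X_v^\dag}\otimes R^{(n)}$, so multiplication by $s^i$ gives $\scrD$-module isomorphisms $\scrA_v^\lam\simra s^i\scrA_v^{(n)}/s^{i+1}\scrA_v^{(n)}$ (since $s$ commutes with the $\scrD$-action). The embedding $i_v$ is locally closed affine, so $i_{v\bullet}$ is exact; and $\Gamma$ is exact on $\bfM^{\tilde\lam}(X)$ by Proposition \ref{prop:BD}(b), using the antidominance of $\lam+\rho$. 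Applying $\Gamma\circ i_{v\bullet}$ to the $s$-adic filtration yields a filtration on $\Gamma(\scrA_{v\bullet}^{(n)})$ whose graded pieces are all isomorphic to $\Gamma(\scrA_{v\bullet}^\lam)=\bfD N_\kappa(v\cdot\lam)$ by Proposition \ref{prop:nonregular}(b). The parallel $s$-adic filtration on $\bfD N_\bfk^{(n)}(v\cdot\lam)$ obviously has the same graded pieces.

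Second, I would identify the deformation weight. By Lemma \ref{lem:basic}, the element $h+\lam(h)+sw^{-1}\omega_0(h)\in\calU(\frakt)\otimes R^{(n)}$ annihilates $(\scrA_v^{(n)})^\dag$ via $\delta_r$. Using the standard relation $\delta_r(h)=-\delta_l(h)$ on global sections (compare Step $5$ of the proof of Proposition \ref{prop:KT}), the generator of $\Gamma(\scrA_{v\bullet}^{(n)})$ corresponding to the cell $X_v$ has weight $v\cdot(\lam+sw^{-1}\omega_0)=v\cdot\lam+s\cdot(vw^{-1})\omega_0$ under the $\g$-action. Writing $v=xw$ with $x\in\frakS_0$, one has $vw^{-1}=x$, and since $\frakS_0$ fixes $\omega_0$ by (\ref{eq:weylaction}), this weight equals $v\cdot\lam+s\omega_0$, which is precisely the highest weight of $\bfD N_\bfk(v\cdot\lam)$.

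Combining the two steps, each generalized $\frakt$-weight space at weight $\mu+s\omega_0$ (for $\mu$ a weight of $\bfD N_\kappa(v\cdot\lam)$) on both sides is a free $R^{(n)}$-module of rank $\dim_\C\bfD N_\kappa(v\cdot\lam)_\mu$, on which $\frakt$ acts by $\mu+s\omega_0$; hence the two $\frakt_{R^{(n)}}$-modules are isomorphic. The main obstacle is the weight identification in the second step: one must reconcile the geometric deformation by $sw^{-1}\omega_0$ (built into $\scrA_v^{(n)}$ through $f=f_w$) with the algebraic deformation by $s\omega_0$ in $\bfD N_\bfk^{(n)}(v\cdot\lam)$, which works out precisely because $v\in\frakS_0 w$ forces $vw^{-1}\omega_0=\omega_0$.
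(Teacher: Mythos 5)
Your approach shares the essential ingredients with the paper's: both exploit the $s$-adic filtration on $\scrA_v^{(n)}$ with graded pieces $\scrA_v^\lambda$, and both rest on the observation (explicit in your write-up, tacit in the paper's) that the geometric twist $sw^{-1}\omega_0$ supplied by $f_w$ becomes $s\omega_0$ once one applies $v\cdot$, since $vw^{-1}\in\frakS_0$ fixes $\omega_0$. The organization differs, though. The paper first computes $\dch\,\overline{\Gamma}(\scrA^{(n)}_{v\bullet})$ by building an explicit sheaf-level isomorphism over the chart $V^y_l$ using the section $\vartheta^s$ — this is what pins down the full $\frakt_{R^{(n)}}$-structure as $\calU(\frakn^-/\frakn^-_l)\otimes R^{(n)}_{v\cdot\lambda+s\omega_0}$ — and only then uses the $s$-adic filtration, in its Step~3, to upgrade $\overline{\Gamma}$ to $\Gamma$. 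You invert the order, running the filtration directly on $\Gamma$ to avoid the $\hat\Gamma/\overline{\Gamma}$ intermediaries, which is an attractive economy.

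This is where there is a gap. The filtration argument gives you the $\C$-character of $\Gamma(\scrA_{v\bullet}^{(n)})$ and the freeness over $R^{(n)}$ of each $\C$-generalized weight space; the $\delta_r$-twist argument gives you the exact $\frakt$-weight $v\cdot\lambda+s\omega_0$ of the top vector. But the lemma asserts an isomorphism of $\frakt_{R^{(n)}}$-modules, so you need $\frakt$ to act on \emph{every} weight space by the exact scalar $\mu+s\omega_0\in R^{(n)}$, not merely up to nilpotents. The $\C$-character plus the top weight do not by themselves force this: $\bfD N^{(n)}_\bfk(v\cdot\lambda)$ is cogenerated, not generated, by its highest weight vector, so the scalar action cannot be propagated by hitting the top vector with $\calU(\frakn^-)$. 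Your final paragraph asserts the scalar action outright, but only the top weight space has actually been justified. In the paper's proof the explicit map $f\otimes p\otimes r\mapsto\bigl((\vartheta^s\cdot f)\cdot\delta_l(p)\bigr)r$ over $V^y_l$ is precisely what closes this gap, by exhibiting the whole of $\overline{\Gamma}(\scrA^{(n)}_{v\bullet})$ as adjoint-$\calU(\frakn^-)$ tensored with the rank-one module $R^{(n)}_{v\cdot\lambda+s\omega_0}$; your shortcut needs a substitute for it, e.g., some direct argument that the $\delta_l(\frakt)$-action on $\Gamma(\scrA_{v\bullet}^{(n)})$ is $R^{(n)}$-semisimple with the stated eigenvalues.
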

\begin{proof}
The proof is very similar to the proof of Proposition \ref{prop:KT}. We will use the notation introduced there.

\emph{Step 1.} Consider the nowhere vanishing section $f^s$ of $(f^\ast\scrI^{(n)})^\dag$ over $Y_w$. Its restriction to $X_v$ yields an isomorphism
$$(\scrA_v^{(n)})^\dag\cong\Omega_{X_v}\otimes_{\scrO_{X_v}}\scrL^\lam_{X_v}\otimes R^{(n)}.$$
Let $\omega$ be a nowhere
vanishing section of $\Omega_{X_v}$, and let $t^\lam$ be the nowhere vanishing section of
$\scrL^\lam_{X_{v}}$ over $X_v$ such that $t^\lam(u\dot{v} b)=e^{-\lam}(b)$ for
$u\in N$, $b\in B$. Then the global section $\omega\otimes t^\lam\otimes f^s$ of $\scrA^{(n)}_v$ defines an element
$$\vartheta^s\in\hat\Gamma(\scrA_{v\bullet}^{(n)})$$
in the same way as $\vartheta$ is defined in the first step of the proof of Proposition \ref{prop:KT}.

\emph{Step 2.} Let us show that
$$\dch(\overline{\Gamma}(\scrA^{(n)}_{v\bullet}))=\dch(\bfD
N^{(n)}_\bfk(v\cdot\lam)).$$
The proof is the same as in the second step of the proof of Proposition \ref{prop:KT}. The right $\scrD_{X_v}^\dag$-module structure on $(\scrA_v^{(n)})^\dag$ yields an isomorphism of sheaves of $\C$-vector spaces over $V^y_l$
\begin{eqnarray*}
j^{y\ast}_l(i^{v}_{l\ast}\scrO_{X_{v}})\otimes
\calU(\frakn^-(\Pi^-\cap v(\Pi^-))/\frakn^-_l)\otimes
R^{(n)}&\simra& j^{y\ast}_l\bigl(i^{v}_{l\bullet}(\scrA_{v}^{(n)})^\dag\bigr),\\
f\otimes p\otimes r&\mapsto& \bigl((\vartheta^s\cdot f)\cdot \delta_l(p)\bigr)r.
\end{eqnarray*}
This yields an
isomorphism of $\frakt_{R^{(n)}}$-modules
\begin{equation*}
\dch\bigl(\Gamma(\frakX^y_l,
i^{v}_{l\bullet}(\scrA_v^{(n)}))\bigr)
=\dch\bigl(\calU(\frakn^-/\frakn^-_l)\otimes
R^{(n)}_{v\cdot\lam+s\omega_0}\bigr).
\end{equation*}
Therefore we have
\begin{equation}\label{eq:hatgamma}\dch(\hat\Gamma(\scrA^{(n)}_{v\bullet}))
=\dch\bigl((\plim_l\calU(\frakn^-/\frakn^-_l))\otimes
R^{(n)}_{v\cdot\lam+s\omega_0}\bigr),
\end{equation} and
\begin{eqnarray}
  \dch(\overline{\Gamma}(\scrA^{(n)}_{v\bullet}))&
=&\dch\bigl(\calU(\frakn^-)\otimes R^{(n)}_{v\cdot\lam+s\omega_0}\bigr)\nonumber\\
&=&\dch(\bfD
N^{(n)}_\bfk(v\cdot\lam)).\label{eq:4}
\end{eqnarray}

\emph{Step 3.} In this step, we prove that
$\Gamma(\scrA_{v\bullet}^{(n)})=\overline{\Gamma}(\scrA_{v\bullet}^{(n)})$
as $\g_{R^{(n)}}$-modules. Since both of them are $\g_{R^{(n)}}$-submodules of $\hat{\Gamma}(\scrA_{v\bullet}^{(n)})$. It is enough to prove that they are equal as vector spaces. Consider the filtration $F^\bullet(\scrA^{(n)}_v)$ on $\scrA^{(n)}_v$. It is a filtration in $\bfM^{\tilde\lam}(X_v)$ of length $n$ and $$\gr^i(\scrA^{(n)}_v)=\scrA^\lam_v,\quad 0\leqs i\leqs n-1.$$
Since $i_{v\bullet}$ is exact and $$R^i\Gamma(\scrA^\lam_{v\bullet})
=R^i\overline{\Gamma}(\scrA^\lam_{v\bullet})=0,\quad\forall\ i>0,$$
the functor $\Gamma\circ i_{v\bullet}$ commute with the filtration. Therefore both the filtrations $F^\bullet \Gamma(\scrA_{v\bullet}^{(n)})$ and $F^\bullet \overline{\Gamma}(\scrA_{v\bullet}^{(n)})$ have length $n$ and
$$\gr^i\Gamma(\scrA_{v\bullet}^{(n)})=\Gamma(\scrA_{v\bullet}^\lam),\quad
\gr^i\overline{\Gamma}(\scrA_{v\bullet}^{(n)})=\overline{\Gamma}(\scrA_{v\bullet}^\lam),\quad 0\leqs i\leqs n-1.$$ By Step 3 of the proof of Proposition \ref{prop:KT}, we have $\Gamma(\scrA^\lam_v)=\overline{\Gamma}(\scrA^\lam_v).$
We deduce that all the sections in $\Gamma(\scrA_{v\bullet}^{(n)})$ are $\frakt$-finite and all the sections in $\overline{\Gamma}(\scrA_{v\bullet}^{(n)})$ are supported on finite dimensional subschemes. This proves that
$$\Gamma(\scrA_{v\bullet}^{(n)})=\overline{\Gamma}(\scrA_{v\bullet}^{(n)}).$$
We are done by Step 2.
\end{proof}

\begin{lemma}\label{lem:dualvermadeformed}
If $\lam+\rho$ is antidominant there is an isomorphism of $\g_{R^{(n)}}$-modules
$$\Gamma(\scrA_{v\bullet}^{(n)})= \bfD
N^{(n)}_\bfk(v\cdot\lam).$$
\end{lemma}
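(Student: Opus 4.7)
My plan is to argue by induction on $n$ using the filtration $F^i\scrA_v^{(n)}=s^i\scrA_v^{(n)}$ analyzed in Step 3 of the proof of Lemma \ref{lem:A1}. The base case $n=1$ is Proposition \ref{prop:KT}(b). For the inductive step, multiplication by $s$ furnishes a short exact sequence
\begin{equation*}
0\to\scrA_v^{(n-1)}\to\scrA_v^{(n)}\to\scrA_v^\lam\to 0
\end{equation*}
in $\bfM^{\tilde\lam}_h(X_v)$ (the $R^{(n)}$-action on the first term factors through $R^{(n-1)}$). Applying the exact functor $i_{v\bullet}$ and the exact functor $\Gamma$ of Proposition \ref{prop:BD}(b) produces, using the inductive hypothesis together with the base case, a short exact sequence of $\g_{R^{(n)}}$-modules
\begin{equation*}
0\to\bfD N_\bfk^{(n-1)}(v\cdot\lam)\to\Gamma(\scrA_{v\bullet}^{(n)})\to\bfD N_\kappa(v\cdot\lam)\to 0.
\end{equation*}
The analogous algebraic SES $0\to\bfD N_\bfk^{(n-1)}(v\cdot\lam)\to\bfD N_\bfk^{(n)}(v\cdot\lam)\to\bfD N_\kappa(v\cdot\lam)\to 0$ arises from the short exact sequence of $R^{(n)}$-modules $0\to\wp^{n-1}R^{(n)}\to R^{(n)}\to\C\to 0$ after applying $\Hom_R(N_\bfk(v\cdot\lam)_\mu,-)$ weight-by-weight (each $N_\bfk(v\cdot\lam)_\mu$ is $R$-free).

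Next, I would construct a $\g_{R^{(n)}}$-module morphism between the two SESs inducing the identity on the outer terms, so that the five lemma forces the middle arrow to be an isomorphism. The key input is the section $\vartheta^s\in\Gamma(\scrA_{v\bullet}^{(n)})$ defined in Step 1 of the proof of Lemma \ref{lem:A1}, which is a weight vector of weight $v\cdot\lam+s\omega_0$ spanning the corresponding weight space over $R^{(n)}$; since by (\ref{eq:hatgamma}) no strictly higher weight appears in $\Gamma(\scrA_{v\bullet}^{(n)})$, the vector $\vartheta^s$ is automatically annihilated by $\frakn$. To obtain a map out of the dual Verma, my plan is to first establish the companion identification $\Gamma(\scrA_{v!}^{(n)})\cong N_\bfk^{(n)}(v\cdot\lam)$ by the same inductive scheme (the universal property of $N_\bfk^{(n)}$ yields the required map directly from the analogue of $\vartheta^s$ in $\Gamma(\scrA_{v!}^{(n)})$), and then dualize using the canonical morphism $\psi:\scrA_{v!}^{(n)}\to\scrA_{v\bullet}^{(n)}$ together with the pairing $\bfD N_\bfk^{(n)}(v\cdot\lam)=\bigoplus_\mu\Hom_{R^{(n)}}(N_\bfk^{(n)}(v\cdot\lam)_\mu,R^{(n)})$. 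This pairing is perfect since, by Lemma \ref{lem:A1}, every weight space of $\Gamma(\scrA_{v\bullet}^{(n)})$ is a free $R^{(n)}$-module of the same rank as the corresponding weight space of $\bfD N_\bfk^{(n)}(v\cdot\lam)$.

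The main obstacle is producing a morphism whose target is $\Gamma(\scrA_{v\bullet}^{(n)})$ rather than $\Gamma(\scrA_{v!}^{(n)})$: since $\bfD N_\bfk^{(n)}(v\cdot\lam)$ is not generated by its highest weight vector, it admits no direct universal description analogous to that of $N_\bfk^{(n)}(v\cdot\lam)$. Either one dualizes the $!$-statement weight-by-weight as sketched above, making essential use of the $R^{(n)}$-freeness supplied by Lemma \ref{lem:A1}, or one adapts the matrix-coefficient construction from \cite{KT1} underlying Proposition \ref{prop:KT} to the deformed setting, keeping track of how the parameter $s$ interacts with the $\g$-action along the $s$-adic filtration. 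Either route requires verifying the compatibility of the resulting map with the two SESs, which is the delicate computational point.
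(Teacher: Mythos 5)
Your plan goes against the paper's logical order, and the step you lean on hardest is not available. You propose to first establish $\Gamma(\scrA_{v!}^{(n)})\cong N_\bfk^{(n)}(v\cdot\lam)$ by a universal-property argument, and then obtain the $\bullet$-statement by dualizing. But there is no readily available "analogue of $\vartheta^s$ in $\Gamma(\scrA_{v!}^{(n)})$": the explicit section $\vartheta^s$ comes from the $\scrO$-module pushforward $i^v_{l\ast}$, which naturally lands in the $\bullet$-extension, not the $!$-extension (which is defined as $\bbD\circ i_\bullet\circ\bbD$ and admits no such direct section map). More importantly, the paper's proof of the $!$-statement (Lemma \ref{lem:vermadeformed}) \emph{depends on} Lemma \ref{lem:dualvermadeformed} — Step~2 there uses $\Gamma(\scrA^{(n)}_{v\bullet})_{\tilde\nu}$, computed via this very lemma, together with the chain $\scrA^{(n)}_{v!}\twoheadrightarrow\scrA^{(n)}_{v!\bullet}\hookrightarrow\scrA^{(n)}_{v\bullet}$, and requires the additional hypothesis that $v$ be shortest in $v\frakS(\lam)$, which the present lemma does not assume. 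So the order you propose is circular. Finally, even granting the $!$-isomorphism, the dualization step you sketch would need an identity of the shape $\bfD\Gamma(\scrA^{(n)}_{v!})\cong\Gamma(\scrA^{(n)}_{v\bullet})$, but $\bbD$ on the $\scrD$-module side does not send $\scrA^{(n)}_{v!}$ to $\scrA^{(n)}_{v\bullet}$ (the twist by $f^\ast\scrI^{(n)}$ changes sign of $s$ under $\bbD$), so this is not a formal compatibility and would need its own argument.

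The paper avoids all of this by not dualizing the $\scrD$-module at all: it dualizes the $\g_{R^{(n)}}$-module $\Gamma(\scrA_{v\bullet}^{(n)})$ weight-by-weight. Lemma~\ref{lem:A1} shows that the $\frakt_{R^{(n)}}$-character of $\Gamma(\scrA_{v\bullet}^{(n)})$ equals that of $\bfD N_\bfk^{(n)}(v\cdot\lam)$, so in particular each weight space is a free $R^{(n)}$-module of finite rank, and the weight-by-weight dual $\bfD\Gamma(\scrA_{v\bullet}^{(n)})$ is a well-defined $\g_{R^{(n)}}$-module with the same character as $N_\bfk^{(n)}(v\cdot\lam)$. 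Its highest weight space is then free of rank one over $R^{(n)}$ with $\frakt$ acting by $v\cdot\lam+s\omega_0$, the universal property of the deformed Verma module gives $\varphi:N_\bfk^{(n)}(v\cdot\lam)\to\bfD\Gamma(\scrA_{v\bullet}^{(n)})$, and Nakayama's lemma against the $n=1$ case (Proposition~\ref{prop:nonregular}) shows $\varphi$ is an isomorphism. This buys you a proof that needs neither the $!$-statement nor the shortest-element hypothesis. Your "inductive SES plus five lemma" framework is correct in outline but stalls precisely at the point you flag yourself — producing the middle map — and the paper's weight-by-weight duality is the device that resolves it.
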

\begin{proof}
Note that
\begin{eqnarray*}
\bfD N^{(n)}_\bfk(v\cdot\lam)&=&
\bigoplus_{\mu\in{}_\bfk\!\frakt^\ast}\Hom_{R}(N_\bfk(v\cdot\lam)_\mu, R)(\wp^n)\\
&=&\bigoplus_{\mu\in{}_\bfk\!\frakt^\ast}\Hom_{R^{(n)}}
(N^{(n)}_\bfk(v\cdot\lam)_\mu, R^{(n)}).
\end{eqnarray*}
For $\mu\in {}_\bfk\!\frakt^\ast$ let $\Gamma(\scrA_{v\bullet}^{(n)})_\mu$ be the weight space as defined in (\ref{eq:weight}). By Lemma \ref{lem:A1} we have
$$\Gamma(\scrA_{v\bullet}^{(n)})=\bigoplus_{\mu\in{}_\bfk\!\frakt^\ast}
\Gamma(\scrA_{v\bullet}^{(n)})_\mu,$$
because the same equality holds for $\bfD N^{(n)}_\bfk(v\cdot\lam)$. So we can consider the following $\g_{R^{(n)}}$-module
\begin{equation}\label{eq:10}
\bfD \Gamma(\scrA_{v\bullet}^{(n)})=\bigoplus_{\mu\in{}_\bfk\!\frakt^\ast}\Hom_{R^{(n)}}
(\Gamma(\scrA_{v\bullet}^{(n)})_\mu, R^{(n)}).
\end{equation}
It is enough to prove that we have an isomorphism of $\g_{R^{(n)}}$-modules
$$\bfD \Gamma(\scrA_{v\bullet}^{(n)})=
N^{(n)}_\bfk(v\cdot\lam).$$
By (\ref{eq:10}) we have
\begin{equation}\label{eq:11}
\dch\bigl(\bfD \Gamma(\scrA_{v\bullet}^{(n)})\bigr)=
\dch\bigl(\Gamma(\scrA_{v\bullet}^{(n)})\bigr).
\end{equation}
Together with Lemma \ref{lem:A1}, this yields an isomorphism of
$R^{(n)}$-modules
$$N^{(n)}_\bfk(v\cdot\lam)_{v\cdot\lam+s\omega_0}
=\bigl(\bfD\Gamma(\scrA_{v\bullet}^{(n)})\bigr)_{v\cdot\lam+s\omega_0}.$$ By the universal property of Verma modules, such an isomorphism induces a morphism of $\g_{R^{(n)}}$-module
$$\varphi:N^{(n)}_\bfk(v\cdot\lam)\ra\bfD \Gamma(\scrA_{v\bullet}^{(n)}).$$
We claim that for each $\mu\in{}_\bfk\!\frakt^\ast$ the
$R^{(n)}$-module morphism
$$\varphi_\mu:N^{(n)}_\bfk(v\cdot\lam)_\mu
\ra\bigl(\bfD\Gamma(\scrA_{v\bullet}^{(n)})\bigr)_\mu$$ given by the restriction of
$\varphi$ is invertible. Indeed, by Lemma \ref{lem:A1} and (\ref{eq:11}), we have
$$\ch(\bfD\Gamma(\scrA^{(n)}_{v\bullet}))=\ch(\bfD N^{(n)}_\bfk(v\cdot\lam))=\ch(N^{(n)}_\bfk(v\cdot\lam)).$$
So
$$N^{(n)}_\bfk(v\cdot\lam)_\mu
=\bigl(\bfD\Gamma(\scrA_{v\bullet}^{(n)})\bigr)_\mu$$
as $R^{(n)}$-modules. On the other hand, Proposition \ref{prop:nonregular} yields that the map
$$\varphi(\wp)=\varphi\otimes_{R^{(n)}}(R^{(n)}/\wp R^{(n)}):\
N_\kappa(v\cdot\lam)\ra\bfD\Gamma(\scrA^\lam_{v\bullet})$$ is an isomorphism
of $\g$-modules. So $\varphi_\mu(\wp)$ is also an isomorphism. Since the $R^{(n)}$-modules $N^{(n)}_\bfk(v\cdot\lam)_\mu$ and $\bigl(\bfD\Gamma(\scrA_{v\bullet}^{(n)})\bigr)_\mu$ are finitely generated, Nakayama's lemma implies that $\varphi_\mu$ is an isomorphism. So $\varphi$ is an isomorphism. The lemma is proved.
\end{proof}
\begin{lemma}\label{lem:vermadeformed}
If $\lam+\rho$ is antidominant and $v$ is a shortest element in $v\frakS(\lam)$, then there is an isomorphism of $\g_{R^{(n)}}$-modules
$$\Gamma(\scrA_{v!}^{(n)})=
N^{(n)}_\bfk(v\cdot\lam).$$
\end{lemma}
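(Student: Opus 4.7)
The plan is to imitate closely the proof of Lemma \ref{lem:dualvermadeformed}, replacing $\bullet$ by $!$.

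First, I would establish the character equality $\dch(\Gamma(\scrA_{v!}^{(n)})) = \dch(N_\bfk^{(n)}(v\cdot\lam))$ of $\frakt_{R^{(n)}}$-modules via a filtration argument. The $s$-adic filtration $F^\bullet\scrA_v^{(n)}$ in $\bfM_h^{\tilde\lam}(X_v)$ has length $n$ with graded pieces $\gr^i\scrA_v^{(n)}\cong\scrA_v^\lam$. Since $i_{v!}$ is exact (as the $!$-pushforward of a locally closed affine embedding) and $\Gamma$ is exact on $\bfM^{\tilde\lam}(X)$ by Proposition \ref{prop:BD}(b), applying $\Gamma\circ i_{v!}$ produces a filtration on $\Gamma(\scrA_{v!}^{(n)})$ of length $n$ whose graded pieces are isomorphic to $\Gamma(\scrA_{v!}^\lam)\cong N_\kappa(v\cdot\lam)$ by Proposition \ref{prop:nonregular}(a). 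The analogous $s$-adic filtration on $N_\bfk^{(n)}(v\cdot\lam)$ has the same graded pieces, yielding the desired character equality.

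Second, I would construct a $\g_{R^{(n)}}$-morphism $\varphi: N_\bfk^{(n)}(v\cdot\lam) \to \Gamma(\scrA_{v!}^{(n)})$ via the universal property of the deformed Verma module. By the character computation, the weight space $\Gamma(\scrA_{v!}^{(n)})_{v\cdot\lam + s\omega_0}$ is free of rank one over $R^{(n)}$, and $v\cdot\lam+s\omega_0$ is a maximal weight of $\Gamma(\scrA_{v!}^{(n)})$. A generator of this weight space is therefore annihilated by $\frakn$ (no higher weight space exists) and supplies the required highest weight vector.

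Third, I would check that $\varphi$ is an isomorphism one weight space at a time. The specialization $\varphi(\wp): N_\kappa(v\cdot\lam) \to \Gamma(\scrA_{v!}^\lam)$ agrees with the isomorphism of Proposition \ref{prop:nonregular}(a). Each weight space $N_\bfk^{(n)}(v\cdot\lam)_\mu$ is free of finite rank over $R^{(n)}$ by the PBW theorem, and the character equality forces the target $\Gamma(\scrA_{v!}^{(n)})_\mu$ to have the same rank. Nakayama's lemma then yields surjectivity of $\varphi_\mu$, and a surjection between free $R^{(n)}$-modules of equal finite rank over the local Artinian ring $R^{(n)}$ is an isomorphism. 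The main subtlety is the convention check that the $f^s$-deformation contributes precisely the shift $+s\omega_0$ matching the highest weight of $N_\bfk(v\cdot\lam)$ rather than some other character; this is already encoded in the proof of Lemma \ref{lem:basic}, where the ideal $\frakm_\lam$ is shown to act trivially on $(\scrB^{(n)})^\dag$.
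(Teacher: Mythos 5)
There is a genuine gap in your Step 2. The character computation of Step 1 only determines the dimensions of the generalized weight spaces $\Gamma(\scrA_{v!}^{(n)})_{\tilde\mu}$, and hence (via the filtration argument) their isomorphism type as $R^{(n)}$-modules. It does \emph{not} determine the precise $\frakt_{R^{(n)}}$-module structure, i.e., the $R^{(n)}$-valued weight by which $\frakt$ acts. Concretely: on the highest generalized weight space, which is $\cong R^{(n)}$ as an $R^{(n)}$-module, each $h\in\frakt$ acts (commuting with $R^{(n)}$) by some scalar $\theta(h)\in R^{(n)}$ congruent to $\nu(h)$ modulo $\wp$; a priori $\theta$ could be $\nu+s\phi$ for any $\phi\in\frakt^\ast\otimes R^{(n-1)}$, in which case the strict weight space $\Gamma(\scrA_{v!}^{(n)})_{\nu+s\omega_0}$ you invoke would be \emph{zero}, and there would be no highest weight vector with which to apply the universal property. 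Your appeal to Lemma \ref{lem:basic} does not close this: that lemma concerns the torsor action $\delta_r$ (the twist defining $\bfM^{\tilde\lam}$), whereas the weight of $\Gamma(\scrM)$ is governed by the $\g$-action $\delta_l$; translating between the two is precisely the nontrivial computation that is carried out in the Kashiwara--Tanisaki framework for the $\bullet$-pushforward in Lemma \ref{lem:dualvermadeformed}, and is not available for $!$-pushforwards directly.

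The paper's proof plugs this hole by comparing with the $\bullet$-side: one applies $\Gamma$ to $\scrA^{(n)}_{v!}\twoheadrightarrow\scrA^{(n)}_{v!\bullet}\hookrightarrow\scrA^{(n)}_{v\bullet}$, restricts to the generalized weight $\tilde\nu$, and shows via a dimension count that the composite $\Gamma(\scrA^{(n)}_{v!})_{\tilde\nu}\to\Gamma(\scrA^{(n)}_{v\bullet})_{\tilde\nu}$ is an isomorphism of $\frakt_{R^{(n)}}$-modules; Lemma \ref{lem:dualvermadeformed} identifies the target as $R^{(n)}_{\nu+s\omega_0}$, which transfers the correct weight back to the source. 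This is exactly where the hypothesis that $v$ is shortest in $v\frakS(\lam)$ is used: it guarantees $\Gamma(\scrA^\lam_{v!\bullet})=L_\kappa(\nu)\neq 0$ by Proposition \ref{prop:nonregular}(c), which forces the $\tilde\nu$-piece of $\Gamma(\scrA^{(n)}_{v!\bullet})$ to have dimension at least $n$ and hence makes both maps isomorphisms. Your proposal never uses this hypothesis, which is a symptom of the missing step; you need the comparison with $\Gamma(\scrA^{(n)}_{v\bullet})$ (and hence Lemma \ref{lem:dualvermadeformed} as an input) to identify the highest weight, rather than deducing it from the character alone.
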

\begin{proof}
We abbreviate $\nu=v\cdot\lam$. The lemma will be proved in three steps.

\emph{Step 1.} Recall the character map from (\ref{eq:ch}).
Note that since $\Gamma$ and $i_!$ are exact, and
$$\gr \scrA_{v}^{(n)}=(\scrA_{v}^\lam)^{\oplus n},$$
we have an isomorphism of $\frakt$-modules
\begin{equation}\label{eq:grgamma}
\gr\Gamma(\scrA_{v!}^{(n)})=\Gamma(\scrA_{v!}^\lam)^{\oplus n}.
\end{equation}
Next, since the action of $s$ on $\Gamma(\scrA_{v!}^{(n)})$ is nilpotent, for any $\mu\in\frakt^\ast$ we have
\begin{equation*}
\dim_\C(\Gamma(\scrA_{v!}^{(n)})_{\tilde\mu})
=\dim_\C\bigl((\gr\Gamma(\scrA_{v!}^{(n)}))_{\tilde\mu}\bigr).
\end{equation*}
We deduce that as a $\frakt$-module $\Gamma(\scrA_{v!}^{(n)})$ is a generalized weight module and
\begin{equation}\label{eq:eqch}
  \ch(\Gamma(\scrA_{v!}^{(n)}))=\ch(\gr\Gamma(\scrA_{v!}^{(n)}))=n\ch \Gamma(\scrA_{v!}^\lam).
\end{equation}
On the other hand, we have the following isomorphism of $\frakt$-modules
\begin{equation}\label{eq:grverma}
\gr N^{(n)}_\bfk(\nu)=N_\kappa(\nu)^{\oplus n}.
\end{equation}
Therefore we have
\begin{equation*}
  \ch(N^{(n)}_\bfk(\nu))=n\ch(N_\kappa(\nu)).
\end{equation*}
Since $\Gamma(\scrA_{v!}^\lam)=N_\kappa(\nu)$ as $\g$-modules by Proposition \ref{prop:nonregular}, this yields
\begin{equation}
  \ch(\Gamma(\scrA_{v!}^{(n)}))=\ch(N^{(n)}_\bfk(\nu)).
\end{equation}
Further, we claim that there is an isomorphism of $R^{(n)}$-module \begin{equation}
\Gamma(\scrA_{v!}^{(n)})_{\tilde\mu}=N^{(n)}_\bfk(\nu)_{\tilde \mu},\quad\forall\ \mu\in\frakt^\ast.
\end{equation}
Note that $\Gamma(\scrA_{v!}^{(n)})_{\tilde\mu}$ is indeed an $R^{(n)}$-module because the action of $s$ on $\Gamma(\scrA_{v!}^{(n)})$ is nilpotent. To prove the claim, it suffices to notice that for any finitely generated $R^{(n)}$-modules $M$, $M'$ we have that $M$ is isomorphic to $M'$ as $R^{(n)}$-modules if and only if $\gr^iM=\gr^{i}M'$ for all $i$.
So the claim follows from the isomorphisms of $\frakt$-modules (\ref{eq:grgamma}), (\ref{eq:grverma}) and Proposition \ref{prop:nonregular}.

\emph{Step 2.} In this step, we prove that as a $\frakt_{R^{(n)}}$-module
$$\Gamma(\scrA_{v!}^{(n)})_{\tilde\nu}=R^{(n)}_{\nu+s\omega_0}$$
where $R^{(n)}_{\nu+s\omega_0}$ is the rank one $R^{(n)}$-module over which $\frakt$ acts by the weight $\nu+s\omega_0$. Let us consider the canonical morphisms in $\bfM^{\tilde\lam}_h(X)$
\[\xymatrix{\scrA^{(n)}_{v!}\ar@{>>}[r] &\scrA^{(n)}_{v!\bullet}\ar@{^{(}->}[r]
&\scrA^{(n)}_{v\bullet}.}\]
Since $\Gamma$ is exact on $\bfM^{\tilde\lam}(X)$, we deduce the following chain of $\g_{R^{(n)}}$-module morphisms
\[\xymatrix{\Gamma(\scrA^{(n)}_{v!})\ar@{>>}[r]^{\al} &\Gamma(\scrA^{(n)}_{v!\bullet})\ar@{^{(}->}[r]^{\beta}
&\Gamma(\scrA^{(n)}_{v\bullet}).}\]
Consider the following $\frakt_{R^{(n)}}$-morphisms given by the restrictions of $\al$, $\beta$
\[\xymatrix{\Gamma(\scrA^{(n)}_{v!})_{\tilde\nu}\ar@{>>}[r]^{\al_\nu} &\Gamma(\scrA^{(n)}_{v!\bullet})_{\tilde\nu}\ar@{^{(}->}[r]^{\beta_\nu}
&\Gamma(\scrA^{(n)}_{v\bullet})_{\tilde\nu}.}\]
We claim that $\al_\nu$ and $\beta_\nu$ are isomorphisms.
Note that by (\ref{eq:eqch}) we have
$$\dim_\C\Gamma(\scrA^{(n)}_{v!})_{\tilde\nu}=
n\dim_{\C}\Gamma(\scrA^{\lam}_{v!})_{\tilde\nu}=n.$$
By Lemma \ref{lem:dualvermadeformed} we also have $\dim_\C\Gamma(\scrA^{(n)}_{v\bullet})_{\tilde\nu}=n$.
Next, consider the exact sequence in $\bfM^{\tilde\lam}_h(X_v)$,
$$0\ra F^{i+1}\scrA^{(n)}_v\ra F^i\scrA^{(n)}_v\ra \gr^i\scrA^{(n)}_v\ra 0.$$
Applying the functor $i_{v!\bullet}$ to it yields a surjective morphism $$i_{v!\bullet}(F^i\scrA^{(n)}_v)/i_{v!\bullet}(F^{i+1}\scrA^{(n)}_v)\ra i_{v!\bullet}(\gr^i\scrA^{(n)}_v).$$
Since $i_{v!\bullet}(F^i\scrA^{(n)}_v)=F^i(i_{v!\bullet}(\scrA^{(n)}_v))$ and $\gr^i\scrA^{(n)}_v=\scrA^{\lam}_v$, we deduce a surjective morphism
$$\gr^i\scrA^{(n)}_{v!\bullet}\ra\scrA^{\lam}_{v!\bullet},\quad 0\leqs i\leqs n-1.$$
Applying the exact functor $\Gamma$ to this morphism and summing over $i$ gives a surjective morphism of $\g$-modules
$$\gamma:\gr \Gamma(\scrA^{(n)}_{v!\bullet})\ra\Gamma(\scrA^{\lam}_{v!\bullet})^{\oplus n}.$$
Since $v$ is minimal in $v\frakS(\lam)$, by Proposition \ref{prop:nonregular}(c) the right hand side is equal to $L_\kappa(\nu)$. We deduce from the surjectivity of $\gamma$ that
\begin{eqnarray*}
\dim_{\C}\Gamma(\scrA^{(n)}_{v!\bullet})_{\tilde\nu}
&=&\dim_{\C}\gr \Gamma(\scrA^{(n)}_{v!\bullet})_{\tilde\nu}\\
&\geqs&\dim_{\C}(L_\kappa(\nu)_{\tilde\nu})^{\oplus n}\\
&=&n
\end{eqnarray*}
It follows that the epimorphism $\al_\nu$ and the monomorphism $\beta_\nu$ are isomorphisms. The claim is proved. So we have an isomorphisms of $\frakt_{R^{(n)}}$-modules
$$\beta_\nu\circ\al_\nu:\Gamma(\scrA^{(n)}_{v!})_{\tilde\nu}\ra \Gamma(\scrA^{(n)}_{v\bullet})_{\tilde\nu}.$$
In particular, we deduce an isomorphisms of $\frakt_{R^{(n)}}$-modules
$$\Gamma(\scrA^{(n)}_{v!})_{\nu+s\omega_0}\ra \Gamma(\scrA^{(n)}_{v\bullet})_{\nu+s\omega_0},$$
because $$\Gamma(\scrA^{(n)}_{v\sharp})_{\nu+s\omega_0}\subset
\Gamma(\scrA^{(n)}_{v\sharp})_{\tilde\nu},\quad \text{for}\ \sharp=!,\bullet.$$
By Lemma \ref{lem:dualvermadeformed}, we have $$
\Gamma(\scrA^{(n)}_{v\bullet})_{\nu+s\omega_0}=R^{(n)}_{\nu+s\omega_0}.$$
We deduce an isomorphism of $\frakt_{R^{(n)}}$-modules
$$
\Gamma(\scrA^{(n)}_{v!})_{\nu+s\omega_0}=R^{(n)}_{\nu+s\omega_0}.$$
\emph{Step 3.} By the universal property of Verma modules and Step $2$, there exists a morphism of $\g_{R^{(n)}}$-modules
$$\varphi:N^{(n)}_\bfk(\nu)\ra \Gamma(\scrA^{(n)}_{v!}).$$
For any $\mu\in\frakt^\ast$ this map restricts to a morphism of $R^{(n)}$-modules
$$\varphi_{\mu}:N^{(n)}_\bfk(\nu)_{\tilde\mu}\ra \Gamma(\scrA^{(n)}_{v!})_{\tilde\mu}.$$
By Step 1, the $R^{(n)}$-modules on the two sides are finitely generated and they are isomorphic. Further, the induced morphism
$$\varphi(\wp):N_\kappa(\nu)\ra\Gamma(\scrA^\lam_{v!})$$
is an isomorphism by Proposition \ref{prop:nonregular}. So by Nakayama's lemma, the morphism $\varphi_\mu$ is an isomorphism for any $\mu$. Therefore $\varphi$ is an isomorphism. The lemma is proved.
\end{proof}
\begin{rk}
  The hypothesis that $v$ is a shortest element in $v\frakS(\lam)$ is probably not necessary but this is enough for our purpose.
\end{rk}

\subsection{Proof of Proposition \ref{prop:parvermadeformed}}

Consider the canonical embedding $r:X^\dag_w\ra Y^\dag_w$. We claim that the adjunction map yields a surjective morphism
\begin{equation}\label{eq:7}
r_!r^\ast(\scrB^{(n)})\ra\scrB^{(n)}.
\end{equation}
Indeed, an easy induction shows that it is enough to prove that $\gr^i(r_!r^\ast(\scrB^{(n)}))\ra\gr^i\scrB^{(n)}$ is surjective for each $i$. Since the functors $r_!$, $r_\ast$ are exact and $\gr^i\scrB^{(n)}=\scrB$, this follows from Lemma \ref{lem:vermaquotient}(a). Note that $r^\ast(\scrB^{(n)})=\scrA^{(n)}_w$. So the image of (\ref{eq:7}) by the exact functor $\Gamma\circ j_!$ is a surjective morphism
\begin{equation}\label{eq:8}
\Gamma(\scrA_{w!}^{(n)})\ra\Gamma(\scrB^{(n)}_!).
\end{equation}
By Lemma \ref{lem:vermadeformed} we have $\Gamma(\scrA_{w!}^{(n)})=N^{(n)}_\bfk$. Since the $\g_{R^{(n)}}$-module $\Gamma(\scrB^{(n)}_!)$ is $\frakq$-locally finite and $M^{(n)}_\bfk$ is the
largest quotient of $N^{(n)}_\bfk$ in $\calO_\bfk$, the morphism (\ref{eq:8}) induces a surjective morphism
\begin{equation*}
\varphi:M^{(n)}_\bfk\ra\Gamma(\scrB^{(n)}_!).
\end{equation*}
Further, by Proposition \ref{prop:localization}(b) the map
$$\varphi(\wp):M_\kappa\ra\Gamma(\scrB_!)$$
is an isomorphism. The same argument as in Step 1 of the proof of Lemma \ref{lem:vermadeformed} shows that for each $\mu\in\frakt^\ast$ the generalized weight spaces $(M^{(n)}_\bfk)_{\tilde{\mu}}$ and $\Gamma(\scrB^{(n)}_!)_{\tilde{\mu}}$ are isomorphic as $R^{(n)}$-modules. We deduce that $\varphi$ is an isomorphism by Nakayama's lemma. This proves the first equality. The proof for the second equality is similar. We consider the adjunction map
\begin{equation}
\scrB^{(n)}\ra r_\bullet r^\ast(\scrB^{(n)}).
\end{equation}
It is injective by Lemma \ref{lem:vermaquotient}(b) and the same arguments as above. So by applying the exact functor $\Gamma\circ j_\bullet$, we get an injective morphism
\begin{equation*}
\varphi':\ \Gamma(\scrB^{(n)}_\bullet)\ra \bfD M^{(n)}_\bfk.
\end{equation*}
Again, by using Proposition \ref{prop:localization}(b) and Nakayama's lemma, we prove that $\varphi'$ is an isomorphism.
\qed

\vskip1cm

\section*{Index of notation}

\begin{itemize}\setlength{\itemsep}{1mm}

\item[\textbf{1.1}:] $\calC\cap R\proj$, $R'\calC$.

\item[\textbf{1.2}:] $\calC^\Delta$, $\Delta$, $\nabla$, $D\spcheck$.

\item[\textbf{1.3}:] $R=\C[[s]]$, $\wp$, $K$, $M(\wp^i)$, $M_K$, $f(\wp^i)$, $f_K$, $\calC(\wp)$, $\calC_K$.

\item[\textbf{1.4}:] $J^iD(\wp)$.

\item[\textbf{1.5}:] $F(\wp)$.

\item[\textbf{2.1}:] $G_0,$ $B_0$, $T_0$, $\g_0$, $\frakb_0$,
$\frakt_0,$ $\g$, $\frakt$, $\bfone$, $\partial$, $c$, $\kappa=c+m$, $\fraka_R$,
$\calU_\kappa$, $M_\lam$, $\frakt_0^\ast$,
$\frakt^\ast$, $\delta$, $\omega_0$, $\ep_i$, $\pair{\bullet:\bullet},$
$||h||^2$, ${}_\kappa\!\frakt^\ast$, $a$, $z$, $\Pi$, $\Pi_0$,
$\Pi^+$, $\Pi^+_0$, $\al_i$, $\frakS$, $\frakS_0$, $w\cdot\lam$,
$\rho_0$, $\rho$, $\frakS(\lam)$, $l:\frakS\ra\N$.

\item[\textbf{2.2}:] $\frakq$, $\frakl$, $\Lam^+$, $M_\kappa(\lam)$,
$L_\kappa(\lam)$, $\bfc$, $\bfk$, $\calU_\bfk$,
$M_\bfk(\lam)$, $\lam_s$, ${}_\bfk\!\frakt^\ast$.

\item[\textbf{2.3}:] $\sigma$, $\bfD M_\bfk(\lam)$, $J^iM_\kappa(\lam)$.

\item[\textbf{2.4}:] $\calO_\bfk$, $\calO_\kappa$, ${}^r_\kappa\!\frakt^\ast$,
${}^r_\bfk\!\frakt^\ast$, ${}^r\!\calO_\kappa$, ${}^r\!\calO_\bfk$,
${}^r\!\Lam^+$, ${}^r\!P_\kappa(\lam)$,
$L_\bfk(\lam)$, ${}^r\!P_\bfk(\lam)$.

\item[\textbf{2.5}:] $\calP_n$, $\preceq$, $\unlhd$, $E$, $\calE_\kappa$, $\g'$, $\calE_\bfk$,
$P_\bfk(E)$, $P_\kappa(E)$.

\item[\textbf{2.6}:] $\calA_\bfk$, $\calA_\kappa$, $\Delta_\bfk$,
$\Delta_\kappa$.

\item[\textbf{2.7}:] $\bfD$.

\item[\textbf{2.9}:] $\scrH_\bfv$, $\bfS_\bfv$, $\calA_{\bfv}$, $W_\bfv(\lam)$, $\Delta_{\bfv}$.

\item[\textbf{2.10}:] $v=\exp(2\pi i/\kappa)$, $\bfv=\exp(2\pi i/\bfk)$, $J^iW_v(\lam)$.

\item[\textbf{2.11}:] $\bfH_{1/\kappa}$, $\calB_\kappa$, $B_\kappa(\lam)$,
$\frakE_\kappa$, $\bfH_{1/\bfk}$, $\calB_\bfk$, $\frakE_\bfk$, $B_\bfk(\lam)$.

\item[\textbf{3.1}:] $\scrO_Z$, $\bfO(Z)$, $f_\ast$, $f^\ast$, $f^!$, $\scrD_Z$, $\bfM(Z)$, $\Omega_Z$, $\scrD_{Y\ra Z}$, $\scrM^{\scrO}$, $\bfM(Z,Z')$, $i^\ast$, $i_\bullet$, $i^!$.

\item[\textbf{3.2}:] $\bfM_h(Z)$, $\bbD$, $i_!$, $i_{!\bullet}$.

\item[\textbf{3.3}:] $\bfM^T(Z)$, $\bfM^T(X,Z)$.

\item[\textbf{3.4}:] $\scrM^\dag$, $\scrD^\dag_Z$, $\delta_r$, $\frakm_\lam$, $\bfM^\lam(Z)$, $\bfM^{\tilde\lam}(Z)$, $\bfM^\lam(X,Z)$, $\bfM^{\tilde\lam}(X,Z)$, $\bfM(\scrD^\dag_Z)$.

\item[\textbf{3.5}:] $\bfM^T_h(Z^\dag)$, $\bfM^{\lam}_h(Z)$, $\bfM^{\tilde\lam}_h(Z)$, $\bbD'$, $\scrL^\lam_Z$, $\Theta^\lam$, $\bbD=\bbD^\lam$, $\scrD^\lam_Z$.

\item[\textbf{3.6}:] $\tilim\calC_\al$, $\tplim\calC_\al$.

\item[\textbf{3.7}:] $X=\ilim X_\al$, $\bfO(X)$, $\Gamma(X,\scrM)$ (for $\scrM\in\bfO(X)$), $\hat\bfO(X)$, $\scrO_X$, $-\otimes_{\scrO_X}\scrF$, $\Gamma(X,\scrF)$ (for $\scrF\in\hat\bfO(X)$).

\item[\textbf{3.8}:] $\bfM(X)$ (with $X$ an ind-scheme), $\scrM^{\scrO}$, $\Gamma(X,\scrM)$ (for $\scrM\in\bfM(X)$), $\bfM^T(X)$, $\bfM^\lam(X)$, $\bfM^{\tilde\lam}(X)$, $\Gamma(\scrM)$ (for $\scrM\in\bfM^{\tilde\lam}(X)$), $\bfM^\lam_h(X)$, $\bfM^{\tilde\lam}_h(X)$, $i_!$, $i_\bullet$.

\item[\textbf{3.9}:] $\scrD_X$ (with $X$ a formally smooth ind-scheme).

\item[\textbf{4.1}:] $G$, $B$, $Q$, $T$, $N$, $\g$, $\frakb$, $\frakn$.

\item[\textbf{4.2}:] $X=G/B$, $X^\dag=G/N$, $\pi:X^\dag\ra X$, $X_w$, $\dot{w}$, $\olX_w$.

\item[\textbf{4.3}:] $\delta_l:\calU(\g)\ra\Gamma(X^\dag,\scrD_{X^\dag})$, $\bfM(\g)$, $\Gamma$

\item[\textbf{4.4}:] $M_{\tilde\lam}$, $\ch(M)$, $\tilde{\calO}$, $\tilde{\calO}_\kappa$, $N_\kappa(\lam)$, $L_\kappa(\lam)$, $\Lam$, $\scrA^\lam_w$, $i_w$, $\scrA^\lam_{w!}$, $\scrA^\lam_{w!\bullet}$, $\scrA^\lam_{w\bullet}$.

\item[\textbf{4.5}:] $\QS$, $w_0$, $Y_w$, $\olY_w$, $j_w$, $\scrB^\lam_w$, $\scrB^\lam_{w!}$, $\scrB^\lam_{w!\bullet}$, $\scrB^\lam_{w\bullet}$, $r:X^\dag_w\ra Y^\dag_w$.

\item[\textbf{5.1}:] $\calC_R$, $\mu_M$, $for$, $F_R$.

\item[\textbf{5.2}:] $Q'$, $f_w$.

\item[\textbf{5.3}:] $j$, $f$, $\scrB$, $\scrB_!$, $\scrI^{(n)}$, $x^s$, $f^s$, $\scrB^{(n)}$, $\scrB^{(n)}_!$, $\scrB^{(n)}_{!\bullet}$, $\scrB^{(n)}_\bullet$.

\item[\textbf{5.4}:] $M_\kappa$, $M_\bfk$, $M^{(n)}_\bfk$, $\bfD M^{(n)}_\bfk$.

\item[\textbf{5.5}:] $\psi(a,n)$, $\pi^a(\scrB)$, $J^i(\scrB_!)$.

\item[\textbf{6.1}:] $\MHM(Z)$, $W^\bullet\scrM$, $(k)$, $\Perv(Z)$, $\varrho$.

\item[\textbf{6.2}:] $\bfM^\lam_0(\olX_v)$, $\DR$, $\MHM_0(\olX_v)$, $\eta$,
$\tilde\scrA^\lam_w$, $\tilde\scrA^\lam_{w!}$,
$\tilde\scrA^\lam_{w!\bullet}$, $\tilde\scrB^\lam_w$,
$\tilde\scrB^\lam_{w!}$, $\tilde\scrB^\lam_{w!\bullet}$, $q$,
$\scrH_q(\frakS)$, $T_w$, $\Psi$, $P_{x,w}$,

\item[\textbf{A.1}:] $\Pi^-$, $\frakn(\Upsilon)$, $\frakn^-(\Upsilon)$, $\Pi^-_l$, $\frakn^-_l$, $N^-$, $N^-_l$, $\frakX$, $\frakX^\dag$, $\frakX^w$, $\frakX^w_l$, $\frakX^{w\dag}_l$, $p_{l_1l_2}$, $p_l$.

\item[\textbf{A.2}:] $\bfH^{\tilde\lam}(\frakX^y,\olX_w)$, $\bfH^{\tilde\lam}(\olX_w)$, $\bfH^{\tilde\lam}(X)$.

\item[\textbf{A.3}:] $\hat\Gamma(\scrM)$, $\overline{\Gamma}(\scrM)$.

\item[\textbf{A.5}:] $\tilde{\calO}_{\kappa,\lam}$, $\pr_\lam$, $V(\nu)$, $\theta^\nu$, $\scrL^\lam$, $\Theta^{\lam-\mu}$.

\item[\textbf{B.1}:] $r_x$, $\scrA_v^{(n)}$, $\scrA^{(n)}_{v!}$, $\scrA^{(n)}_{v!\bullet}$, $\scrA^{(n)}_{v\bullet}$, $N_\bfk(\mu)$, $R_{\mu+s\omega_0}$, $\bfD N_\bfk(\mu)$, $N^{(n)}_\bfk(\mu)$, $\bfD N^{(n)}_\bfk(\mu)$, $s^iM$, $F^\bullet M$, $\gr M$, $\dch(M)$.
\end{itemize}


\begin{thebibliography}{20}

\bibitem[An]{An} H. H. Andersen, \emph{The strong linkage principle for quantum groups at roots of
$1$}, J. Algebra \textbf{260} (2003),  no. 1, 2--15.

\bibitem[Ar]{Ar} S. Ariki, \emph{Graded $q$-Schur algebras}, arxiv:
0903.3453

\bibitem[B]{B} A. Beilinson, \emph{How to glue perverse sheaves}, $K$-theory, arithmetic and geometry (Moscow, 1984--1986),  42--51,
Lecture Notes in Math., 1289, Springer, Berlin, 1987.

\bibitem[BB]{BB} A. Beilinson, J. Bernstein, \emph{A proof of Jantzen conjectures}, I. M. Gel'fand Seminar,  1--50, Adv. Soviet Math., 16, Part 1, Amer.
Math. Soc., Providence, RI, 1993.

\bibitem[BBD]{BBD} A. Beilinson, J. Bernstein, P. Deligne, \emph{Faisceaux pervers},
Analysis and topology on singular spaces, I (Luminy, 1981),  5--171,
Ast\'erisque, \textbf{100}, Soc. Math. France, Paris, 1982.

\bibitem[BD]{BD} A. Beilinson, V. Drinfeld, \emph{Quantization of Hitchin's integrable system and Hecke
eigensheaves},

\bibitem[BG]{BG} A. Beilinson, V. Ginzburg, \emph{Wall-crossing functors and $\calD$-modules}, Represent. Theory
\textbf{3} (1999), 1--31 (electronic).

\bibitem[BGS]{BGS} A. Beilinson, V. Ginzburg, W. Soergel,
\emph{Koszul duality patterns in representation theory}, J. Amer.
Math. Soc. \textbf{9} (1996), no. 2, 473--527.
\bibitem[CPS]{CPS} E. Cline, B. Parshall, L. Scott, \emph{Duality in
highest weight categories}, Classical groups and related topics
(Beijing, 1987),  7--22, Contemp. Math., \textbf{82}, Amer. Math.
Soc., Providence, RI, 1989.


\bibitem[Don]{D} S. Donkin, \emph{The $q$-Schur algebra}, London Mathematical Society Lecture Note Series, \textbf{253}. Cambridge
University Press, Cambridge, 1998.


\bibitem[DGK]{DGK} V. V. Deodhar, O. Gabber, V. Kac \emph{Structure of some categories of representations of
infinite-dimensional Lie algebras}, Adv. in Math. \textbf{45}
(1982), no. 1, 92--116.

\bibitem[F]{F} P. Fiebig, \emph{Centers and translation functors for
the category $\calO$ over Kac-Moody algberas},  Math. Z.
\textbf{243} (2003),  no. 4, 689--717.

\bibitem[FG]{FG} E. Frenkel, D. Gaitsgory,\emph{$D$-modules on the affine grassmannian and representations of
affine Kac-Moody algebras}, Duke Math. J. \textbf{125} (2004), no.
2, 279--327.


\bibitem[GGOR]{GGOR} V. Ginzburg, N. Guay, E. Opdam, R. Rouquier, \emph{On the category $\calO$ for
rational Cherednik algebras}, Invent. Math. \textbf{154} (2003),
no.3, 617--651.




\bibitem[HTT]{HTT} R. Hotta, K. Takeuchi, T. Tanisaki,\emph{$D$-modules, perverse sheaves, and representation theory}, Progress in Mathematics, \textbf{236}, Birkh\"auser Boston Inc., 2008.


\bibitem[J]{J} J. C. Jantzen, \emph{Moduln mit einem hohsten Gewicht}, Lecture notes in Math.,
vol. 750, Springer-Verlag, Berlin and New York, 1980.

\bibitem[JM]{JM} G. James, A. Mathas, \emph{A $q$-analogue of the Jantzen-Schaper theorem},
Proc. London Math. Soc. (3)  \textbf{74}  (1997),  no. 2, 241--274.

\bibitem[Ka]{Kac} V. Kac, \emph{Infinite dimensional Lie algebras}, Third edition. Cambridge University Press,
Cambridge, 1990.

\bibitem[KV]{KV} M. Kapranov, E. Vasserot, \emph{Vertex algebras and the formal loop
space}, Publ. Math. Inst. Hautes \'Etudes Sci.  No. \textbf{100}
(2004), 209--269.


\bibitem[K]{K} M. Kashiwara, \emph{The flag manifold of Kac-Moody
Lie algebra}, Algebraic Analysis, Geometry and Number Theory, Johns
Hopkins Univ. Press., Baltimore, 1990.

\bibitem[Kas]{Kas} M. Kashiwara, \emph{Equivariant derived category and representation of real semisimple Lie groups}, in Representation theory and complex analysis (Lecture Notes in Math., vol. 1931), 137--234, Springer, Berlin, 2008.

\bibitem[KS]{KS} M. Kashiwara, P. Shapira, \emph{Sheaves on
Manifold}, Grundlehren der Mathematischen Wissenschaften,
\textbf{292}, Springer-Verlag, Berlin, 1994.

\bibitem[KT1]{KT1} M. Kashiwara, T. Tanisaki, \emph{Kazhdan-Lusztig conjecture for affine Lie algebras with
negative level}, Kazhdan-Lusztig conjecture for affine Lie algebras
with negative level.  Duke Math. J.  \textbf{77}  (1995),  no. 1,
21--62.

\bibitem[KT2]{KT2} M. Kashiwara, T. Tanisaki, \emph{Parabolic Kazhdan-Lusztig polynomials and Schubert varieties},
 J. Algebra \textbf{249} (2002),  no. 2, 306--325.


\bibitem[KL1]{KL2} D. Kazhdan, G. Lusztig, \emph{Schubert varieties and Poincar\'e duality}, Geometry of the
Laplace operator (Proc. Sympos. Pure Math., Univ. Hawaii, Honolulu,
Hawaii, 1979),  pp. 185--203, Amer. Math. Soc., Providence, R.I.,
1980.

\bibitem[KL2]{KL3} D. Kazhdan, G. Lusztig, \emph{Tensor structures
arising from afine Lie algebras}, J. Amer. Math. Soc. I-IV,
\textbf{6-7} (1993-1994), 905-947, 949-1011, 335-381, 383-453.


\bibitem[Ku1]{Ku1} S. Kumar, \emph{Toward proof of Lusztig's conjecture concerning negative level
representations of affine Lie algebras}, J. Algebra  \textbf{164}
(1994), no. 2, 515--527.

\bibitem[Ku2]{Ku2} S. Kumar, \emph{Kac-Moody groups, their flag varieties and representation
theory}, Progress in Mathematics, \textbf{204}. Birkh\"{a}user
Boston, Inc., Boston,

\bibitem[L]{L}, \emph{Decomposition numbers and canonical bases}, Algebr. Represent. Theory, \textbf{3}, (2000), no. 3, 277--287.


\bibitem[LT1]{LT1} B. Leclerc, J-Y. Thibon, \emph{Canonical bases of $q$-deformed Fock spaces},
Internat. Math. Res. Notices  1996,  no. 9, 447--456.

\bibitem[LT2]{LT2} B. Leclerc, J-Y. Thibon, \emph{Littlewood-Richardson coefficients and Kazhdan-Lusztig
polynomials},  Combinatorial methods in representation theory
(Kyoto, 1998),  155--220, Adv. Stud. Pure Math., \textbf{28},
Kinokuniya, Tokyo, 2000.

\bibitem[M]{Mi} D. Milicic, \emph{Localization and Representation Theory of Reductive Lie
Groups}, available at http://www.math.utah.edu/\~{}milicic

\bibitem[PS]{PS} B. Parshall, L. Scott, \emph{Integral and graded quasi-hereditary algebras, II. with applications to representations of generalized $q$-Schur algebras
      and algebraic groups}, arxiv:0910.0633.


\bibitem[RT]{RT} A. Ram, P. Tingley, \emph{Universal Verma modules and the Misra-Miwa Fock space}, arxiv:1002.0558.


\bibitem[R]{R} R. Rouquier, \emph{$q$-Schur algebras and complex reflection groups}, Mosc. Math. J. \textbf{8}  (2008),  no. 1, 119--158, 184.



\bibitem[RW]{RW} A. Rocha-Caridi, N. R. Wallach, \emph{Projective
modules over graded Lie algebras}, Math. Z.  \textbf{180}  (1982),
no. 2, 151-177.

\bibitem[Sa]{Sa}M. Saito, \emph{Mixed Hodge modules}, Publ. Res. Inst.
Sci. \textbf{26} (1990), 221-333

\bibitem[So]{So} W. Soergel, \emph{Character formulas for tilting
modules over Kac-Moody algberas}, Represent.
Theory,\textbf{2}(1998), 432-448.

\bibitem[St]{St} C. Stroppel, \emph{Parabolic category {$\mathscr{O}$}, perverse sheaves on Grassmannians, Springer fibres and Khovanov homology}, Compos. Math.\textbf{145} (2009), 954--992.

\bibitem[Su]{Su} T. Suzuki, \emph{Double affine Hecke algebras, conformal coinvariants and Kostka polynomial}, C. R. Math. Acad. Sci. Paris  \textbf{343}  (2006),  no. 6, 383--386.


\bibitem[TT]{TT} R. W. Thomason, T. Trobaugh, \emph{Higher algebraic
$K$-theory of schemes and of derived categories}, Grothendieck
Festschrift, vol. III, Progr. Math., \textbf{88}, 247--435,
Birkh\"auser, 1990.


\bibitem[VV1]{VV1} M. Varagnolo, E. Vasserot, \emph{On the decomposition matrices of the quantized
Schur algebra},  Duke Math. J.  \textbf{100}  (1999),  no. 2,
267--297.

\bibitem[VV2]{VV} M. Varagnolo, E. Vasserot, \emph{Cyclotomic double affine Hecke algebras and affine parabolic
category $\calO$},Adv. Math. \textbf{225} (2010), no. 3, 1523-1588.


\end{thebibliography}
\end{document}